\tikzset{nodde/.style={circle,draw=blue!50,fill=pink!80,inner sep=4.2pt}}
\tikzset{noddee/.style={circle,draw=black,fill=black,inner sep=2pt}}
\newcommand*{\equal}{=}
\newcommand{\lebn}
\theoremstyle{plain}
\newtheorem{proposition}[equation]{Proposition}
\newtheorem{theorem}[equation]{Theorem}
\newtheorem{corollary}[equation]{Corollary}
\newtheorem{lemma}[equation]{Lemma}
\theoremstyle{definition}
\newtheorem{problem}[equation]{Problem}
\newtheorem{definition}[equation]{Definition}
\newtheorem{remark}[equation]{Remark}
\newtheorem{example}[equation]{Example}
\numberwithin{equation}{section}
\newcommand{\Z}{\mathbb{Z}}
\DeclareMathOperator{\NP}{NP}
\newcommand{\PD}{\mathcal{P}\mathcal{D}}
\newcommand{\PF}{\mathcal{P}\mathcal{F}}
\newcommand{\BW}{\mathscr{BW}}
\newcommand{\ID}{\mathcal{I}\mathcal{D}}
\newcommand{\ME}{\mathcal{M}}
\newcommand{\DE}{\mathcal{D}}
\newcommand{\FE}{\mathcal{F}}
\newcommand{\RE}{\mathcal{R}}
\newcommand{\GE}{\mathcal{G}}
\newcommand{\LE}{\mathcal{L}}
\newcommand{\IE}{\operatorname{Ind}}
\newcommand{\HE}{\mathcal{H}}
\newcommand{\TE}{\mathcal{T}}
\newcommand{\kk}{\Bbbk}
\newcommand{\gp}{\mathfrak{g}}
\newcommand{\fp}{\mathfrak{f}}
\newcommand{\tp}{\mathfrak{t}}
\newcommand{\af}{\mathfrak{a}}
\newcommand{\Ind}{\operatorname{Ind}}
\newcommand{\Del}{\operatorname{Del}}
\newcommand{\Add}{\operatorname{Add}}
\newcommand{\lk}{\operatorname{lk}}
\newcommand{\del}{\operatorname{del}}
\newcommand{\cd}{\operatorname{cochord}}
\newcommand{\reg}{\operatorname{reg}}
\newcommand{\im}{\operatorname{im}}
\newcommand{\vim}{\operatorname{vim}}
\newcommand{\m}{\operatorname{m}}
\newcommand{\ol}{\overline}
\newcommand{\D}{\Delta}
\begin{document}

\bibliographystyle{plain}

\title[Regularity of graphs]{Castelnuovo-Mumford regularity of graphs}
\author{T\" urker B\i y\i ko$\breve{g}$lu and Yusuf Civan}

\address{}

\address{Department of Mathematics, Suleyman Demirel University,
Isparta, 32260, Turkey.}

\email{tbiyikoglu@gmail.com\\
yusufcivan@sdu.edu.tr}

\keywords{ Castelnuovo-Mumford regularity, prime graph, Cohen-Macaulay graph, matching number, induced matching number.}

\date{\today}

\thanks{Both authors are supported by T\" UB\. ITAK (grant no: 111T704), and the first author is
also supported by ESF EUROCORES T\" UB\. ITAK (grant no: 210T173)}

\subjclass[2010]{13F55, 05E40, 05C70, 05C75, 05C76.}

\begin{abstract}
We present new combinatorial insights into the calculation of (Castelnuovo-Mumford) regularity of graphs. 
We first show that the regularity of any graph can be reformulated as a generalized induced matching problem. On that direction, we introduce the notion of a \emph{prime graph} by calling a connected graph $G$ as a prime graph over a field $\kk$, if $\reg_{\kk}(G-x)<\reg_{\kk}(G)$ for any vertex $x\in V(G)$.  We exhibit some structural properties of prime graphs that enables us to compute the regularity in specific hereditary graph classes. In particular, we prove that $\reg(G)\leq \Delta(G)\im(G)$ holds for any graph $G$, and in the case of claw-free graphs, we verify that this bound can be strengthened by showing that $\reg(G)\leq 2\im(G)$, where $\im(G)$ is the induced matching number of $G$. By analysing the effect of Lozin transformations on graphs, we narrow the search of prime graphs into bipartite graphs having sufficiently large girth with maximum degree at most three, and show that the regularity
of bipartite graphs $G$ with such constraints is bounded above by $2\im(G)+1$. Moreover, we prove that any non-trivial Lozin operation preserves the primeness of a graph that enables us to generate new prime graphs from the existing ones.

We introduce a new graph invariant, the \emph{virtual induced matching number} $\vim(G)$ satisfying $\im(G)\leq \vim(G)\leq \reg(G)$ for any graph $G$ that results from the effect of edge-contractions and vertex-expansions both on graphs and the independence complexes of graphs to the regularity. In particular, we verify the equality $\reg(G)=\vim(G)$ for a graph class containing all Cohen-Macaulay graphs of girth at least five.

Finally, we prove that there exist graphs satisfying $\reg(G)=n$ and $\im(G)=k$ for any two integers $n\geq k\geq 1$. The proof is based on a result of Januszkiewicz and Swiatkowski~\cite{JS} accompanied with Lozin operations. We provide an upper bound on the regularity of any $2K_2$-free graph $G$ in terms of the maximum privacy degree of $G$. In addition, if $G$ is a prime $2K_2$-free graph, we show that $\reg(G)\leq \frac{\delta(G)+3}{2}$.

\end{abstract} 

\maketitle

\section{Introduction}\label{sect:intro}
Computing or finding applicable bounds on the graded Betti numbers, the regularity and the projective dimension of a monomial ideal $I$ in the polynomial ring $R=\kk[V]$ with a finite set $V$ of indeterminates over a field $\kk$
has been a central problem in combinatorial commutative algebra~\cite{Ha}. In general, very little is known even in the case where the
ideal $I$ is squarefree. For squarefree monomial ideals, the main approach to tackle such a problem is combinatorial, that is,
the instruments of (hyper)graph theory are the main tools when finding the exact value or establishing
tight bounds on these invariants. A common way to associate a combinatorial object to a squarefree monomial ideal $I$ goes by the language of Stanley-Reisner theory.  
In detail, let $I$ be minimally generated by squarefree monomials $m_1,\ldots,m_r$, where $V=\{x_1,\ldots, x_n\}$. If we define $F_i:=\{x_j\colon x_j|m_i\}$
for any $1\leq i\leq r$, then the family $\D:=\{K\subseteq V\colon F_i\nsubseteq K\;\text{for any}\;i\in [r]\}$ is a simplicial complex on $V$ such that its minimal non-faces exactly correspond to the generators of $I$. Under such an association, the ideal $I$ is said to be the \emph{Stanley-Reisner ideal} of the simplicial complex
$\D$, denoted by $I=I_{\D}$, and the simplicial complex $\D=\D_I$ is called the \emph{Stanley-Reisner complex} of the ideal $I$. In particular, when each monomial $m_i$ is quadratic, then the pair $G_I:=(V,\{F_1,\ldots,F_r\})$ is a (simple) graph on the set $V$, and under such a correspondence, the ideal $I$ is called the \emph{edge ideal} of the graph $G_I$.

These interrelations can be reversible in the following way.
Let $G=(V,E)$ be a graph on $V$. Then the family of subsets of $V$ containing no edges of $G$ forms a simplicial complex $\IE(G)$ on $V$, the \emph{independence complex} of $G$, and the corresponding Stanley-Reisner ideal $I_{G}:=I_{\IE(G)}$ is exactly the edge ideal of $G$. 

The resulting one-to-one correspondence among squarefree monomial ideals in $R$ and simplicial complexes on the set $V$ enable us to state the definitions of the invariants in question by the well-known Hochster's formula. For instance, if $\D$ is a simplicial complex on $V$, then 
\begin{equation*}
\reg(\D):=\reg(R/I_{\D})=\max \{j\colon \widetilde{H}_{j-1}(\D[S];\kk)\neq 0\;\textrm{for\;some}\;S\subseteq V\},
\end{equation*}
where $\D[S]:=\{F\in \D\colon F\subseteq S\}$ is the induced subcomplex of $\D$ by $S$, and $\widetilde{H}_{*}(-;\kk)$ denotes
the (reduced) singular homology. Since $\reg(I)=\reg(R/I)+1$ for any ideal $I$ in $R$, we have $\reg(I_{\D})=\reg(\D)+1$. In a similar vein, we conclude that $\reg(G):=\reg(\IE(G))=\reg(I_{G})-1$ for any graph $G$.

Most of the recent work on the graph's regularity has been devoted to the existence of tight bounds on the regularity via other graph parameters, and most likely candidate is the induced matching number. By a theorem of Katzman~\cite{MK},
it is already known that the induced matching number $\im(G)$ provides a lower bound for the regularity of a graph $G$, and the characterization of graphs in which
the regularity equals to the induced matching number has been the subject of many recent papers~\cite{BC,HVT,MMCRTY,MV,VT,RW2}. 
Observe that for a graph $G$, if we have 
$\reg(G)=\im(G)=n$ for some $n\geq 1$, then $G$ contains an induced copy of the graph $nK_2$, and $n$ is the greatest integer with this property. On the other hand, the graph $nK_2$ has the minimal order among all graphs satisfying the equality $\reg(G)=\im(G)=n$. Such an observation brings the idea of decomposing a graph into its induced subgraphs for which each subgraph in the decomposition has the minimal order with respect to its regularity. 
In other words, we call a connected graph $H$ as a \emph{prime graph} over a field $\kk$ if $\reg_{\kk}(H-x)<\reg_{\kk}(H)$ for any vertex $x\in V(H)$. Now, we say that a family $\{H_1,\ldots,H_r\}$ of vertex disjoint induced subgraphs of $G$ is a \emph{prime decomposition} of $G$ over $\kk$, if each graph $H_i$ is a prime graph over $\kk$ and the induced subgraph of $G$ on $\cup_{i=1}^r V(H_i)$ contains no edge of $G$ that is not contained in any of $E(H_i)$, and denote by $\PD_{\kk}(G)$, the set of prime decompositions of $G$ over $\kk$.   

\begin{theorem}\label{thm:reg-dec}
For any graph $G$ and any field $\kk$, we have $$\reg_{\kk}(G)=\max \{\sum_{i=1}^{r}\reg_{\kk}(H_i)\colon \{H_1,\ldots,H_r\}\in \PD_{\kk}(G)\}.$$
\end{theorem}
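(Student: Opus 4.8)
The plan is to prove the two inequalities separately. For the easier direction ($\geq$), I would start from the known behavior of regularity under disjoint unions: if $H_1, \dots, H_r$ are vertex-disjoint induced subgraphs of $G$ whose union induces a subgraph with no extra edges, then the induced subgraph $G[\cup V(H_i)]$ is precisely the disjoint union $H_1 \sqcup \cdots \sqcup H_r$. The standard fact that $\reg(\IE(H_1 \sqcup H_2)) = \reg(\IE(H_1)) + \reg(\IE(H_2))$ — which follows from $\IE(H_1 \sqcup H_2) = \IE(H_1) * \IE(H_2)$ (join of complexes) together with the Künneth formula for the reduced homology of a join, $\widetilde{H}_{k}(X * Y) \cong \bigoplus_{i+j = k-1} \widetilde{H}_i(X) \otimes \widetilde{H}_j(Y)$ — gives $\reg_{\kk}(G[\cup V(H_i)]) = \sum_i \reg_{\kk}(H_i)$. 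Since regularity is monotone under taking induced subgraphs (again Hochster's formula: an induced subcomplex of an induced subcomplex is an induced subcomplex of the whole), $\reg_{\kk}(G) \geq \reg_{\kk}(G[\cup V(H_i)]) = \sum_i \reg_{\kk}(H_i)$, so the max over $\PD_{\kk}(G)$ is at most $\reg_{\kk}(G)$. Wait—I need this direction to be $\geq$, and indeed it gives $\reg_\kk(G) \geq \sum_i \reg_\kk(H_i)$ for every prime decomposition, hence $\reg_\kk(G) \geq \max$.

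For the reverse inequality ($\leq$), I would argue by induction on $|V(G)|$, and it suffices to treat $G$ connected (a disjoint union is handled by concatenating prime decompositions of the components and using additivity of regularity over components, as above). If $G$ itself is prime, then $\{G\}$ is a prime decomposition realizing $\reg_\kk(G)$, and we are done. If $G$ is not prime, then by definition there is a vertex $x$ with $\reg_\kk(G - x) \geq \reg_\kk(G)$; combined with monotonicity $\reg_\kk(G-x) \leq \reg_\kk(G)$, we get $\reg_\kk(G-x) = \reg_\kk(G)$. By the induction hypothesis applied to $G - x$, there is a prime decomposition $\{H_1, \dots, H_r\} \in \PD_\kk(G-x)$ with $\sum_i \reg_\kk(H_i) = \reg_\kk(G-x) = \reg_\kk(G)$. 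The final point is that this family is also a prime decomposition of $G$: the $H_i$ are vertex-disjoint induced subgraphs of $G - x$, hence of $G$, and since $x \notin \cup V(H_i)$, the induced subgraph $G[\cup V(H_i)]$ equals $(G-x)[\cup V(H_i)]$, which contains no edge outside the $E(H_i)$ by hypothesis. Therefore $\{H_1, \dots, H_r\} \in \PD_\kk(G)$ and the claimed maximum is at least $\reg_\kk(G)$.

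I expect the main (though modest) obstacle to be a clean treatment of the disconnected case and the bookkeeping around the definition of a prime decomposition — specifically, verifying that "no edge of $G$ outside the $E(H_i)$" is genuinely preserved when passing between $G$ and $G - x$, and that each prime graph is by definition connected so that the components of $G[\cup V(H_i)]$ are exactly the $H_i$. The homological input (additivity of regularity over disjoint unions via the join/Künneth formula, and monotonicity under induced subgraphs via Hochster's formula) is standard and can be cited; the real content of the theorem is the simple but useful observation that primeness lets the induction terminate exactly at the value $\reg_\kk(G)$. One should also double check that $\PD_\kk(G)$ is nonempty for every $G$ — e.g. the empty family decomposes the edgeless graph, and for $\reg = 0$ one can take the empty decomposition — so that the maximum on the right-hand side is well defined; this is immediate from the induction since the base case $|V(G)| \le 1$ has $\reg_\kk(G) = 0$ realized by the empty decomposition.
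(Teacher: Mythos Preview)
Your proposal is correct and follows essentially the same approach as the paper's own proof (given as Theorem~\ref{thm:complex-reg-dec} in Section~\ref{sect:primes}): induct on the number of vertices, use the base case where $G$ is prime, and for non-prime $G$ pick a vertex $x$ with $\reg_\kk(G-x)=\reg_\kk(G)$ and invoke the induction hypothesis together with $\PD_\kk(G-x)\subseteq\PD_\kk(G)$. You are more explicit than the paper about the $\geq$ direction (additivity via K\"unneth on joins, plus monotonicity from Hochster's formula) and about the bookkeeping for disconnected graphs and nonemptiness of $\PD_\kk(G)$, but the core argument is identical.
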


In its greatest generality, the notion of primeness can also be expressed in the language of simplicial complexes, namely that a connected simplicial complex $\D$ is called a \emph{prime complex} (over $\kk$) whenever $\reg_{\kk}(\del_{\D}(x))<\reg_{\kk}(\D)$ for any vertex $x$ of $\D$, where $\del_{\D}(x)$ is the subcomplex  obtained from $\D$ by removing $x$ from every face containing it. Observe that these two definitions coincide when $\D=\Ind(G)$ for some graph $G$.

As the regularity is dependent on the characteristic of the coefficient field, so is the notion of primeness. We verify by a computer calculation that the graph
provided by Morey and Villareal (Example 3.6 in~\cite{MV}) is a prime graph over $\Z_2$, while it is not prime with respect to $\Z_3$ (see Figure~\ref{fig:MV}). As a result we call a graph $G$ as a \emph{perfect prime graph} if it is a prime graph over any field. The graphs $K_2$, the cycles $C_{3k+2}$ and the complement of cycles $\overline{C}_m$ for any $k \geq 1$ and $m\geq 4$ are examples of perfect prime graphs, and except these graphs, we show that the M\"{o}bius-Kantor graph, which is the generalized Petersen graph $G(8,3)$ is also a ($3$-regular bipartite) perfect prime graph. In the case of simplicial complexes, any minimal triangulation of an oriented pseudomanifold provides an example of a perfect prime simplicial complex.

Obviously, Theorem~\ref{thm:reg-dec} reduces the graph's regularity computation into finding prime graphs as well as decompositions of a graph into primes, which is still a difficult task. On such a ground, we choose to follow two different roads. In one way, we look for combinatorial conditions on graphs that may effect their primeness. For instance, we prove that any graph $G$ with a (closed or open) dominated vertex can not be a prime graph. In other words, if $G$ contains two vertices $x$ and $y$ satisfying  $N_G[x]\subseteq N_G[y]$ or $N_G(x)\subseteq N_G(y)$, then $G$ can not be a prime graph, where $N_G(x)$ is the set of neighbours of $x$ in $G$ and $N_G[x]=N_G(x)\cup \{x\}$. Such simple combinatorial observations allow us to determine the set of primes that a graph may contain under some restrictions and reformulate the regularity of such graphs as a generalized induced matching problem (see Section~\ref{sect:primes} for details).

On the other hand, the theory of prime graphs also allows us to prove the upper bound $\reg(G)\leq \Delta(G)\im(G)$ for any graph $G$ (see Theorem~\ref{thm:red-alg}). Moreover, in the case of claw-free graphs, we verify that this bound can be strengthened by showing that $\reg(G)\leq 2\im(G)$, which generalizes an earlier result of Nevo~\cite{EN} on $(\text{claw}, 2K_2)$-free graphs. Additionally, we prove that  $C_5$ is the unique connected graph satisfying $\im(G)<\reg(G)=\m(G)$, where $\m(G)$
is the matching number of $G$, that answers a recent question of Hibi et al.~\cite{HHKT}. 

The other direction is shaped by the result of an operation on graphs due to Lozin~\cite{VVL}, which says that it suffices to restrict ourselves to the search of prime graphs in a narrow subclass of bipartite graphs. During his search on the complexity of the induced matching number, Lozin describes an operation (he calls it as the \emph{stretching operation}) on graphs, and  proves that when it is applied to a graph, the induced matching number increases exactly by one. His operation works simply by considering a vertex $x$ of a graph $G$ whose (open) neighbourhood is split into two disjoint parts $N_G(x)=Y_1\cup Y_2$, and replacing the vertex $x$ with a four-path on $\{y_1,a,b,y_2\}$ together with edges $uy_i$ for any $u\in Y_i$ and $i=1,2$ (see Section~\ref{sect:lozin}). One of the interesting results of his work is that the induced matching problem remains $\NP$-hard in a narrow subclass of bipartite graphs. We here prove that his operation has a similar effect on the regularity:

\begin{theorem}\label{thm:lozin+reg}
Let $G=(V,E)$ be a graph and let $x\in V$ be given. Then $\reg(\LE_x(G))=\reg(G)+1$, where $\LE_x(G)$ is the Lozin transform of $G$ with respect to the vertex $x$.
\end{theorem}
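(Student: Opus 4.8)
The plan is to induct on $|V(G)|$, the main input being the homotopy equivalence $\Ind(\LE_x(G))\simeq\Sigma\,\Ind(G)$, where $\Sigma$ is suspension, from which $\reg(\LE_x(G))=\reg(G)+1$ will follow after sorting through the induced subcomplexes. The ingredients I would lean on are all standard: regularity is monotone under passing to induced subgraphs (because $\Ind(G)[S]=\Ind(G[S])$ and $\reg$ is read off the homology of such subcomplexes); the Mayer--Vietoris inequality $\reg(H)\le\max\{\reg(H-v),\reg(H-N[v])+1\}$ for any vertex $v$; the leaf identity $\reg(H)=\max\{\reg(H-u),\reg(H-N[v])+1\}$ for a leaf $u$ with neighbour $v$; and the domination lemma underlying the paper's observation that no graph with a dominated vertex is prime, namely that $N_G(u)\subseteq N_G(w)$ with $u\neq w$ forces $\Ind(G)$ to deformation retract onto $\Ind(G-w)$, so in particular $\reg(G)=\reg(G-w)$.

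For the equivalence, write $\del(y_2)=\Ind(\LE_x(G)-y_2)$ and $\lk(y_2)=\Ind(\LE_x(G)-N[y_2])$; as the closed star of $y_2$ is a cone on $\lk(y_2)$, we get $\Ind(\LE_x(G))\simeq\operatorname{Cone}(\lk(y_2)\hookrightarrow\del(y_2))$. I would simplify each piece by deleting open-dominated vertices: in $\LE_x(G)-y_2$ the vertex $b$ is a leaf at $a$, hence is dominated by $y_1$, and deleting $y_1$ leaves $(G-x)\sqcup K_2$, so $\del(y_2)\simeq\Sigma\,\Ind(G-x)$; in $\LE_x(G)-N[y_2]$ the vertex $a$ is a leaf at $y_1$, hence is dominated by each $w\in Y_1$, and deleting all of $Y_1$ leaves $(G-N_G[x])\sqcup K_2$, so $\lk(y_2)\simeq\Sigma\,\Ind(G-N_G[x])$. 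It then remains to see that, through these retractions, the inclusion $\lk(y_2)\hookrightarrow\del(y_2)$ is identified with the join of the natural inclusion $\iota\colon\Ind(G-N_G[x])\hookrightarrow\Ind(G-x)$ with a bijection between the two copies of $S^0$: the retraction of $\del(y_2)$ onto $(G-x)\sqcup K_2$ fixes every vertex but $y_1$, sending $y_1\mapsto b$, so its restriction to $\lk(y_2)$ is exactly $\iota$ joined to $y_1\mapsto b,\ a\mapsto a$. Hence $\operatorname{Cone}(\lk(y_2)\hookrightarrow\del(y_2))\simeq\operatorname{Cone}(\Sigma\iota)=\Sigma\operatorname{Cone}(\iota)\simeq\Sigma\,\Ind(G)$, the last step being the same cone description applied to $G$ at $x$. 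I expect making this last naturality statement watertight to be the only genuinely delicate point; everything else is bookkeeping.

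Granting the equivalence, set $r=\reg(G)$ and $W=V(G)\setminus\{x\}$. For $\reg(\LE_x(G))\le r+1$, run over $S\subseteq V(\LE_x(G))$: if $S\supseteq\{y_1,a,b,y_2\}$ then $\LE_x(G)[S]=\LE_x(G[(S\cap W)\cup\{x\}])$, and the induction hypothesis (when $S\cap W\subsetneq W$) or the equivalence (when $S$ is everything) keeps its induced homology in degrees $\le r$; if $S$ omits one of $y_1,a,b,y_2$, then some of $a,b,y_1,y_2$ is isolated in $\LE_x(G)[S]$ (so $\Ind(\LE_x(G)[S])$ is contractible) or a leaf, and peeling it off via the leaf identity and domination reduces $\LE_x(G)[S]$, up to a disjoint $K_2$, to an induced subgraph of $G$ — except when $a$ and $b$ are both absent, where it is a partial vertex-split of an induced subgraph of $G$ and one or two applications of the Mayer--Vietoris inequality give $\reg(\LE_x(G)[S])\le r+1$. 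For $\reg(\LE_x(G))\ge r+1$, take $S^{\ast}\subseteq V(G)$ minimal with $\widetilde{H}_{r-1}(\Ind(G[S^{\ast}]))\ne0$: if $S^{\ast}=V(G)$ then $\widetilde{H}_{r-1}(\Ind(G))\ne0$, so the equivalence gives $\widetilde{H}_{r}(\Ind(\LE_x(G)))\ne0$; if $S^{\ast}\subsetneq V(G)$ then $\reg(G[S^{\ast}])=r$ and $\LE_x(G)$ contains, as an induced subgraph, either $\LE_x(G[S^{\ast}])$ (when $x\in S^{\ast}$, whence $\reg\ge r+1$ by induction) or $G[S^{\ast}]\sqcup K_2$ (when $x\notin S^{\ast}$, which has regularity $r+1$). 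The base case is $\LE_x(K_1)=P_4$ with $\reg(P_4)=1$, and combining the two bounds yields $\reg(\LE_x(G))=\reg(G)+1$.
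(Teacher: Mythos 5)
Your proposal is correct, but it takes a genuinely different route from the paper's. The paper never runs a case analysis over witness subsets: it first shows that both the homotopy type and the regularity of $\LE_x(G;Y,Z)$ are independent of the chosen partition (Lemma~\ref{lem:lozin+stable} and Lemma~\ref{lem:lozin+reg}, the latter resting on the isolating-edge result Proposition~\ref{prop:reg+isolating}, which is the technical heart). The suspension statement then comes for free from the trivial partition $(N_G(x),\emptyset)$ by deleting the dominated vertex $a$ (Proposition~\ref{prop:lozin+stable}); the lower bound $\reg(\LE_x(G))\geq\reg(G)+1$ needs no induction, since for $x\in S$ the induced subgraph $\LE_x(G)[S']$ is itself the Lozin transform of $G[S]$ and Proposition~\ref{prop:lozin+stable} applies to it directly; and the upper bound is a single application of Corollary~\ref{cor:induction-sc} at the vertex $a$ of the trivially partitioned transform, using $\LE_x(G)-a\cong G\cup bz$ and $\LE_x(G)-N_{\LE_x(G)}[a]\cong (G-x)\cup\{z\}$. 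You instead keep the partition arbitrary throughout: you establish $\Ind(\LE_x(G))\simeq\Sigma\,\Ind(G)$ by a mapping-cone argument at $y_2$, identifying $\lk(y_2)\hookrightarrow\del(y_2)$ with the suspension of $\Ind(G-N_G[x])\hookrightarrow\Ind(G-x)$ via fold retractions, and you replace the partition-independence of regularity by induction on $|V(G)|$ plus a subset-by-subset analysis using leaf/domination reductions and Mayer--Vietoris in the split case. Both succeed; the paper's route concentrates the difficulty in Proposition~\ref{prop:reg+isolating} (which it reuses later, e.g.\ in the primeness results of Section~\ref{sect:lozin}) and makes the final proof very short, while yours avoids that proposition at the cost of the naturality bookkeeping you flag, which does go through: the fold maps ($y_1\mapsto b$ in $\del(y_2)$, and $w\mapsto a$ for $w\in Y_1$ on the link side) are simplicial retractions and hence homotopy inverses to the inclusions, the relevant composite equals the suspended inclusion on the nose, and mapping cones are invariant under pre- and post-composition with homotopy equivalences. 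Your subgraph analysis also closes up as described; in fact, when exactly one of $a,b$ is missing, deleting the vertices of $Y_2\cap S$ (resp.\ $Y_1\cap S$) that dominate the resulting leaf reduces $\LE_x(G)[S]$ up to homotopy to an induced subgraph of $G$ plus a disjoint $K_2$, so the only genuine exception is $a,b\notin S$, where your one or two Mayer--Vietoris steps (whose deletion and link graphs are again induced subgraphs of $G$, possibly with isolated split vertices) give the bound $\reg(G)+1$; your lower bound via a minimal witness set plus induction is merely a slightly more roundabout version of the paper's direct argument.
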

 
When it is combined with Theorem~\ref{thm:reg-dec}, one of the immediate result of Theorem~\ref{thm:lozin+reg} is that the determination of bipartite prime graphs having sufficiently large girth with maximum
degree at most three is in fact our main task. On that matter, we first prove that any such graph must be $2$-connected, and then show that the bound
$\reg(G)\leq 2\im(G)+1$ holds for any such graph $G$. In particular, we verify that any non-trivial Lozin operation preserves the primeness of a graph that enables us to generate new prime graphs from the existing ones.

We recall that the only known graph parameter so far that produces a lower bound to the regularity is the induced matching number. However, we here introduce a new graph invariant, the \emph{virtual induced matching number} $\vim(G)$, satisfying $\im(G)\leq \vim(G)\leq \reg(G)$ for any graph $G$. The logic behind defining such an invariant is to introduce a combinatorial operation that in a sense it closes the gap between the induced matching number and the regularity. The definition of the virtual induced matching number is based on a detail analyse on the effects of certain combinatorial operations to the regularity. To be more specific, we determine the behaviour of the regularity under edge contractions on the independence complexes of graphs and vertex expansions on graphs. By way of an application, we prove the equality $\reg(G)=\vim(G)$ for a graph class containing all Cohen-Macaulay graphs of girth at least five. 

Note also that one of the special case of Lozin operations corresponds to a triple edge subdivision on the graph, and Theorem~\ref{thm:lozin+reg} already determines its effect to the regularity. Along the same lines, we determine the effect of edge subdivisions of arbitrary lengths to the regularity, and verify that the impacts of a double edge subdivision on an edge and the contraction of that edge to the regularity are closely related. 
In particular, we prove the inequality $\reg(G/e)\leq \reg(G)\leq \reg(G/e)+1$ for any edge $e$ in a graph $G$ that in turn implies $\reg(H)\leq \reg(G)$ whenever $H$ is an edge contraction minor of $G$.

There are various graph parameters that can be used to bound the regularity, and most of them are far from being tight in general. For instance,
the inequality $\im(G)\leq \reg(G)\leq \cd(G)$ holds for any graph $G$, where $\cd(G)$ is the cochordal cover number of $G$~\cite{RW2}. Woodroofe has constructed in~\cite{RW2} graphs for which the gap between the regularity and the cochordal cover number could be arbitrarily large. For the lower bound, let $R_n$ be the graph obtained from $(n+1)$ disjoint five cycles by adding an extra vertex and connecting it to exactly one vertex of each five cycle (see Figure~\ref{fig-reg-im-1}). Then the graph $R_n$ is vertex-decomposable~\cite{DK,BC} and the equality $\reg(R_n)=\im(R_n)+n$ holds for any $n\geq 1$. 
\begin{figure}[ht]
\begin{center}
\begin{tikzpicture}[scale=0.7]

\node [noddee] at (0,0) (v1){};
\node [noddee] at (2,0) (v2){}
	edge [] (v1);
\node [noddee] at (0,2) (v3){}
	edge [] (v1);
\node [noddee] at (2,2) (v4){}
	edge [] (v2);
\node [noddee] at (1,3) (v5){}
	edge [] (v3)
	edge [] (v4);
\node [noddee] at (3,0) (v6){};
\node [noddee] at (5,0) (v7){}
     edge [] (v6);
\node [noddee] at (3,2) (v8){}	
	 edge [] (v6);
\node [noddee] at (5,2) (v9){}	
	 edge [] (v7);	 
\node [noddee] at (4,3) (v10){}	
	 edge [] (v8)
	 edge [] (v9);	 
\node [noddee] at (9,0) (v11){};
\node [noddee] at (11,0) (v12){}
     edge [] (v11);
\node [noddee] at (9,2) (v13){}	
	 edge [] (v11);
\node [noddee] at (11,2) (v14){}	
	 edge [] (v12);	 
\node [noddee] at (10,3) (v15){}	
	 edge [] (v13)
	 edge [] (v14);	
\draw [dashed, thick] (6,1) -- (8,1);
\node [noddee] at (6,4) (v16){}	
	 edge [] (v5)
	 edge [] (v10)
	 edge [] (v15);		
\end{tikzpicture}

\end{center}
\caption{The graph $R_n$.}
\label{fig-reg-im-1}
\end{figure}
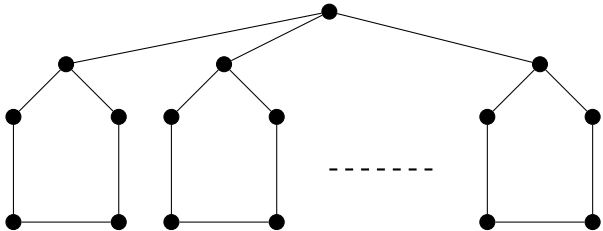
Even if the gap between the regularity and the induced matching number could be arbitrarily large, this does not guarantee that any pair $(n,k)$ of positive integers can be realized as
$(\reg(G),\im(G))$ for some graph $G$. So, we may ask whether there exists
a graph $G(n,k)$ such that $\reg(G(n,k))=n$ and $\im(G(n,k))=k$ for every pair $(n,k)$ of integers with $n\geq k\geq 1$ (compare to Question $7.1. (5)$ in~\cite{GW}).  
The case $k=1$ is of particular importance, since no example is known when $n\geq 5$~\cite{EP}.
We remark that for $n=4$, there is the Coxeter $600$-cell $X_{600}$~\cite{PS}, which is the independence complex of a $2K_2$-free graph $G_{600}$ and the geometric realization of $X_{600}$ is homeomorphic to the $3$-dimensional sphere so that $\reg(G_{600})=4$. Furthermore, Przytycki and Swiatkowski~\cite{PS} show that no generalized homology sphere of dimension $n\geq 4$ can be triangulated as the independence complex of a $2K_2$-free graph.  The existence of $2K_2$-free graphs of arbitrary large regularity can be deduced from the result of Januszkiewicz and Swiatkowski~\cite{JS}, stating\footnotemark that there exists a $2K_2$-free graph such that its independence complex is an oriented pseudomanifold of dimension $(n-1)$ for any $n\geq 1$. \footnotetext{In fact, the statement of their result is somewhat different than what we express here (see Section~\ref{sect:$2K_2$-free} for details).}

Now, combining Theorem~\ref{thm:lozin+reg} and the Januszkiewicz and Swiatkowski's result, we have a complete answer:

\begin{theorem}\label{thm:reg-im-pair}
For any two integers $n\geq k\geq 1$, there exists a graph $G(n,k)$ satisfying $\reg(G(n,k))=n$ and $\im(G(n,k))=k$.
\end{theorem}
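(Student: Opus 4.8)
The plan is to bootstrap from a single $2K_2$-free graph of prescribed regularity and then inflate the induced matching number one unit at a time by Lozin transforms, keeping the regularity under control because, by Theorem~\ref{thm:lozin+reg}, each such transform costs exactly one unit of regularity. First I would extract from the Januszkiewicz--Swiatkowski result~\cite{JS} (in the form to be recorded in Section~\ref{sect:$2K_2$-free}) the following: for every integer $m\geq 1$ there is a $2K_2$-free graph $H_m$ whose independence complex $\Ind(H_m)$ is an oriented pseudomanifold of dimension $m-1$; for $m=1$ one may simply take $H_1=K_2$. I claim $\reg(H_m)=m$ and $\im(H_m)=1$. The equality $\im(H_m)=1$ is immediate, since a $2K_2$-free graph contains no induced $2K_2$ and $H_m$ is not edgeless (otherwise $\Ind(H_m)$ would be a simplex, hence acyclic). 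For the regularity, the upper bound is soft: every induced subcomplex $\Ind(H_m)[S]$ has dimension at most $m-1$, so $\widetilde{H}_{j-1}(\Ind(H_m)[S];\kk)=0$ whenever $j\geq m+1$, and Hochster's formula gives $\reg(H_m)\leq m$. For the lower bound, a connected closed oriented pseudomanifold of dimension $m-1$ has nonvanishing top reduced homology over every field $\kk$ (the integral fundamental class is detected after tensoring with $\kk$, by the universal coefficient theorem), so $\widetilde{H}_{m-1}(\Ind(H_m);\kk)\neq 0$, and taking $S=V(H_m)$ in Hochster's formula yields $\reg(H_m)\geq m$. Hence $\reg(H_m)=m$.

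Given $n\geq k\geq 1$, I would then set $m:=n-k+1$, which is a positive integer by hypothesis, and put $G_0:=H_m$, so that $\reg(G_0)=n-k+1$ and $\im(G_0)=1$. Now iterate the Lozin transform: for $i=1,\dots,k-1$ choose any vertex $x_i\in V(G_{i-1})$ together with an admissible splitting of $N_{G_{i-1}}(x_i)$, and set $G_i:=\LE_{x_i}(G_{i-1})$. By Theorem~\ref{thm:lozin+reg}, $\reg(G_i)=\reg(G_{i-1})+1$, and by Lozin's theorem on the stretching operation~\cite{VVL}, $\im(G_i)=\im(G_{i-1})+1$. After $k-1$ steps the graph $G(n,k):=G_{k-1}$ satisfies $\reg(G(n,k))=(n-k+1)+(k-1)=n$ and $\im(G(n,k))=1+(k-1)=k$, which is the assertion; when $k=1$ no transform is applied and $G(n,1)=H_n$.

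The two substantive inputs---Theorem~\ref{thm:lozin+reg} and the Januszkiewicz--Swiatkowski existence statement---carry essentially all the weight, so the only point that genuinely needs care is the identification $\reg(H_m)=m$, and within it the lower bound. That bound hinges on the pseudomanifold being \emph{oriented} rather than merely a $\mathbb{Z}_2$-pseudomanifold: orientability is exactly what makes the top reduced homology survive base change to an arbitrary coefficient field, so that the value $n$ is realized characteristic-freely. I therefore expect the main obstacle to be expository, namely pinning down the Januszkiewicz--Swiatkowski result in the precise form ``oriented pseudomanifold realized as the independence complex of a $2K_2$-free graph'' and reconciling it with their original formulation in Section~\ref{sect:$2K_2$-free}; the remaining verifications---that a Lozin transform can be performed $k-1$ times in succession, which is automatic for any nonempty graph, and that it adds exactly one to each of $\reg$ and $\im$---are routine given the cited results.
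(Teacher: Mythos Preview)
Your proposal is correct and follows essentially the same approach as the paper: start from a $2K_2$-free graph of regularity $m=n-k+1$ supplied by the Januszkiewicz--Swiatkowski construction, then apply $k-1$ Lozin transforms to raise both $\reg$ and $\im$ by one at each step. The paper's only cosmetic differences are that it disposes of the case $n=k$ separately via $nK_2$ and carries out the iteration using the triple-edge-subdivision form of the Lozin operation (Corollary~\ref{cor:lozin+edgesubdiv}) rather than the general $\LE_x$; your added justification that $\reg(H_m)=m$ via Hochster's formula and the top homology of an oriented pseudomanifold is a welcome clarification that the paper leaves implicit.
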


{\it The organization of the paper:} The conventional background notations and informations that may be needed in the sequel are introduced in the Preliminaries section.
The prime complexes and prime factorizations are introduced in Section~\ref{sect:primes}, where we also investigate the structural properties of prime graphs and perform the regularity calculations in some hereditary graph classes. We there provide the proofs of various upper bounds  to the regularity involving the induced matching number.
The following three sections are devoted to the detailed analyse on the behaviour of the regularity under specific combinatorial operations, including the Lozin operations in Section~\ref{sect:lozin}, the edge-contractions and vertex-expansions on the independence complexes in Section~\ref{sect:contr-expan} and the edge subdivisions and contractions on graphs in Section~\ref{sect:edge-subdiv}. The proof of Theorem~\ref{thm:reg-im-pair} is given in Section~\ref{sect:$2K_2$-free}, where we also provide an upper bound on the regularity of $2K_2$-free graphs in terms of their local constraints.  

\section{Preliminaries}\label{sect:prel}
We first recall some general notions and notations needed throughout the paper, and repeat some of the
definitions mentioned in the introduction more formally. 
We choose to follow the standard terminology from combinatorial commutative algebra~\cite{Ha,MV}, topological combinatorics~\cite{DK} 
and graph theory~\cite{BM}. So, for any undefined terms, we refer to these references. We remark that most of our results is independent from the characteristic of the coefficient field,
so whenever it is appropriate we drop $\kk$ from our notation. 

{\bf Graphs:}
By a (simple) graph $G$, we will mean an undirected graph without loops or multiple edges. If $G$ is a graph, $V(G)$ and $E(G)$ (or simply $V$ and $E$) denote its vertex and edge sets. An edge between $u$ and $v$ is denoted by $e=uv$ or
$e=(u,v)$ interchangeably. If $U\subset V$, the graph induced on $U$ is written $G[U]$, and in particular, we abbreviate $G[V\backslash U]$ to $G-U$, and write $G-x$ whenever $U=\{x\}$. For a given subset $U\subseteq V$, the (open) neighbourhood of $U$ is
defined by $N_G(U):=\cup_{u\in U}N_G(u)$, where $N_G(u):=\{v\in V\colon uv\in E\}$, and similarly, $N_G[U]:=N_G(U)\cup U$ is the closed neighbourhood of $U$. Furthermore, if $F=\{e_1,\ldots,e_k\}$ is a subset of edges of $G$, we write $N_G[F]$ for the set $N_G[V(F)]$, where $V(F)$ is the set of vertices incident to edges in $F$. The cardinality of the set $N_G(x)$ is the degree $\deg_G(x)$ of the vertex $x$, and the maximum and the minimum degrees of a graph $G$ are denoted by $\Delta(G)$  and $\delta(G)$ respectively. The \emph{distance} of any two vertices $x,y\in V(G)$ will be denoted by $d_G(x,y)$.

Throughout $K_n$, $K_{n,m}$, $C_n$ and $P_n$ will denote the complete, complete bipartite, cycle and path graphs for $n, m\geq 1$  respectively. 
In particular, the graph $K_{1,3}$ is known as the \emph{claw} graph.

We say that $G$ is $H$-free if no induced subgraph of $G$ is isomorphic to $H$. A graph $G$ is called \emph{chordal} if it is $C_r$-free for any $r>3$. Moreover, a graph $G$
is said to be \emph{cochordal} if its complement $\overline{G}$ is a chordal graph. A subgraph $H$ of a graph $G$ is said to be \emph{dominating} if $N_G[V(H)]=V(G)$. A graph $G$ is said to be \emph{well-covered} if each maximal independent set in $G$ has the same cardinality.

Recall that a subset $M\subseteq E$ is called a {\it matching} of $G$ if no two edges in $M$ share a common vertex. Moreover, a matching $M$ of $G$ is an {\it induced matching} if the edges in $M$ are exactly the edges of the induced graph of $G$ over the vertices contained in $V(M)$, and the cardinality of a maximum induced matching is called the {\it induced matching number} of $G$ and denoted by $\im(G)$. 

{\bf Simplicial complexes:} 
An \emph{(abstract) simplicial complex} $\D$ on a finite set $V$ is a family of subsets of $V$ satisfying the following properties.
\begin{itemize}
\item[$(i)$] $\{v\}\in \D$ for all $v\in V$,
\item[$(ii)$] If $F\in \D$ and $H\subset F$, then $H\in \D$.
\end{itemize} 

The elements of $\D$ are called \emph{faces} of it; the \emph{dimension} of a face $F$ is $\textrm{dim}(F):=|F|-1$, and the \emph{dimension} of $\D$ is defined to be  $\textrm{dim}(\D):=\textrm{max}\{\textrm{dim}(F)\colon F\in \D\}$. 
The $0$ and $1$-dimensional faces of $\D$ are called \emph{vertices} and \emph{edges} while maximal faces are called \emph{facets}. In particular, we denote by $\FE_{\D}$, the set of facets of $\D$.

The simplicial \emph{join} of two complexes $\D_1$ and $\D_2$ on disjoint sets of vertices, denoted by $\D_1 * \D_2$, is the simplicial complex defined by 
$$\D_1\ast \D_2=\{\sigma \cup \tau\colon \sigma \in \D_1, \tau \in \D_2 \}.$$

Even if the regularity is not a topological invariant, the use of topological methods plays certain roles. In many cases, we will appeal to an induction on the cardinality of the vertex set by a particular choice of a vertex accompanied by two subcomplexes. To be more explicit, if $x$ is a vertex of $\D$, then the subcomplexes $\del_{\D}(x):=\{F\in \D\colon x\notin F\}$ and $\lk_{\D}(x):=\{R\in \D\colon x\notin R\;\textrm{and}\;R\cup \{x\}\in \D\}$ are called the \emph{deletion} and \emph{link} of $x$ in $\D$ respectively. Such an association brings the use of a Mayer-Vietoris sequence of the pair $(\D,x)$:

\begin{equation*}
\cdots \to \widetilde{H}_{j}(\lk_{\D}(x)) \to \widetilde{H}_{j}(\del_{\D}(x))\to \widetilde{H}_{j}(\D)\to \widetilde{H}_{j-1}(\lk_{\D}(x)) \to
\cdots \to \widetilde{H}_0(\D)\to 0.
\end{equation*}
The following provides an inductive bound on the regularity whose proof can be easily obtained from the associated Mayer-Vietoris exact sequence of the pair $(\D,x)$. 

\begin{proposition}\label{prop:induction-sc}
Let $\D$ be a simplicial complex and let $x\in V$ be given. Then
\begin{equation*}
\reg (\D)\leq \max\{\reg (\del_{\D}(x)), \reg(\lk_{\D}(x))+1\}.
\end{equation*}
\end{proposition}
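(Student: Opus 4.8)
The plan is to deduce the inequality directly from the long exact Mayer--Vietoris sequence of the pair $(\D,x)$ displayed just above the statement, using the Hochster-type description of $\reg$ as the top index in which some induced subcomplex has nonvanishing reduced homology. First I would fix notation: by definition $\reg(\D)$ is the largest $j$ such that $\widetilde{H}_{j-1}(\D[S];\kk)\neq 0$ for some $S\subseteq V$, and similarly for $\del_{\D}(x)$ and $\lk_{\D}(x)$. So it suffices to show that whenever $\widetilde{H}_{j-1}(\D[S];\kk)\neq 0$ for some $S$, then either $\widetilde{H}_{j-1}((\del_{\D}(x))[S'];\kk)\neq 0$ for some $S'$, giving $\reg(\del_{\D}(x))\geq j$, or $\widetilde{H}_{j-2}((\lk_{\D}(x))[S''];\kk)\neq 0$ for some $S''$, giving $\reg(\lk_{\D}(x))+1\geq j$. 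Taking the maximum over all such $j$ then yields the claimed bound.

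Next I would carry out the homological step. Fix $S\subseteq V$ with $\widetilde{H}_{j-1}(\D[S];\kk)\neq 0$. If $x\notin S$ then $\D[S]=(\del_{\D}(x))[S]$ and we are immediately in the first case. If $x\in S$, write $\G:=\D[S]$ and apply the Mayer--Vietoris sequence to the pair $(\G,x)$; the key compatibility is that $\del_{\G}(x)=(\del_{\D}(x))[S\setminus\{x\}]$ and $\lk_{\G}(x)=(\lk_{\D}(x))[S\setminus\{x\}]$, which is straightforward from the definitions of deletion and link since induced subcomplexes and these operations commute. The relevant three-term exact piece is
\begin{equation*}
\widetilde{H}_{j-1}(\del_{\G}(x);\kk)\to \widetilde{H}_{j-1}(\G;\kk)\to \widetilde{H}_{j-2}(\lk_{\G}(x);\kk),
\end{equation*}
and since the middle group is nonzero, at least one of the two outer groups must be nonzero. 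In the first case $\widetilde{H}_{j-1}((\del_{\D}(x))[S\setminus\{x\}];\kk)\neq 0$, so $\reg(\del_{\D}(x))\geq j$; in the second case $\widetilde{H}_{j-2}((\lk_{\D}(x))[S\setminus\{x\}];\kk)\neq 0$, so $\reg(\lk_{\D}(x))\geq j-1$, i.e.\ $\reg(\lk_{\D}(x))+1\geq j$.

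Finally I would assemble the pieces: choosing $S$ and $j$ so that $j=\reg(\D)$ is actually attained, the dichotomy above forces $\reg(\D)\leq \max\{\reg(\del_{\D}(x)),\reg(\lk_{\D}(x))+1\}$, which is the assertion. I do not anticipate a serious obstacle here; the only point requiring a little care is the bookkeeping that restriction to $S$ is compatible with taking deletions and links, and the (standard) convention about reduced homology in degree $-1$ for the empty complex versus the void complex, which must be handled so that the exactness at the ends of the Mayer--Vietoris sequence is not misapplied — in particular one should note that $\lk_{\D}(x)$ may be the void complex (if $x$ lies in no face of dimension $\geq 1$), in which case the link term vanishes in all degrees and the conclusion is trivial. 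Once these conventions are pinned down, the argument is a one-line consequence of exactness.
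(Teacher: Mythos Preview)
Your proposal is correct and is precisely the approach the paper indicates: the paper does not spell out a proof but simply states that the bound ``can be easily obtained from the associated Mayer--Vietoris exact sequence of the pair $(\D,x)$,'' and you have filled in exactly those details, including the necessary compatibility of induced subcomplexes with deletion and link.
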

We note that Dao, Huneke and Schweig~\cite{DHS} show that $\reg(\D)$ always equals to one of $\reg (\del_{\D}(x))$ or
$\reg(\lk_{\D}(x))+1$ for any simplicial complex $\D$ and any vertex $x$.

When considering the complex $\IE(G)$ of a graph, the deletion and link of a given vertex $x$ correspond to the independence complexes of induced subgraphs, namely that $\del_{\IE(G)}(x)=\IE(G-x)$ and $\lk_{\IE(G)}(x)=\IE(G-N_G[x])$. Therefore, the following is an immediate consequence of Proposition~\ref{prop:induction-sc}, which is also proven in algebraic setting in~\cite{MV}.

\begin{corollary}\label{cor:induction-sc}
Let $G$ be a graph and let $v\in V$ be given. Then
\begin{equation*}
\reg (G)\leq \max\{\reg (G-v), \reg(G-N_G[v])+1\}.
\end{equation*}
\end{corollary}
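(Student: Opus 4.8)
The plan is to derive this as a direct specialization of Proposition~\ref{prop:induction-sc} to the independence complex. First I would set $\D := \IE(G)$ and apply Proposition~\ref{prop:induction-sc} with the vertex $x = v$, which immediately gives
\[
\reg(\IE(G)) \leq \max\{\reg(\del_{\IE(G)}(v)),\, \reg(\lk_{\IE(G)}(v))+1\}.
\]
Since $\reg(G)$ is by definition equal to $\reg(\IE(G))$, the left-hand side is already the quantity we want to bound.

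The key step is to identify the two subcomplexes on the right-hand side with independence complexes of induced subgraphs, using the two identities $\del_{\IE(G)}(v) = \IE(G-v)$ and $\lk_{\IE(G)}(v) = \IE(G-N_G[v])$ recorded in the paragraph preceding the corollary. For the deletion, I would observe that the faces of $\IE(G)$ are exactly the independent sets of $G$, and $\del_{\IE(G)}(v)$ retains precisely those independent sets not containing $v$; these are exactly the independent sets of $G-v$, so $\del_{\IE(G)}(v) = \IE(G-v)$. For the link, a face $R \in \lk_{\IE(G)}(v)$ satisfies $v \notin R$ together with $R \cup \{v\} \in \IE(G)$; the latter condition forces $R$ to meet no neighbour of $v$, so $R$ ranges exactly over the independent sets of $G$ contained in $V \setminus N_G[v]$, which are the independent sets of $G-N_G[v]$. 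Hence $\lk_{\IE(G)}(v) = \IE(G-N_G[v])$.

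Substituting these two identities into the displayed inequality and invoking $\reg(H) = \reg(\IE(H))$ once more for the induced subgraphs $H = G-v$ and $H = G-N_G[v]$, the right-hand side becomes $\max\{\reg(G-v),\, \reg(G-N_G[v])+1\}$, which is exactly the asserted bound. The only real obstacle is verifying the deletion/link identities, and this is a routine set-theoretic unwinding of the definition of the independence complex; no homological input beyond Proposition~\ref{prop:induction-sc} is needed, which is why the corollary follows immediately.
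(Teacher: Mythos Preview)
Your proposal is correct and follows exactly the approach the paper takes: apply Proposition~\ref{prop:induction-sc} to $\D=\IE(G)$ and use the identifications $\del_{\IE(G)}(v)=\IE(G-v)$ and $\lk_{\IE(G)}(v)=\IE(G-N_G[v])$ recorded just before the corollary. If anything, you have supplied more detail than the paper, which simply declares the result an immediate consequence of Proposition~\ref{prop:induction-sc}.
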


The existence of vertices satisfying some extra properties is useful when dealing with the homotopy type of simplicial complexes (see~\cite{MT} for details).

\begin{theorem}\label{thm:hom-simp-induction}
If $\lk_{\D}(x)$ is contractible in $\del_{\D}(x)$, then $\D\simeq \del_{\D}(x)\vee \Sigma(\lk_{\D}(x))$, where $\Sigma X$ denotes the (unreduced) suspension of $X$.
\end{theorem}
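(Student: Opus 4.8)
The plan is to recognise $\D$ as the mapping cone of the inclusion $\lk_{\D}(x)\hookrightarrow \del_{\D}(x)$ and then to exploit the fact that the homotopy type of a mapping cone depends only on the homotopy class of its attaching map.

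First I would record the standard closed-star/deletion covering of $\D$. Writing $\operatorname{st}_{\D}(x):=\{F\in \D\colon F\cup \{x\}\in \D\}$ for the closed star of $x$, the definitions give at once that $\D=\del_{\D}(x)\cup \operatorname{st}_{\D}(x)$, that $\del_{\D}(x)\cap \operatorname{st}_{\D}(x)=\lk_{\D}(x)$, and that $\operatorname{st}_{\D}(x)=x\ast \lk_{\D}(x)$ is the simplicial cone over the link with apex $x$: a face avoiding $x$ lies in the deletion, whereas a face containing $x$ has the form $R\cup \{x\}$ with $R\in \lk_{\D}(x)$. In particular $\operatorname{st}_{\D}(x)$ is contractible, and the inclusion $\lk_{\D}(x)\hookrightarrow \operatorname{st}_{\D}(x)$ is exactly the inclusion of the base into this cone.

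Consequently $\D$ is obtained by gluing $\del_{\D}(x)$ to the cone $\operatorname{st}_{\D}(x)=\mathrm{Cone}(\lk_{\D}(x))$ along $\lk_{\D}(x)$, which is by definition the mapping cone $\mathrm{Cone}(\iota)$ of the inclusion $\iota\colon \lk_{\D}(x)\hookrightarrow \del_{\D}(x)$. All the maps involved are inclusions of subcomplexes, hence cofibrations, so the square is a genuine homotopy pushout and $\mathrm{Cone}(\iota)$ carries its usual homotopy-theoretic meaning. The hypothesis that $\lk_{\D}(x)$ is contractible in $\del_{\D}(x)$ is precisely the assertion that $\iota$ is null-homotopic. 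Since $\iota$ is a cofibration, homotopic attaching maps produce homotopy equivalent mapping cones, and therefore $\D=\mathrm{Cone}(\iota)\simeq \mathrm{Cone}(c)$ for a constant map $c\colon \lk_{\D}(x)\to \del_{\D}(x)$. Attaching the cone along a constant map collapses the entire base $\lk_{\D}(x)$ to a single point, yielding $\del_{\D}(x)$ wedged with $\mathrm{Cone}(\lk_{\D}(x))/\lk_{\D}(x)$; as the cone modulo its base is the unreduced suspension, this is exactly $\del_{\D}(x)\vee \Sigma(\lk_{\D}(x))$.

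The main obstacle is the invariance step: one must justify that $\D$ genuinely is the mapping cone of $\iota$ and that its homotopy type is unchanged when $\iota$ is replaced by the homotopic constant map $c$. Both rest on the observation that subcomplex inclusions of simplicial (hence CW) complexes are cofibrations, so that the homotopy-extension property applies; the only other point to watch is well-pointedness, which holds for CW complexes and ensures that the resulting wedge is well defined up to homotopy. Everything else is the formal bookkeeping of the cone, suspension and wedge constructions.
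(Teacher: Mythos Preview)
Your argument is correct and is the standard mapping-cone proof of this fact: decompose $\D$ as the union of $\del_{\D}(x)$ and the closed star $\operatorname{st}_{\D}(x)=x\ast\lk_{\D}(x)$ along $\lk_{\D}(x)$, recognise this as the mapping cone of the inclusion $\iota\colon\lk_{\D}(x)\hookrightarrow\del_{\D}(x)$, and then use that homotopic attaching maps give homotopy-equivalent mapping cones to replace $\iota$ by a constant map. You correctly flag the technical points that need attention (subcomplex inclusions are cofibrations; CW complexes are well-pointed), and the identification $\mathrm{Cone}(c)\simeq \del_{\D}(x)\vee\Sigma(\lk_{\D}(x))$ is the standard computation.

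There is nothing to compare against here: the paper does not supply its own proof of this theorem but simply quotes it from Marietti and Testa~\cite{MT}. Your argument is essentially the one that underlies that reference as well, so there is no genuine methodological difference to report.
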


In the case of the independence complexes of graphs, the existence of a (closed or open) dominated vertex may guarantee that the required condition of Theorem~\ref{thm:hom-simp-induction} holds. 

\begin{corollary}\cite{AE, MT}\label{thm:hom-induction}
If $N_G(u)\subseteq N_G(v)$, then there is a homotopy equivalence $\IE(G)\simeq \IE(G-v)$. On the other hand,
if $N_G[u]\subseteq N_G[v]$, then the homotopy equivalence $\IE(G)\simeq \IE(G-v)\vee \Sigma \IE(G-N_G[v])$
holds.
\end{corollary}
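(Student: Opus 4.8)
\emph{The plan} is to treat both assertions as applications of Theorem~\ref{thm:hom-simp-induction} to the complex $\D=\IE(G)$ at the vertex $x=v$, using the identifications $\del_{\IE(G)}(v)=\IE(G-v)$ and $\lk_{\IE(G)}(v)=\IE(G-N_G[v])$ recorded above. In each case the whole content reduces to showing that the link $\IE(G-N_G[v])$ is contractible inside the deletion $\IE(G-v)$; the dominating vertex $u$ will supply, in both situations, a cone apex that witnesses this. I assume throughout that $u$ and $v$ are distinct, as is implicit in the statement.

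First I would dispose of the open case $N_G(u)\subseteq N_G(v)$. Here $u$ cannot lie in $N_G[v]$: if it did, then $uv\in E$ would force $v\in N_G(u)\subseteq N_G(v)$, contradicting the absence of loops. Hence $u$ is a vertex of $G-N_G[v]$, and since every neighbour of $u$ lies in $N_G(v)$ it has been deleted, so $u$ is isolated in $G-N_G[v]$. An isolated vertex is a cone apex of the independence complex, whence $\IE(G-N_G[v])=u\ast \IE\big((G-N_G[v])-u\big)$ is contractible. In particular it is contractible in $\IE(G-v)$, so Theorem~\ref{thm:hom-simp-induction} applies and gives $\IE(G)\simeq \IE(G-v)\vee \Sigma\IE(G-N_G[v])$; since the suspension of a contractible space is contractible, the wedge collapses to $\IE(G)\simeq \IE(G-v)$, as required.

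For the closed case $N_G[u]\subseteq N_G[v]$ the link need no longer be contractible on its own, so here I would use the full strength of Theorem~\ref{thm:hom-simp-induction}. Now $u\in N_G[u]\subseteq N_G[v]$, so $u$ is absent from $G-N_G[v]$ but is still present in $G-v$. The key step is to check that for every face $S\in\IE(G-N_G[v])$ the set $S\cup\{u\}$ is a face of $\IE(G-v)$: any edge from $u$ to some $s\in S$ would place $s\in N_G(u)\subseteq N_G[v]$, contradicting $S\cap N_G[v]=\emptyset$, so $S\cup\{u\}$ is independent; and $S\cup\{u\}\subseteq V\setminus\{v\}$ since $u\neq v$. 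Consequently the cone $u\ast \IE(G-N_G[v])$ is a subcomplex of $\IE(G-v)$, and the inclusion $\IE(G-N_G[v])\hookrightarrow \IE(G-v)$ factors through this contractible cone, hence is null-homotopic. Thus the link is contractible in the deletion, and Theorem~\ref{thm:hom-simp-induction} yields exactly $\IE(G)\simeq \IE(G-v)\vee \Sigma\IE(G-N_G[v])$.

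The only genuinely nontrivial point, and the step I expect to require the most care, is the cone argument in the closed case, specifically the verification that $u\ast \IE(G-N_G[v])$ is an honest subcomplex of $\IE(G-v)$ so that the factorization of the inclusion is legitimate. Once that combinatorial check is in place, both homotopy equivalences fall out of Theorem~\ref{thm:hom-simp-induction}, the open case being the degenerate instance in which the suspended link is itself contractible.
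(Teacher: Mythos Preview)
Your proof is correct and follows precisely the route the paper intends: the corollary is stated immediately after Theorem~\ref{thm:hom-simp-induction} with the remark that a (closed or open) dominated vertex guarantees its hypothesis, and you have supplied exactly that verification in both cases. The paper itself gives no further details beyond the citation to \cite{AE,MT}, so your write-up is, if anything, more explicit than what appears there.
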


\begin{definition}
An edge $e=uv$ is called an {\it isolating edge} of a graph $G$ with respect to a vertex $w$, if $w$ is an isolated vertex of $G-N_G[e]$.
\end{definition}

\begin{theorem}\cite{MA}\label{thm:isolating}
If $\IE(G-N_G[e])$ is contractible, then the natural inclusion $\IE(G)\hookrightarrow \IE(G-e)$ is a homotopy equivalence.
\end{theorem}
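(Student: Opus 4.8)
The plan is to realize $\IE(G-e)$ as the union of $\IE(G)$ with an explicitly identified \emph{contractible} subcomplex whose intersection with $\IE(G)$ is also contractible, and then to extract the homotopy equivalence from a standard collapsing argument. Throughout write $e=uv$ and set $Y:=\IE(G-N_G[e])$, the piece assumed contractible.

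First I would identify the faces of $\IE(G-e)$ that are \emph{not} faces of $\IE(G)$. Since $G$ and $G-e$ differ only in the edge $uv$, a subset $\sigma\subseteq V$ lies in $\IE(G-e)\setminus\IE(G)$ exactly when $\sigma$ is independent in $G-e$ and $\{u,v\}\subseteq\sigma$; equivalently $\sigma=\{u,v\}\cup\tau$, where $\tau$ is independent in $G$ and meets neither $N_G(u)$ nor $N_G(v)$, i.e. $\tau\in Y$. Let $\sigma_{uv}$ denote the full simplex on $\{u,v\}$ and $\partial\sigma_{uv}\cong S^0$ its boundary. Then I claim
\[
\IE(G-e)=\IE(G)\cup(\sigma_{uv}\ast Y),\qquad \IE(G)\cap(\sigma_{uv}\ast Y)=\partial\sigma_{uv}\ast Y.
\]
The verification is elementary bookkeeping: a face of $\sigma_{uv}\ast Y$ has the form $\rho\cup\tau$ with $\rho\subseteq\{u,v\}$ and $\tau\in Y$; if $\rho\neq\{u,v\}$ then $\rho\cup\tau$ is independent in $G$ (since $u,v\notin N_G[\tau]$ and $\tau\in\IE(G)$), whereas if $\rho=\{u,v\}$ the set contains $e$ and so lies in $\IE(G-e)$ but not in $\IE(G)$. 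This is the step requiring the most care, but it is routine.

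Set $B:=\sigma_{uv}\ast Y$ and $C:=\partial\sigma_{uv}\ast Y$, so $\IE(G-e)=\IE(G)\cup B$ and $\IE(G)\cap B=C$. Because $\sigma_{uv}$ is a simplex, the join $B$ is contractible regardless of $Y$. For $C$ I note that $C=\partial\sigma_{uv}\ast Y\cong S^0\ast Y=\Sigma Y$, the suspension of $Y=\IE(G-N_G[e])$; this is precisely where the hypothesis is used, since $Y$ contractible forces $\Sigma Y$, hence $C$, to be contractible.

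Finally I would conclude by a two-out-of-three argument for collapsing contractible subcomplexes. The inclusions $C\hookrightarrow B\hookrightarrow\IE(G-e)$ and $C\hookrightarrow\IE(G)$ are inclusions of simplicial subcomplexes, hence cofibrations, so collapsing a contractible subcomplex is a homotopy equivalence. Thus the quotient map $q\colon\IE(G-e)\to\IE(G-e)/B$ is a homotopy equivalence (as $B$ is contractible), and since $\IE(G-e)/B=(\IE(G)\cup B)/B=\IE(G)/(\IE(G)\cap B)=\IE(G)/C$, the composite $q\circ i$ of $q$ with the inclusion $i\colon\IE(G)\hookrightarrow\IE(G-e)$ is exactly the quotient $\IE(G)\to\IE(G)/C$, which is a homotopy equivalence because $C$ is contractible. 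With $q$ and $q\circ i$ both homotopy equivalences, two-out-of-three yields that $i$ is a homotopy equivalence, as desired. The topological steps here are standard; the only genuinely delicate point is the face-level identification of $B$ and $C$ above, so that is where I expect the main (though modest) obstacle to lie.
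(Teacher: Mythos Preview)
Your proof is correct. Note, however, that the paper does not supply its own proof of this theorem: it is quoted verbatim from Adamaszek~\cite{MA} and used as a black box, so there is no ``paper's proof'' to compare against. That said, your argument is essentially the one Adamaszek gives: he too identifies $\IE(G-e)$ as the pushout of $\IE(G)$ and the cone $\sigma_{uv}\ast Y$ along $\partial\sigma_{uv}\ast Y=\Sigma Y$, and then uses that gluing a contractible space along a contractible subspace (via a cofibration) does not change the homotopy type. Your face-level bookkeeping for the decomposition and intersection is accurate, and the cofibration/two-out-of-three step is the standard way to finish.
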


In particular, Theorem~\ref{thm:isolating} implies that $\IE(G)\simeq \IE(G-e)$ whenever the edge $e$ is isolating. This brings the use of an operation, adding or removing an edge, on a graph without altering its homotopy type. We will follow \cite{MA} to write $\Add(x,y;w)$ (respectively $\Del(x,y;w)$) to indicate that we add the edge $e=xy$ to (resp. remove the edge $e=xy$ from) the graph $G$, where $w$ is the corresponding isolated vertex. 

\begin{remark}
In order to simplify the notation, 
we note that when we mention the homology, homotopy or a suspension of a graph, we mean that of its independence complex, so 
whenever it is appropriate, we drop $\IE(-)$ from our notation.
\end{remark}



\section{Prime graphs and Prime Factorizations}\label{sect:primes}
As we have already mentioned in Section~\ref{sect:intro}, the notion of primeness brings a new strategy for the calculation of the regularity.
Even if we express its definition in Section~\ref{sect:intro}, there seems no harm for restating it in its greatest generality.
\begin{definition}
A connected simplicial complex $\D$ is called a \emph{prime complex} over a field $\kk$, if $\reg_{\kk}(\del_{\D}(x))<\reg_{\kk}(\D)$ for any vertex $x\in V(\D)$. 
Furthermore, we call a connected complex $\D$ as a \emph{perfect prime complex} if it is a prime complex over any field. 
\end{definition}

There is the degenerate case where $\D=\{\emptyset\}$ in which we count it as the (trivial) perfect prime. This is consistent with the usual conventions that $\widetilde{H}_{-1}(\{\emptyset\};\kk)\cong \kk$ and $\widetilde{H}_{p}(\{\emptyset\};\kk)\cong 0$ for any $p\neq -1$ in that case.

If $\D$ is a prime complex, then $\reg(\D)=\reg(\lk_{\D}(x))+1$
for any vertex $x\in V$. Moreover, observe that a graph $G$ is a prime graph if and only if $\IE(G)$ is a prime complex. 

Recall that the regularity is in general not a topological invariant for simplicial complexes. However, if the simplicial complex is prime,
then the determination of its homology suffices for the calculation of its regularity. In other words, the equality
$\reg_{\kk}(\D)=\min \{i\colon \widetilde{H}_j(\D;\kk)=0\;\text{for any}\;j>i\}+1$ holds for any prime complex over $\kk$.
Obviously, this has some interesting consequences. For example, any minimal triangulation of an orientable pseudomanifold~\cite{JM}
is a perfect prime complex.

\begin{definition}
Let $\D$ be a simplicial complex and let $\RE=\{R_1,\ldots, R_r\}$ be a set of pairwise vertex disjoint subsets of $V$ such that $|R_i|\geq 2$ for each
$1\leq i\leq r$. Then $\RE$ is said to be an \emph{induced decomposition} of $\D$ if $\D[\bigcup_{i=1}^r R_i]\cong \D[R_1]\ast\ldots \ast \D[R_r]$, and  $\RE$ is maximal with this property. The set of induced decompositions of a complex $\D$ is denoted by $\ID(\D)$.
Furthermore, if $\D[R_i]\cong \D[R_j]$ for some $i\neq j$, we may identify the sets $R_i$ and $R_j$ under such an isomorphism, and consider $\RE$ as a (multi)set in which the set $R_i$ has multiplicity $n_i$. In such a case, we  abbreviate it to $\RE=\{n_{i_1}R_{i_1},\ldots,n_{i_r}R_{i_r}\}$. 
\end{definition}

\begin{definition}
Let $\RE=\{R_1,\ldots, R_r\}$ be an induced decomposition of a complex $\D$. If each $\D[R_i]$ is a prime complex, 
then we call $\RE$ as a \emph{prime decomposition} of $\D$, and the set of prime decompositions of a complex $\D$ is denoted by
$\PD(\D)$. 
\end{definition}

\begin{corollary}
$\PD(\D)\neq \emptyset$ for any complex $\D$.
\end{corollary}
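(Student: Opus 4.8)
The plan is to prove that every simplicial complex $\D$ admits at least one prime decomposition, i.e. $\PD(\D)\neq\emptyset$, by a straightforward induction on the number of vertices of $\D$ together with an appeal to the degenerate prime $\{\emptyset\}$. First I would dispose of the trivial cases: if $\D$ has fewer than two vertices, or if $\D$ has no edges at all, then one may take the decomposition to be empty (the empty family vacuously satisfies the induced-decomposition conditions, and hence is vacuously a prime decomposition), so $\PD(\D)\neq\emptyset$. Here it is worth noting that the empty family is allowed as an element of $\ID(\D)$: the join condition $\D[\bigcup_{i=1}^0 R_i]\cong \D[R_1]\ast\cdots\ast\D[R_0]$ is vacuous, and maximality just says one cannot enlarge the (empty) family.

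Next, for the general case I would argue as follows. Take any connected induced subgraph-structure inside $\D$ on a vertex set of size at least two — concretely, pick any edge $\{u,v\}\in\D$ and let $C$ be the vertex set of the connected component of the $1$-skeleton of $\D$ containing it, so $|C|\geq 2$ and $\D[C]$ is connected. Among all connected induced subcomplexes $\D[S]$ with $|S|\geq 2$, choose one, say $\D[S]$, that is minimal with respect to the regularity having a ``prime-like'' drop; more precisely I would choose $S$ so that $\D[S]$ is a prime complex. Such an $S$ exists because one can start from any connected $\D[S_0]$ with $|S_0|\geq 2$ and repeatedly delete a vertex $x$ with $\reg(\del_{\D[S_0]}(x))=\reg(\D[S_0])$ while the result still has at least two vertices and is connected; if we can no longer do so, either we are down to a $K_2$ (which is prime since $\reg(K_2)=1>0=\reg(\{\text{pt}\})$) or the complex we have reached is prime by definition. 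Having produced a prime induced subcomplex $\D[S]$, I would then remove $S$ and any vertices joined off it, pass to the complex $\D'$ on the remaining vertices (after extracting the maximal join factor containing $\D[S]$), apply the induction hypothesis to get a prime decomposition of $\D'$, and adjoin $\D[S]$; the maximality clause in the definition of induced decomposition is then arranged by a final ``merging'' step that absorbs all join factors, which is where a little care is needed.

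The main obstacle, and the only genuinely delicate point, is reconciling the \emph{maximality} requirement in the definition of an induced decomposition with the inductive construction: when I peel off one prime factor $\D[R_1]$ and recurse, the decomposition I get back for the complement need not remain maximal once $\D[R_1]$ is reinserted, because the join structure might allow the blocks to be enlarged or because $\D$ might itself split as a join in a way not visible after removing $R_1$. I expect to handle this by working instead with the canonical join decomposition of $\D$ — every finite simplicial complex factors uniquely (up to reordering) as a join $\D\cong \D_1\ast\cdots\ast\D_\ell$ of join-indecomposable complexes — and then running the ``delete a regularity-non-decreasing vertex'' reduction inside each join-indecomposable factor $\D_j$ separately, noting that regularity is additive over joins so a prime decomposition of each $\D_j$ assembles to one of $\D$. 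The reduction terminates because the vertex count strictly decreases, and what it leaves behind in each factor is, by construction, either a prime complex or has become disconnected — in the latter case one recurses on the (strictly smaller) components. Collecting the prime complexes obtained from all factors yields the required element of $\PD(\D)$, completing the induction.

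Finally I would remark that this argument simultaneously gives the graph-theoretic statement implicit in the introduction: applying it to $\D=\Ind(G)$, and recalling from the excerpt that $G$ is a prime graph iff $\Ind(G)$ is a prime complex and that $\del_{\Ind(G)}(x)=\Ind(G-x)$, shows $\PD_{\kk}(G)\neq\emptyset$ for every graph $G$, so the maximum in Theorem~\ref{thm:reg-dec} is taken over a nonempty set. No step requires anything beyond Proposition~\ref{prop:induction-sc}, the additivity of regularity over simplicial joins, and the uniqueness of join factorization, all of which are available at this point in the paper.
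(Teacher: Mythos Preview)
Your approach is broadly correct but far more elaborate than the paper's. The paper proves this in four lines: if $\D$ is itself prime then $\{\D\}\in\PD(\D)$; otherwise there is a vertex $x$ with $\reg(\del_\D(x))=\reg(\D)$, and by induction $\PD(\del_\D(x))\neq\emptyset$, whence $\PD(\D)\neq\emptyset$ because $\PD(\del_\D(x))\subseteq\PD(\D)$. That is, the paper simply deletes a single non-prime vertex and inherits a prime decomposition from the smaller complex---no reassembly, no join factorization, no hunting for a prime piece inside $\D$ first. Your vertex-deletion procedure for locating a prime induced subcomplex is essentially the same reduction, just halted prematurely and then followed by an unnecessary recursive step on the complement.

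Your concern about the maximality clause is legitimate, and in fact the paper's bare assertion $\PD(\del_\D(x))\subseteq\PD(\D)$ glosses over precisely the issue you raise: a family maximal in $\del_\D(x)$ could in principle be extendable in $\D$ by a block containing $x$. So your instinct here is sharper than the paper's write-up, though the remedy via canonical join factorizations is heavier than the statement warrants. One genuine slip in your argument: the base case ``$\D$ has no edges'' does not yield a prime decomposition when $|V|\geq 2$, since the empty family is then not maximal---any $R_1\subseteq V$ with $|R_1|\geq 2$ trivially satisfies $\D[R_1]\cong\D[R_1]$ and can be added. You only need the base case $|V|\leq 1$.
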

\begin{proof}
We proceed by an induction on the cardinality of $V$. If $\D$ is itself a prime graph, then $\{\D\}\in \PD(\D)$. Otherwise,
there exists a vertex $x\in V$ such that $\reg(\del_{\D}(x))=\reg(\D)$. Then, the complex $\del_{\D}(x)$ admits a prime decomposition by 
the induction so that $\PD(\D)\neq \emptyset$, since $\PD(\del_{\D}(x))\subseteq \PD(\D)$.
\end{proof}

\begin{theorem}\label{thm:complex-reg-dec}[Compare to Theorem~\ref{thm:reg-dec}]
For any simplicial complex $\D$ and any field $\kk$, we have $$\reg_{\kk}(\D)=\max \{\sum_{i=1}^{r}\reg_{\kk}(\D[R_i])\colon \{R_1,\ldots,R_r\}\in \PD_{\kk}(\D)\}.$$
\end{theorem}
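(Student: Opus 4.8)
The plan is to prove Theorem~\ref{thm:complex-reg-dec} by a double inequality, using induction on $|V|$ for the hard direction and Hochster-type/join considerations for the easy one. Throughout write $m(\D):=\max\{\sum_{i=1}^r\reg_{\kk}(\D[R_i])\colon \{R_1,\ldots,R_r\}\in\PD_{\kk}(\D)\}$; note $\PD(\D)\neq\emptyset$ by the preceding corollary, so $m(\D)$ is well defined.

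\textbf{The inequality $\reg_{\kk}(\D)\geq m(\D)$.} Fix any prime decomposition $\RE=\{R_1,\ldots,R_r\}\in\PD_{\kk}(\D)$, and set $S=\bigcup_{i=1}^r R_i$. By definition of an induced decomposition, $\D[S]\cong\D[R_1]\ast\cdots\ast\D[R_r]$. I would first recall (or prove in one line) that regularity is additive over simplicial joins: $\reg_{\kk}(\D_1\ast\D_2)=\reg_{\kk}(\D_1)+\reg_{\kk}(\D_2)$. This is standard — on the algebra side the Stanley–Reisner ring of a join is a tensor product of the two rings over $\kk$, and regularity of a tensor product of $\kk$-algebras adds; alternatively one can see it directly from Hochster's formula together with the reduced Künneth formula $\widetilde{H}_{*}(X\ast Y;\kk)\cong\bigoplus_{i+j=*-1}\widetilde{H}_i(X;\kk)\otimes\widetilde{H}_j(Y;\kk)$ applied to every induced subcomplex $(\D[S])[T]=\D[T\cap R_1]\ast\cdots\ast\D[T\cap R_r]$. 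Iterating gives $\reg_{\kk}(\D[S])=\sum_{i=1}^r\reg_{\kk}(\D[R_i])$. Finally, regularity is monotone under taking induced subcomplexes (again immediate from Hochster's formula, since the induced subcomplexes of $\D[S]$ form a subfamily of those of $\D$), so $\reg_{\kk}(\D)\geq\reg_{\kk}(\D[S])=\sum_{i=1}^r\reg_{\kk}(\D[R_i])$. Taking the maximum over $\RE\in\PD_{\kk}(\D)$ yields $\reg_{\kk}(\D)\geq m(\D)$.

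\textbf{The inequality $\reg_{\kk}(\D)\leq m(\D)$.} Here I proceed by induction on $|V|$, the base case $\D=\{\emptyset\}$ being the trivial perfect prime with $\reg_{\kk}(\D)=0=m(\D)$. For the inductive step there are two cases. If $\D$ is itself a prime complex, then $\{V\}\in\PD_{\kk}(\D)$ (taking $\RE=\{V\}$), so $m(\D)\geq\reg_{\kk}(\D)$ and we are done. Otherwise $\D$ is not prime, so there is a vertex $x$ with $\reg_{\kk}(\del_{\D}(x))=\reg_{\kk}(\D)$ (note $\reg_{\kk}(\del_{\D}(x))\le\reg_{\kk}(\D)$ always, by induced-subcomplex monotonicity, so non-primeness forces equality for some $x$). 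Now $\del_{\D}(x)$ is a complex on the smaller vertex set $V\setminus\{x\}$, hence by the induction hypothesis $\reg_{\kk}(\del_{\D}(x))=m(\del_{\D}(x))$, i.e. there is a prime decomposition $\{R_1,\ldots,R_r\}\in\PD_{\kk}(\del_{\D}(x))$ with $\sum_i\reg_{\kk}((\del_{\D}(x))[R_i])=\reg_{\kk}(\D)$. The point is that this is also a prime decomposition of $\D$: since $x\notin R_i$ for all $i$, the induced subcomplexes satisfy $(\del_{\D}(x))[R_i]=\D[R_i]$, so the $\D[R_i]$ are prime, pairwise vertex disjoint, and $\D[\bigcup R_i]=(\del_{\D}(x))[\bigcup R_i]\cong\D[R_1]\ast\cdots\ast\D[R_r]$. (If this family fails to be maximal inside $\D$, enlarge it to a maximal one, which can only increase the sum.) Hence $m(\D)\geq\sum_i\reg_{\kk}(\D[R_i])=\reg_{\kk}(\D)$, completing the induction. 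Combining the two inequalities gives the theorem.

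\textbf{Main obstacle.} The only genuinely substantive ingredient is the additivity of regularity over joins, $\reg_{\kk}(\D_1\ast\D_2)=\reg_{\kk}(\D_1)+\reg_{\kk}(\D_2)$; everything else is bookkeeping with Hochster's formula, the definitions of induced/prime decomposition, and the already-established fact $\PD(\D)\neq\emptyset$. I expect the join formula to be quotable from the literature (it is well known, e.g. via $R/I_{\D_1\ast\D_2}\cong (R_1/I_{\D_1})\otimes_{\kk}(R_2/I_{\D_2})$ and the behaviour of local cohomology / regularity under tensor products over a field), so in the write-up I would cite it or give the two-line Künneth argument rather than reprove it from scratch. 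A minor subtlety to handle carefully is the maximality clause in the definition of induced decomposition: in the inductive step one must note that replacing a prime decomposition of $\del_{\D}(x)$ by a maximal refinement inside $\D$ preserves primeness of each block and does not decrease $\sum_i\reg_{\kk}(\D[R_i])$ (it can only split off additional prime factors, each of regularity $\ge 0$, or is already maximal since $x$ is an apex-free vertex of the join decomposition). This is routine but should be stated.
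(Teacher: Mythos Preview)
Your proof is correct and follows essentially the same route as the paper: induction on $|V|$, splitting into the prime case (where $\{V\}$ itself is a prime decomposition) and the non-prime case (where one passes to $\del_{\D}(x)$ for a suitable vertex and uses that prime decompositions of $\del_{\D}(x)$ yield prime decompositions of $\D$). The paper's proof is terser---it omits the lower bound $\reg_{\kk}(\D)\geq m(\D)$ entirely and simply asserts $\PD(\del_{\D}(x))\subseteq\PD(\D)$---whereas you spell out both the join-additivity argument and the maximality issue; in particular, your observation that a prime decomposition of $\del_{\D}(x)$ may need to be enlarged to become maximal in $\D$ is a point the paper glosses over.
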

\begin{proof}
If $\D$ is itself a prime complex, there is nothing to prove. Otherwise there exists a vertex $x\in V$ such that $\reg(\D)=\reg(\del_{\D}(x))$. If $\del_{\D}(x)$ is a prime complex, then $\{\del_{\D}(x)\}\in \PD(\D)$ so that the result follows. Otherwise, we have 
$\reg(\del_{\D}(x))=\max \{\sum_{i=1}^{t}\reg(\del_{\D}(x)[S_i])\colon \{S_1,\ldots,S_t\}\in \PD(\del_{\D}(x))\}$ by the induction. However,
since $\PD(\del_{\D}(x))\subseteq \PD(\D)$ for such a vertex, the claim follows.
\end{proof}

\begin{definition}
A prime decomposition $\RE$ of a simplicial complex $\D$ for which the equality of Theorem~\ref{thm:complex-reg-dec} holds is called a 
\emph{prime factorization} of $\D$, and the set of prime factorizations of $\D$ is denoted by $\PF(\D)$.
\end{definition}
\begin{corollary}
A simplicial complex $\D$ is prime if and only if $\PF(\D)=\{\D\}$.
\end{corollary}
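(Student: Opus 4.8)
The statement is a bookkeeping consequence of Theorem~\ref{thm:complex-reg-dec} together with two elementary properties of regularity, so the plan is to isolate those properties and then unwind the definitions.

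I would first record the two facts about $\reg$ that drive everything. \textbf{(a)} Monotonicity under induced subcomplexes: if $W\subseteq V$ then $\reg(\D[W])\leq\reg(\D)$, which is immediate from Hochster's formula since $\D[W][S]=\D[S]$ for every $S\subseteq W$; in particular $\reg(\del_\D(x))\geq\reg(\D[W])$ whenever $x\notin W$, because $(\del_\D(x))[W]=\D[W]$ in that case. \textbf{(b)} Additivity over joins: $\reg(\D[R_1]\ast\cdots\ast\D[R_r])=\sum_{i}\reg(\D[R_i])$, the simplicial translation of additivity of Castelnuovo--Mumford regularity under tensoring Stanley--Reisner rings over $\kk$ (equivalently, additivity of $\reg$ over disjoint unions of graphs). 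I would also use the standard fact that a connected complex is not a nontrivial simplicial join; for $\D=\Ind(G)$ this is precisely the connectivity of $G$.

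For the easy direction, if $\PF(\D)=\{\D\}$ then $\{V\}\in\PF(\D)\subseteq\PD(\D)$, so by the definition of a prime decomposition its only piece $\D[V]=\D$ is a prime complex. Conversely, assume $\D$ is prime. Being connected, $\D$ is not a nontrivial join, so $\{V\}$ is a (maximal) induced decomposition; its only piece is prime, and it attains the maximum in Theorem~\ref{thm:complex-reg-dec} with value $\reg(\D)$, hence $\{V\}\in\PF(\D)$. To see it is the only prime factorization, let $\RE=\{R_1,\dots,R_r\}\in\PF(\D)$. If $\bigcup_iR_i=V$, then $\D=\D[R_1]\ast\cdots\ast\D[R_r]$, and since $\D$ is not a nontrivial join we get $r=1$, i.e.\ $\RE=\{V\}$. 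If instead $\bigcup_iR_i\subsetneq V$, pick $x\in V\setminus\bigcup_iR_i$; then (a) and (b) give
\[
\reg(\del_\D(x))\ \geq\ \reg\!\Big(\D\big[\textstyle\bigcup_iR_i\big]\Big)\ =\ \sum_{i=1}^{r}\reg(\D[R_i])\ =\ \reg(\D),
\]
the last equality because $\RE$ is a prime factorization, contradicting primeness of $\D$. Hence $\PF(\D)=\{\{V\}\}=\{\D\}$.

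The only load-bearing input is the join-irreducibility of a connected complex: it is what forbids a multi-piece prime factorization with full support, and the single place where the ``connected'' hypothesis is genuinely used (so that, e.g., $\Ind(C_5)\ast\Ind(C_5)$, which is a join, is not counted as prime). What remains is purely formal --- verifying that $\{V\}$ meets the maximality clause in the definition of an induced decomposition (again via join-irreducibility), and disposing of the degenerate complex $\{\emptyset\}$ by the conventions already fixed in the text --- and I would expect no surprises there.
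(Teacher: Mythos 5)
Your overall route is the natural one and, as far as one can tell, is exactly what the authors leave implicit: the paper states this corollary without proof, as a direct consequence of Theorem~\ref{thm:complex-reg-dec}, and your two reductions are the right way to exploit primeness. Facts (a) and (b) are fine, and the argument that a prime factorization whose support misses a vertex $x$ would force $\reg(\del_{\D}(x))\geq\reg(\D[\bigcup_i R_i])=\sum_i\reg(\D[R_i])=\reg(\D)$, contradicting primeness, is correct; likewise the observation that a full-support factorization with $r\geq 2$ exhibits $\D$ as a nontrivial join.

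The gap is in the step you yourself single out as load-bearing: it is not true that a connected simplicial complex cannot be a nontrivial simplicial join. The join of any two complexes each possessing at least one vertex is path-connected (every vertex of one factor spans an edge with every vertex of the other), so cones, suspensions and, pointedly, the very example you claim to exclude, $\Ind(C_5)\ast\Ind(C_5)=\Ind(C_5\sqcup C_5)\simeq S^3$, are all connected joins. That complex even satisfies the regularity-drop condition at every vertex, since $\reg(\del_{\D}(x))=\reg(P_4)+\reg(C_5)=3<4=\reg(\D)$; so if ``connected'' is read topologically, your argument does not rule it out, and in fact the corollary itself would fail for it, because $\{V(C_5),V(C_5)\}$ is a prime factorization distinct from the one-piece decomposition. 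What makes both the statement and your proof work is the reading in which primeness presupposes join-indecomposability of $\D$; for $\D=\Ind(G)$ this is precisely connectivity of $G$ (a join decomposition of $\Ind(G)$ along a vertex partition is the same thing as a disconnection of $G$), which is evidently what the authors intend and is also what makes their remark that the two primeness notions coincide for $\Ind(G)$ accurate. So you should either prove the corollary for independence complexes of connected graphs, or state explicitly that ``connected'' is being interpreted as join-indecomposable, rather than appealing to a ``standard fact'' that is false for general complexes.
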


We now turn our attention to prime graphs and prime decompositions of graphs.
\begin{remark}
If $\RE=\{R_1,\ldots,R_k\}$ is an induced decomposition of a graph $G$ (i.e., that of $\IE(G)$), 
we will not distinguish the sets $R_i$'s and the subgraphs that they induce in $G$.
Furthermore, we note that for such a decomposition, the inequality $k\leq \im(G)$ always holds. Moreover, it is also possible that $\sum_{i=1}^k \im(R_i)<\im(G)$, 
even if $\RE\in \PF(G)$.
\end{remark}

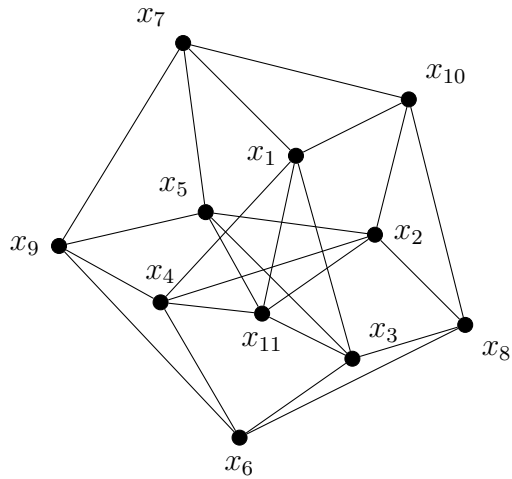
\begin{figure}[ht]
\begin{center}
\begin{tikzpicture}[scale=1.5]

\node [noddee] at (0,0) (v6) [label=below:$x_6$] {};
\node [noddee] at (2,1) (v8) [label=below right:$x_8$] {}
	edge [] (v6);
\node [noddee] at (-1.6,1.7) (v9) [label=left:$x_9$] {}
	edge [] (v6);
\node [noddee] at (1.5,3) (v10)[label=above right:$x_{10}$] {}
	edge [] (v8);
\node [noddee] at (-0.5,3.5) (v7) [label=above left:$x_7$] {}
	edge [] (v9)
	edge [] (v10);
\node [noddee] at (1,0.7) (v3) [label=above right:$x_3$] {}
	edge [] (v6)
	edge [] (v8);
\node [noddee] at (-0.7,1.2) (v4) [label=above:$x_4$] {}
	edge [] (v9)
	edge [] (v6);
\node [noddee] at (1.2,1.8) (v2) [label=right:$x_2$] {}
	edge [] (v4)
	edge [] (v8)
	edge [] (v10);
\node [noddee] at (0.5,2.5) (v1) [label=left:$x_1$] {}
	edge [] (v3)
	edge [] (v4)
	edge [] (v7)
	edge [] (v10);
\node [noddee] at (-0.3,2) (v5) [label=above left:$x_5$] {}
	edge [] (v2)	
	edge [] (v3)
	edge [] (v7)
	edge [] (v9);
\node [noddee] at (0.2,1.1) (v11) [label=below:$x_{11}$] {}
	edge [] (v1)
	edge [] (v2)
	edge [] (v3)
	edge [] (v4)
	edge [] (v5);

\end{tikzpicture}

\end{center}
\caption{The Morey-Villarreal graph $G_{MV}$.}
\label{fig:MV}
\end{figure}
We verify by a computer calculation~\cite{SAGE} that the graph
provided by Morey and Villareal (Example 3.6 in~\cite{MV}) is a prime graph over $\Z_2$, while it is not prime with respect to $\Z_3$ (see Figure~\ref{fig:MV}).

Apart from the already existing examples of perfect prime graphs, our next example shows that there exists a $3$-regular bipartite perfect prime graph.

\begin{example}\label{exmp:MK}
The bipartite $3$-regular M\"{o}bius-Kantor graph $G_{MK}$~(see~Figure~\ref{fig:tcbp}), also known as
the generalized Petersen graph $G(8,3)$~\cite{MP}  is a perfect prime graph. In order to verify that we first determine the homotopy type of the independence complex of $G_{MK}$ by repeated use of Theorem~\ref{thm:hom-simp-induction} and Corollary~\ref{thm:hom-induction} together with the fact that $G_{MK}$ is a vertex-transitive graph. It turns out that 
$G_{MK}-N_{G_{MK}}[x]\simeq S^2\vee S^3$ and $G_{MK}-x\simeq S^3\vee S^3\vee S^3$ for any vertex $x\in V(G_{MK})$ so that $\reg(G_{MK}-N_{G_{MK}}[x])$ and $\reg(G_{MK}-x)$ are at least $4$. It then follows that
\begin{equation*}
G_{MK}\simeq (\bigvee_{i=1}^4 S^3)\vee S^4,
\end{equation*}
which in particular implies that $\reg(G_{MK})\geq 5$. On the other hand, we have computed the induced matching and cochordal cover numbers of the corresponding graphs as follows;

\begin{center}
  \begin{tabular}{ |c || c | c |c|}
    \hline
    $H$ & $G_{MK}$ & $G_{MK}-x$ & $G_{MK}-N_{G_{MK}}[x]$\\ \hline
    $\cd(H)$ & 6 & 4 & 4 \\ \hline
    $\im(H)$ & 4 & 4 & 3 \\
    \hline
  \end{tabular}
\end{center}
where the computation of the cochordal cover number in each cases follows easily from Theorem~$27$ of~\cite{BC}, since the girth of $G_{MK}$ is $6$. As a result, observe that $\reg(G_{MK}-N_{G_{MK}}[x])=\reg(G_{MK}-x)=4$;
hence, we have $\reg(G_{MK})=5$ by Corollary~\ref{cor:induction-sc}, and that the graph $G_{MK}$ is a perfect prime. 
\begin{figure}[ht]
\begin{center}
\begin{tikzpicture}[scale=1.2]

\node [noddee] at (0,0) (v1) [label=left:$1$] {};
\node [noddee] at (2,0) (v2) [label=below:$2$] {}
	edge [] (v1);
\node [noddee] at (4,0) (v3) [label=right:$3$] {}
	edge [] (v2);
\node [noddee] at (0,2) (v4)[label=left:$8$] {}
	edge [] (v1);
\node [noddee] at (1,1) (v5) [label=left:$1'$] {}
	edge [] (v1);
\node [noddee] at (2,1) (v6)[label=left:$4'$] {}
    edge [] (v2);
\node [noddee] at (3,1) (v7) [label=right:$7'$] {}
     edge [] (v3);
\node [noddee] at (1,2) (v8)[label=above:$6'$] {}	
	 edge [] (v4)
	 edge [] (v7);	 

\node [noddee] at (4,2) (v10)[label=right:$4$] {}	
	 edge [] (v3);	
\node [noddee] at (3,2) (v9) [label=above:$2'$] {}	
	 edge [] (v10)
	 edge [] (v5);		   
\node [noddee] at (0,4) (v11)[label=left:$7$] {}	
	 edge [] (v4);	 
\node [noddee] at (2,4) (v12) [label=above:$6$] {}	
	 edge [] (v11);	 
\node [noddee] at (4,4) (v13) [label=right:$5$] {}	
	 edge [] (v12)
	 edge [] (v10);	
\node [noddee] at (1,3) (v14) [label=above:$3'$] {}	
	 edge [] (v11)
	 edge [] (v6)
	 edge [] (v9);
\node [noddee] at (2,3) (v15) [label=right:$8'$] {}	
	 edge [] (v12)
	 edge [] (v5)
	 edge [] (v7);
\node [noddee] at (3,3) (v16) [label=right:$5'$] {}	
	 edge [] (v13)
	 edge [] (v6)
	 edge [] (v8);	 	 	  
	 
\end{tikzpicture}
\end{center}
\caption{A $3$-regular bipartite perfect prime graph; the M\"{o}bius-Kantor graph $G_{MK}$.}
\label{fig:tcbp}
\end{figure}
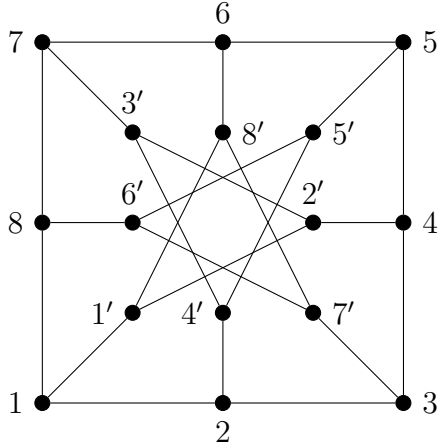
We further note that the graph $G_{MK}$ lacks certain properties. Firstly,
it is not well-covered, since for the independent set $S=\{2,3',6'\}$,
the graph $G_{MK}-N_{G_{MK}}[S]$ is not well-covered as it is isomorphic to  $P_5$. Moreover, it is not vertex-decomposable as it contains no shedding vertex; hence, it is not even sequentially Cohen-Macaulay~\cite{VT}. 
\end{example}

We may restate Theorem~\ref{thm:complex-reg-dec} in a more graph theoretical terms under which the regularity corresponds to a generalized induced matching problem.
\begin{definition}
Let $G$ be a graph, $\TE=\{T_1,\ldots,T_k\}$ be a set of connected graphs and $\af=(a_1,\ldots,a_k)$ be a sequence of non-negative integers. We then call the integer
\begin{equation*}
\im(G;\TE;\af):=\max\{a_1n_1+\ldots+a_kn_k\colon \{n_1T_1,\ldots,n_kT_k\}\in \ID(G)\}
\end{equation*}
as the \emph{induced matching number} of $G$ with respect to the pair $(\TE,\af)$. We make the convention that $\im(G;\TE;\af):=0$ if there exists no sequence of
non-negative integers $(n_1,\ldots,n_k)$ such that $\{n_1T_1,\ldots,n_kT_k\}\in \ID(G)$. 
\end{definition} 

\begin{corollary}\label{cor:reg-gim}
For any graph $G$, we have $\reg(G)\geq \im(G;\RE;\af_{\RE})$ for each $\RE\in \PD(G)$, where $\af_{\RE}=(\reg(H)\colon H\in \RE)$. In particular,
the equality $\reg(G)=\im(G;\RE;\af_{\RE})$ holds if $\RE\in \PF(G)$.
\end{corollary}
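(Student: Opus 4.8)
The plan is to derive Corollary~\ref{cor:reg-gim} as a nearly immediate consequence of Theorem~\ref{thm:complex-reg-dec}, using only the observation that for a graph $G$ and an induced decomposition $\RE=\{n_1T_1,\ldots,n_kT_k\}$ of $G$ (equivalently, of $\IE(G)$), the regularity is additive over the simplicial join. Recall that for independence complexes, the induced subgraph on a vertex set $R_i$ corresponds to $\IE(G[R_i])$, and that $\reg$ of a join satisfies $\reg(\D_1\ast\D_2)=\reg(\D_1)+\reg(\D_2)$; iterating this gives $\reg(G[\bigcup_i R_i])=\sum_i\reg(G[R_i])$ whenever $\RE$ is an induced decomposition. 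First I would make this additivity explicit (it follows from the Künneth-type behaviour of reduced homology of joins together with Hochster-type reasoning, or simply from the known formula $\reg(R/(I+J))=\reg(R/I)+\reg(R/J)$ for ideals in disjoint variable sets), so that the bookkeeping in the remainder is clean.

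Next, for the inequality $\reg(G)\geq \im(G;\RE;\af_{\RE})$ when $\RE\in\PD(G)$: each prime decomposition $\RE$ is in particular an induced decomposition, and the quantity $\im(G;\RE;\af_{\RE})$ is, by definition, the maximum over all induced decompositions of $G$ into copies of the graphs in $\RE$ of the weighted count $\sum_i \reg(T_i)\,n_i$. For any such witnessing decomposition $\{n_1T_1,\ldots,n_kT_k\}$, the additivity above yields $\reg\big(G[\bigcup_i V(n_iT_i)]\big)=\sum_i n_i\reg(T_i)$, and since regularity is monotone under taking induced subgraphs (equivalently, $\reg(\D[S])\le\reg(\D)$ for induced subcomplexes, which is immediate from Hochster's formula as stated in the introduction), we get $\reg(G)\ge\sum_i n_i\reg(T_i)$. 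Taking the maximum over witnessing decompositions gives $\reg(G)\ge\im(G;\RE;\af_{\RE})$.

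Finally, for the equality statement when $\RE\in\PF(G)$: by Theorem~\ref{thm:complex-reg-dec}, $\reg(G)=\max\{\sum_i\reg(G[R_i])\colon \{R_1,\ldots,R_r\}\in\PD(G)\}$, and by definition of a prime factorization this maximum is attained at $\RE$ itself, say $\RE=\{n_1T_1,\ldots,n_kT_k\}$ with $\reg(G)=\sum_i n_i\reg(T_i)$. Since this particular decomposition is one of the decompositions competing in the definition of $\im(G;\RE;\af_{\RE})$, we have $\im(G;\RE;\af_{\RE})\ge\sum_i n_i\reg(T_i)=\reg(G)$; combined with the inequality $\reg(G)\ge\im(G;\RE;\af_{\RE})$ from the previous paragraph, equality follows.

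I do not anticipate a genuine obstacle here — the statement is essentially a translation of Theorem~\ref{thm:complex-reg-dec} into the language of the invariant $\im(G;\TE;\af)$. The only point requiring a little care is making sure the additivity of $\reg$ over simplicial joins is invoked correctly, in particular that an induced decomposition of a \emph{graph} (subsets inducing a complete multipartite-type join structure on the independence complex) is exactly what makes $\IE$ of the union split as a join of the pieces; this is built into the definition of $\ID(G)$ and of $\im(G;\TE;\af)$, so it is really just unwinding notation. The monotonicity of regularity under induced subcomplexes and the join-additivity are both standard, so the proof is short.
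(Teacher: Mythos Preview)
Your proposal is correct and is exactly the approach the paper has in mind: the corollary is stated without proof, as it is an immediate reformulation of Theorem~\ref{thm:complex-reg-dec} in the language of the invariant $\im(G;\TE;\af)$, using only the additivity of $\reg$ over a join (applied to the pieces of an induced decomposition) and the monotonicity of $\reg$ under induced subcomplexes, both of which you invoke explicitly. There is nothing to add.
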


\begin{example}
We have $\reg(R_n)=\im(R_n;\{C_5\};\textbf{2})=2(n+1)$ for the graph $R_n$ depicted in Figure~\ref{fig-reg-im-1} for any $n\geq 1$. Note that any induced prime
subgraph of $R_n$ is isomorphic to either $K_2$ or $C_5$.
\end{example}

The above example suggests that Corollary~\ref{cor:reg-gim} is more useful when we know the set of induced prime subgraphs of a given graph. So, our next target is to look for combinatorial conditions on a graph that may effect its primeness or its induced primes.

\begin{proposition}\label{prop:dominated}
If $N_G(y)\subseteq N_G(x)$ for vertices $x$ and $y$, then $G$ can not be a prime graph. Similarly, if $N_G[u]\subseteq N_G[v]$ holds in $G$ such that $\deg_G(v)\geq 2$, then $G$ can not be a prime graph.
\end{proposition}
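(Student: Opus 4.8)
The plan is to assume that $G$ is prime and, in each of the two cases, to exhibit a single vertex $z$ with $\reg(G-z)\ge\reg(G)$, which contradicts primeness. The engine is the fact, recalled in Section~\ref{sect:primes}, that the regularity of a prime complex is read off from its reduced homology: a prime graph contains an edge (an edgeless graph is never prime), so $\reg(G)\ge 1$, the complex $\IE(G)$ is not acyclic, and $\reg(G)=\hd(\IE(G))+1$, where $\hd(\D)$ denotes the largest integer $j$ with $\widetilde{H}_j(\D)\neq 0$. On the other hand, by Hochster's formula (restrict to the full vertex set in the expression for $\reg$ recalled in the introduction), $\reg(G-z)\ge\hd(\IE(G-z))+1$ for every vertex $z$. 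Hence it suffices to produce a vertex $z$ with $\hd(\IE(G-z))\ge\hd(\IE(G))$.

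For the open case $N_G(y)\subseteq N_G(x)$ this is immediate. Note first that $xy\notin E$, since otherwise $x\in N_G(y)\subseteq N_G(x)$, impossible. Then Corollary~\ref{thm:hom-induction} supplies a homotopy equivalence $\IE(G)\simeq\IE(G-x)$, so $\hd(\IE(G-x))=\hd(\IE(G))$ and $z=x$ works.

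For the closed case $N_G[u]\subseteq N_G[v]$ with $\deg_G(v)\ge 2$, observe that $u\neq v$ and $u\in N_G(v)$ (from $u\in N_G[u]\subseteq N_G[v]$). Corollary~\ref{thm:hom-induction} gives $\IE(G)\simeq\IE(G-v)\vee\Sigma\IE(G-N_G[v])$, hence, passing to reduced homology,
$$\hd(\IE(G))=\max\{\hd(\IE(G-v)),\ \hd(\IE(G-N_G[v]))+1\},$$
with the convention that an acyclic complex has $\hd=-\infty$. If the first term attains the maximum, then $\hd(\IE(G-v))=\hd(\IE(G))$ and $z=v$ works. If the second term attains it, I use $\deg_G(v)\ge 2$ to choose a vertex $w\in N_G(v)\setminus\{u\}$; the relation $N_{G-w}[u]\subseteq N_{G-w}[v]$ persists in $G-w$ and $(G-w)-N_{G-w}[v]=G-N_G[v]$, so Corollary~\ref{thm:hom-induction} applied inside $G-w$ yields $\IE(G-w)\simeq\IE(G-\{v,w\})\vee\Sigma\IE(G-N_G[v])$, whence $\hd(\IE(G-w))\ge\hd(\IE(G-N_G[v]))+1=\hd(\IE(G))$; thus $z=w$ works.

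The computations involved are only the elementary neighbourhood bookkeeping in $G-w$ and the homology of a wedge of a suspension. The place that needs care is the degenerate behaviour: one must invoke the identity $\reg=\hd+1$ only for (nontrivial) prime complexes, track the possibly-acyclic summand $\Sigma\IE(G-N_G[v])$ correctly so that $z=v$ remains available when $\IE(G-N_G[v])$ is acyclic, and — the genuinely essential use of the hypothesis $\deg_G(v)\ge 2$ — guarantee a neighbour $w$ of $v$ distinct from $u$, so that after deleting $w$ the summand carrying the top homology of $\IE(G)$ is still present. I expect this degenerate bookkeeping, rather than the main line of argument, to be the only real obstacle.
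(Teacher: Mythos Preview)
Your proof is correct. The open-neighbourhood case is identical to the paper's argument. For the closed-neighbourhood case you take a genuinely different route: the paper first observes that primeness forces $\reg(G)=\reg(G-N_G[v])+1$, then notes that $\{K_2,\,G-N_G[v]\}$ (with the $K_2$ on $\{u,v\}$) is an induced decomposition of $G$ which refines to a prime factorization missing every neighbour of $v$ other than $u$; since $\deg_G(v)\ge 2$ this factorization is proper, contradicting $\PF(G)=\{G\}$ for prime $G$. Your argument instead stays at the level of homotopy and reduced homology, using the wedge splitting from Corollary~\ref{thm:hom-induction} twice (once in $G$, once in $G-w$) and the key observation $(G-w)-N_{G-w}[v]=G-N_G[v]$. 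Your approach is more self-contained, avoiding the prime-factorization machinery of Theorem~\ref{thm:complex-reg-dec}; the paper's approach, on the other hand, illustrates that machinery in action and makes the role of $\deg_G(v)\ge 2$ transparent (it is precisely what makes the constructed factorization proper). One small remark: you assert $u\neq v$ ``from $u\in N_G[u]\subseteq N_G[v]$'', but that inclusion alone does not rule out $u=v$; the intended reading of the hypothesis is that $u$ and $v$ are distinct (otherwise the condition is vacuous and $K_2$ is a counterexample), so this is harmless.
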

\begin{proof}
Let $G$ be a prime graph with $\reg(G)=k$.
Suppose first that $N_G(y)\subseteq N_G(x)$. Since $G$ is prime,
we have $\widetilde{H}_{k-1}(G)\neq 0$. However, the inclusion
$N_G(y)\subseteq N_G(x)$ implies that $G\simeq G-x$ by Corollary~\ref{thm:hom-induction} so that $\widetilde{H}_{k-1}(G-x)\neq 0$,
that is, $\reg(G-x)=k$, a contradiction.

Suppose next that $N_G[u]\subseteq N_G[v]$ holds in $G$. Assume that $G$ is a prime graph. This in turn implies that 
$\reg(G)=\reg(G-N_G[v])+1$. On the other hand, $\{K_2, G-N_G[v]\}$ is an induced decomposition of $G$, where the graph $K_2$ is induced by the pair $\{u,v\}$. In such a decomposition, we may replace the graph $G-N_G[v]$, if necessary, with a prime factorization of itself that creates a prime factorization for $G$, and such a factorization of $G$ never contains any neighbour of $v$ other than $u$, a contradiction. 
\end{proof}

One of the immediate consequences of Proposition~\ref{prop:dominated} is that the equality $\reg(G)=\im(G)$ holds for any graph in a hereditary graph class defined in terms of having a non-isolated (closed or open) dominated vertex, 
since only prime graph that such a graph in that class can contain is isomorphic to a $K_2$. For instance, the family of such graphs contains all chordal graphs~\cite{BC,HVT,Ha} or distance-hereditary graphs~\cite{BLS}. For the latter, we recall that any distance-hereditary graph $G$ can be inductively constructed by adding a new vertex $y$ for each existing vertex $x$ such that either $N_G(y)=\{x\}$ or the (open or closed) neighbourhood set of $y$ equals to that of $x$ in $G$.
Therefore, only prime distance-hereditary graph must be isomorphic to a $K_2$. We also note that distance-hereditary graphs are contained in the class of weakly chordal graphs; hence, the equality $\reg(G)=\im(G)$ for such graphs is already known~\cite{RW2}. However, we do not know whether a weakly chordal graph can contain primes other than a $K_2$.

\begin{theorem}\label{thm:3-5}
If $G$ is a $(C_3,P_5)$-free graph, then $\reg(G)=\im(G;\{K_2,C_5\};{\bf(1,2)})$.
\end{theorem}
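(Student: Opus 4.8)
The plan is to combine the structural restriction on primes coming from Proposition~\ref{prop:dominated} with the characterization of $(C_3,P_5)$-free graphs, and then invoke Corollary~\ref{cor:reg-gim}. The first step is to understand which connected graphs can occur as induced prime subgraphs of a $(C_3,P_5)$-free graph $G$. Since both properties are hereditary, any induced subgraph $H\le G$ is again $(C_3,P_5)$-free; so it suffices to classify the connected $(C_3,P_5)$-free graphs that are prime. By Proposition~\ref{prop:dominated}, a prime graph contains no open-dominated pair and no closed-dominated pair $N_G[u]\subseteq N_G[v]$ with $\deg_G(v)\ge 2$; in particular a prime graph of order $\ge 3$ has no vertex of degree $1$ (such a leaf $u$ with neighbour $v$ would satisfy $N_G[u]\subseteq N_G[v]$, and $\deg_G(v)\ge 2$ in a connected graph on $\ge 3$ vertices), and no two vertices with the same neighbourhood or with nested neighbourhoods. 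The claim I would establish is: the only connected $(C_3,P_5)$-free prime graphs are $K_2$ and $C_5$.

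To prove that claim I would argue as follows. Let $H$ be a connected $(C_3,P_5)$-free prime graph with $|V(H)|\ge 3$; we want $H\cong C_5$. Triangle-freeness forbids $K_3$; $P_5$-freeness gives a strong diameter/structure bound. I expect the cleanest route is: first show $H$ has diameter $2$ — if $d_H(x,y)\ge 3$ then a shortest path of length $3$ is an induced $P_4$, and one more step towards a fifth vertex, combined with triangle-freeness, either extends it to an induced $P_5$ or forces a vertex whose neighbourhood is dominated; make this precise. Then, with diameter $\le 2$, triangle-free and no dominated vertices, a case analysis on the maximum degree $\Delta(H)$ should pin down $H$. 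If $\Delta(H)=2$ then $H$ is a path or a cycle; paths and triangles are excluded by primeness/triangle-freeness, cycles $C_n$ for $n\ge 6$ contain induced $P_5$, so $H\cong C_4$ or $C_5$; but $C_4$ has $N(u)\subseteq N(v)$ is false, yet $C_4$ is not prime because $\reg(C_4)=1=\reg(C_4-x)$ (indeed any two opposite vertices of $C_4$ have equal neighbourhoods), so $H\cong C_5$. If $\Delta(H)\ge 3$, let $x$ have neighbours $y_1,y_2,y_3$; triangle-freeness makes $\{y_i\}$ independent, and one shows via $P_5$-freeness and the no-dominated-vertex condition that no such configuration survives in a triangle-free graph without producing either an induced $P_5$ or nested neighbourhoods — this is the step requiring care.

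Once the classification is in hand, the conclusion is immediate from the prime-decomposition machinery: every prime decomposition $\RE\in\PD(G)$ uses only copies of $K_2$ and $C_5$, so $\RE=\{n_1K_2,n_2C_5\}$ for some $n_1,n_2\ge 0$, and $\af_\RE=(\reg(K_2),\reg(C_5))=(1,2)$ by the known values $\reg(K_2)=1$, $\reg(C_5)=2$. Hence $\sum_{H\in\RE}\reg(H)=n_1+2n_2=\im(G;\{K_2,C_5\};(1,2))$ when $\RE$ is chosen to maximize this quantity, and Theorem~\ref{thm:complex-reg-dec} (equivalently Theorem~\ref{thm:reg-dec}) together with Corollary~\ref{cor:reg-gim} gives $\reg(G)=\im(G;\{K_2,C_5\};(1,2))$, as desired.

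\textbf{Main obstacle.} The routine part is reducing everything to the classification of connected $(C_3,P_5)$-free prime graphs and then bookkeeping with $\PD(G)$. The genuine work — and the step I expect to be the main obstacle — is the classification itself, specifically ruling out all triangle-free $P_5$-free graphs of order $\ge 3$ other than $C_5$ (and small degenerate cases) by showing each must contain an open- or closed-dominated vertex; handling the $\Delta(H)\ge 3$ case cleanly, and correctly excluding $C_4, C_6, K_{2,3}, K_{3,3}$ and similar small bipartite graphs via Proposition~\ref{prop:dominated}, is where the care is needed.
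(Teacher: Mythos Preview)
Your overall strategy is exactly the paper's: reduce to showing that the only connected $(C_3,P_5)$-free prime graphs are $K_2$ and $C_5$, and then read off the conclusion from Theorem~\ref{thm:reg-dec}/Corollary~\ref{cor:reg-gim}. Your $\Delta(H)=2$ case is correct and complete (and your exclusion of $C_4$ via equal neighbourhoods is precisely Proposition~\ref{prop:dominated}). Where you diverge is in the organisation of the classification: the paper does not go through a preliminary diameter bound and does not quite do a bare degree split. Instead it gives two alternative arguments for the remaining case. The first is purely combinatorial and pivots on whether an induced $C_5$ is present: if not, a $(C_3,C_5,P_5)$-free graph is weakly chordal so $\reg=\im$ and the only prime is $K_2$; if so, a short neighbour analysis (using Proposition~\ref{prop:dominated} to force private neighbours, then $P_5$-freeness to derive contradictions) shows every vertex outside the $C_5$ leads to an induced $P_5$, so $H\cong C_5$. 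The second is topological and closest in spirit to your $\Delta\ge 3$ sketch: picking $x$ with $\deg(x)\ge 3$ and neighbours $x_1,\dots,x_k$, one shows that if some $N_G(x_i)$ is covered by the others then $\Ind(G-N_G[x_i])$ is contractible (via Corollary~\ref{thm:hom-induction}), contradicting primeness; otherwise each $x_i$ has a private neighbour, and $P_5$-freeness forces those to form a clique, contradicting triangle-freeness. Either route cleanly dispatches the $\Delta(H)\ge 3$ case you flagged as the main obstacle; your proposed diameter-$2$ step is not needed and, as stated, is too vague to carry weight.
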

\begin{proof} We choose to state two distinct proofs of the claim. One of them is purely combinatorial and the other uses topological methods.

{\it The first proof:} Suppose that $G$ is a prime and $(C_3,P_5)$-free graph. If $G$ is $C_5$-free, then it is a weakly chordal graph; hence, we have $\reg(G)=\im(G)$. So, we may assume that $G$ contains at least one induced five cycle $C$, say on the vertices $x_1,\ldots,x_5$ such that $x_ix_{i+1}\in E(G)$ in cyclic fashion. Suppose first that any vertex of $G$ not contained in $V(C)$ has exactly two neighbours in $C$, and let $y$ be such a vertex. We may assume without loss of generality that neighbours of $y$ in $C$ are $x_1$ and $x_3$. Then by Proposition~\ref{prop:dominated}, there exist vertices $u\in N_G(y)$ and $v\in N_G(x_2)$ such that $ux_2, yv\notin E(G)$. Note that we must have $uv\in E(G)$, since otherwise the set $\{u,x_2,x_1,y,v\}$ induces a $P_5$. To prevent the existence of
induced $5$-paths in $G$, the vertices $u$ and $v$ must have at least one neighbour in $C$. However, since $G$ is triangle-free, the only possible neighbours could be
$x_4$ and $x_5$. If $ux_4\in E(G)$, then the set $\{u,x_4,x_5,x_1,x_2\}$ induces a $P_5$, while if $ux_5\in E(G)$, then the set $\{u,x_5,x_4,x_3,x_2\}$ induces a $P_5$
in $G$, any of which is impossible. 

Assume now that any vertex in $V(G)\setminus V(C)$ has exactly one neighbour in $C$. If $y$ is such vertex and its neighbour in $C$ is $x_1$, then it follows from Proposition~\ref{prop:dominated} that $y$ has a neighbour $z$ outside of $C$. However, in such a case either the set $\{z,y,x_1,x_5,x_4\}$ or the set $\{z,y,x_1,x_2,x_3\}$ induces a $P_5$ in $G$, since $z$ can have at most one neighbour in $C$ by our assumption.

Therefore any such graph must be isomorphic to a $C_5$.

{\it The second proof:}
To prove the claim, it suffices to verify that any prime $(C_3,P_5)$-free graph $G$ with minimum degree at least two is isomorphic to a $C_5$, since only prime graph having a degree one vertex is $K_2$ by Proposition~\ref{prop:dominated}. Suppose that $G$ is such a graph, and consider a vertex $x\in V(G)$ with $\deg_G(x)\geq 3$. We let $N_G(x)=\{x_1,\ldots,x_k\}$. 

We first note that the vertices $x_1,\ldots,x_k$ can not have a common neighbour other than $x$ by Proposition~\ref{prop:dominated}. Furthermore, if there exist $i\in [k]$ and a minimal non-empty subset $J\subseteq [k]\setminus \{i\}$ of cardinality at least two such that $N_G(x_i)\subseteq \bigcup_{j\in J} N_G(x_j)$, then we claim that $\IE(G-N_G[x_i])$ is contractible; hence, $\IE(G)\simeq \IE(G-x_i)$, a contradiction, since $G$ is prime. Observe that $\deg_{G-x}(x_i)\geq 2$, and by the minimality of $J$, any vertex $x_j$ for $j\in J$ has at least one private neighbour in 
$N_G(x_i)$, that is, there exists $y_j\in N_G(x_i)\cap N_G(x_j)$ such that $y_j\notin N_G(x_t)$ for any $t\in J\setminus \{j\}$. Furthermore, any two vertices $x_{j_1}$ and $x_{j_2}$ for $j_1,j_2\in J$ can not have a common neighbour in $H:=G-N_G[x_i]$. Assume otherwise that $z\in V(H)\cap N_G(x_{j_1})\cap N_G(x_{j_2})$, then the set $\{z,x_{j_1},x,x_i,y_{j_2}\}$ induces a path in $G$ of length $4$, since $G$ is $C_3$-free. However, this contradicts to the fact that $G$ is $P_5$-free.
Once again, $(C_3,P_5)$-freeness of $G$ forces that either $|J|=2$ or at least one vertex $x_j$ for some $j\in J$ has degree zero in $H$. In the later case, the complex
$\IE(H)$ becomes a cone, so it is contractible. We may therefore assume that $J=\{j_1,j_2\}$ and both vertices $x_{j_1}$ and $x_{j_2}$ have degree at least one in $H$.
At this point, the union $N_H(x_{j_1})\cup N_H(x_{j_2})$ induces a complete bipartite graph in $H$, since $G$ is $P_5$-free. So, we need to have
$N_H(x_{j_1})\subseteq N_H(u)$ for any $u\in N_H(x_{j_2})$. This means that we can remove each such vertex $u\in N_H(x_{j_2})$ one-by-one without altering the homotopy type of $\IE(H)$ by Corollary~\ref{thm:hom-induction}. However, the vertex $x_{j_2}$ becomes isolated eventually, that is, $\IE(H)$ is contractible as claimed.

We may therefore assume that each vertex $x_i$ has a private neighbour $v_i$ in $G$ for each $i\in [k]$. However, since $k\geq 3$ and $G$ is $P_5$-free,
the set $\{v_1,\ldots,v_k\}$ must be a clique of $G$ that contradicts to $G$ being triangle-free.

Therefore, the maximum degree of $G$ must be two, that is, $G$ is isomorphic to a $C_5$. 
\end{proof}
Observe that for any $P_5$-free bipartite graph $B$, we have $\reg(B)=\im(B)$ by Theorem~\ref{thm:3-5}. Note that since bipartite $P_5$-free graphs are weakly chordal, the equality $\reg(B)=\im(B)$ also follows from a result of~\cite{RW2} for such graphs. However, we can extend it further:

\begin{theorem}\label{thm:bip-6}
If $G$ is a bipartite $P_6$-free graph, then $\reg(G)=\im(G)$.
\end{theorem}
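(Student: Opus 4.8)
The plan is to show that any prime $P_6$-free bipartite graph is isomorphic to $K_2$; once this is established, Corollary~\ref{cor:reg-gim} immediately gives $\reg(G)=\im(G;\{K_2\};\mathbf{1})=\im(G)$ because every prime appearing in a prime decomposition of a $P_6$-free bipartite graph must itself be $P_6$-free and bipartite (hereditary property), hence a $K_2$, and an induced decomposition into copies of $K_2$ is exactly an induced matching. So the whole content is the structural claim: \emph{a connected $P_6$-free bipartite graph on at least three vertices contains an open- or closed-dominated vertex}, which by Proposition~\ref{prop:dominated} rules out primeness.

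First I would fix a connected bipartite graph $G=(A\cup B, E)$ with $|V(G)|\geq 3$ that is $P_6$-free and has $\delta(G)\geq 1$; by Proposition~\ref{prop:dominated} it suffices to produce two vertices $x,y$ with $N_G(x)\subseteq N_G(y)$ (note in a bipartite graph with no isolated vertices a closed-domination $N_G[u]\subseteq N_G[v]$ would force $uv\in E$ and then $u,v$ in different parts, which cannot happen since then $v\in N_G[u]\subseteq N_G[v]$ is fine but $u$'s neighbours lie in $v$'s part — actually the clean route is open domination, so I focus on that). The natural approach is a BFS/distance-layering argument: pick a vertex $r$ and let $L_0=\{r\}, L_1, L_2,\ldots$ be the distance layers. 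Since $G$ is $P_6$-free, an induced path on $6$ vertices is forbidden, and in a bipartite graph a BFS tree of depth $\geq 5$ would contain, along a geodesic, an induced $P_6$; hence the eccentricity of every vertex is at most $4$, so $\di(G)\leq 4$ and there are at most five layers $L_0,\dots,L_4$. Then I would analyze the last layer $L_4$ (if nonempty): a vertex $w\in L_4$ has all neighbours in $L_3$, and using $P_6$-freeness together with bipartiteness I would argue that two vertices of $L_4$, or a vertex of $L_4$ and one of $L_3$, must be comparable in neighbourhood, because otherwise one assembles an induced $P_6$ by concatenating a geodesic from $r$ to an ``incomparable witness.'' When $L_4=\emptyset$ one runs the same argument on $L_3$, and so on, the base cases ($\di(G)\leq 2$, e.g. complete bipartite graphs, stars) being handled directly since there an obvious dominated vertex exists.

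The main obstacle I expect is making the ``incomparability forces an induced $P_6$'' step genuinely induced rather than just a walk: given $u,v$ in the outermost layer with private neighbours $u'\in N(u)\setminus N(v)$ and $v'\in N(v)\setminus N(u)$ in the previous layer, one wants a path $u' \!-\! u \!-\! \cdots$ combined with a geodesic back toward the root through $v',v$, but chords can appear (e.g. $u$ adjacent to $v'$, or root-side shortcuts). The fix is to choose $u,v$ as a \emph{closest} incomparable pair in the outermost layer and to pull the geodesics taut, exploiting that in a bipartite graph chords between layers $L_i$ and $L_j$ only occur when $|i-j|=1$, which severely limits the shortcut structure; a careful case split on which of the at most $4$-long connecting path admits a chord then shows each case either yields the desired induced $P_6$ (contradiction) or collapses to a comparability $N(x)\subseteq N(y)$. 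An alternative, possibly cleaner, route that sidesteps the layering bookkeeping is to take a \emph{longest} induced path $Q$ in $G$ (length $\leq 4$ since $G$ is $P_6$-free and connected bipartite hence contains an induced path realizing small diameter) and argue that an endpoint of $Q$ or a leaf attached near an endpoint is dominated; I would present whichever of these two formulations produces the shortest case analysis, and I expect the endpoint-of-longest-induced-path argument, combined with the diameter bound $\di(G)\leq 4$, to be the most economical. Either way, once every prime $P_6$-free bipartite graph is shown to be $K_2$, the theorem follows from Corollary~\ref{cor:reg-gim}.
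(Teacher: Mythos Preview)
Your reduction is wrong at the very first step: the structural claim ``every connected $P_6$-free bipartite graph on at least three vertices has an open- or closed-dominated vertex'' is false, and $C_6$ is already a counterexample. The $6$-cycle is bipartite, it is $P_6$-free (its only induced subgraph on six vertices is $C_6$ itself), and for each vertex $i$ one has $N_{C_6}(i)=\{i-1,i+1\}$; no two of these sets are comparable, and the closed neighbourhoods are likewise pairwise incomparable. So no amount of BFS layering or longest-induced-path analysis can ``force an induced $P_6$'' from an incomparable pair --- there are incomparable pairs and yet no induced $P_6$.

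The point is that $C_6$ is not prime, but \emph{not} because it has a dominated vertex: it fails to be prime because $\reg(C_6-v)=\reg(P_5)=2=\reg(C_6)$. Proposition~\ref{prop:dominated} gives only a sufficient condition for non-primeness, not a necessary one, and your argument implicitly treats it as necessary. The paper's proof is therefore forced to be more indirect. It uses the van~'t~Hof--Paulusma structure theorem (a connected $P_6$-free graph has a dominating complete bipartite subgraph or a dominating induced $C_6$), and for a putative prime $H\ncong K_2$ it shows that $H-N_H[x]$ is $2K_2$-free for a suitable vertex $x$; since a bipartite $2K_2$-free graph is cochordal, $\reg(H-N_H[x])\le 1$, whence $\reg(H)=2$. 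The contradiction then comes not from a dominated vertex but from exhibiting an induced $C_6\subseteq H$, which gives $\im(H)\ge 2=\reg(H)$ and hence a prime factorisation $\{K_2,K_2\}\neq\{H\}$. If you want to keep a purely structural approach, you must at minimum allow for graphs like $C_6$ (and its blow-ups) that have no dominated vertex; handling those will push you toward a regularity argument of the same flavour as the paper's.
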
  
\begin{proof}
Once again, it suffices to prove that only induced prime in such a graph is isomorphic to a $K_2$. So, let $H$ be a prime and $P_6$-free bipartite graph. Following the characterization of $P_6$-free graphs due to van't Hof and Paulusma~\cite{HP}, such a graph must contain either a dominating complete bipartite subgraph or else an induced dominating $C_6$. 
We accordingly divide the proof into two cases, while noting that the methods of the proof in both cases are almost identical.
 
Suppose that $K:=K_{m,n}$ be a dominating complete bipartite subgraph of $H$,
and assume for contradiction that $H\ncong K_2$. So, let $V(H)=U\cup V$ and $V(K)=U'\cup V'$ such that $U'\subseteq U$ and $V'\subseteq V$. By Proposition~\ref{prop:dominated}, the equality $m=n=1$ is not possible so that we may suppose that $n\geq 2$. Furthermore, the graph $H$ can not contain any dominated vertex, which is again due to Proposition~\ref{prop:dominated}.

{\it Claim $1$.} $H$ contains an induced $C_6$.

{\it Proof of Claim $1$.} Let $x_1,x_2\in U'$ be given. Since they can not dominate each other, there exist $a_{12}, a_{21}\in V\setminus V'$ such that
$a_{12}\in N_H(x_1)\setminus N_H(x_2)$ and $a_{21}\in N_H(x_2)\setminus N_H(x_1)$. Pick a vertex $y_1\in V'$, and since $a_{12}$ is not dominated,
it has a neighbour, say $b_{12}\in U\setminus U'$ such that $b_{12}y_1\notin E(H)$. However, since $H$ is $P_6$-free, the edge $b_{12}a_{21}$ must be present in $H$; hence, the set $\{x_1,y_1,x_2,a_{21},b_{12},a_{12}\}$ induces the desired $C_6$.

{\it Claim $2$.} For any vertex $x\in U'$, the graph $T=H-N_H[x]$ is $2K_2$-free.

{\it Proof of Claim $2$.} Assume otherwise that $M$ is an induced matching in $T$ having order at least two. Observe that if $b\in V\setminus V'$ is an end vertex of an edge in $M$, since it can not be dominated by any vertex $y\in V'$, it has a neighbour $c_{(b,y)}\in U\setminus U'$ such that
$yc_{(b,y)}\notin E(H)$. 

{\it Case $2.1$.} Suppose that $V(M)\cap U'=\emptyset$ so that $M$ contains edges $a_1b_1,a_2b_2$ with $a_1,a_2\in U\setminus U'$ and $b_1,b_2\in V\setminus V'$. Since $K$ is dominating, the vertices $a_1$ and 
$a_2$ (resp. $b_1$ and $b_2$) have at least one neighbour in $V'$ (resp. in $U'$). 

{\it Subcase $2.1.(i)$.} Assume that $a_1,a_2\in N_H(y_1)$ and $b_1,b_2\in N_H(x_1)$ for some $y_1\in V'$ and $x_1\in U'\setminus \{x\}$. 
In such a case, we must have $c_{(b_1,y_1)}b_2\notin E(H)$, since otherwise the set
$\{x,y_1,a_1,b_1,c_{(b_1,y_1)},b_2\}$ induces a $P_6$. But then the set $\{c_{(b_1,y_1)},b_1,a_1,y_1,a_2,b_2\}$ induces a $P_6$ in $H$, a contradiction.

{\it Subcase $2.1.(ii)$.} Assume that $a_1,a_2\in N_H(y_1)$, while there exist distinct vertices $x_1,x_2\in U'$ such that $b_1\in N_H(x_1)\setminus N_H(x_2)$ and $b_2\in N_H(x_2)\setminus N_H(x_1)$. 
If $c_{(b_1,y_1)}b_2\notin E(H)$, then the set $\{c_{(b_1,y_1)},b_1,a_1,y_1,x_2,b_2\}$, and if
$c_{(b_1,y_1)}b_2\in E(H)$, then the set $\{x,y_1,x_2,b_1,c_{(b_1,y_1)},b_2\}$ induces a $P_6$ in $H$, both of which is impossible.

{\it Subcase $2.1.(iii)$.} Assume that $b_1,b_2\in N_H(x_1)$, while there exist distinct vertices $y_1,y_2\in V'$ such that $a_1\in N_H(y_1)\setminus N_H(y_2)$ and $a_2\in N_H(y_2)\setminus N_H(y_1)$. 
If $c_{(b_1,y_1)}y_2\notin E(H)$, the set $\{c_{(b_1,y_1)},b_1,a_1,y_1,x,y_2\}$ induces a $P_6$;
hence, $c_{(b_1,y_1)}y_2\in E(H)$. On the other hand, if $c_{(b_1,y_1)}b_2\notin E(H)$, then the set
$\{b_2,a_2,y_2,c_{(b_1,y_1)},b_1,a_1\}$, and if
$c_{(b_1,y_1)}b_2\in E(H)$, then the set $\{x,y_1,a_1,b_1,c_{(b_1,y_1)},b_2\}$ induces a $P_6$ in $H$.

{\it Subcase $2.1.(iv)$.} Assume that there exist distinct vertices $x_1,x_2\in U'$ and $y_1,y_2\in V'$ such that $b_1\in N_H(x_1)\setminus N_H(x_2)$, $b_2\in N_H(x_2)\setminus N_H(x_1)$ and $a_1\in N_H(y_1)\setminus N_H(y_2)$ and $a_2\in N_H(y_2)\setminus N_H(y_1)$. 
If $c_{(b_1,y_1)}b_2\notin E(H)$, then the set $\{c_{(b_1,y_1)},b_1,a_1,y_1,x_2,b_2\}$, and if
$c_{(b_1,y_1)}b_2\in E(H)$, then the set $\{x,y_1,x_2,b_1,c_{(b_1,y_1)},b_2\}$ induces a $P_6$ in $H$, both of which is impossible.

{\it Case $2.2$.} Suppose that $|V(M)\cap U'|=1$. We may therefore assume that $M$ contains edges of the form $x_1b_1$ and $a_2b_2$, where
$x_1\in U'\setminus \{x\}$, $a_2\in U\setminus U'$ and $b_1,b_2\in V\setminus V'$. Pick a vertex $y_1\in V'$.  If
$c_{(b_1,y_1)}b_2\in E(H)$, then the set $\{x,y_1,x_1,b_1,c_{(b_1,y_1)},b_2\}$ induces a $P_6$ in $H$ so that we must have $c_{(b_1,y_1)}b_2\notin E(H)$. However, it then follows that the edges $b_1c_{(b_1,y_1)}$ and $a_2b_2$ form an induced matching that shares no vertex with $U'$, which is not possible by Case $2.1$.

{\it Case $2.3$.} Suppose that $|V(M)\cap U'|=2$, and let
$M$ contains the edges $x_1b_1$ and $x_2b_2$ such that $x_1,x_2\in U'\setminus \{x\}$ and $b_1, b_2\in V\setminus V'$. Once again, pick a vertex $y_1\in V'$. If $c_{(b_1,y_1)}b_2\notin E(H)$, then the set $\{c_{(b_1,y_1)},b_1,x_1,y_1,x_2,b_2\}$, while
if $c_{(b_1,y_1)}b_2\in E(H)$, then the set $\{x,y_1,x_2,b_2,c_{(b_1,y_1)},b_1\}$ induces a $P_6$ in $H$, both of which is not possible.

This completes the proof of the Claim $2$. Now, since the graph $T$ is $2K_2$-free and bipartite, it follows that $T$ is a cochordal graph so that  $\reg(T)=1$, which in turn implies that $\reg(H)=2$, since $H$ is prime. 
However, such a graph can not be prime, since it contains an induced $C_6$ by Claim $1$.
 
Assume now that $H$ has a dominating induced $6$-cycle $C$, say on the vertices $x_1,\ldots,x_6$ such that $x_ix_{i+1}\in E(H)$ in the cyclic fashion. Since a $6$-cycle itself is not a prime graph, the set $V(H)\setminus V(C)$ is not empty.

{\it Claim $3$.} Any vertex $x\in V(H)\setminus V(C)$ has at least two neighbours in $C$, that is, $|N_C(x)|\geq 2$. 

{\it Proof of Claim $3$.} Since $V(C)$ is dominating in $H$, any such vertex has at least one neighbour in $V(C)$, and if it has a unique neighbour, then $H$ contains an induced $P_6$ which is not possible.

{\it Claim $4$.} For any vertex $x_i\in V(C)$, the graph $H-N_H[x_i]$ is $2K_2$-free.

{\it Proof of Claim $4$.} Consider the vertex $x_5$, and suppose that the graph $L=H-N_H[x_5]$ has an induced matching $M$ of cardinality $2$.

{\it Subclaim $4.1$.} $M\cap \{x_1x_2,x_2x_3\}=\emptyset$.

{\it Proof of Subclaim $4.1$.} Assume without loss of generality that $M=\{x_1x_2, xy\}$ for some $x,y\in V(H)\setminus V(C)$. Note that $x,y\notin N_H[x_5]$.
Now, if $xx_3\in E(H)$, then one of the vertices $x_4$ or $x_6$ must be adjacent to $x$ by Claim $3$. However, we then necessarily have $|N_C(y)\cap V(C)|\leq 1$, that contradicts to Claim $3$. Furthermore, if none of the vertices $x$ and $y$ is not adjacent to $x_3$, then one of these vertices has no neighbours in $V(C)$ which is not possible again by Claim $3$.

{\it Subclaim $4.2$.} $V(M)\cap \{x_1,x_2,x_3\}=\emptyset$.

{\it Proof of Subclaim $4.2$.} Assume without loss of generality that $M=\{x_ju, xy\}$ for some $u,x,y\in V(H)\setminus V(C)$ and $j\in [3]$. By the symmetry, it suffices to consider the cases only when $j\in \{1,3\}$ or $j=2$. 

{\it Case $4.2.(i)$.} $j=1$. In this case, we note that one of $x$ and $y$ is adjacent to either $x_2$ or $x_3$. So, we let $xx_2\in E(H)$. It follows that
$x_4,x_6\in N_C(y)$. However, this forces $|N_C(x)\cap V(C)|\leq 1$, a contradiction.

{\it Case $4.2.(ii)$.} $j=2$. It is sufficient to consider the case, where $x_1,x_3\in N_C(y)$ and $x_4,x_6\in N_C(x)$. On the other hand, the vertex $u$ must be adjacent to at least one of the vertices $x_4$ and $x_6$. If $ux_4\in E(H)$, then the set $\{y,x_1,x_2,u,x_4,x_5\}$ induces a $P_6$ in $H$, while if $ux_6\in E(H)$, then the set $\{x_5,x_6,u,x_2,x_3,y\}$ induces a $P_6$ in $H$, both of which is impossible. 

We may therefore assume that $M=\{xy,ab\}$ for some $x,y,a,b\in V(H)\setminus V(C)$. Again by Claim $3$, we note that at least one of the end vertex of the edges $xy$ and $ab$ has exactly two neighbours in $\{x_1,x_2,x_3\}$. So, assume that $x_1,x_3\in N_C(x)\cap N_C(a)$. It then follows that each of the vertices $y$ and $b$ has at least two neighbours in $\{x_2,x_4,x_6\}$. If $x_6\in N_C(y)\cap N_C(b)$ while $x_4\notin N_C(y)\cup N_C(b)$, then the set $\{b,x_6,y,x,x_3,x_4\}$ induces a $P_6$ in $H$. Thus,
we must have that at least one of $yx_4$ and $bx_4$ as an edge in $H$. However, if $yx_4\in E(H)$, then the set $\{a,x_1,x,y,x_4,x_5\}$, and if $bx_4\in E(H)$, then the set
$\{x_5,x_4,b,a,x_1,x\}$ induces a $P_6$ in $H$, any of which is impossible. By the symmetry, the case $x_4\in N_C(y)\cap N_C(b)$ while $x_6\notin N_C(y)\cup N_C(b)$ can be treated similarly. This completes the proof of Claim $4$. 

Now, as in the first case, since the graph $L$ is $2K_2$-free and bipartite, it follows that $L$ is a cochordal graph so that  $\reg(L)=1$, which in turn implies that $\reg(H)=2$, since $H$ is prime. 
However, such a graph can not be prime, since it contains an induced $C_6$.
\end{proof}

Our next aim is to provide an answer to a question 
of Hibi et al.~\cite{HHKT} (see Question $1.11$) regarding the existence of some infinite family of connected graphs satisfying $\im(G)<\reg(G)=\m(G)$,
where $\m(G)$ is the matching number of $G$. We recall that the inequality 
$\reg(G)\leq \m(G)$ naturally holds for any graph $G$~\cite{HVT}. They already observe that the only graph up to seven vertices with these constraints is $C_5$, and remark that such graphs might be rare. Indeed, we confirm their observation by showing that the graph $C_5$ is unique with these properties.

\begin{lemma}\label{lem:reg-match}
Let $H$ be a graph with $\reg(H)=\m(H)$. If $\{H_1,\ldots,H_k\}$ is a prime factorization of $H$, then $\reg(H_i)=\m(H_i)$ for each $i\in [k]$.
\end{lemma}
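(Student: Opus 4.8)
The plan is to combine the two basic inequalities that constrain the regularity and the matching number under an induced decomposition. Recall that for an induced decomposition $\{H_1,\ldots,H_k\}\in \ID(H)$ we have $\IE(H)[\bigcup_i V(H_i)]\cong \IE(H_1)\ast\cdots\ast\IE(H_k)$, so the vertices of $H$ outside $\bigcup_i V(H_i)$ are adjacent to no vertex of any $H_i$, and the subgraph $\widehat H:=H[\bigcup_i V(H_i)]$ is the disjoint union $H_1\sqcup\cdots\sqcup H_k$. Two facts will be used: first, regularity is additive over disjoint unions, and since $\{H_1,\ldots,H_k\}$ is a \emph{prime factorization}, Corollary~\ref{cor:reg-gim} (or Theorem~\ref{thm:complex-reg-dec}) gives $\reg(H)=\sum_{i=1}^k\reg(H_i)$; second, the matching number is superadditive over induced (in fact over any vertex-disjoint) subgraphs, namely $\m(H)\geq \sum_{i=1}^k \m(H_i)$, since a union of matchings in the $H_i$ is a matching in $H$.

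First I would write the chain of (in)equalities
\begin{equation*}
\sum_{i=1}^k \reg(H_i)=\reg(H)=\m(H)\geq \sum_{i=1}^k \m(H_i).
\end{equation*}
Next I would invoke the general bound $\reg(H_i)\leq \m(H_i)$, valid for every graph (this is the inequality of~\cite{HVT} recalled just before the lemma), to get $\reg(H_i)\leq \m(H_i)$ for each $i$. Summing these over $i$ forces
\begin{equation*}
\sum_{i=1}^k \reg(H_i)\leq \sum_{i=1}^k \m(H_i)\leq \sum_{i=1}^k \reg(H_i),
\end{equation*}
so all the inequalities are equalities. Finally, since $\reg(H_i)\leq \m(H_i)$ holds term by term while the two sums agree, we must have $\reg(H_i)=\m(H_i)$ for every $i\in[k]$; otherwise a strict inequality in one summand would make the left sum strictly smaller.

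This argument is essentially a pigeonhole/squeeze on two sums, so there is no real obstacle beyond making sure the two structural facts are cleanly justified: the additivity $\reg(H)=\sum_i\reg(H_i)$ for a prime \emph{factorization} (as opposed to a mere decomposition), which is exactly the content of Theorem~\ref{thm:complex-reg-dec}/Corollary~\ref{cor:reg-gim}, and the superadditivity $\m(H)\geq\sum_i\m(H_i)$, which follows because the $H_i$ are vertex-disjoint induced subgraphs of $H$ and a disjoint union of matchings is again a matching. The one point worth a sentence of care is that we do \emph{not} in general have equality $\m(H)=\sum_i\m(H_i)$ — the vertices of $H$ outside the decomposition, together with the edges among them, may well increase the matching number — but this does not matter: only the inequality $\m(H)\geq\sum_i\m(H_i)$ is needed, and combined with the hypothesis $\reg(H)=\m(H)$ and the per-factor bound it still squeezes each $\reg(H_i)$ up to $\m(H_i)$.
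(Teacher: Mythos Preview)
Your proof is correct and essentially identical to the paper's: both combine the additivity $\reg(H)=\sum_i\reg(H_i)$ for a prime factorization, the per-factor bound $\reg(H_i)\leq\m(H_i)$, and the superadditivity $\sum_i\m(H_i)\leq\m(H)$ to force equality in each summand. The paper phrases it as a one-line contradiction, while you phrase it as a squeeze, but the content is the same.
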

\begin{proof}
If there exists $j\in [k]$ such that $\reg(H_j)<\m(H_j)$, then 
\begin{equation*}
\m(H)=\reg(H)=\reg(H_1)+\ldots+\reg(H_k)<\m(H_1)+\ldots+\m(H_k)\leq m(H),
\end{equation*}
a contradiction.
\end{proof}

\begin{theorem}\label{thm:hibi}
If $G$ is a connected graph satisfying $\im(G)<\reg(G)=\m(G)$, then $G\cong C_5$.
\end{theorem}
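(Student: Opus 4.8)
The plan is to combine Lemma~\ref{lem:reg-match} with a classification of connected prime graphs satisfying $\reg(H)=\m(H)$, and then to use a short connectivity argument to force $G$ to be one of these. So let $G$ be connected with $\im(G)<\reg(G)=\m(G)$ and fix a prime factorization $\{H_1,\ldots,H_k\}\in\PF(G)$. Since the $H_i$ are pairwise vertex-disjoint induced subgraphs of $G$ with no edge of $G$ between distinct ones, the subgraph of $G$ on $\bigcup_iV(H_i)$ is the disjoint union $H_1\sqcup\cdots\sqcup H_k$; hence $\im(G)\ge\sum_i\im(H_i)$ and $\m(G)\ge\sum_i\m(H_i)$, while $\reg(G)=\sum_i\reg(H_i)$. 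By Lemma~\ref{lem:reg-match} each $\reg(H_i)=\m(H_i)$, and since $\im(H_i)\le\reg(H_i)$ always while $\sum_i\reg(H_i)=\reg(G)>\im(G)\ge\sum_i\im(H_i)$, at least one factor $H_j$ satisfies $\im(H_j)<\reg(H_j)=\m(H_j)$; in particular there is at least one such factor.

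The core is the claim that a connected prime graph $H$ with $\reg(H)=\m(H)$ is either $K_2$ or $C_5$. The intended mechanism is as follows. If $H\ne K_2$, then $H$ has no (open or closed) dominated vertex by Proposition~\ref{prop:dominated}, so $\delta(H)\ge 2$. For every vertex $v$, primeness gives $\reg(H-v)<\reg(H)$, so Corollary~\ref{cor:induction-sc} forces $\reg(H)\le\reg(H-N_H[v])+1$; on the other hand a maximum matching of $H-N_H[v]$ extended by an edge at $v$ shows $\m(H-N_H[v])\le\m(H)-1$, and the bound $\reg\le\m$ of~\cite{HVT} then collapses everything to the chain
\begin{equation*}
\reg(H-N_H[v])=\m(H-N_H[v])=\m(H)-1,
\end{equation*}
valid for all $v\in V(H)$. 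One then argues by induction on $t:=\m(H)$: the base $t=1$ is immediate, since a connected graph with $\m=1$ is a star or a triangle, of which only $K_2$ is prime; for $t\ge 2$ one applies the inductive hypothesis together with Lemma~\ref{lem:reg-match} to see that every prime factor of each $H-N_H[v]$, having regularity equal to its matching number and smaller than $t$, is a copy of $K_2$ or $C_5$, and this structural information on $H-N_H[v]$ for all $v$, combined with $\delta(H)\ge 2$ and the fact that deleting any closed neighbourhood drops $\m$ by exactly one, should force $t=2$. Finally, for $t=2$ a direct analysis of connected prime graphs of minimum degree at least two with $\reg=\m=2$ (for each $v$ the graph $H-N_H[v]$ is then a star or triangle together with isolated vertices) yields $H\cong C_5$. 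I expect the step forcing $t=2$ to be the main obstacle, since it requires controlling how the $K_2$- and $C_5$-factors of the various $H-N_H[v]$ sit together inside $H$.

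Granting the claim, the assembly is quick. The factor $H_j$ above is $C_5$; more generally every $H_i$ with $\reg(H_i)>\im(H_i)$ is $C_5$, and every $H_i$ with $\reg(H_i)=\im(H_i)$ is a connected prime graph with $\im(H_i)=\reg(H_i)=\m(H_i)$, hence $K_2$ (as $C_5$ has $\im<\reg$). Thus $\{H_1,\ldots,H_k\}$ consists of $a$ copies of $K_2$ and $b\ge 1$ copies of $C_5$, and $\reg(G)=a+2b=\m(G)$. Since $\m(G)\ge\m\bigl(G[\bigcup_iV(H_i)]\bigr)=a+2b$, equality holds, so $G$ has no matching of size exceeding $a+2b$. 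Suppose $\bigcup_iV(H_i)\ne V(G)$; by connectedness there is an edge $uw$ of $G$ with $u\notin\bigcup_iV(H_i)$ and $w\in V(H_i)$ for some $i$. If some such edge reaches a $C_5$-factor, say on $v_1,\ldots,v_5$ with $v_1=w$, then $\{uv_1,v_2v_3,v_4v_5\}$ together with maximum matchings of the remaining factors is a matching of $G$ of size $a+2b+1$, a contradiction; and if every such edge reaches only $K_2$-factors, then each $C_5$-factor, having no $G$-edge leaving its vertex set, is a connected component of $G$, forcing $G=C_5$ and contradicting $\bigcup_iV(H_i)\ne V(G)$. Hence $\bigcup_iV(H_i)=V(G)$, so $G=H_1\sqcup\cdots\sqcup H_k$; connectedness gives $k=1$, and $\im(G)<\reg(G)$ rules out $G=K_2$, so $G\cong C_5$.
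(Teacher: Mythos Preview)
Your overall architecture matches the paper's: reduce to classifying connected prime graphs $H$ with $\reg(H)=\m(H)$, and then assemble. The assembly step at the end is correct and in fact slightly cleaner than the paper's corresponding paragraph. The problem is the core classification, where you explicitly leave a gap: you say the structural information on the various $H-N_H[v]$ ``should force $t=2$'' and flag this as ``the main obstacle'' without resolving it, and your sketch for $t=2$ is also just an assertion that a direct analysis ``yields $H\cong C_5$''. Neither of these is a proof.

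The paper does not attempt to first force $t=2$. Instead, for a prime $G$ with $\reg(G)=\m(G)$ it takes the prime factorization of $H=G-N_G[v]$ into copies of $K_2$ and $C_5$ and splits into two cases. If any $C_5$ occurs, a direct matching-count contradiction rules this out (much as in your assembly argument). If only $K_2$'s occur, then $\im(H)=\m(H)$, and the paper invokes two external structural facts you do not mention: Theorem~30 of~\cite{BC}, which guarantees a closed-dominated pair $p,q$ in any graph with $\im=\m$, and Lemma~1 of~\cite{KR}, which constrains the local structure around such a pair. These tools, combined with the primeness of $G$ (so $p$ cannot be dominated in $G$ itself), drive a subcase analysis that either produces a matching larger than $\m(G)$ or pins $G$ down to $C_5$. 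Without something playing the role of these results, your inductive step has no engine: knowing that each $H-N_H[v]$ decomposes into $K_2$'s and $C_5$'s, together with $\delta(H)\ge2$ and $\m(H-N_H[v])=\m(H)-1$, is not by itself enough to bound $\m(H)$.
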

\begin{proof}
We first prove the claim for prime graphs. So, let $G$ be a prime graph satisfying $\im(G)<\reg(G)=\m(G)$.
We proceed by an induction on the order of $G$. If $v\in V(G)$, we then have $\reg(G)=1+\reg(G-N_G[v])$. Observe that $\reg(G-N_G[v])=\m(G-N_G[v])$, since otherwise
$\reg(G)=1+\reg(G-N_G[v])<1+\m(G-N_G[v])\leq \m(G)$, which is not possible by the assumption. Now, let $\HE=\{H_1,\ldots,H_k\}$ be a prime factorization of 
$H:=G-N_G[v]$. It then follows from Lemma~\ref{lem:reg-match} that $\reg(H_i)=\m(H_i)$ for each $i\in [k]$. 
On the other hand, if $\im(H_i)<\reg(H_i)$ for some $i\in [k]$, we have 
$H_i\cong C_5$ by the induction hypothesis. Furthermore, if $\im(H_j)=\reg(H_j)$
for some $j\in [k]$, we then have $H_j\cong K_2$, since, in such a case, the equality $\im(H_j)=\m(H_j)$ implies that the graph $H_j$ must contain a (closed) dominated vertex by Theorem~$30$ of~\cite{BC}, which is only possible when $H_j\cong K_2$ by Proposition~\ref{prop:dominated}. Therefore, the prime factorization $\HE$ can be divided into two pieces $\HE_1:=\{nK_2\}$ and 
$\HE_2:=\{mC_5\}$ such that $n+m=k$.

{\it Case $1$.} $m\neq 0$. If we define $T:=\cup_{i=1}^k V(mC_5)$, then the set $U:=V(H)\setminus T$ is an independent set, which is due to the fact that $\reg(H)=\m(H)$. Moreover, by the same reason, none of the vertex $u\in U$ can have a neighbour in $T$. However, this implies that
each $C_5$ in $\HE_2$ is a connected component of $H$. On the other hand, the vertex $v$ has at least two neighbours, say $w$ and $z$ in $G$ by Proposition~\ref{prop:dominated}, and since $G$ is connected, at least one of these two vertices is adjacent to a vertex in $T$. Now, if $wp\in E(G)$ for some $p\in T$, then $\reg(G)<\m(G)$, since the addition of the edges $wp$ and
$vz$ increases the matching number of $G$ by two, a contradiction. 

{\it Case $2$.} $m=0$. In such a case, we necessarily have $\im(H)=\reg(H)=\m(H)$. If we define $L:=\cup_{i=1}^k V(kK_2)$, the set
$W:=V(H)\setminus L$ is an independent set as in the previous case.
Once again, by Theorem~$30$ of~\cite{BC}, the graph $H$ contains two vertices, say $p$ and $q$ such that $N_H[p]\subseteq N_H[q]$. However, since $G$ is prime, such a vertex $p$ can not be dominated
in $G$; hence, there exists $x\in N_G(v)$ such that $xp\in E(G)$, while $xq\notin E(G)$. Since $W$ is an independent set, one end of the edge $pq$ must be contained in $L$. We also note that the vertex $v$ has at least one neighbour $y$ other than $x$ by again Proposition~\ref{prop:dominated}.

{\it Subcase $2.1$.} If $q\in L$, while $p\notin L$, then the set $\HE\cup \{vy,xp\}$ is a matching in $G$ of size one more than $\reg(G)=\m(G)$, which is not possible.

{\it Subcase $2.2$.} Let $p\in L$ and $q\notin L$. Then there exists $t\in L$,
such that the edge $pt$ is in the prime factorization $\HE$. It then follows from Lemma~$1$ of~\cite{KR} that $tq\in E(H)$ and $\deg_H(p)=\deg_H(t)=2$.
Now, the set $\HE'\cup \{xp, vy\}$, where $\HE':=(\HE\setminus \{pq\})\cup \{tq\}$, is then a matching in $G$ of size one more than $\reg(G)=\m(G)$, which is not possible.

{\it Subcase $2.3$.} Assume that $p,q\in L$. Observe that the case where the vertices $p$ and $q$ have a common neighbour in $H$ is not possible by Subcase $2.2$. However, it then follows from Lemma~$1$ of~\cite{KR} that
we must have $\deg_H(p)=1$. Now, if there exists $h\in W$ such that $qh\in E(H)$, then the set $\HE'':=(\HE\setminus \{pq\})\cup \{xp,qh, vy\}$ is a matching in $G$ of size one more than $\reg(G)=\m(G)$, a contradiction.
So, we may further assume that $\deg_H(q)=1$. However, this forces that
the set $W$ is empty, that is, $H\cong kK_2$. On the other hand, since $G$ is prime, the vertex $q$ must have a neighbour, say $y$, in $N_G(v)$.
If the vertex $v$ has a neighbour $s$ other than $x$ and $y$, then the set $\HE''':=(\HE\setminus \{pq\})\cup \{xp,qy, vs\}$ is a matching in $G$ of size one more than $\reg(G)=\m(G)$; hence, $N_G(v)=\{x,y\}$.
Therefore, we must have either $k=1$ so that $G\cong C_5$ or else
the graph $G-N_G[y]$ is a star, that is, $G-N_G[y]\cong K_{1,l}$ for some $l\geq 1$. But, the latter case contradicts to the fact that $G$ is a prime graph.

Finally, assume that $G$ is not a prime graph, and let $\GE=\{G_1,\ldots,G_r\}$ be a prime factorization of $G$.
It then follows from Lemma~\ref{lem:reg-match} that $\reg(G_i)=\m(G_i)$
for any $i\in [r]$. Now, if $\im(G_i)=\reg(G_i)$ for each $i\in [r]$, then $G_i\cong K_2$ for by Proposition~\ref{prop:dominated} and Theorem~$30$ of~\cite{BC} so that $r=\im(G)=\reg(G)=\m(G)$, a contradiction. Therefore,
there exists $j\in [r]$ such that $\im(G_j)<\reg(G_j)$. Since $G_j$ is prime, then $G_j\cong C_5$  by the above argument. However, since $G$ is not prime, the set $V(G)\setminus \cup_{i=1}^rV(G_i)$ can not be empty. On the other hand, since $G$ is connected, we can use any vertex in this set together with those in $V(G_j)$ to create a matching in $G$ of size larger than $m(G)$, a contradiction.
\end{proof}

As we have already mentioned in Section~\ref{sect:intro}, the determination of bipartite prime graphs can be restricted to those having maximum degree at most three by the effect of Lozin operations (see Section~\ref{sect:lozin}). On this direction, we have the following partial result on the structure of such graphs.  
\begin{theorem}\label{thm:2-connected}
If $G$ is a prime bipartite graph with $\D(G)\leq 3$ and $|G|\geq 3$, then $G$ is $2$-connected.
\end{theorem}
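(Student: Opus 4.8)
The plan is to argue by contradiction: suppose $G$ is a prime bipartite graph with $\Delta(G)\le 3$, $|V(G)|\ge 3$, but $G$ is not $2$-connected. Since $G$ is prime it is connected (primeness is defined only for connected graphs), so the only way it can fail to be $2$-connected is to have a cutvertex $v$. I would then write $G-v$ as a disjoint union of its connected components, and regroup them so that $G=G_1\cup_v G_2$, where $G_1,G_2$ are the induced subgraphs obtained by taking $v$ together with (a grouping of) the components of $G-v$, so that $G_1\cap G_2=\{v\}$ and every edge of $G$ lies in $G_1$ or in $G_2$. Because $\deg_G(v)\le 3$, one of the two sides, say $G_1$, satisfies $\deg_{G_1}(v)=1$; so $v$ is a leaf of $G_1$, and we can write $G_1=H+$\{a pendant edge $vw$\} for some vertex $w$, or even have $G_1\cong K_2$.

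The key step is to compare $\reg(G)$ with the regularities of the pieces. Using Corollary~\ref{cor:induction-sc} on the cut-vertex $v$ gives $\reg(G)\le\max\{\reg(G-v),\reg(G-N_G[v])+1\}$, and since $G$ is prime we actually have $\reg(G)=\reg(G-N_G[v])+1$ and $\reg(G-v)<\reg(G)$. Now $G-N_G[v]$ is a disjoint union $(G_1-N_{G_1}[v])\sqcup(G_2-N_{G_2}[v])$ of graphs sitting in the two sides (deleting $N_G[v]$ respects the decomposition because the two sides only share $v$), so its independence complex is the join, and $\reg(G-N_G[v])=\reg(G_1-N_{G_1}[v])+\reg(G_2-N_{G_2}[v])$. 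Similarly $G-v$ splits as $(G_1-v)\sqcup(G_2-v)$, so $\reg(G-v)=\reg(G_1-v)+\reg(G_2-v)$. On the side $G_1$ where $v$ is a leaf with neighbour $w$: then $N_{G_1}[v]=\{v,w\}$, hence $G_1-N_{G_1}[v]=G_1-\{v,w\}$, and $\{vw\}\cup(\text{a prime factorization of }G_1-\{v,w\})$ is an induced decomposition of $G_1$, giving $\reg(G_1)\ge 1+\reg(G_1-v-w)=1+\reg(G_1-N_{G_1}[v])$; on the other hand $\reg(G_1-v)\ge\reg(G_1-v-w)=\reg(G_1-N_{G_1}[v])$ since deletion cannot increase regularity. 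Combining:
\begin{equation*}
\reg(G)=1+\reg(G_1-N_{G_1}[v])+\reg(G_2-N_{G_2}[v])\le \reg(G_1-v)+1+\reg(G_2-N_{G_2}[v]).
\end{equation*}
Meanwhile, applying Corollary~\ref{cor:induction-sc} to $G_2$ at $v$ and using primeness-type bookkeeping gives $\reg(G)\le\reg(G_1-v)+\reg(G_2-v)$ as well; but $\reg(G_2-v)\le$ either $\reg(G_2-v)$ directly compared against $\reg(G_2-N_{G_2}[v])+1$ via Dao--Huneke--Schweig. The point I am driving at is that the decomposition $\{$prime factorization of $G_1\} \cup \{$prime factorization of $G_2\}$ is an induced decomposition of $G$ (every edge is in one side, the only shared vertex $v$ has all its $G_1$-neighbours disjoint from its $G_2$-neighbours, and independence across the join is automatic), so $\reg(G)=\sum\reg(\text{prime pieces of }G_1)+\sum\reg(\text{prime pieces of }G_2)$ by Theorem~\ref{thm:reg-dec}; since $|V(G)|\ge 3$ at least one of $G_1,G_2$ properly contains the shared $K_2$ at $v$, so this expresses $\reg(G)$ as a sum over at least two primes each strictly smaller than $G$, forcing $\reg(G-x)=\reg(G)$ for some $x$ in the strictly smaller side — contradicting primeness.

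Let me make the final contradiction clean: a cleaner route is to observe directly that if $v$ is a cutvertex with $\deg_G(v)\le 3$, pick the side $G_1$ with $\deg_{G_1}(v)=1$ and let $w$ be the unique $G_1$-neighbour of $v$; let $z$ be any neighbour of $v$ in $G_2$ (exists since $|V(G)|\ge 3$ and $G_2\ne\{v\}$ — if $G_2=\{v\}$ relabel). Now $N_G(w)\cap(V(G_2)\setminus\{v\})=\emptyset$, so the pair $vw$ forms a $K_2$-component-like piece: one checks $\reg(G)\ge 1+\reg(G-\{v,w\})=1+\reg(G-N_G[v])$, and since $G$ is prime this is an equality, i.e.\ $\reg(G-x)<\reg(G)$ for all $x$. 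But take $x=w$: then $G-w$ has $v$ as an isolated vertex (its only neighbour was $w$), so $\reg(G-w)=\reg((G-w)-v)=\reg(G-\{v,w\})=\reg(G)-1$; fine, that is consistent. Take instead $x$ a vertex of $G_2$ far from $v$: then $G-x$ still contains the pendant $K_2$ $\{v,w\}$ plus $G_2-x$, and $\reg(G-x)\ge 1+\reg((G-x)-\{v,w\})=1+\reg(G_1-\{v,w\})+\reg(G_2-x-v)$; choosing $x$ so that $\reg(G_2-x)=\reg(G_2)$ (possible unless $G_2$ itself is prime, in which case use $\reg(G_2-N_{G_2}[x])$-bookkeeping) recovers $\reg(G-x)=\reg(G)$, the desired contradiction.

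The main obstacle I anticipate is the bookkeeping in the inductive step when \emph{both} sides $G_1,G_2$ are themselves prime (so we cannot pass to a strictly smaller non-prime piece and invoke induction on order): here one must argue directly at the level of independence complexes that $\Ind(G)\simeq\Ind(G_1)*\Ind(G_2)$ when $G_1\cap G_2=\{v\}$ — this is \emph{not} literally a join since $v$ is shared — so one needs the standard Mayer--Vietoris / gluing-along-a-vertex computation for independence complexes (or the $\Add/\Del$ moves of Theorem~\ref{thm:isolating} to detach the pendant $K_2$ up to homotopy), to conclude $\reg(G)=\reg(G_1)+\reg(G_2)$ with both summands positive and both pieces proper induced subgraphs, directly violating primeness of $G$. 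Getting that gluing formula stated correctly, and handling the degenerate case $G_1\cong K_2$ (which is fine and in fact typical — it says $G$ has a pendant edge, and a graph with a pendant vertex $v$ whose neighbour $w$ has another neighbour is never prime by Proposition~\ref{prop:dominated} applied to... actually more carefully, via the $K_2$ plus $G-N_G[v]$ decomposition), is where the care is needed.
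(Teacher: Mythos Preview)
Your proposal has the right overall shape — contradiction via a cutvertex $v$, split $G-v$ into pieces, use the pendant side — but it never closes, and it contains at least one genuine error. When you write ``take $x=w$: then $G-w$ has $v$ as an isolated vertex (its only neighbour was $w$)'', this is simply false: you chose $w$ to be the unique neighbour of $v$ on the $G_1$-side, but $v$ still has its neighbours in $G_2$, so $G-w$ does not isolate $v$. The subsequent attempt to find some $x\in G_2$ with $\reg(G_2-x)=\reg(G_2)$ likewise runs aground exactly where you flag it (what if $G_2$ is prime?), and the hoped-for gluing formula $\reg(G)=\reg(G_1)+\reg(G_2)$ for a one-vertex amalgam is not a theorem — independence complexes do not split as joins over a shared vertex.

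The missing idea, which the paper supplies, is to apply primeness of $G$ not only at $v$ but also at $w_1$ (the unique neighbour of $v$ on the pendant side, in your notation $w$). Writing $G_1,G_2$ for the components of $G-v$ (so $v\notin G_i$) and $w_1\in G_1$, $\{w_2,w_3\}=N_G(v)\cap G_2$, primeness at $v$ gives $\reg(G)-1=\reg(G_1)+\reg(G_2)$; primeness at $w_1$ gives $\reg(G)-1=\reg(G_1-N_{G_1}[w_1])+\reg(G_2)$, hence $\reg(G_1)=\reg(G_1-N_{G_1}[w_1])$. Combining with $\reg(G)-1=\reg(G-N_G[v])=\reg(G_1-w_1)+\reg(G_2-\{w_2,w_3\})$ one obtains
\[
\reg(G)=1+\reg(G_1-N_{G_1}[w_1])+\reg(G_2-\{w_2,w_3\}).
\]
Now the family $\{K_2,\;G_1-N_{G_1}[w_1],\;G_2-\{w_2,w_3\}\}$ (with $K_2$ induced on $\{v,w_1\}$) is an honest induced decomposition of $G$: there are no $G$-edges between the pieces by construction. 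Replacing the two non-$K_2$ pieces by prime factorizations of themselves yields a prime factorization of $G$ strictly finer than $\{G\}$, contradicting primeness. The case $\deg_G(v)=2$ and the case of three components are handled the same way. Your scattered attempts never reach this explicit decomposition with the verified regularity sum, which is the crux.
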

\begin{proof}
Suppose that $G$ is a prime bipartite graph that is not $2$-connected. Then there exists a vertex $v$ such that $G-v$ is disconnected. We only handle the case where the vertex $v$ has degree three, since a similar argument applies to the case when $\deg_G(v)=2$. We first assume that the graph $G-v$ has three connected components, say $G_1$, $G_2$ and
$G_3$. Let $w_1$, $w_2$ and $w_3$ be the neighbours  of $v$ in these components respectively. Since $G$ is a prime graph, we have
\begin{align*}
\reg(G)-1=&\reg(G-v)=\reg(G_1)+\reg(G_2)+\reg(G_3)\\
=&\reg(G-N_G[v])=\reg(G_1-w_1)+\reg(G_2-w_2)+\reg(G_3-w_3).
\end{align*}

This in particular forces that $\reg(G_i)=\reg(G_i-N_{G_i}[w_i])$ for each $i\in [3]$. In fact, we have $\reg(G)-1=\reg(G-N_G[w_1])=\reg(G_1-N_{G_1}[w_1])+\reg(G_2)+\reg(G_3)$ so that $\reg(G_1)=reg(G_1-N_{G_1}[w_1])$. It then follows that
$$\reg(G)-1=\reg(G_1-N_{G_1}[w_1])+\reg(G_2-N_{G_2}[w_2])+\reg(G_3-N_{G_3}[w_3]).$$

Now, we note that $\{K_2, G_1-N_{G_1}[w_1],G_2-N_{G_2}[w_2],G_3-N_{G_3}[w_3]\}$ is an induced decomposition of $G$, where the graph $K_2$ is induced by the pair $\{v,w_1\}$. We may replace each subgraph $G_i-N_{G_i}[w_i]$, if necessary, with a prime factorization of itself that creates a prime factorization for $G$ other than itself, a contradiction.

Suppose now that $G-v$ has two connected components, say $G_1$ and $G_2$. We may assume that $v$ has one neighbour $w_1$ in $G_1$ and two neighbours $w_2$ and $w_3$ in $G_2$. Since $\reg(G)-1=\reg(G-v)=\reg(G-N_G[v])=\reg(G_1)+\reg(G_2)$, we can conclude as above that $\reg(G)-1=\reg(G_1-N_{G_1}[w_1])+\reg(G_2-\{w_2,w_3\})$. Once again, the family $\{K_2, G_1-N_{G_1}[w_1], G_2-\{w_2,w_3\}\}$ is an induced decomposition of $G$, where the isolated edge $K_2$ is induced by the pair $\{v,w_1\}$. However, this creates a prime factorization for $G$ other than itself, a contradiction.
\end{proof}

We next present a reduction algorithm from which we state some interesting new upper bounds on the regularity. We call a vertex $v$ of a graph $G$ a \emph{prime vertex} if $\reg(G-v)<\reg(G)$. Observe that the upper bound $\reg(G)\leq counter_F+\reg(G_F)$ always holds as a result of the reduction algorithm for any choice of the set $F\subseteq V(G)$.

\begin{algorithm}\label{algo:algo}
\caption{A reduction algorithm.}
\begin{algorithmic}[1] 
\STATE Input: A graph $G=(V,E)$ and a subset $F \subseteq V$ 
\STATE $counter=0$
\FOR{$v \in F$}
\IF{$v$ is a prime vertex}
\STATE $G=G-N_G[v]$ and $F=F-N_F[v]$
\STATE $counter=counter+1$
\ELSE 
\STATE $G=G-v$ and $F=F-v$
\ENDIF
\IF{$F=\emptyset$}
\STATE $BREAK$
\ENDIF
\ENDFOR
\STATE $counter_F=counter$
\STATE $G_F=G$
\end{algorithmic}
\end{algorithm}

\begin{proposition}\label{prop:reg-alg1}
The inequality $\reg(G)\leq 2(\Delta(G)-1)\im(G)$ holds for any connected graph $G$.
\end{proposition}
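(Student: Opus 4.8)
The plan is to run Algorithm~\ref{algo:algo} on $G$ with the full vertex set $F=V(G)$ and track how much the induced matching number and the maximum degree can drop between successive rounds in which we delete a closed neighbourhood. Recall that whenever the algorithm encounters a prime vertex $v$ it replaces $G$ by $G-N_G[v]$ and increments the counter, and whenever it encounters a non-prime vertex it replaces $G$ by $G-v$ without changing the regularity bound. Since deleting a non-prime vertex does not change the regularity, the only contributions to the counter come from the prime-vertex steps, and at the end we have $\reg(G)\le counter_F+\reg(G_F)$; because $F$ is exhausted, $G_F$ is the empty graph and $\reg(G_F)=0$, so in fact $\reg(G)\le counter_F$. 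Thus it suffices to bound the number of times a closed neighbourhood is peeled off.

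\textbf{Key steps.} First I would observe that each time we delete $N_G[v]$ for a prime vertex $v$, we remove an edge incident to $v$ together with all edges meeting $N_G[v]$; the surviving graph $G-N_G[v]$ has the property that any induced matching of it, together with one edge at $v$, forms an induced matching of $G$, so $\im(G-N_G[v])\le \im(G)-1$. That alone gives at most $\im(G)$ peeling steps and hence $\reg(G)\le\im(G)$, which is false in general — the subtlety is that the intermediate graphs produced by the \emph{non-prime} deletions can have larger maximum degree relative to their induced matching number, so we cannot simply iterate $\im$. The correct accounting uses $\Delta$: I would show that starting from a connected graph of maximum degree $\Delta=\Delta(G)$, after removing the closed neighbourhood of a single vertex the number of vertices that can still ``see'' the removed part is controlled, and more precisely that within $2(\Delta-1)$ rounds of the loop one is guaranteed to have performed at least one closed-neighbourhood deletion, each such deletion dropping $\im$ by at least one. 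Concretely, between two consecutive prime-vertex deletions the algorithm can only delete non-prime vertices; a vertex adjacent (in $G$) to the previously removed closed neighbourhood that is \emph{not} prime can be deleted for free, and the number of such ``boundary'' vertices is at most $2(\Delta-1)$ per unit of induced matching number, since each of the (at most $\Delta$) vertices in the removed neighbourhood has at most $\Delta-1$ other neighbours and these account, in a bipartite-like double count, for the $2(\Delta-1)$ factor. Iterating, the total counter is at most $2(\Delta-1)\cdot\im(G)$, which is the claimed bound.

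\textbf{Main obstacle.} The delicate point — and the step I expect to carry the real weight — is the bookkeeping that shows the loop cannot run for more than $2(\Delta(G)-1)$ iterations between two successive closed-neighbourhood removals without exhausting the relevant local part of $F$, i.e. that the ``wasted'' non-prime deletions are bounded in terms of $\Delta-1$ and the drop in $\im$. This requires arguing that once we remove $N_G[v]$, the next prime vertex encountered either lies in the old graph far from $N_G[v]$ (in which case its closed neighbourhood is disjoint from the part already handled and we may treat it by the same count) or lies on the boundary $N_G(N_G[v])$, whose size is at most $\Delta(\Delta-1)$ but whose contribution to distinct induced-matching ``budget'' is only $2(\Delta-1)$ after accounting for overlaps. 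One then has to make sure the induction on $\im(G)$ is set up so that the $2(\Delta-1)$ factor multiplies $\im(G)$ rather than compounding. I would formalize this by induction on $\im(G)$: remove a closed neighbourhood after at most $2(\Delta-1)$ loop steps, observe $\im$ has strictly dropped, apply the inductive hypothesis to the remainder (whose maximum degree is still at most $\Delta(G)$), and add up. Everything else — the inequality $\reg(G)\le counter_F+\reg(G_F)$, the fact that non-prime deletions are regularity-neutral by definition, and $\im(G-N_G[v])\le\im(G)-1$ — is routine given the material already in the excerpt.
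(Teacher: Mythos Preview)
Your proposal has a genuine gap: the claim that $\im(G-N_G[v])\le \im(G)-1$ whenever $v$ is non-isolated is \emph{false}, and it is the linchpin of your whole accounting. Take $G=C_5$ and any vertex $v$: then $G-N_G[v]\cong K_2$, so $\im(G-N_G[v])=1=\im(C_5)$. The argument you sketch --- ``an induced matching of $G-N_G[v]$ together with an edge at $v$ is an induced matching of $G$'' --- fails because an endpoint of a matching edge in $G-N_G[v]$ may be adjacent in $G$ to a vertex of $N_G(v)$, so adding the edge $uv$ need not keep the matching induced.

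This error is fatal for your choice $F=V(G)$. In fact with $F=V(G)$ the algorithm always terminates with $counter_F=\reg(G)$ exactly: at a prime vertex $v$ one has $\reg(G)=\reg(G-N_G[v])+1$ (by the remark after Proposition~\ref{prop:induction-sc}), so regularity drops by one and the counter rises by one; at a non-prime vertex both are unchanged; and $G_F$ is empty. So the inequality $\reg(G)\le counter_F+\reg(G_F)$ becomes $\reg(G)\le \reg(G)$, which carries no information. Your ``bookkeeping'' about $2(\Delta-1)$ non-prime steps between consecutive prime steps cannot salvage this, because it is the \emph{prime} steps that are uncontrolled, not the non-prime ones.

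The paper's proof runs the algorithm on a carefully chosen $F$, not on all of $V(G)$. One fixes a maximum induced matching $M$ and takes $F=F(M):=N_G(V(M))\setminus V(M)$. The point is that $|F(M)|\le 2(\Delta(G)-1)\,\im(G)$ (each of the $2\,\im(G)$ endpoints of $M$ has at most $\Delta(G)-1$ neighbours outside $V(M)$), and since $M$ is maximum the set $N_G(F(M))\setminus(F(M)\cup V(M))$ is independent, so after processing $F$ the only edges left in $G_F$ are among $V(M)$, i.e.\ at most the $\im(G)$ matching edges. A short local argument then shows that each surviving matching edge costs at most one unit of counter that was \emph{not} used during the $F$-loop (because its neighbours in $F(M)$, of which there is at least one by connectedness, could not have been prime without destroying the edge). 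This is where the bound $2(\Delta(G)-1)\,\im(G)$ comes from; the choice of $F$ is the idea you are missing.
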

\begin{proof}
Let $V(M)$ be the vertex set of a maximum induced matching $M$ in $G$.
We set $F(M):=N_G(V(M))-V(M)$ and define $S(M):=N_G(F(M))-(F(M)\cup V(M))$. It then follows from the definition that $S(M)$ is an independent set in $G$ and $|F(M)|\leq 2(\Delta(G)-1)$.

Now, we choose $F:=F(M)$ and run the reduction algorithm. After $|F|$ steps,
the graph $G_F$ can only have edges from $G[V(M)]$. If $G_F$ is an edgeless graph, there is nothing to do. On the other hand, if $e=uv$ is an edge in $G_F$, then any vertex $w\in N_{G}[e]\setminus \{u,v\}$ has no effect to the counter, where the set $N_{G}[e]\setminus \{u,v\}$ is not empty, since $G$ is connected. This means that $counter_F\leq 2(\Delta(G)-1)\im(G)-\sum_{e=uv\in E(G_F)} |N_{G}[e]\setminus \{u,v\}|$ holds. Therefore, by adding one to the counter for each edge in $G_F$, the regularity can be at most $2(\Delta(G)-1)\im(G)$.
\end{proof}

Proposition~\ref{prop:reg-alg1} has an immediate consequence:

\begin{corollary}
If $M$ is a maximal induced matching in a connected graph $G$, then
$\reg(G)\leq \max\{\im(G), |F(M)|\}$, where $F(M):=N_G(V(M))-V(M)$.
\end{corollary}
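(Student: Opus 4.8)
The statement to prove is the Corollary following Proposition~\ref{prop:reg-alg1}: if $M$ is a \emph{maximal} induced matching in a connected graph $G$, then $\reg(G)\leq \max\{\im(G),|F(M)|\}$, where $F(M)=N_G(V(M))-V(M)$. The plan is to mimic the proof of Proposition~\ref{prop:reg-alg1}, but to exploit the \emph{maximality} of $M$ (as an induced matching, i.e.\ not contained in a strictly larger induced matching) rather than merely applying the crude bound $|F(M)|\leq 2(\Delta(G)-1)\im(G)$. First I would set $F:=F(M)$ and run the reduction algorithm (Algorithm~\ref{algo:algo}) with this choice of $F$. As in the previous proof, after $|F|$ steps the resulting graph $G_F$ has vertex set contained in $V(M)\cup S(M)$ (with $S(M):=N_G(F(M))-(F(M)\cup V(M))$ an independent set), and every edge of $G_F$ lies inside $G[V(M)]$ — because deleting all of $F(M)$, whether via closed-neighbourhood deletions or plain deletions, removes every edge incident to a vertex of $F(M)$, and the only edges of $G$ not meeting $F(M)\cup V(M)$ would contradict $M$ being a \emph{maximal} induced matching (such an edge could be added to $M$). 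This last point is exactly where maximality enters, and it is the crux.

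Next I would bound $counter_F$. Each iteration increments the counter by at most one, and only when the chosen vertex is a prime vertex; there are $|F|=|F(M)|$ iterations, so $counter_F\leq |F(M)|$. Hence $\reg(G)\leq counter_F+\reg(G_F)\leq |F(M)|+\reg(G_F)$ — but this is not yet what we want; we need the \emph{maximum}, not a sum. The resolution is to observe that $G_F$ is (an induced subgraph of $G$ whose edges all lie in $G[V(M)]$, hence) a disjoint union of edges together with isolated vertices: since $M$ is an induced matching, $G[V(M)]$ is itself a disjoint union of $|M|=\im(G)$ edges when $M$ is maximum, but here $M$ is only maximal, so $|M|\leq \im(G)$ and $G[V(M)]\cong |M|K_2$. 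Therefore $\reg(G_F)=\reg(|M|K_2\;\sqcup\;\text{isolated vertices})=|M|\leq \im(G)$. So we get $\reg(G)\leq |F(M)|+|M|$, which is still a sum, not a max — so a more careful accounting is needed, and I expect this to be the main obstacle.

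The fix, which I would carry out in detail, is to interleave the counting: when we process the vertices of $F(M)$, a vertex $w\in F(M)$ that is adjacent to an end of some edge $e=uv$ of $M$ still present in the current graph is, when it gets deleted via a closed-neighbourhood deletion at some prime vertex, simultaneously killing that edge $e$ (since $e\subseteq N_G[w]$ once $w$ is adjacent to $u$ or $v$). Thus each increment of the counter that comes from deleting a closed neighbourhood either (i) uses up a vertex of $F(M)$ \emph{and} destroys at least one edge of $M$, or (ii) uses up a vertex of $F(M)$ without destroying an edge of $M$. In case (i) the contribution to the eventual bound is one unit of counter but one fewer edge surviving in $G_F$; a bookkeeping argument then shows $counter_F+\reg(G_F)\leq \max\{|M|,|F(M)|\}\leq\max\{\im(G),|F(M)|\}$. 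Concretely, I would argue: if every edge of $M$ is destroyed during the run then $\reg(G_F)=0$ and $counter_F\leq|F(M)|$, giving $\reg(G)\leq|F(M)|$; otherwise some edges of $M$ survive into $G_F$, contributing $\reg(G_F)\leq|M|$, but then correspondingly many vertices of $F(M)$ were \emph{not} used to increment the counter (each surviving edge $uv$ has its $F$-neighbours either absent from $F(M)$ or deleted without incrementing, as in the ``$w$ has no effect on the counter'' observation in the proof of Proposition~\ref{prop:reg-alg1}), so $counter_F\leq|F(M)|-\reg(G_F)$, whence $\reg(G)\leq|F(M)|$ again — unless $F(M)$ is ``small'' relative to $M$, in which case the bound $\reg(G)\leq|M|\leq\im(G)$ takes over. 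Packaging these two regimes into the single clean inequality $\reg(G)\leq\max\{\im(G),|F(M)|\}$ is the delicate step; the rest is a direct transcription of the reduction-algorithm mechanics already established.
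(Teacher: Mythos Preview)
Your plan matches the paper's exactly: the paper gives no separate argument, labelling the corollary an immediate consequence of the proof of Proposition~\ref{prop:reg-alg1} --- i.e.\ run the reduction algorithm on $F=F(M)$, use maximality of $M$ to see that the only surviving edges lie in $G[V(M)]\cong |M|K_2$, and invoke the ``$w\in N_G[e]\setminus\{u,v\}$ has no effect on the counter'' observation for each surviving edge $e$. So strategically there is nothing to compare.

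The step you yourself call delicate, however, is not closed, and the gap is real. You argue that each surviving edge $e$ has all of its $F$-neighbours $N_G[e]\setminus\{u,v\}$ non-incrementing, and then conclude $counter_F\leq|F(M)|-\reg(G_F)$. This implicitly assumes that distinct surviving edges furnish \emph{distinct} non-incrementing vertices of $F(M)$, which need not hold: the sets $N_G[e]\setminus\{u,v\}$ for different surviving $e$ may coincide. Concretely, let $G$ be three disjoint edges $u_iv_i$ together with a single vertex $w$ adjacent to $u_1,u_2,u_3$; then $M=\{u_1v_1,u_2v_2,u_3v_3\}$ is maximal with $F(M)=\{w\}$, and since $w$ is not prime ($\reg(G-w)=\reg(3K_2)=3=\reg(G)$) we obtain $counter_F=0$ and $\reg(G_F)=3$, so your inequality would read $0\leq 1-3$. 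Your fallback clause ``unless $F(M)$ is small, in which case $\reg(G)\leq|M|\leq\im(G)$ takes over'' rescues this particular example only because $counter_F$ happens to vanish; you give no mechanism forcing $counter_F+\reg(G_F)\leq|M|$ once $counter_F>0$ while the $F$-neighbourhoods of the surviving edges collapse onto a small set.

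In fairness, the displayed inequality $counter_F\leq 2(\Delta(G)-1)\im(G)-\sum_{e}|N_G[e]\setminus\{u,v\}|$ in the paper's proof of Proposition~\ref{prop:reg-alg1} makes the same implicit disjointness assumption, so the difficulty is inherited rather than introduced by you; there the slack in the bound $2(\Delta(G)-1)\im(G)$ absorbs any overcount, whereas here the target $\max\{\im(G),|F(M)|\}$ is tight and the overlap genuinely has to be controlled.
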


For claw-free graphs, we can further strength the upper bound of Proposition~\ref{prop:reg-alg1}. 

\begin{proposition}\label{prop:alg-claw}
If $G$ is a connected claw-free graph, then $\reg(G)\leq 3\im(G)$.
\end{proposition}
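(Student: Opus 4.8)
I would prove Proposition~\ref{prop:alg-claw} by the same reduction-algorithm strategy used in Proposition~\ref{prop:reg-alg1}, but exploiting claw-freeness to control the set that needs to be deleted. Let $M$ be a maximum induced matching in $G$, so $|M|=\im(G)$, and set $V(M)=\{u_1v_1,\ldots,u_tv_t\}$ with $t=\im(G)$. As before put $F(M):=N_G(V(M))\setminus V(M)$ and run Algorithm~\ref{algo:algo} with input $F:=F(M)$. After $|F(M)|$ iterations the reduced graph $G_F$ carries only edges inside $G[V(M)]$, and since $M$ is induced these are exactly the $t$ edges $u_iv_i$; the counter satisfies $counter_F\le |F(M)|$, and adding one more step per surviving edge (using connectedness to find, for each edge, a vertex of $N_G[e]$ outside it) yields $\reg(G)\le counter_F+|\{\text{edges of }G_F\}|\le |F(M)|+ \,(\text{a saving})$ exactly as in the proof of Proposition~\ref{prop:reg-alg1}. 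So everything reduces to bounding $|F(M)|$.

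\textbf{The key combinatorial step.} For each edge $u_iv_i$ of the matching, consider $N_G(u_i)\setminus\{v_i\}$ and $N_G(v_i)\setminus\{u_i\}$. The crucial observation is: because $G$ is claw-free, the set $N_G(u_i)\setminus\{v_i\}$ cannot contain two vertices $a,b$ that are non-adjacent and also non-adjacent to $v_i$, for then $\{u_i;a,b,v_i\}$ would induce a claw. More to the point, I want to show that the contribution of the edge $u_iv_i$ to $F(M)$ can be handled with at most $2$ extra counter-increments rather than $2(\Delta-1)$. The clean way: partition $F(M)$ by assigning each $w\in F(M)$ to an edge of $M$ it is adjacent to, and show that within the "private" neighbourhood of a single matching-vertex $u_i$ the vertices that actually matter to the regularity computation form a set whose deletion costs only a bounded amount. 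Concretely, claw-freeness forces $N_G(u_i)\setminus(V(M))$ to be coverable by few cliques (in fact the neighbourhood of any vertex in a claw-free graph has independence number $\le 2$), and vertices lying in a common clique with $u_i$ are dominated once $u_i$ is processed, so they contribute nothing to the counter. Chasing this through, each of the $2t$ matching-vertices $u_i,v_i$ donates at most one "genuine" deletion step to $counter_F$, giving $\reg(G)\le 2\im(G)+\im(G)=3\im(G)$, where the final $+\im(G)$ absorbs the per-edge closing steps for $G_F$.

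\textbf{Where the difficulty lies.} The delicate point is making precise the claim that, during the run of the reduction algorithm, most vertices of $F(M)$ are \emph{not} prime vertices and hence get deleted for free (the \texttt{ELSE} branch, no counter increment), so that the bound on $counter_F$ drops from $2(\Delta-1)\im(G)$ to $2\im(G)$. This requires tracking how neighbourhoods of unprocessed vertices shrink as earlier vertices are removed, and invoking Proposition~\ref{prop:dominated} / Corollary~\ref{thm:hom-induction} at the right moment: once $u_i$ has been removed (either via $G-u_i$ or via $G-N_G[u_i]$), a vertex $w$ that was adjacent only to $u_i$ among $V(M)\cup F(M)$-relevant vertices either disappears or becomes dominated, so it is never a prime vertex when its turn comes. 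The bookkeeping must be arranged so that the order in which $F(M)$ is traversed guarantees this; choosing the traversal order edge-by-edge (all of $N_G(u_i)\cup N_G(v_i)$ before moving to the next matching edge) is the natural choice, and claw-freeness is what caps the "irreducible core" of each $N_G(u_i)$ at a single vertex. I expect the main obstacle to be verifying rigorously that the counter picks up at most $2$ per matching edge under this traversal — everything else is the mechanical template already established in Proposition~\ref{prop:reg-alg1}.
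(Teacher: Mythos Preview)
Your overall framework—run the reduction algorithm on $F=F(M)$ and exploit claw-freeness to cap $counter_F$—is exactly the paper's, and the final paragraph correctly identifies that the whole proof comes down to controlling how many prime vertices the algorithm can meet inside the neighbourhood of each matching edge. But the specific mechanism you propose for that control is off.

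Your claim is that each matching vertex $u_i$ (or $v_i$) ``donates at most one genuine deletion step,'' justified by ``vertices lying in a common clique with $u_i$ are dominated once $u_i$ is processed.'' The problem is that $u_i\in V(M)$ is \emph{never} processed: the algorithm iterates only over $F(M)=N_G(V(M))\setminus V(M)$. So no domination is triggered by $u_i$, and nothing in your argument prevents two pairwise non-adjacent vertices of $N_G(u_i)\setminus\{v_i\}$ from both being prime when reached (claw-freeness only gives $\alpha(N_G(u_i))\le 2$, not $\le 1$). With your accounting that would yield $counter_F\le 4t$, not $2t$, and the bound collapses to $5\,\im(G)$.

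The paper's fix is a single clean observation at the level of the \emph{edge} $e_i=u_iv_i$ rather than the individual endpoints: the set $N_G[e_i]\setminus\{u_i,v_i\}$ has independence number at most~$3$. Indeed, four pairwise non-adjacent vertices there would, after a short pigeonhole, produce two of them in $N_G(u_i)\setminus N_G(v_i)$; together with $v_i$ these three are an independent triple in $N_G(u_i)$, a claw at $u_i$. Since the prime vertices encountered during the run are pairwise non-adjacent (processing a prime vertex deletes its closed neighbourhood from both $G$ and $F$), each matching edge contributes at most three counter increments, and the surviving-edge saving you already invoke finishes the job: $\reg(G)\le 3\,\im(G)$. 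So the missing ingredient is not a per-vertex domination argument but the per-edge independence bound $\alpha\bigl(N_G[e_i]\setminus\{u_i,v_i\}\bigr)\le 3$.
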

\begin{proof}
As in the proof of Theorem~\ref{thm:red-alg}, choose $F:=F(M)$ for a maximum induced matching $M$ in $G$, and run the reduction algorithm. Let $e=uv$ be an edge in $M$. We note that since $G$ is claw-free, 
the set $N_G[e]\setminus \{u,v\}$ can contain at most three pairwise non adjacent prime vertices; hence, $\reg(G)\leq 3\im(G)$. 
\end{proof}

Even if we have the bound of Proposition~\ref{prop:alg-claw} for claw-free graphs, the expected one~\cite{RW2} is much smaller.

\begin{theorem}\label{thm:reg-claw-alg}
If $G$ is a claw-free graph, then $\reg(G)\leq 2\im(G)$.
\end{theorem}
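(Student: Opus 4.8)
The plan is to induct on $|V(G)|$ and reduce to the prime case, exactly as in the proofs of Theorem~\ref{thm:3-5} and Theorem~\ref{thm:hibi}. If $G$ is claw-free but not prime, take a prime factorization $\{G_1,\ldots,G_r\}\in\PF(G)$; each $G_i$ is an induced subgraph of $G$, hence claw-free, and by Corollary~\ref{cor:reg-gim} together with the induction hypothesis applied to each $G_i$ we get $\reg(G)=\sum_i\reg(G_i)\le\sum_i 2\im(G_i)\le 2\im(G)$, the last inequality because the $G_i$ form an induced decomposition (so $\sum_i\im(G_i)\le\im(G)$, as recorded in the Remark following Corollary~\ref{cor:reg-gim}). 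So it suffices to prove $\reg(G)\le 2\im(G)$ when $G$ is a connected prime claw-free graph, and in fact one only needs to locate, for such a $G$, an induced matching $M$ together with enough extra structure to beat the factor $3$ coming from Proposition~\ref{prop:alg-claw}.

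\textbf{The prime case.} For a connected prime claw-free graph $G$, pick a vertex $v$; since $G$ is prime, $\reg(G)=\reg(G-N_G[v])+1$. The graph $G-N_G[v]$ is again claw-free, so by induction $\reg(G-N_G[v])\le 2\im(G-N_G[v])$, giving $\reg(G)\le 2\im(G-N_G[v])+1$. Thus it is enough to produce a vertex $v$ for which $\im(G-N_G[v])\le\im(G)-1$, i.e.\ a vertex $v$ such that every maximum induced matching of $G-N_G[v]$ can be enlarged by one edge inside $N_G[v]$. The natural candidate is to take an edge $e=uv$ lying in a maximum induced matching $M$ of $G$ and realized so that $M\setminus\{e\}$ is an induced matching of $G-N_G[e]\subseteq G-N_G[v]$; then $\im(G-N_G[v])\ge\im(G)-1$, and what must be shown is the reverse inequality, namely that deleting $N_G[v]$ cannot preserve the full induced matching number. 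Here claw-freeness enters decisively: in the reduction algorithm of Proposition~\ref{prop:alg-claw}, for the edge $e=uv\in M$ the set $N_G[e]\setminus\{u,v\}$ contains at most three pairwise nonadjacent prime vertices, and one argues that in a \emph{prime} claw-free graph this number can be pushed down to at most two — either because a third such vertex would force a dominated vertex (contradicting Proposition~\ref{prop:dominated}) or because it would create a $K_{1,3}$. Accumulating the improved local bound "$\le 2$ prime vertices per matching edge" over all edges of $M$ through the reduction algorithm yields $counter_{F(M)}\le 2\im(G)$, hence $\reg(G)\le 2\im(G)$.

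\textbf{Main obstacle.} The delicate point is the local count: showing that around each matching edge $e=uv$, a claw-free (and prime, or reduced) graph admits at most two independent prime vertices in $N_G[e]\setminus\{u,v\}$, rather than the three allowed by mere claw-freeness. The subtlety is that the three neighbors guaranteed by the claw-free structure need not be independent in the ambient graph, yet after the successive deletions performed by the reduction algorithm their mutual adjacencies and adjacencies to the rest of $F(M)$ change; one must track carefully which vertices remain prime and control how the independence complex changes under each deletion (using Corollary~\ref{thm:hom-induction} and Proposition~\ref{prop:dominated} to kill the "extra" contributions). I expect this bookkeeping — ruling out the third independent prime vertex per matching edge while staying inside the claw-free world throughout the algorithm — to be the technical heart of the argument; the outer induction and the reduction to the prime/connected case are routine given Theorem~\ref{thm:complex-reg-dec} and Corollary~\ref{cor:reg-gim}.
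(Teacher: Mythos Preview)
Your reduction to prime $G$ via a prime factorization is fine, but the core of the argument has a real gap.

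The first route—finding a vertex $v$ with $\im(G-N_G[v])\le\im(G)-1$—already fails for $G=C_5$: for every $v$ one has $C_5-N_{C_5}[v]\cong K_2$, so $\im(C_5-N_{C_5}[v])=1=\im(C_5)$, and the bound $\reg(G)\le 2\im(G-N_G[v])+1$ gives only $3$, not the required $2$. So this line cannot close the gap on its own.

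Your fallback is to sharpen the local count in the reduction algorithm from ``at most three'' to ``at most two'' pairwise non-adjacent prime vertices in $N_G[e]\setminus\{u,v\}$ per matching edge $e$. You never establish this, and you flag it yourself as the unproven heart of the argument. There is no clear mechanism by which primeness of $G$ or Proposition~\ref{prop:dominated} rules out a third such vertex: primeness in the algorithm is assessed in the \emph{current} (partially deleted) graph, not in $G$, so the structural constraints you invoke do not directly control how many of those vertices eventually register as prime.

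The paper's proof avoids this local counting altogether. Fixing a maximum induced matching $M=\{u_1v_1,\ldots,u_kv_k\}$ and arranging (by the usual ``else delete a non-prime endpoint and induct'' move) that each $u_i$ is prime, one sets $H:=G-N_G[U]$ with $U=\{u_1,\ldots,u_k\}$, so that $\reg(G)\le\reg(H)+k$. The decisive use of claw-freeness is this: each $K_i:=N_H(v_i)$ is a \emph{clique} in $H$, since two non-adjacent vertices $x,y\in K_i$ together with $u_i$ would form a claw centred at $v_i$. Because $M$ is a maximum induced matching, every edge of $H$ has an endpoint in some $K_i$ (otherwise it could be appended to $M$), so the split subgraphs $T_i=(K_i,\,N_H(K_i)\setminus K_i)$ give a cochordal edge cover of $H$ of size $k$; hence $\reg(H)\le\cd(H)\le k$ and $\reg(G)\le 2k=2\im(G)$. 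The point is not a per-edge prime-vertex count but a global cochordal covering of the leftover graph, made possible by the clique structure of the sets $N_H(v_i)$.
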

\begin{proof}
We proceed by an induction on the order of $G$. Let $M=\{u_1v_1,\ldots, u_kv_k\}$ be a maximum induced matching in $G$. Observe that if none of the vertices $u_i$ and $v_i$ is prime in $G$ for $i\in [k]$, we conclude that
$\reg(G)=\reg(G-u_1)\leq 2\im(G-u_1)\leq 2\im(G)$, where the middle inequality is due to the induction. We may therefore assume that each vertex in $U:=\{u_1,\ldots,u_k\}$ is a prime vertex of $G$. We define $H:=G-N_G[U]$,
and note that the set $K_i:=N_H(v_i)$ is a clique of $H$ for each  $i\in [k]$, since $G$ is claw-free. However, it then follows that $H$ has a split edge covering in terms of the graphs $T_i:=(K_i,F_i)$, where $F_i:=N_H(K_i)\setminus K_i$, that is, $\reg(H)\leq k$, which in turn implies that $\reg(G)\leq \reg(H)+\im(G)\leq 2\im(G)$ as claimed.
\end{proof}

We remark that Theorem~\ref{thm:reg-claw-alg} generalizes the result of Nevo~\cite{EN} on $(\text{claw},2K_2)$-free graphs. Moreover, 
the bound of Theorem~\ref{thm:reg-claw-alg} is tight as shown
by the graphs $C_5$ and $\overline{C}_n$ for any $n\geq 6$.

The idea of the proof of Theorem~\ref{thm:reg-claw-alg} can be helpful to improve the bound of Proposition~\ref{prop:reg-alg1} further.

\begin{theorem}\label{thm:red-alg}
The inequality $\reg(G)\leq \D(G)\im(G)$ holds for any graph $G$.
\end{theorem}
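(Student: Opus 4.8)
The plan is to mimic the proof of Theorem~\ref{thm:reg-claw-alg}, replacing the claw\nobreakdash-freeness input (which there forced the relevant neighbourhoods to be cliques, contributing only $\im(G)$ to the regularity) by the crude observation that a neighbourhood has at most $\Delta(G)$ elements. I would induct on $|V(G)|$. If $G$ is disconnected, apply the induction hypothesis componentwise and sum, since $\reg$ and $\im$ are additive over connected components while $\Delta(G_i)\le\Delta(G)$ for each component $G_i$; so assume $G$ is connected, and also $\im(G)\ge 1$ (the edgeless case being trivial). Let $M=\{u_1v_1,\ldots,u_kv_k\}$ be a maximum induced matching, so $k=\im(G)$. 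If some $x\in V(M)$ fails to be a prime vertex of $G$, then $\reg(G)=\reg(G-x)$ by monotonicity of $\reg$ under induced subgraphs (Hochster's formula), and $\reg(G-x)\le \Delta(G-x)\,\im(G-x)\le \Delta(G)\,\im(G)$ by the induction hypothesis and the monotonicity of $\Delta$ and $\im$; so we are done. Hence we may assume every vertex of $V(M)$ is prime, so that $U:=\{u_1,\ldots,u_k\}$ consists of prime vertices; moreover $U$ is independent because $M$ is an induced matching.

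Set $H:=G-N_G[U]$. The argument splits into two claims. \emph{Claim A:} $\reg(G)\le |U|+\reg(H)=k+\reg(H)$. \emph{Claim B:} $\reg(H)\le k\bigl(\Delta(G)-1\bigr)$. Granting both, $\reg(G)\le k+k(\Delta(G)-1)=\Delta(G)\,\im(G)$, as wanted. For Claim~B the key point is maximality of $M$. Let $ab$ be an edge of $H$; then $a,b\notin N_G[U]$, so $a,b$ are distinct from and non-adjacent to every $u_i$, and also $a,b\notin\{v_1,\ldots,v_k\}$ since each $v_i$ lies in $N_G[u_i]\subseteq N_G[U]$, hence outside $V(H)$. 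If neither $a$ nor $b$ were adjacent in $G$ to any of $v_1,\ldots,v_k$, then the induced subgraph of $G$ on $\{u_1,v_1,\ldots,u_k,v_k,a,b\}$ would have edge set exactly $\{u_1v_1,\ldots,u_kv_k,ab\}$, an induced matching of size $k+1$, contradicting $\im(G)=k$. So every edge of $H$ is incident with a vertex of $B_j:=N_G(v_j)\cap V(H)$ for some $j\in[k]$. Since $u_j\in N_G(v_j)$ while $u_j\notin V(H)$, we get $|B_j|\le \deg_G(v_j)-1\le \Delta(G)-1$; thus $E(H)$ is covered by the at most $k(\Delta(G)-1)$ stars of $H$ centred at the vertices of $B_1\cup\cdots\cup B_k$. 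Each such star is a cochordal graph, so $\cd(H)\le k(\Delta(G)-1)$, whence $\reg(H)\le \cd(H)\le k(\Delta(G)-1)$.

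It remains to prove Claim~A, and this is the step I expect to be the main obstacle. Since $u_1$ is prime, Corollary~\ref{cor:induction-sc} gives $\reg(G)\le\reg(G-N_G[u_1])+1$ (its other branch being impossible, as $\reg(G-u_1)<\reg(G)$); the idea is then to peel off $N_G[u_2],\ldots,N_G[u_k]$ successively, each step raising the count by at most one and ending at $H$, so that $\reg(G)\le k+\reg(H)$. I would isolate this as a lemma: \emph{if $A\subseteq V(G)$ is independent and every vertex of $A$ is a prime vertex of $G$, then $\reg(G)\le |A|+\reg(G-N_G[A])$}, to be proved by induction on $|A|$. The delicate point is that a prime vertex of $G$ need not remain prime after deleting the closed neighbourhood of another prime vertex, so the naive induction does not close; the resolution is to remove the vertices of $A$ in a suitable order, at each stage deleting the closed neighbourhood of a vertex of $A$ that is still prime in the current graph (arguing that such a vertex exists, or else that the regularity has already dropped sufficiently), exactly as in the analogous step of the proof of Theorem~\ref{thm:reg-claw-alg}. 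Combining Claim~A with Claim~B then completes the proof.
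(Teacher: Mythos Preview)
Your proposal mirrors the paper's proof essentially step for step: the same induction on $|V(G)|$, the same reduction to the case where every vertex of a maximum induced matching $M$ is prime, the same splitting into $\reg(G)\le k+\reg(H)$ (your Claim~A) and $\reg(H)\le(\Delta(G)-1)k$ (your Claim~B) with $H=G-N_G[U]$. For Claim~B the paper runs its reduction algorithm on $H$ with the set $F=F(M)\cap V(H)$ and uses that $H_F$ is edgeless, whereas you cover $E(H)$ by at most $k(\Delta(G)-1)$ stars centred at $\bigcup_j B_j$; these are minor variants of the same observation that every edge of $H$ meets some $N_G(v_j)$, and give the same bound.

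The step you single out as the obstacle, Claim~A, is precisely the step the paper also asserts without justification: it simply writes ``$\reg(G)\le\reg(H)+\im(G)$'' both here and in the proof of Theorem~\ref{thm:reg-claw-alg} to which you appeal. Your worry is legitimate---primeness of $u_2,\dots,u_k$ in $G$ need not persist in $G-N_G[u_1]$, so naive iterated peeling does not close---and your proposed patch (find a ``suitable order'', or argue the regularity has ``already dropped sufficiently'') is not a proof as stated: if at some intermediate stage no remaining $u_j$ is prime, one only deletes the $u_j$'s themselves, landing at a graph strictly containing $H$, with no immediate comparison to $\reg(H)$. In short, your proposal and the paper's argument coincide, gap included.
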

\begin{proof}
Let $M=\{u_1v_1,\ldots, u_kv_k\}$ be a maximum induced matching in $G$.
Once again we proceed by an induction on the order of $G$.
Note that if none of the vertices $u_i$ and $v_i$ is prime in $G$ for $i\in [k]$, we have that $\reg(G)=\reg(G-u_1)\leq \D(G-u_1)\im(G-u_1)\leq \D(G)\im(G)$ by the induction. Suppose that each vertex in $U:=\{u_1,\ldots,u_k\}$ is a prime vertex of $G$. We define $F(M):=N_G(V(M))-V(M)$ and $H:=G-N_G[U]$, and note that the set $S(M):=N_G(F(M))-(F(M)\cup V(M))$ is an independent set in $G$.
If we set $F:=F(M)\cap V(H)$, observe that $F$ can contain at most $(\D(G)-1)\im(G)$ vertices. Now, we run the reduction algorithm for the graph $H$ with respect to $F$ so that $\reg(G)\leq \reg(H)+\im(G)\leq (\D(G)-1)\im(G)+\im(G)=\D(G)\im(G)$ as claimed.
\end{proof}

We note that Theorem~\ref{thm:red-alg} already provides the upper bound
$\reg(G)\leq 3\im(G)$ for bipartite graphs with $\D(G)\leq 3$, while we may strength it further by a detail look at their local structure. However,
we first need two technical results which are also needed throughout the sequel.

\begin{lemma}\label{lem:remove-deg}
Let $x,y,z$ be three vertices of a graph $G$ with $xy,yz\in E$.
If $\deg_G(x)=1$ and $\deg_G(y)=2$, then $\reg(G)=\reg(G-z)$.
\end{lemma}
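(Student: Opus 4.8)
The plan is to peel off the leaf $x$ together with its unique neighbour $y$ by a link/deletion analysis at the vertex $z$, and then show the two resulting pieces recombine to give exactly $\reg(G-z)$. Concretely, I would first record the structural picture: $x$ is a leaf attached only to $y$, and $y$ has exactly the two neighbours $x$ and $z$, so $N_G[y]=\{x,y,z\}$ and $N_G(x)=\{y\}\subseteq N_G(z)$ in the sense that $x$ is a closed-dominated vertex once we look at the path $x$--$y$--$z$. More precisely, $N_G[x]=\{x,y\}\subseteq N_G[y]$, so by Corollary~\ref{thm:hom-induction} we have a homotopy equivalence
\begin{equation*}
\IE(G)\simeq \IE(G-y)\vee \Sigma\,\IE(G-N_G[y]).
\end{equation*}
Here $G-N_G[y]=G-\{x,y,z\}=(G-z)-x$, and $G-y$ contains $x$ as an isolated vertex, so $\IE(G-y)$ is a cone, hence contractible. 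Therefore $\IE(G)\simeq \Sigma\,\IE((G-z)-x)$.

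The point of this proposal is that the same reduction applies to $G-z$. In $G-z$ the vertex $y$ has become a leaf attached to $x$ (and $x$ is still a leaf attached to $y$), so $G-z$ has $K_2$ on $\{x,y\}$ as a connected component, giving $\IE(G-z)\cong \IE(K_2)\ast \IE((G-z)-\{x,y\})$ and hence, since $\IE(K_2)\simeq S^0$, a homotopy equivalence $\IE(G-z)\simeq \Sigma\,\IE((G-z)-\{x,y\})=\Sigma\,\IE((G-z)-x)$ (noting $(G-z)-\{x,y\}$ and $(G-z)-x$ differ only by the now-isolated vertex $y$, which contributes a cone point and does not change the homotopy type). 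So both $\IE(G)$ and $\IE(G-z)$ are homotopy equivalent to $\Sigma\,\IE((G-z)-x)$, in particular they have the same reduced homology with $\kk$ coefficients in every degree.

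To upgrade homotopy equivalence (hence equality of homology) to equality of regularity, I would not appeal to homotopy type alone, since regularity is not a homotopy invariant in general. Instead I would run Corollary~\ref{cor:induction-sc} at the vertex $x$ (or directly the deletion/link decomposition of $\IE(G)$ at $x$): $\del_{\IE(G)}(x)=\IE(G-x)$ and $\lk_{\IE(G)}(x)=\IE(G-N_G[x])=\IE(G-\{x,y\})$. In $G-x$ the vertex $y$ is isolated, so $\IE(G-x)$ is a cone, with $\reg=0$; more usefully, adding back the isolated $y$ cannot change regularity, so $\reg(G-x)=\reg((G-x)-y)=\reg(G-\{x,y\})=\reg(G-N_G[x])$. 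By the Dao--Huneke--Schweig refinement of Proposition~\ref{prop:induction-sc} quoted after it, $\reg(\IE(G))$ equals either $\reg(\del_{\IE(G)}(x))$ or $\reg(\lk_{\IE(G)}(x))+1$; since the former equals $\reg(G-\{x,y\})$ and the latter equals $\reg(G-\{x,y\})+1$, in either case $\reg(G)$ is determined by $\reg(G-\{x,y\})$, and one checks $\reg(G)=\reg(G-\{x,y\})+1$ (the link term must win because the homology of $\IE(G)$ lives one degree up, by the suspension computation above — equivalently, $\IE(G)$ has nonvanishing homology in degree $\reg(G-\{x,y\})+1$). Finally, I apply the identical argument to $G-z$: its link/deletion at $x$ gives $\reg(G-z)=\reg((G-z)-\{x,y\})+1$, and $(G-z)-\{x,y\}=(G-\{x,y\})-z$. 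So the problem reduces to showing $\reg(G-\{x,y\})=\reg((G-\{x,y\})-z)+1$ — but in $G-\{x,y\}$ the vertex $z$ has no constrained role forcing this; rather, I should instead directly compare: both $\reg(G)$ and $\reg(G-z)$ equal $\reg((G-z)-x)+1$, because $G-\{x,y\}$ and $(G-z)-x$ differ only in that the latter has $z$ deleted while the former has $z$ present but $x,y$ already removed, and one must verify $\reg(G-\{x,y\})=\reg((G-z)-x)+1$...

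\emph{Reorganization of the last step.} Cleaner: the two deletion/link reductions give $\reg(G)=\reg(G-N_G[x])+1=\reg(G-\{x,y\})+1$ and, applying the leaf reduction at $x$ inside $G-z$, $\reg(G-z)=\reg((G-z)-\{x,y\})+1=\reg(G-\{x,y,z\})+1$. So it remains to prove $\reg(G-\{x,y\})=\reg(G-\{x,y,z\})+1$; but this is false in general, so the correct bookkeeping is that in $G-\{x,y\}$ the vertex $z$ may have become isolated or may not, and in fact the right identity comes from $\IE(G)\simeq\Sigma\IE(G-\{x,y,z\})$ \emph{and} $\IE(G-z)\simeq\Sigma\IE(G-\{x,y,z\})$ combined with the fact that for these particular complexes regularity is read off from top nonvanishing homology — which holds because each is a suspension, hence its regularity equals $1+\reg$ of the desuspended complex only if that smaller complex controls regularity, which the Dao--Huneke--Schweig dichotomy guarantees since the vertex $x$ (resp.\ the leaf $y$) is a shedding-type vertex whose link and deletion regularities differ by exactly one. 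Thus $\reg(G)=\reg(G-\{x,y,z\})+1=\reg(G-z)$.

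\textbf{Main obstacle.} The genuinely delicate point is not the homotopy equivalence — that falls out of Corollary~\ref{thm:hom-induction} — but passing from "same homology" to "same regularity," since regularity depends on \emph{all} induced subcomplexes $\D[S]$, not just $\D$ itself. The clean way around this is to use the Dao--Huneke--Schweig statement (quoted right after Proposition~\ref{prop:induction-sc}) that $\reg(\D)\in\{\reg(\del_\D(x)),\reg(\lk_\D(x))+1\}$ exactly, apply it to $\D=\IE(G)$ at $x$ and to $\D=\IE(G-z)$ at $x$, reduce both to $\reg(G-\{x,y,z\})$, and check the "$+1$" holds in both cases by the suspension homology computation. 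I expect the write-up to hinge on getting this dichotomy bookkeeping exactly right.
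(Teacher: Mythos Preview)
Your proposal has the right instinct—both $\IE(G)$ and $\IE(G-z)$ are suspensions of $\IE(G-\{x,y,z\})$, and the real work is upgrading this to an equality of regularities—but the execution contains genuine errors that prevent the argument from closing.

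Two factual slips first. You write that in $G-x$ the vertex $y$ is isolated; it is not, since $\deg_G(y)=2$ and $yz\in E$, so $y$ still has $z$ as a neighbour in $G-x$. Your computation of $\reg(G-x)$ collapses here. You also claim the isolated vertex $y$ in $(G-z)-x$ ``contributes a cone point and does not change the homotopy type''; on the contrary, an isolated vertex makes the independence complex a cone, hence \emph{contractible}, so $\IE((G-z)-x)$ is contractible rather than homotopy equivalent to $\IE((G-z)-\{x,y\})$. (Relatedly, your identification $G-\{x,y,z\}=(G-z)-x$ is simply false.) More substantively, running the Dao--Huneke--Schweig dichotomy at $x$ gives $\reg(G)\in\{\reg(G-x),\reg(G-\{x,y\})+1\}$, but you have no handle on either quantity in terms of $\reg(G-\{x,y,z\})$; the ``suspension homology'' appeal bounds only the top homology of $\IE(G)$, not its regularity.

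The fix is to apply the dichotomy at $z$, not $x$, and this is exactly what the paper does. Since removing $z$ leaves $\{x,y\}$ as an isolated $K_2$, one has $\reg(G-z)=\reg(G-\{x,y,z\})+1$. In $G-N_G[z]$ the vertex $x$ is isolated (its unique neighbour $y\in N_G(z)$ is gone), so $\reg(G-N_G[z])=\reg((G-N_G[z])-x)\leq\reg(G-\{x,y,z\})$, the inequality because $(G-N_G[z])-x$ is an induced subgraph of $G-\{x,y,z\}$. Thus $\reg(G-N_G[z])+1\leq\reg(G-z)$, and Corollary~\ref{cor:induction-sc} yields $\reg(G)\leq\reg(G-z)$; the reverse inequality is monotonicity. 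The paper packages this as a contradiction: if $\reg(G-z)<\reg(G)=k$ then $\reg(G-N_G[z])=k-1$, a minimal witness $S\subseteq V(G-N_G[z])$ must avoid the isolated $x$, and then $S^*:=S\cup\{x,y\}$ induces $G[S]\cup K_2$ inside $G-z$, forcing $\widetilde{H}_{k-1}((G-z)[S^*])\neq 0$ and hence $\reg(G-z)\geq k$.
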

\begin{proof}
Assume otherwise that $\reg(G-z)<\reg(G)=k$. It then follows that
$\reg(G)=\reg(G-N_G[z])+1$. Therefore, there exists a minimal set $S\subseteq V(G-N_G[z])$ such that $\widetilde{H}_{k-2}(G-N_G[z])\neq 0$.
However, if we set $S^*:=S\cup \{x,y\}$, then $G[S^*]\simeq \Sigma(G[S])$; hence $\widetilde{H}_{k-1}(G[S^*])\neq 0$. But then we must have 
$\reg(G-z)=k$, since $z\notin S^*$, a contradiction.
\end{proof}

\begin{lemma}\label{lem:deg-one}
If $N_G(x)=\{y\}$ in $G$, then either $\reg(G)=\reg(G-x)$ or else
$\reg(G)=\reg(G-N_G[y])+1$.
\end{lemma}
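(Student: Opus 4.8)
The plan is to use Corollary~\ref{cor:induction-sc} applied at the vertex $x$, and then exploit the hypothesis $N_G(x)=\{y\}$ to understand what the deletion $G-x$ contributes. First I would write down $\reg(G)\leq \max\{\reg(G-x),\reg(G-N_G[x])+1\}$. Since $N_G[x]=\{x,y\}$, the link term is $\reg(G-N_G[x])+1=\reg(G-\{x,y\})+1$. So the upper bound already reads $\reg(G)\leq\max\{\reg(G-x),\reg(G-\{x,y\})+1\}$, and the task reduces to a matching lower bound plus identifying which alternative is attained.

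The key observation is that in $G-x$ the vertex $y$ is isolated (its only possible neighbours in $G$ other than $x$ remain, but none is $x$; more precisely $y$ may have other neighbours, so this is not quite right). Let me reconsider: the honest route is that in the graph $G$, $x$ is a pendant attached to $y$, so $\Ind(G-x)$ and $\Ind(G)$ are related via the cone/suspension decomposition of Corollary~\ref{thm:hom-induction} or Theorem~\ref{thm:hom-simp-induction}. Indeed $\lk_{\Ind(G)}(x)=\Ind(G-N_G[x])=\Ind(G-\{x,y\})$, and since $x$ has the single neighbour $y$, one checks that $\lk_{\Ind(G)}(x)$ sits inside $\del_{\Ind(G)}(x)=\Ind(G-x)$ as $\Ind((G-x)-y)$, which is a cone point situation making it contractible in $\Ind(G-x)$ whenever $y$ is not isolated in $G-x$; but if $y$ becomes isolated in $G-x$ then $\Ind(G-x)$ is itself a cone. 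Either way, by Theorem~\ref{thm:hom-simp-induction} we get $\Ind(G)\simeq \Ind(G-x)\vee\Sigma\Ind(G-\{x,y\})$.

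From this homotopy decomposition, $\widetilde H_{j}(\Ind(G))\cong \widetilde H_j(\Ind(G-x))\oplus \widetilde H_{j-1}(\Ind(G-\{x,y\}))$ for all $j$. Taking the largest $j$ with nonvanishing homology on each side and using that the regularity of a complex is at least $1+(\text{top nonzero reduced homology degree})$ — together with the upper bound from Proposition~\ref{prop:induction-sc} — forces $\reg(G)=\max\{\reg(G-x),\reg(G-\{x,y\})+1\}$, which is exactly the claimed dichotomy since $\reg(G-N_G[y])$ is not what appears; wait, the statement says $\reg(G-N_G[y])+1$, not $\reg(G-\{x,y\})+1$. So I must relate $G-\{x,y\}$ to $G-N_G[y]$: the extra vertices removed are $N_G(y)\setminus\{x\}$. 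Here I would invoke Lemma~\ref{lem:remove-deg} or a direct argument: if $\reg(G)\neq\reg(G-x)$, then $\reg(G)=\reg(G-\{x,y\})+1$, and I claim in that case $\reg(G-\{x,y\})=\reg(G-N_G[y])$; this is the technical heart. One shows that the vertices of $N_G(y)\setminus\{x\}$, once $y$ is deleted, can be removed one at a time without lowering the regularity below $\reg(G)-1$, or more cleanly, that $\reg(G-\{x,y\})\leq \reg(G)-1$ would follow anyway and the reverse from Corollary~\ref{cor:induction-sc} applied now at $y$ inside $G$: $\reg(G)\leq\max\{\reg(G-y),\reg(G-N_G[y])+1\}$, and since $x$ is isolated in $G-y$ we have $\reg(G-y)=\reg(G-\{x,y\})$, closing the loop.

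The main obstacle I anticipate is the bookkeeping in the final paragraph: cleanly showing that when $\reg(G)>\reg(G-x)$ the value $\reg(G)$ equals $\reg(G-N_G[y])+1$ rather than merely $\le$ it. The clean fix is to apply Corollary~\ref{cor:induction-sc} at $y$ instead of $x$: since $x$ is a pendant on $y$, the graph $G-y$ has $x$ as an isolated vertex, so $\Ind(G-y)$ is a cone over $\Ind(G-\{x,y\})$ and $\reg(G-y)=\reg(G-\{x,y\})$. Thus $\reg(G)\le\max\{\reg(G-\{x,y\}),\reg(G-N_G[y])+1\}$; combined with $\reg(G-\{x,y\})\le\reg(G-x)<\reg(G)$ (the first inequality because $G-\{x,y\}=(G-x)-y$ and deleting a vertex cannot raise regularity), the maximum must be achieved by the second term, giving $\reg(G)=\reg(G-N_G[y])+1$ as desired. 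So the proof splits cleanly into the two stated alternatives with no homology computation strictly required — Corollary~\ref{cor:induction-sc} at $x$ and at $y$ suffice — though the homotopy viewpoint above is a useful sanity check.
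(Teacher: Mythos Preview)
Your final argument---apply Corollary~\ref{cor:induction-sc} at $y$, observe that $x$ is isolated in $G-y$ so $\reg(G-y)=\reg(G-\{x,y\})\le\reg(G-x)<\reg(G)$, and conclude the second alternative---is sound and is a different, arguably cleaner route than the paper's. The paper instead fixes a minimal witness $S$ with $\widetilde H_{m-1}(G[S])\neq 0$, argues that primeness of $x$ forces $x,y\in S$, and then uses Corollary~\ref{thm:hom-induction} to show that no vertex of $N_G(y)\setminus\{x\}$ can lie in $S$ (else it could be deleted without changing homotopy type, violating minimality). This yields $G[S]\cong (G-N_G[y])[S\setminus\{x,y\}]\cup K_2$ and hence the conclusion. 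Your approach sidesteps the minimal-witness analysis entirely.

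There is, however, one genuine gap in your last step. From Corollary~\ref{cor:induction-sc} at $y$ you obtain only $\reg(G)\le\max\{\reg(G-y),\reg(G-N_G[y])+1\}$; after ruling out the first term you get $\reg(G)\le\reg(G-N_G[y])+1$, not equality. Corollary~\ref{cor:induction-sc} by itself does \emph{not} say that $\reg(G)$ equals one of the two terms, so ``the maximum must be achieved by the second term'' does not yield $\reg(G)=\reg(G-N_G[y])+1$. You need the reverse inequality. It is immediate in this pendant situation: if $S\subseteq V(G-N_G[y])$ satisfies $\widetilde H_{k-1}(G[S])\neq 0$ with $k=\reg(G-N_G[y])$, then in $G[S\cup\{x,y\}]$ neither $x$ nor $y$ has a neighbour in $S$, so $G[S\cup\{x,y\}]\cong G[S]\sqcup K_2$ and $\widetilde H_{k}(G[S\cup\{x,y\}])\neq 0$, giving $\reg(G)\ge k+1$. (Alternatively, cite the Dao--Huneke--Schweig remark after Proposition~\ref{prop:induction-sc}.) With that one sentence added, your proof is complete. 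Note also that your earlier wedge decomposition $\Ind(G)\simeq\Ind(G-x)\vee\Sigma\Ind(G-\{x,y\})$ does not by itself establish $\reg(G)=\max\{\reg(G-x),\reg(G-\{x,y\})+1\}$, since regularity is governed by all induced subcomplexes, not just the full complex; it is harmless here because you ultimately do not use that equality.
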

\begin{proof}
We let $\reg(G)=m$, and assume that $S\subseteq V$ is a minimal subset satisfying $\widetilde{H}_{m-1}(G[S])\neq 0$. If $x$ is not a prime vertex of $G$, then $\reg(G)=\reg(G-x)$. So, we may further assume that $x$ is a prime vertex in $G$, which in turn implies that $x\in S$. However, since $\deg_G(x)=1$, we must have $y\in S$ by the minimality of $S$. Now, if
$u\in N_{G[S]}(y)\setminus \{x\}$, then $N_{G[S]}(x)\subseteq N_{G[S]}(u)$ so that
$G[S]\simeq G[S]-u$ by Corollary~\ref{thm:hom-induction}, which is not possible by the minimality of $S$. But then $S\cap N_G(y)=\{x\}$.
We therefore have $G[S]\cong (G-N_G[y])[S]\cup K_2$, where the component $K_2$ is induced by the edge $xy$; hence, $\reg(G)=\reg(G-N_G[y])+1$ as claimed.
\end{proof}
\begin{theorem}\label{thm:sharp-bip}
If $G$ is a connected bipartite graph such that $\delta(G)<\Delta(G)\leq 3$,
then $\reg(G)\leq 2\im(G)$. Furthermore, if $G$ is a $3$-regular connected bipartite graph, then $\reg(G)\leq 2\im(G)+1$.
\end{theorem}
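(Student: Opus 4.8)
## Proof Proposal

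The plan is to induct on the order of $G$, using the reduction-type arguments developed for Theorem~\ref{thm:red-alg} together with the two degree-lemmas just proved (Lemma~\ref{lem:remove-deg} and Lemma~\ref{lem:deg-one}). Fix a maximum induced matching $M=\{u_1v_1,\ldots,u_kv_k\}$, so $\im(G)=k$. As in the proof of Theorem~\ref{thm:reg-claw-alg}, if none of the endpoints $u_i,v_i$ is a prime vertex of $G$, then $\reg(G)=\reg(G-u_1)$ and we are done by induction, since deleting a vertex can only decrease $\im$ and can only decrease $\Delta$ back toward the hypotheses (here one must check $G-u_1$ still falls under one of the two cases, or handle disconnection by treating components separately). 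So we may assume each $u_i$ is a prime vertex. Set $H:=G-N_G[U]$ where $U=\{u_1,\ldots,u_k\}$, and $F(M):=N_G(V(M))-V(M)$. Corollary~\ref{cor:induction-sc} gives $\reg(G)\le \reg(H)+k$, so it suffices to show $\reg(H)\le k$ in the $\delta<\Delta\le 3$ case, and $\reg(H)\le k+1$ in the $3$-regular case.

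The crux is bounding $\reg(H)$ by analysing the local structure of the edges $F:=F(M)\cap V(H)$ hanging off the matching. In the bipartite $\Delta\le 3$ setting, each $v_i$ has at most two neighbours besides $u_i$, and each such neighbour has at most two further neighbours, so the ``frontier'' around each matched edge is small; running the reduction algorithm of Algorithm~\ref{algo:algo} on $H$ with respect to $F$, I would argue that the counter increments at most $k$ times (rather than the crude $(\Delta-1)k=2k$ from Theorem~\ref{thm:red-alg}). The improvement comes from Lemma~\ref{lem:remove-deg}: when a degree-one vertex sits on a path with a degree-two vertex, regularity is unchanged by a further deletion, which lets us ``collapse'' pendant paths without spending a counter step; and from Lemma~\ref{lem:deg-one}: a pendant vertex either contributes nothing to the regularity or forces a link-type drop that we can account for exactly once per matched edge. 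Because $G$ is bipartite with no short odd cycles, the neighbourhoods $N_H(v_i)$ are independent sets rather than cliques (contrast the claw-free argument), so the cochordal-cover trick of Theorem~\ref{thm:reg-claw-alg} is unavailable; instead one leans on the degree bound $\le 3$ to ensure that each component of $H$ after the reduction is a path or a very restricted tree, whose regularity equals its induced matching number and is therefore controlled.

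For the second assertion, when $G$ is $3$-regular bipartite there is no vertex of degree $\le 2$ to seed the pendant-path simplification, which is exactly why the bound weakens by one: after deleting $N_G[U]$ the graph $H$ acquires low-degree vertices, but the very first step may cost an extra unit that cannot be recovered. Concretely I expect the argument to produce $\reg(H)\le k+1$ with the $+1$ traceable to a single application of Corollary~\ref{cor:induction-sc} at a vertex of $H$ whose deletion and link subgraphs are both handled by the non-regular case already proved, i.e. the $3$-regular case reduces to the $\delta<\Delta\le 3$ case after one vertex deletion plus one link step.

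The main obstacle will be the bookkeeping in the induction: verifying that after deleting $N_G[U]$ (or a single non-prime vertex) the resulting graph genuinely satisfies the hypotheses of one of the two cases, or else decomposes into connected pieces each of which does, and that $\im$ is additive/subadditive across this decomposition in the way the counting requires. A secondary subtlety is ensuring the reduction algorithm's counter is bounded by $k$ and not $2k$: this needs a careful case analysis of how the at-most-two neighbours of each $v_i$ interact — whether they are shared between different matched edges, whether they have degree two or three in $H$ — and it is here that bipartiteness (ruling out triangles and $C_5$'s among these frontier vertices) and $\Delta\le 3$ must be used together. I would organize this as a short sequence of claims mirroring the Claims in the proof of Theorem~\ref{thm:bip-6}, each disposing of one configuration of the frontier.
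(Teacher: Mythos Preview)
Your approach diverges substantially from the paper's, and the central reduction you rely on has a real gap. You want to show $\reg(H)\le k$ for $H=G-N_G[U]$ by running the reduction algorithm on $F=F(M)\cap V(H)$ and claiming the counter increments at most $k$ times rather than the generic $2k$. But you never explain \emph{why}. Each $v_i$ can contribute up to two vertices to $F$ (its neighbours other than $u_i$), and nothing you have said rules out both being prime at their step of the algorithm. Your appeal to Lemma~\ref{lem:remove-deg} and Lemma~\ref{lem:deg-one} is not grounded: those lemmas concern a degree-one vertex whose unique neighbour has degree two, and there is no reason the vertices of $F$ sit in such pendant-path configurations inside $H$. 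More tellingly, your argument for the first assertion never actually uses the hypothesis $\delta(G)<\Delta(G)$; you only invoke it rhetorically when explaining why the $3$-regular case should be harder. If your argument for $\reg(H)\le k$ worked as stated, it would apply to the $3$-regular case too.

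The paper proceeds quite differently. It first reduces to the case where $G$ is prime via prime factorization (so that $\reg(G)=\reg(G-N_G[w])+1$ for \emph{every} vertex $w$), and then directly exploits $\delta(G)<\Delta(G)$ by fixing a degree-two vertex $v$ with $N_G(v)=\{a,b\}$. Choosing $u\in N_G(a)\setminus N_G(b)$ (which exists by Proposition~\ref{prop:dominated}) and setting $H:=G-N_G[u]$, one has $\reg(G)=\reg(H)+1$, and in $H$ the vertex $v$ has degree one with sole neighbour $b$, so Lemma~\ref{lem:deg-one} forces either $\reg(H)=\reg(H-v)$ or $\reg(H)=\reg(H-N_H[b])+1$. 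In the second case one takes a prime factorization of $H-N_H[b]$, applies induction to each piece (none can be $3$-regular since $G$ was connected with $\Delta\le 3$), and uses that any induced matching of $H-N_H[b]$ extends by the edge $vb$ to one in $G$, giving $\reg(G)\le 2\im(H-N_H[b])+2\le 2\im(G)$. The first case needs a further split on $\deg_G(a)$, handled similarly. The $3$-regular assertion is then a one-line reduction to the non-regular case after removing a single vertex or a single closed neighbourhood. The missing idea in your proposal is precisely this: work locally around one low-degree vertex and reassemble via prime factorization, rather than trying to control the global frontier of a maximum induced matching all at once.
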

\begin{proof}
Suppose that $G$ is a connected bipartite graph with $\delta(G)<\Delta(G)\leq 3$.
We proceed by the induction on the order of $G$. We first prove the claim when $G$ is a prime graph. Now, note that $G$ must have a vertex, say $v\in V$, of degree two. Let $N_G(v)=\{a,b\}$. We choose a neighbour $u$ of $a$ that is not adjacent to $b$. Observe that the existence of such a vertex $u$ is guaranteed by Proposition~\ref{prop:dominated}. However, we then have $\reg(H)=\reg(G)-1$, where $H:=G-N_G[u]$. Furthermore, the vertex $v$ is of degree one in $H$, and $b$ is its only neighbour. It then follows from Lemma~\ref{lem:deg-one} that
we have either $\reg(H)=\reg(H-v)$ or $\reg(H)=\reg(H-N_H[b])+1$, where we analyse each case separately.

{\it Case $1$.} $\reg(H)=\reg(H-N_H[b])+1$. Let $\HE=\{H_1,\ldots,H_s\}$ be a prime factorization of $H-N_H[b]$. Observe that none of the graphs $H_i$ is  $3$-regular, since $G$ is connected and $\Delta(G)\leq 3$. 
However, we may then apply the induction to conclude that 
\begin{align*}
\reg(G)=\reg(H-N_H[b])+2&=\sum_{i=1}^s \reg(H_i)+2\leq 2(\sum_{i=1}^s \im(H_i))+2\\
&\leq 2\im(H-N_H[b])+2\leq 2\im(G),
\end{align*}
since, if $M$ is an induced matching of $H-N_H[b]$, then $M\cup \{vb\}$ is an induced matching in $G$.

{\it Case $2$.} $\reg(H)=\reg(H-v)$. If $\deg_G(a)=2$, then $\im(H-v)+1\leq \im(G)$ so that we can consider a prime factorization of $H-v$ as in Case $1$ to conclude that $\reg(G)=\reg(H-v)+1\leq 2\im(H-v)+1\leq 2\im(G)$.
We may therefore assume that $\deg_G(a)=3$. Let $w$ be the neighbour of $a$ other than $u$ and $v$. If $w$ is not a prime vertex of $H-v$, then
$\reg(H-v)=\reg(H-\{v,w\})$. Since the vertex $a$ has degree two in $H-w$,
we may proceed as above, that is, we can further suppose that $w$ is a prime vertex of $H-v$. It then follows that $\reg(H-v)=\reg((H-v)-N_{H-v}[w])+1$.
Once again, we can consider a prime factorization of $(H-v)-N_{H-v}[w]$
so that $\reg(G)=\reg((H-v)-N_{H-v}[w])+2\leq 2\im((H-v)-N_{(H-v)}[w])+2\leq 2\im(G)$.

Assume that $G$ is not a prime graph, and let $\{G_1,\ldots,G_k\}$ be a prime factorization of $G$. Since $G$ is connected, we have $\delta(G_i)<\Delta(G_i)\leq 3$ for each $i\in [k]$ so that $\reg(G_i)\leq 2\im(G_i)$ by the induction together with the fact that $G_i$ is prime. But then $\reg(G)=\sum_{i=}^k \reg(G_i)\leq
2(\sum_{i=1}^k\im(G_i))\leq 2\im(G)$. 

Finally, let $G$ be a $3$-regular connected bipartite graph. If $G$ is not prime, then there exists a vertex $x\in V$ such that $\reg(G)=\reg(G-x)$.
However, the connected components of the graph $G-x$ can not be $3$-regular, since $G$ is connected; thus, $\reg(G)=\reg(G-x)\leq 2\im(G-x)\leq 2\im(G)$ by the previous case.
On the other hand, if $G$ is a prime graph, then $\reg(G)=\reg(G-N_G[x])+1$
for any vertex $x\in V$, while each connected component of $G-N_G[x]$ is not a $3$-regular. Therefore, we conclude that $\reg(G)=\reg(G-N_G[x])+1\leq 2\im(G-N_G[x])+1\leq 2\im(G)+1$ as claimed.
\end{proof}

\section{Lozin operations}\label{sect:lozin}

The general knowledge on the behaviour of the regularity under combinatorial operation is limited to only vertex or edge removals on (hyper)graphs. So,  we offer a detailed combinatorial approach in order to describe the effect of various operations on the regularity through the coming three sections.

In his search of algorithmic aspect of the induced matching problem, Lozin describes an operation that increases the induced matching number exactly by one, and proves that a successive applications of his operations turns the given graph into a graph having maximum degree at most three and arbitrarily large girth. We prove in this section\footnote{Some of the results of this section is appeared in an unpublished manuscript~\cite{BCU}.} that his operation has a similar effect on the regularity. In particular, we show that any non-trivial Lozin operation preserves the primeness of a graph that in turn allows us to generate new prime graphs from the existing ones.

We begin with recalling the definition of the Lozin transformation on graphs~\cite{VVL}.

Let $G=(V,E)$ be a graph and let $x\in V$ be given. The \emph{Lozin transform} $\LE_x(G)$ of $G$ with respect to
the vertex $x$ is defined as follows:
\begin{itemize}
\item[$(i)$] partition the neighbourhood $N_G(x)$ of the vertex $x$ into two subsets $Y$ and $Z$ in arbitrary way;\\
\item[$(ii)$] delete vertex $x$ from the graph together with incident edges;\\
\item[$(iii)$] add a $P_4=(\{y,a,b,z\},\{ya,ab,bz\})$ to the rest of the graph;\\
\item[$(iv)$] connect vertex $y$ of the $P_4$ to each vertex in $Y$, and connect $z$ to each vertex in $Z$.
\end{itemize}

\begin{figure}[ht]
\begin{center}
\begin{tikzpicture}[scale=1]

\node [noddee] at (1.5,2.5) (v1) [label=above:$x$] {};
\node [noddee] at (5,2.5) (v2) [label=above:$y$] {};
\node [noddee] at (6,2.5) (v3) [label=above:$a$] {}
	edge [] (v2);
\node [noddee] at (7,2.5) (v4)[label=above:$b$] {}
	edge [] (v3);
\node [noddee] at (8,2.5) (v5)[label=above:$z$] {}
	edge [] (v4);
	
\draw [rounded corners] (0,0.2) rectangle (1,1);		
\draw [rounded corners] (2,0.2) rectangle (3,1);
\draw [rounded corners] (4.5,0.2) rectangle (5.5,1);
\draw [rounded corners] (7.5,0.2) rectangle (8.5,1);

\node  at (0.5,0.6) (v6)  {$Y$};
\node  at (2.5,0.6) (v7)  {$Z$};
\node  at (5,0.6) (v8)  {$Y$};
\node  at (8,0.6) (v9)  {$Z$};

\draw (1.5,2.5) -- (0.05,0.97);		
\draw (1.5,2.5) -- (0.95,0.97);
	
\draw (1.5,2.5) -- (2.05,0.97);	
\draw (1.5,2.5) -- (2.95,0.97);	

\draw (5,2.5) -- (4.55,0.97);	
\draw (5,2.5) -- (5.45,0.97);	
	
\draw (8,2.5) -- (7.55,0.97);	
\draw (8,2.5) -- (8.45,0.97);		
	
\end{tikzpicture}

\end{center}
\caption{The Lozin transformation}
\end{figure}
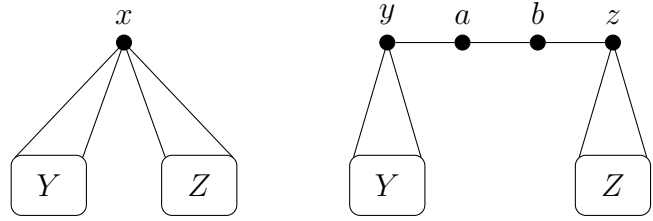

When the decomposition $N_G(x)=Y\cup Z$ is of importance, we will write $\LE_x(G;Y,Z)$ instead of $\LE_x(G)$.
It should be noted that we allow one of the sets $Y$ and $Z$ to be an empty set, in which case, we call the resulting operation as a \emph{trivial Lozin operation}. Furthermore, if $x$ is an isolated vertex,
the corresponding Lozin transform of $G$ with respect to the vertex $x$ is the graph $(G-x)\cup P_4$.

\begin{lemma}\label{lem:lozin-inmatch}~\cite{VVL}
For any graph $G$ and any vertex $x\in V(G)$, the equality $im(\LE_x(G))=im(G)+1$ holds. Furthermore, any graph can be transformed by a sequence of Lozin transformations into a bipartite graph of maximum degree three having arbitrary large girth.
\end{lemma}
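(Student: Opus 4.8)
\emph{Proof proposal.} I will prove the two assertions in turn. Write the attached path as $P=(\{y,a,b,z\},\{ya,ab,bz\})$ and recall that in $\LE_x(G)$ one has $N(a)=\{y,b\}$, $N(b)=\{a,z\}$, $N(y)=Y\cup\{a\}$, $N(z)=Z\cup\{b\}$, while every other vertex and every edge not incident to $x$ is unchanged. For the lower bound $\im(\LE_x(G))\ge\im(G)+1$, take a maximum induced matching $M$ of $G$. If $x\notin V(M)$, then $M$ is still an induced matching of $\LE_x(G)$, and $M\cup\{ab\}$ is again induced because $\{a,b\}$ has no neighbour among the old vertices in $V(M)$. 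If $xw\in M$, then up to the symmetry of $P$ we may assume $w\in Y$; inducedness of $M$ forces every other edge of $M$ to miss $N_G(x)\cup N_G(w)$, hence to miss $\{y,a,b,z\}$ in $\LE_x(G)$, so that $(M\setminus\{xw\})\cup\{wy,bz\}$ is an induced matching. In either case we obtained an induced matching of size $|M|+1$.

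For the upper bound, let $N$ be a maximum induced matching of $\LE_x(G)$, and set $N_0=\{e\in N: e\cap\{y,a,b,z\}=\emptyset\}$ and $N_1=N\setminus N_0$. Since $a$ and $b$ have degree two in $\LE_x(G)$ and $N$ is induced, a short case analysis (according to whether $ab\in N$) shows $|N_1|\le 2$, and that if $|N_1|=2$ then at least one edge of $N_1$ is incident to a vertex $w\in N_G(x)=Y\cup Z$. The edges of $N_0$ use only old vertices different from $x$, hence form an induced matching of $G-x\subseteq G$; if $|N_1|\le 1$ this already gives $|N|-1\le\im(G)$. If $|N_1|=2$, inducedness of $N$ forces $V(N_0)\cap N_G(x)=\emptyset$ (a vertex of $Y$ in $V(N_0)$ would be adjacent to $y$, one of $Z$ to $z$, hence to an endpoint of an edge of $N_1$), and likewise $w$ has no neighbour in $V(N_0)$; therefore $N_0\cup\{xw\}$ is an induced matching of $G$ of size $|N|-1$. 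In all cases $\im(\LE_x(G))=|N|\le\im(G)+1$, which together with the lower bound proves the equality.

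For the last assertion, first reduce the maximum degree: as long as some vertex $x$ has $d:=\deg(x)\ge 4$, apply $\LE_x(G;Y,Z)$ with $|Y|=2$ and $|Z|=d-2$. Old degrees are unchanged and the new vertices $y,a,b,z$ have degrees $3,2,2,d-1$, so the potential $\Phi(G):=\sum_v\max(0,\deg_G(v)-3)$ drops by one; after finitely many steps $\D(G)\le 3$. Next observe that the Lozin transform $\LE_x(G;\{w\},N_G(x)\setminus\{w\})$ merely replaces the edge $xw$ by a path with three internal vertices, i.e.\ it is a triple subdivision of $xw$; applying such transforms one edge at a time, we may triple-subdivide every edge of the current graph. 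This operation leaves the maximum degree unchanged (the inserted vertices have degree two) and sends each cycle of length $\ell$ to one of length $4\ell$ while creating no new cycles, so the result is bipartite (every cycle has length $\equiv 0\pmod{4}$) and its girth is multiplied by $4$. Iterating the ``triple-subdivide every edge'' step $k$ times multiplies the girth by $4^{k}$, so we reach a bipartite graph of maximum degree at most three whose girth exceeds any prescribed bound. The main obstacle is the upper bound in the first part: one must enumerate how a maximum induced matching of $\LE_x(G)$ can meet the gadget $\{y,a,b,z\}$ and check that each two-edge configuration can be traded for a single edge at $x$ without destroying inducedness; a minor but easy-to-miss subtlety in the second part is that a \emph{single} triple subdivision turns an even cycle odd, so bipartiteness is obtained only by subdividing all edges simultaneously.
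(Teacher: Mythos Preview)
The paper does not supply a proof of this lemma; it simply cites Lozin's original article~\cite{VVL}. Your proposal is therefore not a comparison target but a standalone reconstruction, and it is correct.

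A few remarks on the argument. In the upper bound, your case analysis is complete: when $|N_1|=2$ one edge of $N_1$ contains $y$ and the other contains $z$ (since every gadget-touching edge other than $ab$ meets $\{y,z\}$, and $ya,bz$ cannot coexist because $ab\in E$), so indeed $V(N_0)\cap(Y\cup Z)=\emptyset$ and some $w\in Y\cup Z$ lies in $V(N_1)$; the replacement $N_0\cup\{xw\}$ then works exactly as you say. For the second assertion, your potential $\Phi(G)=\sum_v\max(0,\deg(v)-3)$ is the standard bookkeeping device and drops by one at each step since $d\ge 4$ forces both $d-3>0$ and $d-4\ge 0$. Your identification of $\LE_x(G;\{w\},N_G(x)\setminus\{w\})$ with a triple subdivision of $xw$ (with $z$ playing the role of the old $x$) is exactly how the paper uses it later (Corollary~\ref{cor:lozin+edgesubdiv}), and your closing caveat---that one must subdivide \emph{all} edges of the current graph before asserting bipartiteness, since a single triple subdivision sends $C_{2k}$ to $C_{2k+3}$---is a genuine subtlety that is easy to overlook.
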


We next prove that the Lozin transformation has a similar effect on the regularity, whose proof is divided into several steps. 
\begin{proposition}\label{prop:reg+isolating}
Let $e=uv$ be an isolating edge of $G$ with respect to the vertex $w$. If $w$ has a neighbour $x$ of degree two,
then $\reg(G)=\reg(G-e)$.
\end{proposition}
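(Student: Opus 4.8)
The plan is to exhibit a vertex adjacent to $w$ through which the deletion of the edge $e$ has no homotopical effect, and then to feed this into Theorem~\ref{thm:isolating}. By hypothesis $e=uv$ is isolating with respect to $w$, which by definition means $w$ is an isolated vertex of $G-N_G[e]$. Let $x$ be the promised neighbour of $w$ with $\deg_G(x)=2$, and write $N_G(x)=\{w,t\}$ for the unique other neighbour $t$ of $x$. The key observation I would establish first is that $x$ survives into $G-N_G[e]$, i.e. $x\notin N_G[e]$: indeed $x\neq u,v$ since $\deg_G(x)=2$ while $x$ being $u$ or $v$ together with $w$ being isolated in $G-N_G[e]$ would be contradictory, and if $x$ were adjacent to $u$ or $v$ then $w$, being adjacent to $x$, would lie in $N_G[e]$ as well, contradicting that $w$ is isolated there. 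Hence $x$ lies in $G-N_G[e]$, and since $w$ is isolated in that graph while $xw\in E$, we must actually have $x\in N_G[e]$ after all — so I need to be more careful here; the right statement is that $x$ is \emph{not} in $G-N_G[e]$ precisely because $w$ is. Let me reststructure: the correct route is to note $w\in V(G)\setminus N_G[e]$ and $w$ is isolated there, so every neighbour of $w$ in $G$ — in particular $x$ — lies in $N_G[e]$.

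\medskip
With that in hand, I would then analyse $G-N_G[e]$ versus $(G-e)-N_{G-e}[e]$; but a cleaner approach avoids this and argues directly via the homotopy statement. By Theorem~\ref{thm:isolating}, to conclude $\IE(G)\simeq \IE(G-e)$ (and hence $\reg(G)=\reg(G-e)$, since the regularity of a prime-or-not complex is unaffected — more precisely, homotopy equivalence of independence complexes preserves all reduced homology and therefore the regularity) it suffices to show that $\IE(G-N_G[e])$ is contractible. So the whole proof reduces to: \emph{show $\IE(G-N_G[e])$ is contractible}. Here is where the degree-two neighbour is used. Since $w$ is isolated in $H:=G-N_G[e]$, the vertex $w$ is adjacent to nothing in $H$, so $\IE(H)$ is a cone with apex $w$ — wait, that would already give contractibility without needing $x$ at all, which suggests the hypothesis on $x$ is doing something subtler, so I should not claim $w$ is isolated but rather only that it may fail to be isolating in $G-e$; the genuine content is comparing $N_G[e]$ with the situation in $G-e$.

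\medskip
Let me therefore state the plan I actually intend to carry out. Step 1: record that $\reg(G)=\reg(G-e)$ will follow from Theorem~\ref{thm:isolating} applied to $G$ once we know $\IE(G-N_G[e])$ is contractible, \emph{and} symmetrically we must handle that the edge $e$ need not be isolating in $G$ in the naive sense — so instead I will use the $\Del(u,v;w)$ formalism from the paragraph after Theorem~\ref{thm:isolating}. Step 2: identify $x$, with $N_G(x)=\{w,t\}$, and observe $x\in N_G[e]$ because $w$ is isolated in $G-N_G[e]$ and $xw\in E$; so $x$ is adjacent to one of $u,v$, say $x u\in E$. Step 3: in $G-N_G[e]$, use that $w$ is isolated to write $\IE(G-N_G[e])$ as a cone, giving contractibility immediately; then Theorem~\ref{thm:isolating} yields $\IE(G)\simeq\IE(G-e)$, hence equal reduced homology, hence equal regularity. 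The main obstacle I anticipate is that this last argument seems to make the degree-two hypothesis on $x$ superfluous, so the real subtlety must be that $e$ is isolating \emph{with respect to $w$} in $G$ but the edge $e=uv$ might coincide with or interact with $x$'s edges in a way that breaks the naive cone argument; reconciling this — i.e. pinning down exactly why $\deg_G(x)=2$ is needed, presumably to guarantee that removing $N_G[e]$ does not also remove $t$ or to guarantee $w$ stays isolated rather than disappearing — is the step I expect to require the most care, and I would resolve it by a careful case check on whether $t\in N_G[e]$.
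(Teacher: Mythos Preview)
Your proposal contains a fundamental gap: you are assuming that a homotopy equivalence $\IE(G)\simeq\IE(G-e)$ implies $\reg(G)=\reg(G-e)$. This is false in general. Regularity is \emph{not} a homotopy invariant --- the paper says so explicitly in the preliminaries --- because by Hochster's formula
\[
\reg(G)=\max\{j:\widetilde{H}_{j-1}(\IE(G)[S];\kk)\neq 0\text{ for some }S\subseteq V\},
\]
one must control the homology of \emph{all induced subcomplexes}, not merely of the full complex. Your parenthetical claim ``homotopy equivalence of independence complexes preserves all reduced homology and therefore the regularity'' is exactly where the argument breaks. And indeed, as you yourself noticed, your route would render the degree-two hypothesis on $x$ superfluous; that should have been a red flag that something deeper is required.

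The paper's proof proceeds quite differently. It fixes $\reg(G)=m$, takes a \emph{maximal} $S\subseteq V$ with $\widetilde{H}_{m-1}(G[S])\neq 0$, and then does a case analysis on whether $u,v,w,x$ lie in $S$, exploiting Corollary~\ref{thm:hom-induction} and the Mayer-Vietoris sequence of the pair $(G[S^*],x)$ with $S^*=S\cup\{x\}$ to locate a subset of $V(G-e)$ witnessing $\reg(G-e)\geq m$. A symmetric argument (with a different twist in the case $x\in W$, where one builds an auxiliary set $K$ and uses suspensions) handles the reverse inequality. The degree-two condition $N_G(x)=\{u,w\}$ is used repeatedly: to compare $G[S]$ with $R[S\setminus\{v\}]$ via domination, to identify $G[S^*]-N_{G[S^*]}[x]$ with $G[S\setminus\{u\}]$, and to recognise certain induced subgraphs as suspensions. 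None of this is visible at the level of the global homotopy type; it is genuinely an induced-subgraph argument.
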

\begin{proof}
Without loss of generality, we may assume that $N_G(x)=\{u,w\}$. We let $R:=G-e$, and assume that $\reg(G)=m$.
Let $S\subseteq V=V(G)=V(R)$ be a subset satisfying $\widetilde{H}_{m-1}(G[S])\neq 0$. We suppose that
$S$ is maximal in the sense that $\widetilde{H}_{m-1}(G[T])=0$ for any $S\subsetneq T$.

If $u\notin S$ or $v\notin S$, we clearly have $G[S]\cong R[S]$. Thus, we may assume that $u,v\in S$. In this case, if $w\in S$, then $G[S]\simeq G[S]-e\cong R[S]$, since $e$ is then an isolating edge of $G[S]$. So, suppose that $w\notin S$. If $x\in S$, we conclude that $G[S]\simeq G[S]-v\cong R[S\backslash \{v\}]$, where the homotopy equivalence is due to the inclusion $N_{G[S]}(x)\subseteq N_{G[S]}(v)$ (see Corollary~\ref{thm:hom-induction}). It follows that we may further assume $x\notin S$. We then define $S^*:=S\cup \{x\}$, and consider the Mayer-Vietoris sequence of the pair $(G[S^*],x)$: 
\begin{equation*}
\cdots \to \widetilde{H}_{m-1}(G[S^*]-N_{G[S^*]}[x])\to \widetilde{H}_{m-1}(G[S^*]-x)\to \widetilde{H}_{m-1}(G[S^*]) \to \cdots. 
\end{equation*} 
Note that $\widetilde{H}_{m-1}(G[S^*])=0$ by the maximality of $S$. Moreover, since $G[S^*]-x\cong G[S]$,
we need to have $\widetilde{H}_{m-1}(G[S^*]-x)\neq 0$. Therefore, we conclude that $\widetilde{H}_{m-1}(G[S^*]-N_{G[S^*]}[x])\neq 0$. But, $G[S^*]-N_{G[S^*]}[x]\cong G[S\backslash \{u\}]$ and $G[S\backslash \{u\}]\cong R[S\backslash \{u\}]$ so that we have $\widetilde{H}_{m-1}(R[S\backslash \{u\}])\neq 0$. As a result, we deduce that $\reg(R)\geq \reg(G)$.  

Assume that $\reg(R)=n$, and let $W\subseteq V$ be a maximal subset satisfying $\widetilde{H}_{n-1}(R[W])\neq 0$.
Similar to the previous cases, we may assume that $u,v\in W$ and $w\notin W$. Suppose first that
$x\in W$. Then, $R[W]\simeq \Sigma(R[W]-N_{R[W]}[u])$, since $x$ has degree one in $R[W]$ and $u$ is
its only neighbour. It follows that $\widetilde{H}_{n-2}(R[W]-N_{R[W]}[u])\neq 0$. We define $K:=(W\backslash N_G(u))\cup \{v,x,w\}$. Since $e$ is an isolating edge in $G[K]$, we have $G[K]\simeq G[K]-e$. However, the vertex $u$ is of degree one in $G[K]-e$ and $x$ is its only neighbour; hence, $G[K]-e\simeq \Sigma((G[K]-e)-N_{G[K]-e}[x]))$ by Corollary~\ref{thm:hom-induction}. On the other hand, we have  $(G[K]-e)-N_{G[K]-e}[x]\cong R[W]-N_{R[W]}[u]$
so that $\widetilde{H}_{n-1}(G[K])\neq 0$. 

Finally, suppose now that $x\notin W$. As in the previous case, we let $W^*:=W\cup \{x\}$, and consider the Mayer-Vietoris sequence of the pair $(R[W^*],x)$:
\begin{equation*}
\cdots \to \widetilde{H}_{n-1}(R[W^*]-N_{R[W^*]}[x])\to \widetilde{H}_{n-1}(R[W^*]-x)\to \widetilde{H}_{n-1}(R[W^*]) \to \cdots. 
\end{equation*} 
Again, we have $\widetilde{H}_{n-1}(R[W^*])=0$ by the maximality of $W$. Since $R[W^*]-x\cong R[W]$,
the group $\widetilde{H}_{n-1}(R[W^*]-x)$ is non-trivial that implies $\widetilde{H}_{n-1}(R[W^*]-N_{R[W^*]}[x])\neq 0$.
However, $R[W^*]-N_{R[W^*]}[x]\cong R[W\backslash \{u\}]$ and $R[W\backslash \{u\}]\cong G[W\backslash \{u\}]$ so that we have $\widetilde{H}_{n-1}(G[W\backslash \{u\}])\neq 0$. As a result, we deduce that $\reg(G)\geq \reg(R)$. This completes
the proof.     
\end{proof}
We next verify that the homotopy type of Lozin transform can be deduced from the source graph.

\begin{lemma}\label{lem:lozin+stable}
Let $G=(V,E)$ be a graph and let $x\in V$ be given. Then $\LE_x(G;Y,Z)\simeq \LE_x(G;Y',Z')$
for any two distinct decompositions $\{Y,Z\}$ and $\{Y',Z'\}$ of $N_G(x)$. 
\end{lemma}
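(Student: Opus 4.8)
\textbf{Proof proposal for Lemma~\ref{lem:lozin+stable}.}

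The plan is to reduce any two decompositions to a single ``canonical'' one by moving neighbours of $x$ across the $P_4$ one at a time, showing that each such elementary move is a homotopy equivalence. Fix $N_G(x)$ and let $\{Y,Z\}$ and $\{Y',Z'\}$ be two decompositions. It suffices to treat the case where $Y' = Y\cup\{w\}$ and $Z' = Z\setminus\{w\}$ for a single vertex $w\in Z$, since any two decompositions are connected by a finite sequence of such single-vertex swaps (together with the symmetry $\LE_x(G;Y,Z)\cong\LE_x(G;Z,Y)$, which is literally a graph isomorphism $y\leftrightarrow z$, $a\leftrightarrow b$). So from now on write $H:=\LE_x(G;Y,Z)$ and $H':=\LE_x(G;Y\cup\{w\},Z\setminus\{w\})$, with the added $P_4$ on $\{y,a,b,z\}$ in both; the only difference is that $w$ is adjacent to $z$ in $H$ and to $y$ in $H'$.

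The main step is to show $\IE(H)\simeq\IE(H')$ by finding a common ``hub'' complex. First I would apply Theorem~\ref{thm:isolating} to the edge $e=ab$ in $H$: one checks that $a$ (equivalently the edge $ab$) isolates $b$ in an appropriate sense, or more directly, that $\IE(H-N_H[ab])$ is contractible. Indeed $H-N_H[ab]$ is obtained from $H-\{y,a,b,z\}$ by deleting the neighbours of $a$ and $b$ among $\{y,z\}$ — i.e.\ it contains the isolated vertices coming from $Y\cup Z$ only through their other neighbours, but crucially $y$ and $z$ are removed, and in fact $H-N_H[ab]=(G-x)-\varnothing$ has no constraint forcing contractibility in general. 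So instead the cleaner route is: in $H$, the vertex $a$ has $N_H(a)=\{y,b\}$ and $\deg_H(a)=2$, while $y$ has degree $|Y|+1\ge 1$; apply Lemma~\ref{lem:deg-one}-type reasoning or, better, use Corollary~\ref{thm:hom-induction} after an $\Add/\Del$ move. Concretely I would perform $\Del(y,a;\,b)$-style surgery: since $\deg_H(b)=2$ with $N_H(b)=\{a,z\}$, the edge $ya$ can be toggled without changing homotopy type once we check $\IE(H-N_H[ya])$ is contractible — and $H-N_H[ya]$ has $b$ as an isolated vertex (its neighbours $a,z$... wait, $z\notin N_H[ya]$), so one must instead delete $ab$ first. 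The honest plan: show directly via two applications of Theorem~\ref{thm:hom-simp-induction} at the vertices $a$ and $b$ that
\[
\IE(H)\simeq \IE(G-x)\;\vee\;(\text{suspension terms independent of how }w\text{ is placed}),
\]
and that the same expression results for $H'$, whence $\IE(H)\simeq\IE(H')$.

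The key technical point — and the main obstacle — is that $w$'s placement affects the \emph{link} computations at $a,b,y,z$, so I must verify that the relevant links are homotopy equivalent regardless of whether $w$ attaches to $y$ or to $z$. The cleanest way around this is to route everything through a graph in which $w$ is symmetric with respect to $y$ and $z$: introduce the auxiliary graph $\widehat{H}$ obtained by attaching $w$ to \emph{both} $y$ and $z$, and show $\IE(H)\simeq\IE(\widehat{H})\simeq\IE(H')$ by checking that the extra edge ($wz$ in the first equivalence, $wy$ in the second) is an isolating edge — i.e.\ that $\IE(\widehat H - N_{\widehat H}[wz])$ is contractible because, after deleting $N[w]\supseteq\{y,z\}$ and then... here $a,b$ survive with $a$ isolated (its neighbours $y,b$: $y$ is deleted, so $N(a)=\{b\}$ remains) — not yet a cone. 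One more deletion: $\IE(\widehat H-N_{\widehat H}[wz])$ has $a$ of degree one attached to $b$, and $b$ of degree one attached to $a$, so that component is an edge $K_2$, which is \emph{not} contractible. This shows the naive isolating-edge argument fails, confirming that the genuine work is the link bookkeeping at $a$ and $b$, where the $P_4$ ``insulates'' the rest of the graph from $w$'s position. I would therefore organize the proof as: (1) compute $\lk_{\IE(H)}(a)$ and $\del_{\IE(H)}(a)$, using that $\lk(a)=\IE(H-N_H[a])$ has $z$ possibly still adjacent to $w$ but $y,b$ removed; (2) observe $\lk_{\IE(H)}(a)$ is contractible because $b$ becomes isolated in $H-N_H[a]$ (as $N_H(b)=\{a,z\}$ and $a$ is removed but $z$ remains — so $b$ has degree one, attached to $z$, giving a cone over $z$), hence by Theorem~\ref{thm:hom-simp-induction}, $\IE(H)\simeq\IE(H-a)$; (3) in $H-a$, the vertex $y$ now has $N_{H-a}(y)=Y$ and $b$ has $N_{H-a}(b)=\{z\}$, so $b$ is degree-one and Corollary~\ref{thm:hom-induction} gives $\IE(H-a)\simeq\IE(H-a-z)\vee\Sigma\IE(H-a-N[z])$; and (4) verify that both summands, expressed in terms of $G-x$, $Y$, $Z$, and $w$, are invariant under moving $w$ from $Z$ to $Y$ — the suspension coordinate swap $y\leftrightarrow z$ handling the asymmetry. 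The hard part is purely this last invariance check, and I expect it to come down to the observation that after peeling off $a$ and $b$, the vertex $w$ is adjacent to exactly one of $\{y,z\}$ in both $H$ and $H'$, and the two resulting graphs differ by the isomorphism exchanging $y$ with $z$ — at which point the claimed homotopy equivalence is immediate.
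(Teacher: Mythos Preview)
Your reduction to single-vertex swaps is exactly right, and in fact your ``auxiliary graph'' idea --- adjoining $w$ to \emph{both} $y$ and $z$ and then removing one edge --- is precisely the paper's proof. You abandoned it because of a computational slip.

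Here is the slip. In $\widehat H$ (where $w$ is adjacent to both $y$ and $z$) you want to check that $wz$ is an isolating edge. Now $N_{\widehat H}[wz]=N_{\widehat H}[w]\cup N_{\widehat H}[z]$, and this set contains $y$ (since $wy\in E(\widehat H)$) \emph{and} $b$ (since $bz\in E(\widehat H)$). Hence in $\widehat H-N_{\widehat H}[wz]$ both neighbours of $a$ are gone, so $a$ is genuinely isolated and $\IE(\widehat H-N_{\widehat H}[wz])$ is a cone. You wrote that $b$ survives with degree one, but $b$ is removed along with $z$'s closed neighbourhood; there is no surviving $K_2$ on $\{a,b\}$. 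Symmetrically, adding $wy$ to $H$ is an isolating edge with isolated witness $b$, because $N_{\widehat H}[wy]\supseteq\{a,z\}$ kills both neighbours of $b$. So the whole argument is just
\[
\IE(H)\ \xrightarrow{\ \Add(w,y;\,b)\ }\ \IE(\widehat H)\ \xrightarrow{\ \Del(w,z;\,a)\ }\ \IE(H'),
\]
two applications of Theorem~\ref{thm:isolating}, and you are done. This is verbatim the paper's proof (with $u$ in place of your $w$).

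Your fallback plan via Theorem~\ref{thm:hom-simp-induction} at $a$ and $b$ is unnecessary and, as written, also contains an error: in step~(2) you claim $b$ is isolated in $H-N_H[a]$, but $N_H[a]=\{y,a,b\}$, so $b$ is \emph{removed}, not isolated, and the link $\lk_{\IE(H)}(a)=\IE(H-\{y,a,b\})$ has no obvious cone point. There is no need to pursue this route once the isolating-edge argument is fixed.
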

\begin{proof}
To prove the claim, it is enough to verify that for a given decomposition $N_G(x)=Y\cup Z$ and a vertex $u\in Z$,
the graphs $\LE_x(G;Y,Z)$ and $\LE_x(G;Y\cup \{u\},Z\backslash \{u\})$ have the same homotopy type.
However, moving the vertex $u$ from $Z$ to $Y$ corresponds to the sequence of isolating operations $\Add(u,y;b)$ and $\Del(u,z;a)$ in $\LE_x(G;Y,Z)$; therefore, the claim follows from Theorem~\ref{thm:isolating}.
\end{proof}

\begin{proposition}\label{prop:lozin+stable}
Let $G=(V,E)$ be a graph and let $x\in V$ be given. Then $\LE_x(G)\simeq \Sigma(G)$.
\end{proposition}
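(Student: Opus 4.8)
The plan is to reduce the general case to the case where the chosen decomposition is the trivial one, and then to compute the homotopy type of the resulting graph directly. By Lemma~\ref{lem:lozin+stable}, the homotopy type of $\LE_x(G;Y,Z)$ does not depend on the decomposition $N_G(x)=Y\cup Z$, so it suffices to treat the trivial Lozin operation, say $\LE_x(G;N_G(x),\emptyset)$. In that case the transformed graph $\LE_x(G)$ is obtained from $G$ by first appending a pendant path $a-b-z$ at a new vertex $y$, joining $y$ to every vertex of $N_G(x)$, and deleting $x$; equivalently, $y$ plays the role of $x$ (same open neighbourhood in $G-x$) but now carries an attached path $P_3$ rooted at $a$.

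First I would dispose of the path $a-b-z$. In $\LE_x(G)$ the vertex $z$ has degree one with unique neighbour $b$, and $b$ has degree two; more importantly $N(z)=\{b\}\subseteq N(a)$ is not what we want, but rather: $b$ is a vertex of degree two with a pendant neighbour $z$, so by Lemma~\ref{lem:deg-one} applied at $z$ (or directly by Theorem~\ref{thm:hom-simp-induction}, since $\lk(z)=\IE(\LE_x(G)-N[z])$ is a cone with apex $a$ once $b$ is removed, hence contractible) one gets $\IE(\LE_x(G))\simeq \IE(\LE_x(G)-N[z])\vee \Sigma\,\IE(\LE_x(G)-z)$ — I would instead argue the cleaner way: the neighbourhood containment $N(a)\cap V\subseteq N(y)$ after deleting $z$ lets me collapse the tail. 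Concretely, repeatedly applying Corollary~\ref{thm:hom-induction} and Theorem~\ref{thm:isolating}: $z$ is dominated after suitable deletions, $b$ becomes pendant, and the net effect of the path $P_3$ hanging off $y$ is a single suspension. So the expected intermediate claim is $\LE_x(G)\simeq \Sigma\bigl((G-x)\cup\{y\}\text{ with }N(y)=N_G(x)\bigr)$, i.e. a suspension of the graph obtained by re-adding a vertex with $x$'s neighbourhood.

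Then I would finish by observing that re-adding $y$ with $N(y)=N_G(x)$ to $G-x$ just gives back $G$ (up to renaming), so the last step is the identity on homotopy type and we conclude $\LE_x(G)\simeq \Sigma(G)$. To make the tail computation precise I would track it as: in $\LE_x(G)$, use $\Add(a,\text{vertices dominated via }y)$/$\Del$ isolating moves so that $z$ has a dominating neighbour, apply Corollary~\ref{thm:hom-induction} to remove $z$ at the cost of nothing, then $b$ is pendant at $a$ with $\deg(a)=\deg$-whatever and $N(a)=\{b,y\}$ has degree two, so Lemma~\ref{lem:remove-deg}-type reasoning (or directly Theorem~\ref{thm:hom-simp-induction} at $b$, whose link $\{a\}\ast\IE(\text{rest}-N[a])$ contracts in the deletion) yields a suspension with the remaining graph being $G-x$ together with the vertex $y\sim N_G(x)$, which is $G$.

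The main obstacle I anticipate is bookkeeping the order and legality of the isolating-edge moves in step two: each $\Add/\Del$ requires an isolated vertex witness in the appropriate induced subgraph, and one must check that deleting $z$ or $b$ really does produce a cone (so the relevant link is contractible and Theorem~\ref{thm:hom-simp-induction} applies with the clean wedge $\D\simeq\del_\D(x)\vee\Sigma\lk_\D(x)$, and then that the $\del$ piece is contractible so only the $\Sigma\lk$ survives). In other words, the single genuine suspension must be shown to come out of the $P_3$-tail and nothing else must contribute, which is where I'd be careful; the rest is routine application of the tools already set up in the Preliminaries.
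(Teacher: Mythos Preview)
Your reduction to the trivial decomposition $\LE_x(G;N_G(x),\emptyset)$ via Lemma~\ref{lem:lozin+stable} is exactly right and matches the paper. But in the second step you dismiss precisely the observation that finishes the proof in one line. You write that ``$N(z)=\{b\}\subseteq N(a)$ is not what we want'' --- in fact it is exactly what you want. Corollary~\ref{thm:hom-induction} says that if $N_G(u)\subseteq N_G(v)$ then $\IE(G)\simeq\IE(G-v)$: one removes the vertex with the \emph{larger} open neighbourhood, not the dominated one. With $u=z$ and $v=a$ this deletes $a$, after which the edge $bz$ becomes an isolated $K_2$ and the vertex $y$ (now with $N(y)=N_G(x)$) plays the role of $x$; hence $\LE_x(G;N_G(x),\emptyset)-a\cong G\cup K_2$, and $\IE(G\cup K_2)\cong\IE(G)\ast S^0=\Sigma\,\IE(G)$. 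That is the paper's entire argument.

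The alternative routes you sketch all have problems. Lemma~\ref{lem:deg-one} is a statement about regularity, not about homotopy type, so it cannot be invoked here. The wedge decomposition you write, $\IE(\LE_x(G))\simeq \IE(\LE_x(G)-N[z])\vee \Sigma\,\IE(\LE_x(G)-z)$, has the deletion and link pieces swapped relative to Theorem~\ref{thm:hom-simp-induction}. A closed-domination argument at $b$ (using $N[z]\subseteq N[b]$ in Corollary~\ref{thm:hom-induction}) would genuinely work --- the $\IE(\LE_x(G)-b)$ side has $z$ isolated hence is contractible, and the $\IE(\LE_x(G)-N[b])$ side is isomorphic to $\IE(G)$ --- but your write-up does not land on this either, and the $\Add/\Del$ isolating-edge manoeuvres you propose are unnecessary and their witnesses are never specified. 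The moral: the single open-neighbourhood containment you noticed and discarded is the whole proof.
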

\begin{proof}
In view of Lemma~\ref{lem:lozin+stable}, it is sufficient to show that $\LE_x(G;N_G(x),\emptyset)\simeq \Sigma(G)$.
In such a case, we set $\RE_x(G)=\LE_x(G;N_G(x),\emptyset)$ and note that $N_{\RE_x(G)}(z)\subseteq N_{\RE_x(G)}(a)$ so that
$\RE_x(G)$ is homotopy equivalent to $\RE_x(G)-a$, while the latter graph is clearly isomorphic to 
$G\cup K_2$, where $K_2$ is induced by the edge $bz$. It then follows that $\RE_x(G)\simeq \Sigma(G)$
as required. 
\end{proof}

\begin{lemma}\label{lem:lozin+reg}
Let $G=(V,E)$ be a graph and let $x\in V$ be given. Then $\reg(\LE_x(G;Y,Z))=\reg(\LE_x(G;Y',Z'))$
for any two distinct decompositions $\{Y,Z\}$ and $\{Y',Z'\}$ of $N_G(x)$. 
\end{lemma}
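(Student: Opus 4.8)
The plan is to prove Lemma~\ref{lem:lozin+reg} by the same mechanism that already gave Lemma~\ref{lem:lozin+stable}, but promoted from homotopy type to regularity. Recall that regularity is \emph{not} a homotopy invariant, so we cannot simply quote $\LE_x(G;Y,Z)\simeq \LE_x(G;Y',Z')$; instead we must re-run the argument tracking induced subcomplexes, and the tool for that is Theorem~\ref{thm:isolating} (the $\Add/\Del$ isolating-edge operations), whose hypothesis is the contractibility of a certain induced independence complex, a property that \emph{is} inherited by all induced subgraphs containing the relevant vertices.

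\medskip

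\textbf{Reduction to a single transfer.} As in Lemma~\ref{lem:lozin+stable}, it suffices to show $\reg(\LE_x(G;Y,Z))=\reg(\LE_x(G;Y\cup\{u\},Z\setminus\{u\}))$ for a single vertex $u\in Z$; the general statement follows by moving vertices one at a time between the two sides of the partition. Write $H:=\LE_x(G;Y,Z)$ and $H':=\LE_x(G;Y\cup\{u\},Z\setminus\{u\})$; these two graphs have the same vertex set $V'$ (the $P_4$ on $\{y,a,b,z\}$ together with $V\setminus\{x\}$), and $H'$ is obtained from $H$ by the sequence of operations $\Add(u,y;b)$ followed by $\Del(u,z;a)$. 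Concretely this means: in $H$ the edge $uz$ is isolating with respect to $b$ (because $b$'s only neighbours are $a$ and $z$, and $z\in N_H[uz]$, so $b$ is isolated in $H-N_H[uz]$), and after adding $uy$ the edge $uz$ is isolating with respect to $a$ (since in the new graph $a$'s neighbours are $y$ and $b$, and $y\in N[uz]$ once $uy$ is an edge, so $a$ is isolated after deleting $N[uz]$). I would state these two isolating facts as the first step, verifying the ``isolated vertex'' condition directly from the definition of the Lozin transform.

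\medskip

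\textbf{From isolating edges to equality of regularity.} The key point is that an isolating edge $e$ of a graph $K$ with respect to $w$ gives $\reg(K)=\reg(K-e)$ — not merely $\IE(K)\simeq\IE(K-e)$. This does need a small argument, since Theorem~\ref{thm:isolating} is stated homotopically. I would prove it in the style of Proposition~\ref{prop:reg+isolating}: take $S\subseteq V(K)$ maximal with $\widetilde H_{m-1}(K[S])\neq 0$ where $m=\reg(K)$; if $e=pq$ is not entirely inside $S$ then $K[S]\cong(K-e)[S]$ and we are done, and if $p,q\in S$ then $e$ restricted to $K[S]$ is still isolating (with respect to $w$ when $w\in S$, and trivially when $w\notin S$, because then one endpoint of $e$ may fail to be in the neighbourhood but the contractibility of $\IE(K-N_K[e])$ passes to the induced subcomplex, making $e$ removable up to homotopy without changing the induced subcomplex on $S$). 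Running this in both directions yields $\reg(K)=\reg(K-e)$. In fact, rather than reproving this I expect it is cleanest to invoke the Dao--Huneke--Schweig dichotomy together with Theorem~\ref{thm:isolating}: since $\IE(K)\simeq\IE(K-e)$ they have the same homology in every degree, and each of $\reg(K)$, $\reg(K-e)$ equals one of the two ``del/link$+1$'' values, so a short induction on $|V(K)|$ forces them equal. Either way, chaining through the $\Add$ then the $\Del$ gives $\reg(H)=\reg(H')$.

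\medskip

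\textbf{The main obstacle.} The genuine content is upgrading ``isolating edge preserves homotopy type'' to ``isolating edge preserves regularity,'' because regularity sees all induced subcomplexes at once. The subtlety is exactly the case $p,q\in S$, $w\notin S$ in the argument above: one must check that $\IE(K[S]-N_{K[S]}[e])$ is still contractible (or handle it by a separate deletion as in Proposition~\ref{prop:reg+isolating}), so that $e$ remains removable \emph{within} the induced subgraph without disturbing homology in the critical degree. Apart from that, everything is bookkeeping: confirming the two isolating-edge claims from the definition of $\LE_x$, and assembling the equalities $\reg(H)=\reg(H\cup\{uy\})=\reg(H')$. Once this lemma is in hand, combined with Proposition~\ref{prop:lozin+stable} (which pins down $\reg(\LE_x(G))$ via $\LE_x(G;N_G(x),\emptyset)\simeq\Sigma(G)$, hence relating it to $\reg(G)$), one obtains the clean statement $\reg(\LE_x(G))=\reg(G)+1$ of Theorem~\ref{thm:lozin+reg} for an arbitrary decomposition.
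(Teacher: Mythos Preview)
Your reduction to a single vertex transfer and the sequence $\Add(u,y;b)$, $\Del(u,z;a)$ is exactly the paper's route. The gap is in the middle step: you attempt to establish the \emph{general} claim that any isolating edge preserves regularity, and you correctly flag the problematic case $u,v\in S$, $w\notin S$ but do not resolve it. Your Dao--Huneke--Schweig alternative does not work as stated either: that dichotomy concerns vertex deletions, and knowing $\IE(K)\simeq\IE(K-e)$ says nothing about the induced subcomplexes on which regularity is computed; there is no evident induction on $|V(K)|$ that forces $\reg(K)=\reg(K-e)$ from homotopy equivalence alone.

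The point you are missing is that you do not need the general statement at all: Proposition~\ref{prop:reg+isolating} already applies \emph{directly}, twice. In the intermediate graph $\widetilde H:=\LE_x(G;Y,Z)\cup\{uy\}$, the edge $uy$ is isolating with respect to $b$, and $b$ has the degree-two neighbour $a$ (with $N_{\widetilde H}(a)=\{y,b\}$); hence $\reg(\widetilde H)=\reg(\LE_x(G;Y,Z))$. Likewise $uz$ is isolating in $\widetilde H$ with respect to $a$, and $a$ has the degree-two neighbour $b$ (with $N_{\widetilde H}(b)=\{a,z\}$); hence $\reg(\widetilde H)=\reg(\LE_x(G;Y\cup\{u\},Z\setminus\{u\}))$. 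That is the whole proof. The extra ``degree-two neighbour'' hypothesis in Proposition~\ref{prop:reg+isolating} is exactly what handles the case you could not close, and the $P_4=y\text{--}a\text{--}b\text{--}z$ in the Lozin transform supplies it automatically.

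A minor slip in your write-up: the parenthetical checks the wrong edge. For $\Add(u,y;b)$ one must verify that $uy$ is isolating with respect to $b$ in $\widetilde H$, not that $uz$ is isolating with respect to $b$ in $H$. Indeed your own observation that $b$'s neighbours are $a$ and $z$ shows $b\in N_H[z]\subseteq N_H[uz]$, so $b$ is \emph{removed}, not isolated, in $H-N_H[uz]$.
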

\begin{proof}
We proceed as in the proof of Lemma~\ref{lem:lozin+stable}. So, let a decomposition $N_G(x)=Y\cup Z$ and a vertex $u\in Z$ be given. We claim that $\reg(\LE_x(G;Y,Z))=\reg(\LE_x(G;Y\cup \{u\},Z\backslash \{u\})$. If we set $H:=\LE_x(G;Y,Z)\cup uy$, then the edge $uy$ is an isolating edge of $H$ with respect to the vertex $b$, while $b$ has a neighbour $a$ of degree two. Therefore, Proposition~\ref{prop:reg+isolating} applies, that is, $\reg(H)=\reg(\LE_x(G;Y,Z))$.
On the other hand, $uz$ is also an isolating edge in $H$ with respect to $a$, and the vertex $a$ has a degree two neighbour, namely $b$. Thus we have $\reg(H)=\reg(H-uz)=\reg(\LE_x(G;Y\cup \{u\},Z\backslash \{u\})$. So the claim follows.
\end{proof}

\begin{proof}[{\bf Proof of Theorem~\ref{thm:lozin+reg}}]
Given a subset $S\subseteq V$. Assume first that $x\notin S$. If we define $S':=S\cup \{a,b\}$,
it follows that $\LE_x(G)[S']\cong G[S]\cup K_2$, where $K_2$ is induced by the edge $ab$.
Therefore, the graph $\LE_x(G)[S']$ is homotopy equivalent to the suspension of $G[S]$.

Suppose now that $x\in S$. In such a case, we consider $S':=(S\backslash \{x\})\cup \{y,a,b,z\}$, and write 
$H:=G[S]$ and $H':=\LE_x(G)[S']$. Note that the Lozin transform $\LE_x(H)$ of $H$ with respect to the partition $N_H(x)=(S\cap Y)\cup (S\cap Z)$ is exactly isomorphic to $H'$. Therefore, we have $H'\simeq \Sigma(H)$ by Proposition~\ref{prop:lozin+stable}, which implies that $\reg(\LE_x(G))\geq \reg(G)+1$.

For the converse, by Lemma~\ref{lem:lozin+reg}, it is sufficient to show that $\reg(\LE_x(G;N_G(x),\emptyset))\leq \reg(G)+1$. So,  set $\LE_x(G)=\LE_x(G;N_G(x),\emptyset)$. Then we have
\begin{equation*}
\reg (\LE_x(G))\leq \max\{\reg (\LE_x(G)-a), \reg(\LE_x(G)-N_{\LE_x(G)}[a])+1\}.
\end{equation*}
by Corollary~\ref{cor:induction-sc}. However, the graph $\LE_x(G)-a$ is isomorphic to $G\cup bz$ so that $\reg(\LE_x(G)-a)=\reg(G)+1$. Moreover, the graph $\LE_x(G)-N_{\LE_x(G)}[a]$ is isomorphic to $(G-x)\cup \{z\}$ in which $z$ is an isolated vertex. It means that $\reg(\LE_x(G)-N_{\LE_x(G)}[a])=\reg(G-x)\leq \reg(G)$. Therefore, we conclude that
$\reg(\LE_x(G))\leq \reg(G)+1$ as claimed.
\end{proof}

\begin{remark}\label{rem:lozin}
Note that Lozin transformation does not need to preserve the vertex-decomposability of a graph in general. Furthermore, other graph invariants such as the matching number or cochordal cover number may not need to increase exactly by one under a Lozin transformation (see Lemma~\ref{lem:triple-cd}). 
 \end{remark}
We close this section with the proof of the fact that any non-trivial Lozin operation preserves the primeness of a graph. We remark that in the trivial case, the primeness of a graph is not preserved under a Lozin transformation. For instance, the graph $\LE_x(C_8;N_{C_8}(x),\emptyset)$ is not prime for any vertex $x\in V(C_8)$.

\begin{lemma}\label{lem:edge-remove-1}
If $e$ is an edge of a prime graph $G$, then $\reg(G-e)\leq \reg(G)$.
\end{lemma}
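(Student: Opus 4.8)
The plan is to analyze the effect of removing an edge $e=uv$ from a prime graph $G$ via the inductive bound of Corollary~\ref{cor:induction-sc}, applied at one of the endpoints, say $u$. The key observation is that deleting $u$ or deleting $N_G[u]$ almost doesn't see the edge $e$: we have $(G-e)-u = G-u$, and $(G-e)-N_{G-e}[u]$ differs from $G-N_G[u]$ only by the presence of the vertex $v$ (which lies in $N_G[u]$ but not in $N_{G-e}[u]$). More precisely, $(G-e)-N_{G-e}[u]$ is obtained from $G-N_G[u]$ by adding back the vertex $v$ together with whatever edges join $v$ to $V(G)\setminus N_G[u]$. So the first step is to record these two isomorphisms/identities and apply Corollary~\ref{cor:induction-sc} to $G-e$ at the vertex $u$:
\begin{equation*}
\reg(G-e)\leq \max\{\reg(G-u),\ \reg\big((G-e)-N_{G-e}[u]\big)+1\}.
\end{equation*}

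Now I would bring in the primeness of $G$. Since $G$ is prime, $\reg(G-u)<\reg(G)$, so the first term in the maximum is at most $\reg(G)-1<\reg(G)$ and causes no trouble. The real work is to bound the second term, i.e.\ to show $\reg\big((G-e)-N_{G-e}[u]\big)\leq \reg(G)-1$. Write $H:=G-N_G[u]$, so that $(G-e)-N_{G-e}[u]$ is $H$ together with the extra vertex $v$ and its edges to $V(H)$; call this graph $H^+$. Because $G$ is prime we also know $\reg(G)=\reg(H)+1$, i.e.\ $\reg(H)=\reg(G)-1$. Thus it suffices to prove $\reg(H^+)\leq \reg(H)$, i.e.\ that adding the vertex $v$ back does not increase the regularity. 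For this I would apply Corollary~\ref{cor:induction-sc} (or Proposition~\ref{prop:induction-sc}) to $H^+$ at the vertex $v$: $H^+-v=H$, so $\reg(H^+-v)=\reg(H)$, and it remains to control $\reg(H^+-N_{H^+}[v])+1$. Note $H^+-N_{H^+}[v]=H-N_H(v)$ is an induced subgraph of $H$, hence $\reg\big(H^+-N_{H^+}[v]\big)\leq\reg(H)$; but that only gives $\reg(H^+)\leq\reg(H)+1$, which is one too weak.

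To close the gap I would argue more carefully, using that $v$ is adjacent to $u$ in $G$ and $u\notin V(H^+)$. The set $N_{H^+}(v)$ consists of neighbours of $v$ outside $N_G[u]$; in $G$, the vertex $v$ has the additional neighbour $u$. I would try to exhibit a vertex whose presence makes the link contribution collapse, or invoke Corollary~\ref{thm:hom-induction}/Theorem~\ref{thm:hom-simp-induction}: concretely, show that any witness subset $S$ for $\reg(H^+)$ that actually uses $v$ can be modified (by swapping $v$ for a suitable neighbour, or by the domination argument used in the proof of Proposition~\ref{prop:dominated}) into a witness for $\reg(G-u)$ or $\reg(H)$ of the same or larger homological degree, contradicting primeness of $G$ if $\reg(H^+)>\reg(H)$. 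Alternatively, and perhaps more cleanly, one can run the argument directly on $G-e$ as in the proof of Proposition~\ref{prop:dominated}: take a minimal $S$ with $\widetilde H_{\reg(G-e)-1}((G-e)[S])\neq 0$; if $\{u,v\}\not\subseteq S$ then $(G-e)[S]=G[S]$ and we get $\reg(G-e)\leq\reg(G)$ immediately; if $\{u,v\}\subseteq S$ then $e$ is present in $G[S]$ but absent in $(G-e)[S]$, and one analyzes the Mayer--Vietoris sequence of $((G-e)[S],u)$ together with primeness to push the nonvanishing homology down to a proper induced subgraph of $G$, again yielding $\widetilde H_{\reg(G-e)-1}(G[T])\neq 0$ for some $T$, hence $\reg(G-e)\leq\reg(G)$.

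The main obstacle is exactly this last point: ruling out that $G-e$ has strictly larger regularity than $G$ when the witnessing subset genuinely contains both endpoints of $e$. Adding an edge can certainly raise regularity in general, so the argument must use primeness of $G$ in an essential way — intuitively, primeness forces $G$ to be "homologically tight", so there is no slack for $G-e$ to exceed it. I expect the cleanest route is the second one (direct analysis of a minimal witnessing $S$ for $G-e$), combining the Mayer--Vietoris sequence of Proposition~\ref{prop:induction-sc} at an endpoint of $e$ with the identity $(G-e)-N_{G-e}[u]=H^+$ and the equality $\reg(G)=\reg(H)+1$ coming from primeness.
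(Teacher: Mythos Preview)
Your setup is exactly right and matches the paper's approach: apply the inductive bound to $G-e$ at an endpoint $u$ of $e=uv$, observe $(G-e)-u=G-u$ (bounded by primeness), and compute $(G-e)-N_{G-e}[u]=G-(N_G[u]\setminus\{v\})$. The paper's proof is precisely this (phrased as the dichotomy ``is $u$ a prime vertex of $G-e$?'', which amounts to the same two cases).

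The gap is in how you bound the second term. You rename $H:=G-N_G[u]$ and $H^+:=G-(N_G[u]\setminus\{v\})$, then try to show $\reg(H^+)\leq\reg(H)$ by comparing $H^+$ to the \emph{smaller} graph $H$ via vertex-addition at $v$. That is the wrong direction and is why you end up ``one too weak''. The one-line fix you are missing: $H^+=G-(N_G[u]\setminus\{v\})$ is an induced subgraph of $G-u$, simply because $u\in N_G[u]\setminus\{v\}$. Hence
\[
\reg\big((G-e)-N_{G-e}[u]\big)=\reg(H^+)\leq\reg(G-u)\leq\reg(G)-1,
\]
using primeness for the last inequality. Plugging this into your displayed maximum finishes the proof immediately. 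No Mayer--Vietoris on witness sets, no domination arguments are needed; the paper's Case~1 is exactly this observation.
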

\begin{proof}
Let $\reg(G)=k$. We write $e=xy$ and $H:=G-e$, and consider two cases separately.

{\it Case $1$.} $x$ is a prime vertex of $H$. It then follows that
$\reg(H)=\reg(H-N_H[x])+1$, while the graph $H-N_H[x]$ is isomorphic to
$G-(N_G[x]\setminus \{y\})$; hence, we have $\reg(H)=\reg(G-(N_G[x]\setminus \{y\}))+1\leq (k-1)+1=k$, since $G$ is prime.

{\it Case $2$.} $x$ is not a prime vertex of $H$, that is, $\reg(H)=\reg(H-x)$. It then follows $\reg(H)=\reg(H-x)=\reg(G-x)< \reg(G)$, since $H-x\cong G-x$.
\end{proof}

\begin{lemma}\label{lem:edge-remove-2}
If $G$ is a prime graph with $\delta(G)\geq 2$, then $\reg(G-N_G[e])\leq \reg(G)-2$.
\end{lemma}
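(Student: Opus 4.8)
The plan is to mimic the case analysis of Lemma~\ref{lem:edge-remove-1}, but now squeezing out a second unit of regularity from the hypothesis $\delta(G)\geq 2$. Write $e=xy$ and $k:=\reg(G)$. Since $G$ is prime, for any vertex $v$ we have $\reg(G)=\reg(G-N_G[v])+1$, so in particular $\reg(G-N_G[x])=k-1$ and $\reg(G-N_G[y])=k-1$. First I would set $H:=G-x$ (equivalently $H:=(G-e)-x$, since removing $e$ and then $x$ is the same as removing $x$) and observe that $G-N_G[e]$ is obtained from $H$ by deleting the closed neighbourhood of $y$ \emph{within $H$}: precisely, $G-N_G[e]=H-N_H[y]$, because $N_G[e]=N_G[x]\cup N_G[y]$ and we have already removed $N_G[x]$ by passing to $G-N_G[x]\supseteq\dots$; more carefully, $G-N_G[e]=(G-N_G[x])-(N_G[y]\setminus N_G[x])=(G-N_G[x])-N_{G-N_G[x]}[y']$ only if $y$ survives, which it need not. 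So the cleaner route is to condition on whether $y$ is a prime vertex of $G-N_G[x]$.

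The key step is therefore: consider the graph $H:=G-N_G[x]$, which has $\reg(H)=k-1$ since $G$ is prime. I want to show $\reg(H-N_H[y])\leq k-2$, i.e.\ that $\reg(G-N_G[e])=\reg(H-N_H[y])\leq \reg(H)-1$; combined with $\reg(H)=k-1$ this gives the claim. For this it suffices to show $y$ is a prime vertex of $H$, equivalently that $\reg(H-y)<\reg(H)$; then $\reg(H)=\reg(H-N_H[y])+1$ and we are done. If $y$ is \emph{not} prime in $H$, then $\reg(H-y)=\reg(H)=k-1$; but $H-y=G-(N_G[x]\cup\{y\})=G-N_G[x]$ with $y$ deleted, and I would instead argue directly via Corollary~\ref{cor:induction-sc} applied to $y$ in $G-N_G[x]$: either $\reg(G-N_G[x])=\reg((G-N_G[x])-y)$ — but $(G-N_G[x])-y$ still contains $\dots$ — or $\reg(G-N_G[x])=\reg(G-N_G[e])+1$, which is exactly what we want. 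So the real content is ruling out, or absorbing, the bad branch of Corollary~\ref{cor:induction-sc}: I expect that $\delta(G)\geq 2$ is used here, via Proposition~\ref{prop:dominated} (no dominated vertex) together with Lemma~\ref{lem:deg-one}/Lemma~\ref{lem:remove-deg}, to force that even when $y$ fails to be prime in $G-N_G[x]$ one can still peel off an extra unit — e.g.\ $y$ has a second neighbour $z\neq x$ in $G$, so $y$ is not isolated after deleting $N_G[x]$ unless $N_G(y)\setminus\{x\}\subseteq N_G[x]$, and in the latter situation a dominated-vertex argument applies.

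Concretely, the proof I would write proceeds by induction on $|V(G)|$ and splits on whether $y$ is a prime vertex of $G-e$, running the two cases of Lemma~\ref{lem:edge-remove-1} one level deeper. In \textbf{Case 1}, $x$ is prime in $G-e$: then $\reg(G-e)=\reg((G-e)-N_{G-e}[x])+1=\reg(G-(N_G[x]\setminus\{y\}))+1$, and now $y$ is a pendant vertex of $G-(N_G[x]\setminus\{y\})$ — its only neighbour is $y$'s surviving neighbour — wait, rather: in $G-(N_G[x]\setminus\{y\})$ the vertex $x$ has been removed, and $y$ keeps its neighbours outside $N_G[x]$, of which there is at least one since $\deg_G(y)\geq 2$; applying Lemma~\ref{lem:deg-one} or Corollary~\ref{cor:induction-sc} to $y$ and comparing with $G-N_G[e]=G-(N_G[x]\cup N_G[y])$ yields $\reg(G-(N_G[x]\setminus\{y\}))\leq \reg(G-N_G[e])+1$, hence $\reg(G-e)\leq\reg(G-N_G[e])+2$; but $\reg(G-e)\geq\reg(G-N_G[e])+1$ is automatic, and we instead want the reverse bound $\reg(G-N_G[e])\leq\reg(G)-2=k-2$, which follows because $\reg(G-e)\leq\reg(G)=k$ by Lemma~\ref{lem:edge-remove-1}. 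In \textbf{Case 2}, $x$ is not prime in $G-e$, so $\reg(G-e)=\reg((G-e)-x)=\reg(G-x)$, and since $G$ is prime $\reg(G-x)=\reg(G)-1$ \emph{is false in general} — rather $\reg(G-x)\leq\reg(G)-1$ only when $x$ is prime in $G$; primeness of $G$ gives $\reg(G-x)<\reg(G)$, so $\reg(G-e)\leq k-1$, and then deleting $N_{G-e}[y]$ (note $\deg_{G-e}(y)\geq 1$) costs at most one more by Corollary~\ref{cor:induction-sc}, giving $\reg(G-N_G[e])\leq\reg(G-e)-1\leq k-2$. \textbf{The main obstacle} I anticipate is Case~1, where one must genuinely use $\delta(G)\geq 2$ to know that $y$ is non-isolated in $G-(N_G[x]\setminus\{y\})$ and to control the link/deletion split there; the symmetric role of $x$ and $y$ lets one swap them if one of the two vertices cooperates, and Proposition~\ref{prop:dominated} handles the degenerate configurations. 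I would also double-check the edge cases $|V(G)|$ small and $\reg(G)\leq 2$ separately, since the bound $\reg(G)-2$ can be $0$ and one needs $\reg(G-N_G[e])\geq 0$ to be consistent.
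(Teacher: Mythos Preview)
Your proposal does not go through; both branches of the case split contain a genuine gap.

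In \textbf{Case 1} you obtain (at best, and only along one branch of Corollary~\ref{cor:induction-sc}) the inequality $\reg(G-e)\leq \reg(G-N_G[e])+2$. This bounds $\reg(G-e)$ from above in terms of $\reg(G-N_G[e])$, which is the wrong direction: combining it with $\reg(G-e)\leq k$ from Lemma~\ref{lem:edge-remove-1} yields no upper bound on $\reg(G-N_G[e])$ whatsoever. In \textbf{Case 2} you assert ``deleting $N_{G-e}[y]$ costs at most one more \ldots\ giving $\reg(G-N_G[e])\leq \reg(G-e)-1$''. But Corollary~\ref{cor:induction-sc} never says that removing a closed neighbourhood \emph{strictly} lowers the regularity; all you get for free is $\reg(G-N_G[e])\leq \reg(G-e)\leq k-1$, one unit short of the target. (Also note that $G-N_G[e]$ is obtained from $G-e$ by deleting \emph{two} closed neighbourhoods, not one.) The hypothesis $\delta(G)\geq 2$ is never actually exploited in a way that buys you the missing unit.

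The paper's proof avoids the whole case analysis with a single observation. Set $D(x,y):=(N_G[x]\cup N_G[y])\setminus\{x,y\}=N_G[e]\setminus\{x,y\}$. Because $\delta(G)\geq 2$, the set $D(x,y)$ is non-empty, so $G-D(x,y)$ is an induced subgraph of $G-v$ for some vertex $v$; primeness gives $\reg(G-D(x,y))\leq \reg(G-v)\leq k-1$. But in $G-D(x,y)$ the vertices $x$ and $y$ have lost all neighbours except each other, so $G-D(x,y)\cong (G-N_G[e])\cup K_2$, whence $\reg(G-D(x,y))=\reg(G-N_G[e])+1$. Combining the two displays gives $\reg(G-N_G[e])\leq k-2$ immediately. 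The trick you were missing is that instead of trying to squeeze two separate drops of one unit each out of primeness, you can manufacture a disjoint $K_2$ component and let additivity of regularity over disjoint unions supply the second unit for free.
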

\begin{proof}
Let $e=xy$ be an edge of $G$, and assume that $\reg(G)=k$. Observe that if we define $D(x,y):=(N_G[x]\cup N_G[y])\setminus \{x,y\}$, then $\reg(G-D(x,y))\leq k-1$, since $G$ is prime. However, we have $G-D(x,y)\cong (G-N_G[e])\cup K_2$, where the $K_2$ component is induced by the edge $e$; hence, $\reg(G-N_G[e])=\reg(G-D(x,y))-1\leq k-2$.
\end{proof}

\begin{proposition}\label{prop:tsub-prime}
Let $G$ be a prime graph with $\delta(G)\geq 2$. If $e=xy$ is an edge of $G$, then the graph $\LE_x(G;(N_G(x)\setminus \{y\}), \{y\})$ 
is prime.
\end{proposition}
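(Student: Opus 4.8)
The plan is to show that every vertex of $H:=\LE_x(G;(N_G(x)\setminus\{y\}),\{y\})$ is a prime vertex, i.e.\ $\reg(H-v)<\reg(H)$ for all $v\in V(H)$. By Theorem~\ref{thm:lozin+reg} we already know $\reg(H)=\reg(G)+1=:k+1$, where $k=\reg(G)$ and $G$ is prime, so $\widetilde{H}_{k}(H)\neq 0$. Writing $P_4=(\{y_1,a,b,z\},\{y_1a,ab,bz\})$ for the path inserted in place of $x$, with $y_1$ joined to $N_G(x)\setminus\{y\}$ and $z$ joined to $y$, the vertices of $H$ split into the three ``new'' vertices $a,b,z,y_1$ (four of them; I will treat $y_1$ separately since it plays the role of a genuine neighbour continuation) and the ``old'' vertices $V(G)\setminus\{x\}$. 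The strategy is to handle these groups by different tools: the new path-vertices via the explicit homotopy description $H\simeq \Sigma(G)$ from Proposition~\ref{prop:lozin+stable} together with Lemma~\ref{lem:lozin+reg} (so that we may choose the most convenient partition $\{Y,Z\}$), and the old vertices via Lemmas~\ref{lem:edge-remove-1} and~\ref{lem:edge-remove-2} applied inside $G$.

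First I would dispose of the new vertices. Using Lemma~\ref{lem:lozin+reg}, $\reg(H)=\reg(\LE_x(G;N_G(x),\emptyset))$, and in that transform $N(z)\subseteq N(a)$, $N(b)\subseteq N(a)$, etc., so deletions of $a$, $b$, $z$ reduce to graphs homotopy equivalent to $G\cup bz$, $G$, or $(G-x)$ with an isolated vertex — each of regularity $\le k < k+1$; more carefully, for the partition at hand $H-z\cong$ (a graph where $y$ has become degree-reducible and $b$ is pendant), and one checks $\reg(H-z)$, $\reg(H-b)$, $\reg(H-a)$ are all $\le k$ by repeatedly applying Lemma~\ref{lem:deg-one}, Lemma~\ref{lem:remove-deg} and the isolating-edge moves of Theorem~\ref{thm:isolating} to peel $P_4$ back down, landing on an induced subgraph of $G$ or of $G-x$ (both of regularity $\le k$, the latter strictly by primeness of $G$). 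The vertex $y_1$ is the subtle ``new'' vertex: $H-y_1$ disconnects the $P_4$-stub from $N_G(x)\setminus\{y\}$, and I would argue $\reg(H-y_1)\le k$ by recognizing $H-y_1\simeq (G-x)\cup(\text{path})$ up to suspension bookkeeping, again invoking Theorem~\ref{thm:lozin+reg}-style identities with an empty $Y$-side.

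Next, the old vertices $v\in V(G)\setminus\{x\}$. Here I distinguish $v\notin N_G[x]$, $v\in N_G(x)\setminus\{y\}$, and $v=y$. For $v\notin N_G[x]$, the deletion commutes with the Lozin transform: $H-v=\LE_x(G-v;(N_{G-v}(x)\setminus\{y\}),\{y\})$ when $v\neq y$ and $v\notin N_G(x)$, so $\reg(H-v)=\reg(G-v)+1\le (k-1)+1=k$ by Theorem~\ref{thm:lozin+reg} and primeness of $G$ — done. For $v\in N_G(x)\setminus\{y\}$, deleting $v$ from $H$ gives the Lozin transform of $G-v$ with respect to a shrunk partition, so the same computation gives $\reg(H-v)=\reg(G-v)+1\le k$. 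For $v=y$: $H-y=\LE_x(G-y;N_{G-y}(x),\emptyset)$ where now the $z$-end of the $P_4$ has no attachments, so $H-y\simeq \Sigma(G-y)$ up to the extra $K_2$, giving $\reg(H-y)=\reg(G-y)+1\le k$. Thus in all cases $\reg(H-v)\le k<k+1=\reg(H)$, and $H$ is connected (here I use $\delta(G)\ge2$, which guarantees $N_G(x)\setminus\{y\}\neq\emptyset$ so the inserted $P_4$ does not split off), so $H$ is prime.

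The main obstacle I anticipate is the careful verification for the ``new'' vertices $a$, $b$, $z$, $y_1$ — bookkeeping exactly which induced subgraph of $G$ or $G-x$ one lands on after peeling the $P_4$, and making sure the suspension shifts match so that the regularity genuinely drops to $\le k$ rather than staying at $k+1$; this is where one must be most delicate, since an off-by-one in the suspension count would destroy the argument. The old-vertex cases are essentially immediate from the commutation $\,(H-v)=\LE_x(G-v;\dots)\,$ plus Theorem~\ref{thm:lozin+reg}, so the weight of the proof rests on the four new vertices. An alternative, possibly cleaner, route for those four is to use Lemma~\ref{lem:edge-remove-1} and Lemma~\ref{lem:edge-remove-2} on $G$ directly: e.g.\ $H-N_H[a]\cong (G-x)\cup\{z\}$ and $H-a\cong G\cup bz$, so Corollary~\ref{cor:induction-sc} forces $\reg(H)$ to be realized through $H-a$, meaning $a$ is \emph{not} prime — wait, that is the wrong direction; rather one shows $a$ \emph{is} prime by observing $H-a$ has regularity exactly $\reg(G)+1=k+1=\reg(H)$, so $a$ fails to be prime. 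This reveals a genuine subtlety: not every vertex of $H$ is obviously prime, and one may instead need to argue that whenever $H-v$ attains regularity $k+1$, a prime factorization of $H-v$ already yields a prime factorization of $H$ other than $\{H\}$, contradicting $\reg(H)=k+1$ being achieved only through primes smaller than $H$ — precisely the mechanism used in Proposition~\ref{prop:dominated} and Theorem~\ref{thm:2-connected}. So the real plan is: show that for each $v$, either $\reg(H-v)<\reg(H)$ outright, or $H-v$ admits an induced decomposition refining to a prime factorization of $H$ distinct from $\{H\}$ — and the latter will be the crux, handled by exhibiting the explicit $K_2$ (induced by a suitable edge of the $P_4$ or by $\{v',\text{neighbour}\}$) split off together with a Lozin/edge-contraction minor of $G$.
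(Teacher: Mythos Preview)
Your treatment of the old vertices is correct and matches the paper: for $v\in V(G)\setminus\{x,y\}$ the deletion commutes with the Lozin transform so $\reg(H-v)=\reg(G-v)+1\le k$, and for $v\in\{x,y\}$ one recognizes $H-v$ as a (trivial) Lozin transform of $G-v$.

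However, your handling of the new path vertices has a genuine gap. The attempt to invoke Lemma~\ref{lem:lozin+reg} to switch to the trivial partition $(N_G(x),\emptyset)$ is illegitimate here: that lemma asserts $\reg(\LE_x(G;Y,Z))=\reg(\LE_x(G;Y',Z'))$, but says nothing about $\reg(\LE_x(G;Y,Z)-v)$ versus $\reg(\LE_x(G;Y',Z')-v)$, and these can differ since the inserted path vertices have different neighbourhoods in the two transforms. Your subsequent computation $H-a\cong G\cup bz$ is a symptom of this confusion: it holds for the trivial partition but not for the partition at hand, where $z$ is still adjacent to $y$; and even if it did hold, $\reg(G\cup bz)=k+1$, not $\le k$, so it would not establish primeness of $a$. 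This is exactly the contradiction you stumble into at the end, and the proposed escape via prime factorizations is a red herring: that mechanism is for showing a graph is \emph{not} prime, whereas here every vertex of $H$ really is a prime vertex.

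The paper works directly with the given partition, viewing $H$ as $G$ with the edge $xy$ replaced by a path $x{-}x_0{-}x_1{-}x_2{-}y$. For $H-x_0$ (your $a$) and $H-x_2$ (your $z$) one applies Lemma~\ref{lem:remove-deg}: in $H-x_0$ the vertex $x_1$ has degree one and $x_2$ degree two, so one may delete $y$ without changing the regularity, landing on $(G-y)\cup K_2$, whence $\reg(H-x_0)=\reg(G-y)+1\le k$. The substantive case is $H-x_1$ (your $b$), which you do not actually carry out. There the paper applies Lemma~\ref{lem:deg-one} twice (first at $x_0$, then at $x_2$), producing four subcases whose regularities reduce to those of $G-(N_G[x]\setminus\{y\})$, $G-N_G[e]$, $G-e$, and $G-(N_G[y]\setminus\{x\})$; the required bounds $\le k-1$, $\le k-2$, $\le k$, $\le k-1$ come respectively from primeness of $G$, Lemma~\ref{lem:edge-remove-2}, Lemma~\ref{lem:edge-remove-1}, and primeness again. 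This is precisely where $\delta(G)\ge 2$ and the two edge-removal lemmas enter, and it is the core of the proof that your proposal leaves open.
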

\begin{proof}
If we write $H:=\LE_x(G;(N_G(x)\setminus \{y\}), \{y\})$, the graph $H$ is obtained from $G$ by replacing the edge $e=xy$ with the path $x-x_0-x_1-x_2-y$. Suppose that $\reg(G)=k$ so that $\reg(H)=k+1$.

{\it Case $1$.} Consider the graph $H-z$ for any vertex $z\in V(H)\setminus \{x,x_0,x_1,x_2,y\}$. Note that we have $\LE_x(G-z;(N_G(x)\setminus \{y\}), \{y\})\cong H-z$; hence,
$\reg(H-z)\leq k$, since $\reg(G-z)<k$ by the primeness of $G$.

{\it Case $2$.} The cases obtained by the removal of vertices $x$ and $y$ from $H$ respectively are of typical nature, so we may only consider the graph $H-x$. However, the graph $H-x$ is isomorphic to the graph
$\LE_y(G-x; N_{G-x}(y),\emptyset)$. Therefore we conclude that $\reg(H-z)\leq k$.

{\it Case $3$.} Once again, note that the cases $H-x_0$ and $H-x_2$ are similar, so we only consider one of them. For the graph $H-x_0$, observe that $N_{H-x_0}(x_1)\subseteq N_{H-x_0}(y)$ such that $\deg_{H-x_0}(x_1)=1$ and
$\deg_{H-x_0}(x_2)=2$; hence, we can remove the vertex $y$ form $H-x_0$ without altering its regularity by Lemma~\ref{lem:remove-deg}, that is,  $\reg(H-x_0)=\reg(G-y)+1\leq (k-1)+1=k$.

{\it Case $4$.} Finally, we consider the graph $H_x:=H-x_1$.
We verify this case by analysing two subcases separately. Observe that since
$x_0$ has degree one in $H_x$, we have either $\reg(H_x)=\reg(H_x-x_0)$
or else $\reg(H_x)=\reg(H_x-N_{H_x}[x])+1$ by Lemma~\ref{lem:deg-one}.

{\it Subcase $4.1$.} We consider the equality $\reg(H_x)=\reg(H_x-N_{H_x}[x])+1$. For the graph
$T_1:=H_x-N_{H_x}[x]$, note that we have either $\reg(T_1)=\reg(T_1-x_2)$
or $\reg(T_1)=\reg(T_1-N_{T_1}[y])+1$ by again Lemma~\ref{lem:deg-one}.
If $\reg(T_1)=\reg(T_1-x_2)$, then $T_1-x_2\cong G-(N_G[x]\setminus \{y\})$ so that $\reg(T_1)=\reg(G-(N_G[x]\setminus \{y\}))\leq (k-1)$, that is, $\reg(H_x)\leq k$. We should note at this point that the sets $N_G[x]$
and $N_G[y]$ are not comparable with respect to the inclusion by Proposition~\ref{prop:dominated}. 
So, suppose that $\reg(T_1)=\reg(T_1-N_{T_1}[y])+1$, then $T_1-N_{T_1}[y]\cong G-N_G[e]$ so that $\reg(H_x)=\reg(G-N_G[e])+2\leq (k-2)+2=k$ by Lemma~\ref{lem:edge-remove-2}.

{\it Subcase $4.2$.} Assume next that $\reg(H_x)=\reg(H_x-x_0)$. Now, for the graph $T_2:=H_x-x_0$, we have either $\reg(T_2)=\reg(T_2-x_2)$ or $\reg(T_2)=\reg(T_2-N_{T_2}[y])+1$ depending whether $x_2$ is a prime vertex or not in $T_2$.
If $\reg(T_2)=\reg(T_2-x_2)$, then $\reg(T_2)=\reg(G-e)\leq k$ by Lemma~\ref{lem:edge-remove-1}, since $T_2-x_2\cong G-e$. On the other hand,
if $\reg(T_2)=\reg(T_2-N_{T_2}[y])+1$, then $T_2-N_{T_2}[y]\cong G-(N_G[y]\setminus \{x\})$ so that $\reg(T_2)=\reg(G-(N_G[y]\setminus \{x\})+1\leq (k-1)+1=k$.

\end{proof}

\begin{theorem}\label{thm:lozin-prime}
If $G$ is a prime graph with $\delta(G)\geq 2$, then so is $\LE_x(G;Y,Z)$ for any vertex $x$ and any partition $N_G(x)=Y\cup Z$ such that $Y,Z\neq \emptyset$.
\end{theorem}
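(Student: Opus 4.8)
The plan is to reduce the general case to the single special case handled in Proposition~\ref{prop:tsub-prime}, namely the Lozin transform along an edge $e=xy$ with the partition $N_G(x)=(N_G(x)\setminus\{y\})\cup\{y\}$. The key observation is that all Lozin transforms of $G$ at $x$ are related by the moves studied in Lemmas~\ref{lem:lozin+stable} and~\ref{lem:lozin+reg}: moving a neighbour from one side of the partition to the other changes neither the homotopy type nor the regularity of $\LE_x(G)$. What is missing is that these moves also \emph{preserve primeness}, and that is exactly what I would establish first.

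First I would fix a vertex $x$ with $\deg_G(x)\geq 2$ (the hypothesis $\delta(G)\geq 2$ guarantees this), pick any nontrivial partition $N_G(x)=Y\cup Z$, and choose a vertex $y\in Z$ with $|Z|\geq 2$ reduced so that in fact $Z=\{y\}$ after finitely many moves — or, more cleanly, I would prove the statement by induction on $\min(|Y|,|Z|)$, with base case $\min=1$ supplied (up to relabelling $Y\leftrightarrow Z$) by Proposition~\ref{prop:tsub-prime}. For the inductive step, suppose $|Z|\geq 2$ and pick $u\in Z$; set $H:=\LE_x(G;Y,Z)$ and $H':=\LE_x(G;Y\cup\{u\},Z\setminus\{u\})$. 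By the inductive hypothesis $H'$ is prime, and I must deduce that $H$ is prime. As in the proof of Lemma~\ref{lem:lozin+reg}, form the auxiliary graph $F:=H\cup uy$ (equivalently $F=H'\cup uz$): the edge $uy$ is isolating in $F$ with respect to $b$, which has the degree-two neighbour $a$, and the edge $uz$ is isolating in $F$ with respect to $a$, which has the degree-two neighbour $b$. So $F$, $H$, and $H'$ all have the same regularity, $k+1:=\reg(G)+1$, by Proposition~\ref{prop:reg+isolating} together with Theorem~\ref{thm:lozin+reg}.

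The heart of the argument is then to show that for every vertex $w\in V(H)$ we have $\reg(H-w)\leq k$. I would split on the location of $w$. If $w\notin\{y,a,b,z\}$ and $w\notin N_G(x)$, then $H-w\cong \LE_x(G-w;Y,Z)$ and $H'-w\cong\LE_x(G-w;Y\cup\{u\},Z\setminus\{u\})$ carry the same regularity by Lemma~\ref{lem:lozin+reg}; since $H'$ is prime, $\reg(H'-w)\leq k$, hence $\reg(H-w)\leq k$. The same identity and reasoning handle $w\in N_G(x)$ with $w\neq u$. For $w=u$: here $H-u$ is a Lozin-type graph in which $u$ has been deleted, and one checks $H-u\cong\LE_x(G-u;Y,Z\setminus\{u\})$ has regularity equal to $\reg(H'-u)$ (again Lemma~\ref{lem:lozin+reg}), which is $\leq k$ by primeness of $H'$. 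The remaining vertices $w\in\{y,a,b,z\}$ of the inserted $P_4$ are the delicate ones. For $w=a$: $H-a\cong (\text{the graph }G\text{ with }x\text{ replaced appropriately})\cup bz$, where $bz$ splits off as an isolated $K_2$, reducing the question to $\reg$ of a graph obtained from $G$ by a deletion, which is $<\reg(G)+1$ by primeness of $G$; the case $w=b$ is symmetric. For $w=y$ (and symmetrically $w=z$): after deleting $y$, the vertex $a$ becomes pendant on $b$, so by Lemma~\ref{lem:deg-one} either $\reg(H-y)=\reg(H-y-a)$, which strips off the $P_4$ tail and leaves $\reg(G-x)\leq k$ up to an isolated vertex, or $\reg(H-y)=\reg((H-y)-N_{H-y}[b])+1$, and $(H-y)-N_{H-y}[b]$ is $G-x$ together with an isolated vertex $z$, giving $\reg(H-y)=\reg(G-x)+1\leq k$ since $G$ is prime. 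In every case $\reg(H-w)<\reg(H)$, so $H$ is prime, completing the induction and hence the theorem.

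The main obstacle I anticipate is bookkeeping the isomorphism types of $H-w$ for the four ``internal'' vertices $y,a,b,z$ of the inserted path, and making sure that when a pendant vertex appears (after deleting $y$ or $z$) the hypothesis $\delta(G)\geq 2$ and the primeness of $G$ are genuinely what forces the regularity to drop — essentially re-running the case analysis of Proposition~\ref{prop:tsub-prime} but now for a \emph{single} edge subdivided into a longer path, which is why reducing to that proposition (rather than redoing everything) is the efficient route. Lemmas~\ref{lem:remove-deg}, \ref{lem:deg-one}, \ref{lem:edge-remove-1}, \ref{lem:edge-remove-2} supply all the local moves needed, so no new homological input is required beyond what is already in the excerpt.
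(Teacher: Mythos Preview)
Your inductive reduction to Proposition~\ref{prop:tsub-prime} via the primeness of $H'$ differs from the paper's route. The paper argues directly: for every $w\notin\{y,a,b,z\}$ it simply notes that $H-w\cong\LE_x(G-w;Y',Z')$ for the obvious induced partition and invokes Theorem~\ref{thm:lozin+reg} to obtain $\reg(H-w)=\reg(G-w)+1\leq k$, with no appeal to $H'$ at all. Your detour through $H'$ works for these vertices but gains nothing, since Theorem~\ref{thm:lozin+reg} already delivers the bound in one line.

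The genuine gap is your treatment of $w=a$ (and by symmetry $w=b$). You claim that in $H-a$ the edge $bz$ ``splits off as an isolated $K_2$'', but this is false: $z$ remains adjacent to every vertex of $Z$, so $b$ is merely a pendant attached to a vertex of degree $|Z|+1\geq 2$, not part of a detached component. This is exactly the hard case; the paper's Case~3 devotes the bulk of the proof to it, splitting first on whether $b$ is a prime vertex of $H-a$, and then, in the non-prime branch, on the primeness of $y$ and of $z$ in $T:=H-\{a,b\}$, each time identifying the remaining graph with some proper induced subgraph of $G$ so that primeness of $G$ yields the bound. Your handling of $w=y$ also contains errors---$(H-y)-N_{H-y}[b]$ is isomorphic to $G-x$, not to $G-x$ with an isolated $z$ (the vertex $z$ is removed); and in the other branch $H-\{y,a\}$ does not ``strip off'' to $G-x$ plus an isolated vertex, since $z$ is still attached to $Z$---though these slips are more easily repaired (the paper dispatches $w=y$ in one step via Lemma~\ref{lem:remove-deg} rather than Lemma~\ref{lem:deg-one}). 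The upshot is that the case analysis for the four internal vertices $y,a,b,z$ cannot be shortcut, and your sketch of it is wrong precisely where the real work lies.
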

\begin{proof}
By Proposition~\ref{prop:tsub-prime}, we may assume that both sets $Y$ and
$Z$ are of size at least two.

Now assume that $\reg(G)=k$ so that $\reg(\LE_x(G;Y,Z))=k+1$. To ease the notation, we write $H:=\LE_x(G;Y,Z)$.

{\it Case $1$.} For the graph $H-y$, we note that $N_{H-y}(a)\subseteq N_{H-y}(z)$ such that $\deg_{H-y}(a)=1$ and $\deg_{H-y}(b)=2$ so that we can remove the vertex $z$ without altering the regularity by Lemma~\ref{lem:remove-deg}. Since the resulting graph is isomorphic to $(G-x)\cup K_2$, we therefore have $\reg(H-y)=\reg(G-x)+1\leq (k-1)+1=k$. The case of the graph $H-z$ is identical to this case.

{\it Case $2$.} For some vertex $w\in Y$, consider the graph $H-w$. We should note at this point that $Y\setminus \{w\}\neq \emptyset$, since $|Y|\geq 2$. However, this graph is isomorphic to $\LE_x(G-w;Y\setminus \{w\},Z)$ so that
$\reg(H-w)=\reg(G-w)+1\leq k$. Once again, the case $H-u$ for any vertex $u\in Z$ can be similarly treated. 

{\it Case $3$.} Consider the graph $H-a$. Suppose that $\reg(H-a)=m$, and let $S\subseteq V(H-a)$ be a minimal subset satisfying $\widetilde{H}_{m-1}((H-a)[S])\neq 0$. We analyse two cases separately. 

{\it Subcase $3.1$.} The vertex $b$ is a prime vertex of $H-a$, that is, $b\in S$. It then follows that $z\in S$, since $\deg_{H-a}(b)=1$. Furthermore, if
$u\in S\cap Z$, then $N_{(H-a)[S]}(b)\subseteq N_{(H-a)[S]}(u)$ so that
$(H-a)[S]\simeq (H-a)[S]-u$ by Corollary~\ref{thm:hom-induction}, which is not possible by the minimality of $S$. We may therefore assume that $S\cap Z=\emptyset$. However, this forces that $\deg_{(H-a)[S]}(z)=1$; hence,
$(H-a)[S]\cong G[S]\cup K_2$, where we identify the vertex $y\in V(H-a)$ with $x$ and the component $K_2$ is induced by the edge $bz$. But then the graph $G[S]$ is an induced subgraph of $G-Z$ so that
$m\leq \reg(G-Z)+1\leq (k-1)+1=k$, since $G$ is prime.

{\it Subcase $3.2$.} The vertex $b$ is not a prime vertex of $H-a$, that is, $\reg(H-a)=\reg(H-\{a,b\})$. We write $T:=H-\{a,b\}$, and consider the possibility of primeness of the vertices $y$ and $z$ in $T$. If $y$ is a prime vertex of $T$, then $\reg(T)=\reg(T-N_T[y])+1$, while the graph
$T-N_T[y]$ is isomorphic to $G-Y$ so that $\reg(T-N_T[y])=\reg(G-Y)\leq k-1$, that is, $\reg(T)\leq k$. On the other hand, if $y$ is not a prime vertex in $T$, then $\reg(T)=\reg(T-y)$. Now, if $z$ is prime in $T-y$, then we conclude that $\reg(T-y)=\reg((T-y)-N_{(T-y)}[z])+1$, where the graph
$(T-y)-N_{(T-y)}[z]$ is isomorphic to $G-(Z\cup \{x\})$ so that
$\reg(T)=\reg(G-(Z\cup \{x\})+1\leq (k-1)+1=k$. Finally, if $z$ is not prime in $T-y$, then we have $\reg(T)=\reg(T-y)=\reg(T-\{y,z\})=\reg(G-x)\leq k-1$.
 
\end{proof}

\section{Contractions, expansions and the virtual induced matching number}\label{sect:contr-expan}
In this section, we introduce a new lower bound to the regularity of graphs,
the virtual induced matching number $\vim(G)$, which is larger than or equals to the ordinary induced matching number $\im(G)$ in general, while we provide an example showing that the gap between $\vim(G)$ and $\im(G)$ could be arbitrarily large. For that purpose, we first begin to describe the behaviour of the regularity under vertex expansions and edge contractions on the independence complexes of graphs.

The notion of an edge-contraction on a simplicial complex seems firstly studied by Hoppe~\cite{HH}, and it was later investigated in the language of independence complexes by Ehrenborg and Hetyei~\cite{EH} (see also~\cite{ALS, BC1}). 

Let $\D$ be a simplicial complex on the vertex set $V=V(\D)$, and consider $x,y\in V$ such that $\{x,y\}\in \D$ 
and $w_{xy}\notin V$. The \emph{edge contraction} $xy\mapsto w_{xy}$ can be defined to be a map $f\colon V\to (V\backslash \{x,y\})\cup \{w_{xy}\}$ by
\begin{equation*}
f(v):=\begin{cases}
v, & \textrm{if}\;v\notin \{x,y\},\\
w_{xy}, & \textrm{if}\;v\in \{x,y\}.
\end{cases}
\end{equation*} 
We then extend $f$ to all simplices $F=\{v_0,v_1,\ldots,v_k\}$ of $\D$ by setting $$f(F):=\{f(v_0),f(v_1),\ldots,f(v_k)\}.$$
The simplicial complex $\D_{xy}:=\{f(F)\colon F\in \D\}$ is called the contraction of $\D$ with respect to the edge $\{x,y\}$.

Observe that the contraction of an edge may not need to preserve the homotopy type of the complex in general. However, under a suitable restriction on the contracted edge, we can guarantee this to happen. We say that an edge
$\{x,y\}$ is a \emph{contractible-edge} in $\D$ if no minimal non-face of $\D$ contains it.

\begin{theorem}\citep[Theorem 2.4]{EH}, \citep[Theorem 1]{ALS}\label{thm:hmtpy-edge}
Let $\D$ be a simplicial complex and let $\{x,y\}\in \D$ be an edge. Then, the simplicial complexes $\D$ and $\D_{xy}$ are homotopy equivalent provided that the edge $\{x,y\}$ is a contractible-edge. 
\end{theorem}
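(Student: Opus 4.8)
\emph{Proof idea.} The first step is to recast the hypothesis. I claim that ``$\{x,y\}$ is a contractible-edge'' is equivalent to the \emph{link condition}: for every $\sigma\subseteq V\setminus\{x,y\}$ with $\sigma\cup\{x\}\in\D$ and $\sigma\cup\{y\}\in\D$ one also has $\sigma\cup\{x,y\}\in\D$. Indeed, if some such $\sigma\cup\{x,y\}$ were a non-face it would contain a minimal non-face $\tau$, and by hypothesis $\{x,y\}\not\subseteq\tau$, so $\tau\subseteq\sigma\cup\{x\}$ or $\tau\subseteq\sigma\cup\{y\}$, contradicting $\sigma\cup\{x\},\sigma\cup\{y\}\in\D$; the reverse implication is immediate. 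I use the link condition from now on. Write $w:=w_{xy}$ and introduce the auxiliary complex on $V\cup\{w\}$ in which $w$ copies both $x$ and $y$:
\[
\widehat{\D}:=\D\ \cup\ \{\sigma\cup\{w\}\colon\ \sigma\cup\{x\}\in\D\ \text{or}\ \sigma\cup\{y\}\in\D\}.
\]
A routine verification shows that $\widehat{\D}$ is a simplicial complex, that $\del_{\widehat{\D}}(w)=\D$, and that the faces of $\widehat{\D}$ meeting neither $x$ nor $y$ form exactly $\D_{xy}$ (a face of $\D_{xy}$ avoiding $w$ is a face of $\D$ avoiding $x,y$; a face $\sigma\cup\{w\}$ of $\D_{xy}$ comes from $\sigma\cup\{x\}\in\D$ or $\sigma\cup\{y\}\in\D$, and conversely). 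Hence it suffices to prove $\widehat{\D}\simeq\D$ and $\widehat{\D}\simeq\D_{xy}$, and each will be obtained by deleting one vertex and invoking Theorem~\ref{thm:hom-simp-induction}: when $\lk_{\Gamma}(v)$ is contractible it is a fortiori contractible in $\del_{\Gamma}(v)$, so $\Gamma\simeq\del_{\Gamma}(v)\vee\Sigma(\lk_{\Gamma}(v))\simeq\del_{\Gamma}(v)$.

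For $\widehat{\D}\simeq\D$ it remains, since $\del_{\widehat{\D}}(w)=\D$, to check that $\lk_{\widehat{\D}}(w)$ is contractible. By construction $\lk_{\widehat{\D}}(w)=(\{x\}\ast\lk_{\D}(x))\cup(\{y\}\ast\lk_{\D}(y))$, the union of the two closed stars, and each summand is a cone, hence contractible. Their intersection is the set of $\sigma$ with $\sigma\cup\{x\}\in\D$ and $\sigma\cup\{y\}\in\D$; using the link condition (checking the cases according to whether $x$ or $y$ lies in $\sigma$) this set is closed under adjoining $x$, so it too is a cone with apex $x$, hence contractible. A union of two contractible subcomplexes glued along a contractible subcomplex is contractible (gluing lemma), so $\lk_{\widehat{\D}}(w)$ is contractible and $\widehat{\D}\simeq\D$.

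For $\widehat{\D}\simeq\D_{xy}$, observe that in $\widehat{\D}$ every face containing $x$ remains a face after adjoining $w$ — immediate from the definition of $\widehat{\D}$ — so $\lk_{\widehat{\D}}(x)$ is a cone with apex $w$, hence contractible, and Theorem~\ref{thm:hom-simp-induction} gives $\widehat{\D}\simeq\del_{\widehat{\D}}(x)$. The same argument, now applied to $y$ and $w$ inside $\del_{\widehat{\D}}(x)$, gives $\del_{\widehat{\D}}(x)\simeq\del_{\del_{\widehat{\D}}(x)}(y)$, and the right-hand side is precisely the subcomplex of $\widehat{\D}$ on $(V\setminus\{x,y\})\cup\{w\}$, namely $\D_{xy}$. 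Chaining, $\D\simeq\widehat{\D}\simeq\D_{xy}$. I expect the only genuinely load-bearing point to be the contractibility of $\lk_{\widehat{\D}}(w)$ — concretely, that $(\{x\}\ast\lk_{\D}(x))\cap(\{y\}\ast\lk_{\D}(y))$ is contractible — since this is the unique place where the contractible-edge hypothesis is used; everything else is careful bookkeeping about faces, complicated only by the fact that $\D$, $\widehat{\D}$ and $\D_{xy}$ live on three different vertex sets.
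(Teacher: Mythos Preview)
The paper does not prove this theorem; it is quoted from Ehrenborg--Hetyei and Attali--Lieutier--Salinas, with only the remark (just after the statement) that the contractible-edge condition is equivalent to $\lk_\D(x)\cap\lk_\D(y)=\lk_\D(\{x,y\})$. So there is no in-paper proof to compare against, and the question is simply whether your argument is correct. It is.

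Your reformulation of the hypothesis as the link condition is right in both directions. The auxiliary complex $\widehat\D$ is a simplicial complex, and the three identifications you need all check out: $\del_{\widehat\D}(w)=\D$; the subcomplex of $\widehat\D$ on $(V\setminus\{x,y\})\cup\{w\}$ is $\D_{xy}$; and $\lk_{\widehat\D}(w)=\overline{\mathrm{st}}_\D(x)\cup\overline{\mathrm{st}}_\D(y)$. The one substantive step---that $\overline{\mathrm{st}}_\D(x)\cap\overline{\mathrm{st}}_\D(y)$ is a cone with apex $x$---is precisely where the link condition is consumed, and your case split on whether $y\in\sigma$ handles it. The gluing lemma for two contractible subcomplexes meeting in a contractible subcomplex is standard for CW pairs. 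Finally, your use of Theorem~\ref{thm:hom-simp-induction} is legitimate: if $\lk_\Gamma(v)$ is contractible then it is null-homotopic in $\del_\Gamma(v)$, and $\del_\Gamma(v)\vee\Sigma(\text{contractible})\simeq\del_\Gamma(v)$. The second collapse (removing $x$, then $y$) works for the same reason, since in $\widehat\D$ the vertex $w$ is adjacent to every face containing $x$ (respectively $y$).

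For context, the proofs in the cited references proceed differently---Ehrenborg--Hetyei work directly with geometric realizations and an explicit deformation, while Attali--Lieutier--Salinas use elementary collapses---whereas your ``common blow-up'' $\widehat\D$ reduces everything to two applications of the vertex-removal theorem already stated in the paper. That makes your route a good internal fit here.
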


We note that the required condition in Theorem~\ref{thm:hmtpy-edge} is equivalent to $\lk_{\D}(x)\cap \lk_{\D}(y)=\lk_{\D}(\{x,y\})$. Furthermore, if $\D=\Ind(G)$ for some graph $G$, then the contraction of any edge in $\Ind(G)$ preserves the homotopy type.

In graph's side, the edge contraction is one of the fundamental operations in graph minor theory.

\begin{definition}
Let $e=xy$ be an edge of a graph $G$. Then the \emph{contraction} of $e$ on $G$ is the graph $G/e$ defined by
$V(G/e)=(V(G)\setminus\{x,y\})\cup \{w\}$ and $E(G/e)=E(G-\{x,y\})\cup \{wz\colon z\in N_G(x)\cup N_G(y)\}$. 
\end{definition}

In this section, we mainly deal with edge contractions on the independence complexes in which we choose to describe the operation on graphs, while their equivalence is ensured by Lemma~\ref{lem:ind-contract-expand}. Furthermore, the relation between $\reg(G/e)$ and $\reg(G)$ for a graph $G$ and any edge $e\in E(G)$ will be investigated in Section~\ref{sect:edge-subdiv} (see Theorem~\ref{thm:contract-reg}).

\begin{definition}
Let $G$ be a graph and let $u$ and $v$ be two non-adjacent vertices of $G$. We define the \emph{fake-contraction}
(or \emph{$\fp$-contraction}) $\fp(G;uv)$ of $G$ with respect to $u$ and $v$ to be the graph $(G\cup uv)/uv$, where the graph $G\cup uv$ is obtained from $G$ by the addition of the edge $uv$ to $G$. 
\end{definition}

We need to be sure that after an edge contraction on $\IE(G)$, the resulting complex is still the independence complex of a graph.

\begin{definition}
Let $G=(V,E)$ be a graph. Two non-adjacent vertices $\{x,y\}$ in $G$ is called a \emph{genuine-pair} (or simply a \emph{$\gp$-pair}) if there exist no vertices $u,v\in V\backslash \{x,y\}$ such that $G[\{x,y,u,v\}]\cong 2K_2$. When $\{x,y\}$ is a $\gp$-pair in $G$, the graph $\gp(G;xy)$ constructed by $V(\gp(G;xy)):=(V\backslash \{x,y\})\cup \{w\}$ and $E(\gp(G;xy)):=E(G-\{x,y\})\cup \{uw\colon u\in N_G(x)\cap N_G(y)\}$ is called the \emph{$\gp$-contraction} of $G$ with respect to the pair $\{x,y\}$.
\end{definition}

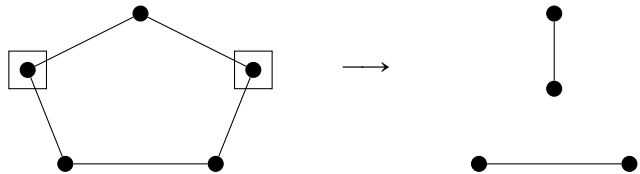
\begin{figure}[ht]
\begin{center}
\begin{tikzpicture}[scale=1]

\node [noddee] at (0,0) (v1)  {};
\node [noddee] at (2,0) (v2)  {}
	edge [] (v1);
\node [noddee] at (2.5,1.25) (v3)  {}
	edge [] (v2);
\node [noddee] at (1,2) (v4) {}
	edge [] (v3);
\node [noddee] at (-0.5,1.25) (v5) {}
	edge [] (v1)
	edge [] (v4);
			
\draw (-0.25,1) rectangle (-0.75,1.5);

\draw (2.25,1) rectangle (2.75,1.5);

\node at (4,1.25) (v5) {$\longrightarrow$};

\node [noddee] at (5.5,0) (t1)  {};
\node [noddee] at (7.5,0) (t2)  {}
	edge [] (t1);
\node [noddee] at (6.5,1) (t3)  {};
\node [noddee] at (6.5,2) (t4) {}
	edge [] (t3);
	
\end{tikzpicture}

\end{center}
\caption{A $\gp$-contraction.}
\label{contract1}
\end{figure}

\begin{lemma}\label{lem:ind-contract-expand}
If $\{x,y\}$ is a $\gp$-pair in $G$, then the simplicial complexes $\Ind(\gp(G;xy))$ and  $\Ind(G)_{xy}$ are isomorphic.
\end{lemma}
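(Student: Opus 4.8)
The plan is to unravel both sides of the claimed isomorphism as explicit simplicial complexes on the same ground set $(V\setminus\{x,y\})\cup\{w\}$ and check they have the same faces. Recall that for a graph $H$, a subset $S$ is a face of $\Ind(H)$ exactly when $S$ is independent in $H$. So on one side, $\Ind(\gp(G;xy))$ consists of the independent sets of $\gp(G;xy)$; by the definition of the $\gp$-contraction, a set $S\subseteq (V\setminus\{x,y\})\cup\{w\}$ is independent in $\gp(G;xy)$ iff $S\setminus\{w\}$ is independent in $G-\{x,y\}$ and, in case $w\in S$, no vertex of $S\setminus\{w\}$ lies in $N_G(x)\cap N_G(y)$.

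On the other side, $\Ind(G)_{xy}$ is the image of $\Ind(G)$ under the contraction map $f$ sending both $x$ and $y$ to $w$ and fixing all other vertices. So $\Ind(G)_{xy}=\{f(F)\colon F\in\Ind(G)\}$. First I would handle the faces not containing $w$: for $S\subseteq V\setminus\{x,y\}$ one has $S\in\Ind(G)_{xy}$ iff $S=f(F)$ for some independent $F$ in $G$, and since $f$ is injective off $\{x,y\}$ this forces $F=S$, so $S\in\Ind(G)_{xy}$ iff $S$ is independent in $G$, equivalently independent in $G-\{x,y\}$ (as $S$ avoids $x,y$); this matches the description of the $w$-free faces of $\Ind(\gp(G;xy))$. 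Next, for a face $S\ni w$, write $S=S_0\cup\{w\}$ with $S_0\subseteq V\setminus\{x,y\}$; then $S\in\Ind(G)_{xy}$ iff there is an independent $F$ in $G$ with $f(F)=S$, i.e.\ $F=S_0\cup\{x\}$, or $F=S_0\cup\{y\}$, or $F=S_0\cup\{x,y\}$. The third option is ruled out because $\{x,y\}\notin E(G)$ would be needed but also one checks it reduces to the other two; the key point is that $S\in\Ind(G)_{xy}$ iff $S_0\cup\{x\}$ is independent in $G$ \emph{or} $S_0\cup\{y\}$ is independent in $G$. I would then show this disjunction is equivalent to: $S_0$ is independent in $G$ and no vertex of $S_0$ lies in $N_G(x)\cap N_G(y)$. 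The forward direction is immediate (if, say, $S_0\cup\{x\}$ is independent then $S_0$ is independent and no vertex of $S_0$ is adjacent to $x$, hence none is in $N_G(x)\cap N_G(y)$).

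The one place where the $\gp$-pair hypothesis is essential is the reverse direction of this last equivalence, and this is the step I expect to be the crux. Suppose $S_0$ is independent in $G$ with $S_0\cap N_G(x)\cap N_G(y)=\emptyset$, but neither $S_0\cup\{x\}$ nor $S_0\cup\{y\}$ is independent. Then there exist $u\in S_0$ adjacent to $x$ and $v\in S_0$ adjacent to $y$, with $u\neq v$ (since $u=v$ would put that vertex in $N_G(x)\cap N_G(y)$) and $uv\notin E(G)$ (as $u,v\in S_0$), and $ux\notin?$—wait, one also needs $uy\notin E(G)$ and $vx\notin E(G)$, which again follow from $u,v\notin N_G(x)\cap N_G(y)$ together with $ux,vy\in E$. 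But then $G[\{x,y,u,v\}]\cong 2K_2$, contradicting that $\{x,y\}$ is a $\gp$-pair. This rules out the bad case and completes the equivalence, hence the face-by-face identification of the two complexes. Finally I would note the correspondence is compatible with the ambient vertex identification (both complexes live on $(V\setminus\{x,y\})\cup\{w\}$ with the evident bijection), so the complexes are not merely abstractly isomorphic but equal under this identification, which is what the lemma asserts; no further calculation is needed.
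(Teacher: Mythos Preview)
Your proof is correct and uses essentially the same idea as the paper's: the crux in both is that if $S_0$ is independent with $S_0\cap N_G(x)\cap N_G(y)=\emptyset$ yet neither $S_0\cup\{x\}$ nor $S_0\cup\{y\}$ is independent, one finds $u,v\in S_0$ producing an induced $2K_2$ on $\{x,y,u,v\}$, contradicting the $\gp$-pair hypothesis. The only cosmetic difference is packaging---the paper compares the minimal non-faces of the two complexes containing $w_{xy}$ (showing in particular that those of $\Ind(G)_{xy}$ have size two), whereas you compare faces directly; these are dual formulations of the same argument.
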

\begin{proof}
We may identify the vertex, say $w_{xy}$ to which the pair $\{x,y\}$ is contracted in both cases so that complexes 
$\Ind(\gp(G;xy))$ and  $\Ind(G)_{xy}$ have identical vertex sets. We show that both complexes have the same set of minimal non-faces containing the distinguished vertex $w_{xy}$, so the required isomorphism follows. Let $R$ be a minimal non-face of $\Ind(\gp(G;xy))$ containing $w_{xy}$. Since $\Ind(\gp(G;xy))$ is the independence complex of a graph,
this means that there exists a vertex $z\in V(\gp(G;xy))$ such that $R=\{z,w_{xy}\}$; hence, we have $zw_{xy}\in E(\gp(G;xy))$, which in turn implies that $z\in N_G(x)\cap N_G(y)$. However, this forces that $\{z,x\}$ and $\{z,y\}$ are not faces of $\Ind(G)$ so that $\{z,w_{xy}\}$ can not be a face in $\Ind(G)_{xy}$.

Suppose now that $L$ is a minimal non-face of $\Ind(G)_{xy}$ containing $w_{xy}$. We first verify that the size of $L$ must be two. Assume to the contrary that $L=\{w_{xy},u_1,\ldots,u_k\}$ such that $u_i\neq u_j$ for any $i,j\in [k]$ and $k>1$. It then follows that neither of the sets $\{x,u_1,\ldots,u_k\}$ and $\{y,u_1,\ldots,u_k\}$ forms a face in $\Ind(G)$, since $L$ is not a face in $\Ind(G)_{xy}$. Thus, there exist $i,j\in [k]$ such that $xu_i, yu_j\in E(G)$ while $xu_j,yu_i\notin  E(G)$, where the latter is due to the fact that $\{u_i,w_{xy}\}$ and $\{u_j,w_{xy}\}$ are faces in $\Ind(G)_{xy}$. On the other hand, since $L$ is minimal, we must have $\{u_i,u_j\}\in \Ind(G)_{xy}$, that is, $u_iu_j\notin E(G)$. However, we then have $G[x,y,u_i,u_j]\cong 2K_2$ which contradicts to $\{x,y\}$ being a $\gp$-pair. Therefore, the set $L$ must be of size two, say $L=\{w_{xy},u\}$. This implies that $u\notin N_G(x)\cap N_G(y)$ so that $uw_{xy}\notin E(\gp(G;xy))$; hence, $L$ is a minimal non-face in $\Ind(\gp(G;xy))$.
\end{proof}

We next introduce an operation that could be considered as the reversal of a $\gp$-contraction.

Let $G=(V,E)$ be a graph. We say that two disjoint (possibly empty) subsets $A$ and $B$ of $V$ constitute a 
\emph{complete-pairing} in $G$, denoted by $[A,B]$, if $ab\in E$ for any $a\in A$ and $b\in B$. 
We note that $[A,\emptyset]$ or $[\emptyset,B]$ is always a complete-pairing in $G$. We say that
a complete-pairing $[A,B]$ is \emph{non-trivial} provided that the sets $A$ and $B$ are both non-empty. We omit the proof of the following easy fact.

\begin{lemma}\label{lemma:genuine-pairing}
Let $x$ and $y$ be two  non adjacent vertices of $G$. Then, $\{x,y\}$ is a $\gp$-pair in $G$ if and only if 
$[N_G(x)\backslash N_G(y), N_G(y)\backslash N_G(x)]$ is a complete-pairing in $G$.
\end{lemma}

\begin{definition}
Let $z$ be any vertex of $G$ and let $[A_z,B_z]$ be a complete-pairing in $G-N_G[z]$. Then the \emph{$\gp$-expansion} $\gp(G;z,A_z,B_z)$ of $G$ with respect to the vertex $z$ and the pairing $[A_z,B_z]$ is the graph constructed by $V(\gp(G;z,A_z,B_z)):=(V\backslash \{z\})\cup \{x_z,y_z\}$ and $E(\gp(G;z,A_z,B_z)):=E(G-z)\cup \{ux_z, uy_z\colon u\in N_G(z)\}\cup \{ax_z\colon a\in A_z\}\cup \{by_z\colon b\in B_z\}$.
\end{definition}
\begin{figure}[ht]
\begin{center}
\begin{tikzpicture}[scale=1.25, >=triangle 45]

\node [noddee] at (0,0) (v1)  {};
\node [noddee] at (1,0) (v2)  {}
	edge [] (v1);
\node [noddee] at (0,1) (v3)  {};
\node [noddee] at (1,1) (v4) {}
	edge [] (v3);


\node at (2,0.5) (v5) {$\longrightarrow$};	
\draw (-0.25,0.75) rectangle (0.25,1.25);
\node at (-0.4,1) (v5) {$a$};

\node [noddee] at (3,0) (t1)  {};
\node [noddee] at (4,0) (t2)  {}
	edge [] (t1);
\node [noddee] at (3,0.75) (t3)[label=below right:$y_a$]  {};
\node [noddee] at (3,1.25) (t4)[label=above right:$x_a$] {};
\node [noddee] at (4,1) (t5) {}
	edge [] (t3)
	edge [] (t4);

\node at (5,0.5) (t5) {$\longrightarrow$};

\node [noddee] at (6,0) (k1)  {};
\node [noddee] at (7,0) (k2)  {}
	edge [] (k1);
\node [noddee] at (6,0.75) (k3) {}
	edge [thick, dashed] (k2);
\node [noddee] at (6,1.25) (k4) {};
\node [noddee] at (7,1) (k5) {}
	edge [] (k3)
	edge [] (k4);

\draw[thick, densely dotted] (6,1.25) arc  (-60:60:-0.75);

\end{tikzpicture}

\end{center}
\caption{A $\gp$-expansion}
\label{expand1}
\end{figure}

When there is no confusion, we abbreviate $\gp(G;z,A_z,B_z)$ to $\gp(G;z)$, and note that the pair $\{x_z,y_z\}$ is always a $\gp$-pair in $\gp(G;z,A_z,B_z)$. Furthermore, the $\gp$-contraction and $\gp$-expansion operations are inverse to each other. In other words, if $w$ is the vertex for which the $\gp$-pair $\{x,y\}$ is contracted, then $\gp(\gp(G;xy); w, A_w,B_w)$ is isomorphic to $G$, where $A_w=N_G(x)\backslash N_G(y)$ and $B_w=N_G(y)\backslash N_G(x)$. Similarly, 
the graph $\gp(\gp(G;z,A_z,B_z);x_zy_z)$ is isomorphic to $G$.

\begin{corollary}\label{cor:ind-contract-expand}
Let $G$ be a graph.
\begin{itemize}
\item[$(i)$] $\Ind(G)\simeq \Ind(\gp(G;xy))$ for any $\gp$-pair $\{x,y\}$ in $G$.\\
\item[$(ii)$] $\Ind(G)\simeq \Ind(\gp(G;z))$ for any expansion $\gp(G;z)$ of $G$.
\end{itemize}
\end{corollary}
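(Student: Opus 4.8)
The plan is to deduce both parts directly from Lemma~\ref{lem:ind-contract-expand} and Theorem~\ref{thm:hmtpy-edge}, exploiting the remark recorded after the latter that edge contractions on independence complexes are always homotopy equivalences.

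For part $(i)$, I would first observe that, being a $\gp$-pair, $\{x,y\}$ is in particular a pair of non-adjacent vertices of $G$, so $\{x,y\}$ is a genuine edge of the complex $\Ind(G)$ and the contraction $\Ind(G)_{xy}$ is defined. As noted immediately after Theorem~\ref{thm:hmtpy-edge}, when $\D=\Ind(G)$ every edge of $\D$ is a contractible-edge (the minimal non-faces of $\Ind(G)$ are exactly the edges of $G$, hence have two elements, and none of them can be contained in an edge of $\Ind(G)$), so Theorem~\ref{thm:hmtpy-edge} gives $\Ind(G)\simeq \Ind(G)_{xy}$. Lemma~\ref{lem:ind-contract-expand} then identifies $\Ind(G)_{xy}$ with $\Ind(\gp(G;xy))$ up to simplicial isomorphism, and composing the two relations yields $\Ind(G)\simeq \Ind(\gp(G;xy))$.

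For part $(ii)$, I would invoke the observation made just before the corollary that $\gp$-expansion and $\gp$-contraction are mutually inverse: writing $H:=\gp(G;z)=\gp(G;z,A_z,B_z)$, the pair $\{x_z,y_z\}$ is a $\gp$-pair in $H$ and $\gp(H;x_zy_z)$ is isomorphic to $G$ (with the distinguished vertex of the contraction matched to $z$). Applying part $(i)$ to $H$ with this $\gp$-pair gives $\Ind(H)\simeq \Ind(\gp(H;x_zy_z))\cong \Ind(G)$, that is, $\Ind(G)\simeq \Ind(\gp(G;z))$, as claimed. There is no substantial obstacle in this argument; the only points deserving a word of care are that the contractibility hypothesis of Theorem~\ref{thm:hmtpy-edge} is automatic for independence complexes — already covered by the remark following that theorem — and that the isomorphism $\gp(\gp(G;z);x_zy_z)\cong G$ correctly identifies the relevant vertices, which is precisely the content of the inverse-operations discussion preceding the statement.
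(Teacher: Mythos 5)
Your proposal is correct and follows the paper's own route: part $(i)$ is obtained by combining Lemma~\ref{lem:ind-contract-expand} with Theorem~\ref{thm:hmtpy-edge} (via the remark that every edge of an independence complex is a contractible-edge), and part $(ii)$ is deduced from part $(i)$ using the fact that $\gp$-expansion and $\gp$-contraction are inverse operations. You simply spell out the details that the paper's one-line proof leaves implicit.
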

\begin{proof}
The part $(i)$ is the consequence of Lemma~\ref{lem:ind-contract-expand} and Theorem~\ref{thm:hmtpy-edge}, and the second follows from the part $(i)$.
\end{proof}

\begin{corollary}\label{cor:ind-contract-contract}
If $\{x,y\}$ is a $\gp$-pair in $G$ such that $N_G(x)\cap N_G(y)=\emptyset$, then $\Ind(G)$ is contractible.
\end{corollary}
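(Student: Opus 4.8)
The plan is to reduce the statement to the observation that a $\gp$-contraction along $\{x,y\}$ produces a graph with an isolated vertex. First I would note that by hypothesis $\{x,y\}$ is a $\gp$-pair, so Corollary~\ref{cor:ind-contract-expand}$(i)$ applies and yields a homotopy equivalence $\Ind(G)\simeq \Ind(\gp(G;xy))$. It therefore suffices to show that $\Ind(\gp(G;xy))$ is contractible.

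Next I would unwind the definition of the $\gp$-contraction: if $w$ denotes the vertex to which $\{x,y\}$ is collapsed, then the edges of $\gp(G;xy)$ incident to $w$ are precisely $\{uw\colon u\in N_G(x)\cap N_G(y)\}$. Since $N_G(x)\cap N_G(y)=\emptyset$ by assumption, $w$ is an isolated vertex of $\gp(G;xy)$. An isolated vertex of a graph $H$ can be added to any independent set, so $\{w\}$ is a cone point of the independence complex; concretely, $\Ind(\gp(G;xy))=\{w\}\ast \Ind(\gp(G;xy)-w)$, which is contractible. Combining this with the homotopy equivalence of the first step gives that $\Ind(G)$ is contractible.

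I do not expect any genuine obstacle here: the statement is essentially a formal consequence of Corollary~\ref{cor:ind-contract-expand} together with the bookkeeping of which vertices survive a $\gp$-contraction. The only point that deserves a line of care is verifying that $w$ is literally isolated (rather than merely of small degree), which is exactly where the extra hypothesis $N_G(x)\cap N_G(y)=\emptyset$ enters; equivalently, by Lemma~\ref{lemma:genuine-pairing} this hypothesis says that $[N_G(x),N_G(y)]$ is itself a complete-pairing, so the contraction adds no new neighbours to $w$ at all. If one preferred to bypass the contraction machinery, the same conclusion could be reached directly by showing that every independent set of $G$ extends to one containing $x$ or $y$ and then identifying $\Ind(G)$ with the union of the closed stars of $x$ and $y$; but the route through $\gp(G;xy)$ is shorter and is the one I would write up.
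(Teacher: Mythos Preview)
Your argument is correct and is essentially the same as the paper's own proof: both observe that the contracted vertex $w_{xy}$ is isolated in $\gp(G;xy)$ because $N_G(x)\cap N_G(y)=\emptyset$, so $\Ind(\gp(G;xy))$ is a cone and hence contractible, and then invoke Corollary~\ref{cor:ind-contract-expand}(i). The extra remarks about Lemma~\ref{lemma:genuine-pairing} and the alternative star-cover argument are sound but unnecessary for the write-up.
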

\begin{proof}
In such a case, the vertex $w_{xy}$ to which the pair $\{x,y\}$ is contracted is an isolated vertex of $\gp(G;xy)$; hence, the complex $\Ind(\gp(G;xy))$ is contractible, so is $\Ind(G)$ by Corollary~\ref{cor:ind-contract-expand}.
\end{proof}


Our next result describes the effect of a $\gp$-contraction (respectively, a $\gp$-expansion) on a graph to its regularity.

\begin{proposition}\label{prop:reg-contract}
Let $\{x,y\}$ be a $\gp$-pair in $G$, and let $z$ be an non-isolated vertex of $G$ such that $[A_z,B_z]$ is a complete-pairing in $G-N_G[z]$. Then the followings hold:
\begin{itemize}
\item[$(i)$] $\reg(G)\geq \reg(\gp(G;xy))\geq \reg(G)-1$.\\
\item[$(ii)$] $\reg(\gp(G;z))\geq \reg(G)\geq \reg(\gp(G;z))-1$.
\end{itemize}
\end{proposition}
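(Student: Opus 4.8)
The plan is to prove part $(i)$ directly and then deduce part $(ii)$ from it, exploiting the fact recorded just above the statement that $\gp$-expansion and $\gp$-contraction are mutually inverse. Write $H=\gp(G;xy)$; since $x,y$ are non-adjacent, $\{x,y\}$ is a face of $\Ind(G)$, and Lemma~\ref{lem:ind-contract-expand} identifies $\Ind(H)$ with the edge contraction $\Ind(G)_{xy}$. The key preliminary observation is that this is compatible with passing to induced subgraphs: if $T\subseteq V(G)$ contains both $x$ and $y$, then $\{x,y\}$ remains a $\gp$-pair in $G[T]$, and unwinding the definitions of $\gp$-contraction and of $H$ shows $\gp(G[T];xy)=H[S]$ \emph{on the nose}, where $S=(T\setminus\{x,y\})\cup\{w\}$; combining this with Corollary~\ref{cor:ind-contract-expand}$(i)$ yields a homotopy equivalence $\Ind(G)[T]\simeq\Ind(H)[S]$. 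This local compatibility is the engine of both inequalities.

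For $\reg(G)\ge\reg(\gp(G;xy))$, pick $S\subseteq V(H)$ with $\widetilde H_{m-1}(\Ind(H)[S])\neq 0$, where $m=\reg(H)$. If $w\notin S$ then $H[S]=G[S]$, so $\widetilde H_{m-1}(\Ind(G)[S])\neq 0$; if $w\in S$, the local compatibility above applied to $T=(S\setminus\{w\})\cup\{x,y\}$ transports the nonzero class to $\Ind(G)[T]$. Either way $\reg(G)\ge m$. For $\reg(\gp(G;xy))\ge\reg(G)-1$, choose $T\subseteq V(G)$ \emph{minimal} with $\widetilde H_{k-1}(\Ind(G)[T])\neq 0$, $k=\reg(G)$. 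If $T\cap\{x,y\}=\emptyset$ then $\Ind(G)[T]=\Ind(H)[T]$, so $\reg(H)\ge k$; if $\{x,y\}\subseteq T$ then local compatibility again gives $\reg(H)\ge k$. The only delicate case is when $T$ meets $\{x,y\}$ in a single vertex, say $x\in T$, $y\notin T$: here $\del_{\Ind(G)[T]}(x)=\Ind(G)[T\setminus\{x\}]$ has vanishing $(k-1)$-homology by minimality of $T$, so in the Mayer--Vietoris sequence of the pair $(\Ind(G)[T],x)$ the connecting map $\widetilde H_{k-1}(\Ind(G)[T])\to\widetilde H_{k-2}(\lk_{\Ind(G)[T]}(x))$ is injective, forcing $\widetilde H_{k-2}(\lk_{\Ind(G)[T]}(x))\neq 0$. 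Since $\lk_{\Ind(G)[T]}(x)=\Ind(G)[T\setminus N_G[x]]$ and $T\setminus N_G[x]$ avoids both $x$ and $y$ — hence lies in $V(H)$ and induces the same graph in $G$ and $H$ — we obtain $\widetilde H_{k-2}(\Ind(H)[T\setminus N_G[x]])\neq 0$, i.e.\ $\reg(H)\ge k-1$.

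Part $(ii)$ is then immediate: set $G'=\gp(G;z)=\gp(G;z,A_z,B_z)$. By the remark preceding Corollary~\ref{cor:ind-contract-expand}, $\{x_z,y_z\}$ is a $\gp$-pair in $G'$ and $\gp(G';x_zy_z)\cong G$, so applying part $(i)$ to $G'$ with this $\gp$-pair gives $\reg(G')\ge\reg(G)\ge\reg(G')-1$, which is exactly the claimed chain of inequalities.

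I expect the main obstacle to be precisely the mixed case in the second inequality, where the homology-supporting set contains exactly one of $x$ and $y$; this is the only place a homological degree is genuinely lost, and making it work requires the minimality of $T$ so that the Mayer--Vietoris sequence pushes the nonzero class down into a link that is literally shared by $G$ and $H$. The only other point needing care — routine but not cosmetic — is the verification that $\gp(G[T];xy)=H[S]$ as graphs (not merely up to homotopy), which I would dispatch by checking that a vertex $u\neq x,y$ is joined to the contracted vertex in one graph iff it is in the other, i.e.\ that $u\in N_G(x)\cap N_G(y)$ in both cases.
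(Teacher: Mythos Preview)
Your proof is correct and follows essentially the same approach as the paper's: both arguments establish the local compatibility $\gp(G[T];xy)\cong H[S]$ to transport homology back and forth, handle the three cases for $T\cap\{x,y\}$ identically, and in the mixed case invoke minimality of $T$ together with the Mayer--Vietoris sequence of the pair $(\Ind(G)[T],x)$ to drop one homological degree into a link that lives in the common vertex set of $G$ and $H$. Your derivation of $(ii)$ from $(i)$ via the inverse relationship is exactly what the paper has in mind when it says ``the other case follows from that.''
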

\begin{proof}
We only prove the case $(i)$, since the other case follows from that.
Now, assume first that $\reg(\gp(G;xy))=n$, and let $S\subseteq V(\gp(G;xy))$ be a subset such that $\widetilde{H}_{n-1}(\gp(G;xy)[S])\neq 0$. If $w_{xy}\notin S$, then $S\subseteq V(G)$ so that $\widetilde{H}_{n-1}(G[S])\neq 0$; hence, $\reg(G)\geq n$. So, suppose that $w_{xy}\in S$. If we let $S^*:=(S\setminus \{w_{xy}\})\cup \{x,y\}$, then $\{x,y\}$ is a $\gp$-pair in $G[S^*]$. Therefore, we have $G[S^*]\simeq (\gp(G[S^*]);xy)$ by Corollary~\ref{cor:ind-contract-expand}. However, the latter graph is clearly isomorphic to $\gp(G;xy)[S]$, which implies that $\widetilde{H}_{n-1}(G[S^*])\neq 0$, that is, $\reg(G)\geq n$.

For the second inequality, suppose that $\reg(G)=m$, and let $R\subseteq V(G)$ be a minimal subset satisfying $\widetilde{H}_{m-1}(G[R])\neq 0$. Note first that if $\{x,y\}\subset R$, then for the set $R^*:=(R\setminus \{x,y\})\cup \{w_{xy}\}$, we have $\widetilde{H}_{m-1}(\gp(G;xy)[R^*])\neq 0$ by the fact that $G[R]\simeq \gp(G;xy)[R^*]$. On the other hand, if $R\cap \{x,y\}=\emptyset$, then we necessarily have $R\subseteq V(\gp(G;xy))$ so that $\widetilde{H}_{m-1}(\gp(G;xy)[R])\neq 0$. It follows that $\reg(\gp(G;xy))\geq m$ in either case. 

Therefore, we are left to verify the case where exactly one of $x$ and $y$ belongs to the set $R$. Assume without loss of generality that $x\in R$, $y\notin R$. If we consider the associated Mayer-Vietoris exact sequence of the pair $(G[R],x)$;
\begin{equation*}
\cdots \to \widetilde{H}_{m-1}(G[R]-x) \to \widetilde{H}_{m-1}(G[R])\to \widetilde{H}_{m-2}(G[R]-N_{G[R]}[x])\to \cdots, 
\end{equation*}
it follows that $\widetilde{H}_{m-1}(G[R]-x)=0$ by the minimality of $R$ so that $\widetilde{H}_{m-2}(G[R]-N_{G[R]}[x])\neq 0$. If we set $L:=V(G[R]-N_{G[R]}[x])$, the graph $\gp(G;xy)[L]$ is isomorphic to $G[R]-N_{G[R]}[x]$. Thus, we have
$\widetilde{H}_{m-2}(\gp(G;xy)[L])\neq 0$ so that $\reg(\gp(G;xy))\geq m-1$ as claimed.
\end{proof}

\begin{remark}\label{rem:contract-genuine-reg}
We note that the first inequality in $(i)$ of Proposition~\ref{prop:reg-contract} could be strict, that is, there exist graphs with $\gp$-pairs such that their $\gp$-contractions may decrease the regularity by one. For instance, if we consider the graph $$H=(\{a,b,c,d,x,y\},\{ab,ac,bc,cd,cx,dy\}),$$ the pair $\{x,y\}$ is genuine, while
$\reg(H)=2$ and $\reg(\gp(H;xy))=1$. 
\end{remark}
We therefore look for a condition on a $\gp$-pair for which the regularity remains stable under its contraction.  

\begin{definition}
We call a $\gp$-pair $\{x,y\}$ in $G$ as a \emph{true-pair} (or simply a \emph{$\tp$-pair}) if there exists a vertex $u\in N_G(x)\cap N_G(y)$ with $N_G[u]\subseteq N_G[x]\cup N_G[y]$, and a $\gp$-contraction on a graph with respect to a $\tp$-pair is called a \emph{$\tp$-contraction} of $G$ and denoted by $\tp(G;xy)$. When $\{x,y\}$ is a $\tp$-pair, such a vertex $u$ is called a \emph{true-neighbour} (or simply a \emph{$\tp$-neighbour}) of the pair $\{x,y\}$. Similarly, if $z$ is a non-isolated vertex in $G$, we call a complete pairing $[A_z,B_z]$ in $G-N_G[z]$ as a \emph{$\tp$-pairing} of $z$, if there exists a vertex $v\in N_G(z)$ with $N_G[v]\subseteq A_z\cup B_z\cup N_G[z]$. A $\gp$-expansion of a graph $G$ with respect to a vertex with a $\tp$-pairing of it is called a \emph{$\tp$-expansion} of $G$ and denoted by $\tp(G;z,A_z,B_z)$. 
\end{definition}
We note that if $[A_z,B_z]$ is a $\tp$-pairing of $z$ in $G-N_G[z]$, then $\{x_z,y_z\}$ is a $\tp$-pair in $\tp(G;z,A_z,B_z)$ having the vertex $v$ as a $\tp$-neighbour.

\begin{theorem}\label{thm:reg-contract}
If $\{x,y\}$ is a $\tp$-pair in $G$, then $\reg(G)=\reg(\tp(G;xy))$. 
\end{theorem}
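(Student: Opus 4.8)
The plan is to leverage the bound that is already in hand. Since a $\tp$-contraction is in particular a $\gp$-contraction with respect to a $\gp$-pair, Proposition~\ref{prop:reg-contract}$(i)$ gives $\reg(\tp(G;xy))\le\reg(G)$ for free, so the whole content is the reverse inequality $\reg(\tp(G;xy))\ge\reg(G)$. Write $H:=\tp(G;xy)$, let $w$ be the vertex to which $\{x,y\}$ is contracted, let $u$ be a $\tp$-neighbour of the pair (so $u\in N_G(x)\cap N_G(y)$ and $N_G[u]\subseteq N_G[x]\cup N_G[y]$), and put $m:=\reg(G)$; note $m\ge 1$ since $x,y$ are non-isolated. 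Fix $R\subseteq V(G)$ minimal with respect to the property $\widetilde{H}_{m-1}(G[R])\neq 0$. If $R\cap\{x,y\}=\emptyset$ then $H[R]=G[R]$ and $\reg(H)\ge m$; if $\{x,y\}\subseteq R$ then $H[(R\setminus\{x,y\})\cup\{w\}]=\gp(G[R];xy)$, whose independence complex is homotopy equivalent to $\Ind(G[R])$ by Corollary~\ref{cor:ind-contract-expand}$(i)$, so again $\reg(H)\ge m$. The real work is the remaining case, where, using the symmetry of $x$ and $y$ (the $\tp$-neighbour $u$ is symmetric in them), we may assume $x\in R$ and $y\notin R$.

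In that case the first move is to try to reinsert $y$: if $\widetilde{H}_{m-1}(G[R\cup\{y\}])\neq 0$ we are back in the ``$\{x,y\}\subseteq R$'' situation and done. Otherwise the Mayer--Vietoris sequence of the pair $(\Ind(G[R\cup\{y\}]),y)$ forces $\widetilde{H}_{m-1}(G[R\setminus N_G(y)])\neq 0$, whence minimality of $R$ forces $N_G(y)\cap R=\emptyset$ and in particular $u\notin R$. Next, minimality of $R$ applied through the Mayer--Vietoris sequence of $(\Ind(G[R]),x)$ yields $\widetilde{H}_{m-2}(G[Q])\neq 0$ for $Q:=R\setminus N_G[x]$. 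Now the $\tp$-neighbour enters: from $N_G(u)\subseteq N_G[x]\cup N_G[y]$ together with $Q\cap N_G[x]=\emptyset$ (by definition of $Q$) and $Q\cap N_G[y]=\emptyset$ (from $y\notin R$ and $N_G(y)\cap R=\emptyset$) one gets $N_G(u)\cap Q=\emptyset$, and also $N_G(x)\cap Q=\emptyset$. Since $x\sim u$, $y\sim u$ and $x\not\sim y$, this gives $G[Q\cup\{x,u,y\}]\cong G[Q]\sqcup P_3$, the $P_3$ being the path $x-u-y$. As $\Ind(P_3)\simeq S^0$, we obtain $\Ind(G[Q\cup\{x,u,y\}])\simeq\Sigma\Ind(G[Q])$ and hence $\widetilde{H}_{m-1}(G[Q\cup\{x,u,y\}])\cong\widetilde{H}_{m-2}(G[Q])\neq 0$. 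Thus $R'':=Q\cup\{x,u,y\}$ is a homology witness for $\reg(G)=m$ containing \emph{both} $x$ and $y$, and the ``$\{x,y\}\subseteq R''$'' reduction applies (in fact one can check directly that $H[(R''\setminus\{x,y\})\cup\{w\}]=G[Q]\sqcup K_2$, which has $\widetilde{H}_{m-1}\neq 0$). Combining with $\reg(H)\le\reg(G)$ closes the argument.

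The main obstacle I anticipate is exactly this last case: the naive transfer of induced subgraphs only recovers $\reg(\gp(G;xy))\ge\reg(G)-1$, and to eliminate the possible loss of $1$ one must produce a homology witness of $G$ containing $x$ and $y$ simultaneously. The device that makes this possible is to start from a witness containing $x$, contract it (via minimality) to the link set $Q$ of $x$, use minimality once more to make $y$ totally non-adjacent to $R$, and then reattach the path $x-u-y$ through the $\tp$-neighbour as a disjoint $P_3$; because $\Ind(P_3)\simeq S^0$ this reattachment raises the homological degree by exactly one, restoring degree $m-1$. The remaining care is routine bookkeeping: checking $m\ge 1$, using the convention $\widetilde{H}_{-1}(\{\emptyset\})\cong\kk$ in the degenerate subcase $Q=\emptyset$, and verifying that $x,u,y$ are distinct and lie outside $Q$.
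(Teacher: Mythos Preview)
Your proof is correct and follows essentially the same route as the paper's: reduce via Proposition~\ref{prop:reg-contract} to the case $x\in R$, $y\notin R$ with $R$ minimal; use Mayer--Vietoris at $y$ (together with minimality) to force $N_G(y)\cap R=\emptyset$, hence $u\notin R$; use Mayer--Vietoris at $x$ to drop to $\widetilde H_{m-2}$ on $Q=R\setminus N_G[x]$; and then use the $\tp$-neighbour $u$ to suspend back up. The only cosmetic difference is that the paper builds the witness directly in $\tp(G;xy)$ as $K=Q\cup\{u,w_{xy}\}$ (so that $\tp(G;xy)[K]\cong G[Q]\sqcup K_2$), while you first build $R''=Q\cup\{x,u,y\}$ in $G$ with a disjoint $P_3$ and then contract --- but you note the direct route parenthetically, and the two are equivalent. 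Your explicit handling of the subcase $\widetilde H_{m-1}(G[R\cup\{y\}])\neq 0$ is in fact cleaner than the paper's phrasing, which tacitly assumes this away.
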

\begin{proof}
Assume that $\reg(G)=m$, and let $R\subseteq V(G)$ be a minimal subset satisfying $\widetilde{H}_{m-1}(G[R])\neq 0$.
In view of Proposition~\ref{prop:reg-contract}, we may restrict the proof to the case where such a set $R$ satisfies
neither $\{x,y\}\subset R$ nor $R\cap \{x,y\}=\emptyset$. Suppose that $x\in R$ and $y\notin R$. We first claim that
$N_G(y)\cap R=\emptyset$. In order to verify that we define $T:=R\cup \{y\}$ and consider the Mayer-Vietoris exact sequence of the pair $(G[T],y)$;
\begin{equation*}
\cdots \to \widetilde{H}_{m-1}(G[T]-N_{G[T]}[y]) \to \widetilde{H}_{m-1}(G[T]-y)\to \widetilde{H}_{m-1}(G[T])\to \cdots, 
\end{equation*}
where $\widetilde{H}_{m-1}(G[T])=0$, since no such set contains both $x$ and $y$. It then follows that 
$\widetilde{H}_{m-1}(G[T]-N_{G[T]}[y])\neq 0$ by the exactness. However, since $R$ is a minimal set satisfying $\widetilde{H}_{m-1}(G[R])\neq 0$, this implies that $N_{G[T]}[y]=\{y\}$, that is, $N_G(y)\cap R=\emptyset$. 

Now, since $\{x,y\}$ is a $\tp$-pair, they have a $\tp$-neighbour, say $u\in N_G(x)\cap N_G(y)$, 
satisfying $N_G(u)\subseteq N_G[x]\cup N_G[y]$. By the above claim, we have $u\notin R$. On the other hand, the minimality of $R$ also implies that $\widetilde{H}_{m-2}(G[R]-N_{G[R]}[x])\neq 0$. Therefore, if we set $K:=V(G[R]-N_{G[R]}[x])\cup \{u,w_{xy}\}$, it follows that the graph $\tp(G;xy)[K]$ is isomorphic to the suspension of $G[R]-N_{G[R]}[x]$ so that $\widetilde{H}_{m-1}(\tp(G;xy)[K])\neq 0$, that is, $\reg(\tp(G;xy))\geq m$. This completes the proof.
\end{proof}

\begin{corollary}\label{cor:reg-expand}
Let $z$ be a non-isolated vertex of a graph $G$ such that 
$[A_z,B_z]$ is a $\tp$-pairing in $G-N_G[z]$, then $\reg(G)=\reg(\tp(G;z,A_z,B_z))$.
\end{corollary}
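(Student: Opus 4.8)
The plan is to deduce this directly from Theorem~\ref{thm:reg-contract} by exploiting the fact that $\tp$-contractions and $\tp$-expansions are mutually inverse. Write $H:=\tp(G;z,A_z,B_z)$, and let $v\in N_G(z)$ be the vertex witnessing that $[A_z,B_z]$ is a $\tp$-pairing, so that $N_G[v]\subseteq A_z\cup B_z\cup N_G[z]$. As already noted after the definition of a $\tp$-pairing, the newly created pair $\{x_z,y_z\}$ is a $\tp$-pair in $H$ with $v$ as a $\tp$-neighbour: that $\{x_z,y_z\}$ is a $\gp$-pair follows from Lemma~\ref{lemma:genuine-pairing} together with the hypothesis that $[A_z,B_z]$ is a complete-pairing in $G-N_G[z]$; moreover $v$ is adjacent to both $x_z$ and $y_z$ in $H$ (since $v\in N_G(z)$), and $N_H[v]\subseteq N_H[x_z]\cup N_H[y_z]$ because every vertex of $A_z$ is adjacent to $x_z$, every vertex of $B_z$ is adjacent to $y_z$, and every vertex of $N_G(z)\setminus\{v\}$ is adjacent to both.

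Next I would apply Theorem~\ref{thm:reg-contract} to the $\tp$-pair $\{x_z,y_z\}$ in $H$, obtaining $\reg(H)=\reg(\tp(H;x_zy_z))$. Finally, recall from the discussion preceding Corollary~\ref{cor:ind-contract-expand} that $\gp$-expansion and $\gp$-contraction are inverse operations, so that $\gp(\gp(G;z,A_z,B_z);x_zy_z)$ is isomorphic to $G$. Since a $\tp$-contraction is in particular a $\gp$-contraction, we have $\tp(H;x_zy_z)=\gp(H;x_zy_z)\cong G$, and as regularity is an isomorphism invariant we conclude $\reg(H)=\reg(G)$, i.e.\ $\reg(\tp(G;z,A_z,B_z))=\reg(G)$, as required.

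There is no real obstacle here: the only thing demanding a small amount of care is the routine verification that $\{x_z,y_z\}$ is genuinely a $\tp$-pair in $H$ and that contracting it returns $G$ up to isomorphism, both of which amount to unwinding the definitions of $\gp$-expansion, $\tp$-pairing, and $\gp$-contraction. All of the analytic content is already packaged inside Theorem~\ref{thm:reg-contract}, so the corollary is essentially a formal consequence obtained by reading that theorem ``in reverse.''
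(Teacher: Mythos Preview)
Your proof is correct and follows exactly the same approach as the paper: both deduce the corollary from Theorem~\ref{thm:reg-contract} by observing that $\{x_z,y_z\}$ is a $\tp$-pair in $\tp(G;z,A_z,B_z)$ and that contracting it recovers $G$. The paper states this in a single sentence, while you have helpfully unpacked the verification that $\{x_z,y_z\}$ is indeed a $\tp$-pair and that the inverse relationship between expansion and contraction applies; this extra detail is accurate and does not deviate from the paper's reasoning.
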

\begin{proof}
The claim is the consequence of Theorem~\ref{thm:reg-contract},
since $\{x_z,y_z\}$ is then a $\tp$-pair in $\tp(G;z,A_z,B_z)$.
\end{proof}

Following the example provided in Remark~\ref{rem:contract-genuine-reg}, the induced matching number of a graph may not need to increase under the $\gp$-contraction of an arbitrary $\gp$-pair; however, if the $\gp$-pair is a $\tp$-pair, we have the following.

\begin{proposition}\label{prop:im-contract-expand}
Let $G=(V,E)$ be a graph.
\begin{itemize}
\item[$(i)$] If $\{x,y\}$ is a $\tp$-pair in $G$, then $\im(\tp(G;xy))\geq \im(G)$.
\item[$(ii)$] If $z$ is a non isolated vertex with a $\tp$-pairing $[A_z,B_z]$ in $G-N_G[z]$, 
then $\im(G)\geq \im(\tp(G;z,A_z,B_z))$.
\end{itemize}
\end{proposition}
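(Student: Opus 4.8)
The plan is to prove both inequalities by exhibiting, for any induced matching on one side, a corresponding induced matching of the same size on the other side. The key observation is that a $\tp$-pair $\{x,y\}$ comes equipped with a $\tp$-neighbour $u\in N_G(x)\cap N_G(y)$ with $N_G(u)\subseteq N_G[x]\cup N_G[y]$, and this containment is exactly what lets us ``fold'' the pair $\{x,y\}$ into the single vertex $w_{xy}$ without destroying or creating distance-two-independent pairs among the matching edges.

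For part $(i)$, I would start with a maximum induced matching $M$ in $G$ with $|M|=\im(G)$, and pass to its image under the contraction map $f\colon V(G)\to V(\tp(G;xy))$ that identifies $x$ and $y$ with $w_{xy}$. First I would handle the easy case: if neither $x$ nor $y$ lies in $V(M)$, then $M$ is already an induced matching in $\tp(G;xy)$ since the only edges altered by the contraction are those incident to $w_{xy}$, and $N_{\tp(G;xy)}(w_{xy})=N_G(x)\cap N_G(y)$, which cannot introduce a chord between two edges of $M$ nor an edge joining them (if it did, the corresponding vertices would already have been adjacent to both $x$ and $y$ in $G$, contradicting $M$ being induced in $G$ — one needs here that $M$ is \emph{maximum}, so $x$ and $y$ could in fact be added; more carefully, one uses that a vertex of $M$ adjacent to $w_{xy}$ is adjacent to both $x,y$, and two such from distinct edges would make $\{x,y\}\cup(\text{those two})$ contain a $2K_2$ or force a chord). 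The substantive case is when exactly one edge of $M$, say $e=xv$ (or $yv$), uses the pair; here $f(e)=w_{xy}v$ is still an edge of $\tp(G;xy)$ provided $v\in N_G(x)\cap N_G(y)$, which need not hold. The fix is to replace $e$ by an edge incident to $u$: since $u$ is adjacent to both $x$ and $y$ and $N_G(u)\subseteq N_G[x]\cup N_G[y]$, the edge $w_{xy}u$ is present in $\tp(G;xy)$, and I would argue that $(M\setminus\{e\})\cup\{w_{xy}u\}$ is an induced matching of $\tp(G;xy)$ by checking $u$ has no neighbour among $V(M\setminus\{e\})$ using the containment $N_G(u)\subseteq N_G[x]\cup N_G[y]$ together with the fact that $e\in M$ was induced. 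The case where both $x\in e_1\in M$ and $y\in e_2\in M$ with $e_1\ne e_2$ is impossible since $x,y$ are non-adjacent and distinct edges of $M$ are at distance $\geq 2$, yet $u$ would be adjacent to both ends $x$ and $y$... actually one must rule this out: if $e_1=xv_1,e_2=yv_2$ then $\{x,y\}$ being a $\gp$-pair forbids $G[\{x,y,v_1,v_2\}]\cong 2K_2$, but that is exactly the induced configuration $M$ would give, contradiction.

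For part $(ii)$, since $\tp$-contraction and $\tp$-expansion are inverse to each other (as noted after the $\gp$-expansion definition: $\tp(\tp(G;z,A_z,B_z);x_zy_z)\cong G$ and $\{x_z,y_z\}$ is a $\tp$-pair in $\tp(G;z,A_z,B_z)$), the inequality $\im(G)\geq\im(\tp(G;z,A_z,B_z))$ is literally part $(i)$ applied to the graph $\tp(G;z,A_z,B_z)$ with the $\tp$-pair $\{x_z,y_z\}$, whose $\tp$-contraction returns $G$. So part $(ii)$ needs no separate work beyond citing this inverse relationship.

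The main obstacle I anticipate is the bookkeeping in part $(i)$ case where a matching edge meets the pair: one must verify that the substituted edge $w_{xy}u$ genuinely forms an induced matching with the untouched edges, and this is where the hypothesis $N_G[u]\subseteq N_G[x]\cup N_G[y]$ does the real work — without it, $u$ could have a stray neighbour somewhere in $V(M)$. A secondary subtlety is making sure the ``maximum matching'' cases where $x$ or $y$ is a non-vertex of $M$ but some vertex of $M$ is adjacent to both $x$ and $y$ are correctly dispatched; here maximality of $M$ is the cleanest lever, since such configurations would let one enlarge $M$. I expect the whole argument to be short once these case distinctions are set up carefully, and part $(ii)$ to be a one-line consequence.
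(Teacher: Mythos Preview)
Your approach is essentially the same as the paper's: the same case split on $|V(M)\cap\{x,y\}|$, the same replacement of a matching edge through $x$ by $uw_{xy}$ using $N_G[u]\subseteq N_G[x]\cup N_G[y]$, and the same derivation of (ii) from (i) via the inverse relationship $\tp(\tp(G;z,A_z,B_z);x_zy_z)\cong G$.

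Two small remarks. In the case $V(M)\cap\{x,y\}=\emptyset$ you overcomplicate matters: since $w_{xy}\notin V(M)$, the induced subgraph on $V(M)$ in $\tp(G;xy)$ is literally $G[V(M)]$, so $M$ remains induced with no appeal to maximality or to adjacencies with $w_{xy}$. In the one-edge case, when $xz\in M$ with $z\in N_G(x)\cap N_G(y)$ but $z\ne u$, the $\gp$-pair condition does not by itself exclude a neighbour of $u$ lying in $(N_G(y)\setminus N_G(x))\cap V(M)$, so the clean fix is to replace $xz$ by $w_{xy}z$ directly rather than by $w_{xy}u$; the paper's write-up also leaves this sub-case implicit.
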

\begin{proof}
Suppose first that $\{x,y\}$ is a $\tp$-pair in $G=(V,E)$, and let $M\subseteq E$ be a maximum induced matching in $G$.
Note that $|V(M)\cap \{x,y\}|\leq 1$, since $\{x,y\}$ is a $\tp$-pair, where $V(M)$ is the set of vertices of $G$
that are incident to an edge in $M$. If $V(M)\cap \{x,y\}=\emptyset$, then $M$ is an induced matching in $\tp(G;xy)$
so that $\im(G)\leq \im(\tp(G;xy))$. We may therefore assume that $|V(M)\cap \{x,y\}|=1$. Assume that $V(M)\cap \{x,y\}=\{x\}$, and let $u\in V$ be a $\tp$-neighbour of the pair $\{x,y\}$. If $xu\in M$, then $M':=(M\setminus \{xu\})\cup \{uw_{xy}\}$ is an induced matching in $\tp(G;xy)$. On the other hand, if $xz\in M$ for some $z\in N_G(x)\setminus N_G(y)$, then $(N_G(u)\setminus \{x,z\})\cap V(M)=\emptyset$; hence, the set $M^*:=(M\setminus \{xz\})\cup \{uw_{xy}\}$ is an induced matching in $\tp(G;xy)$. Thus, we have $\im(G)\leq \im(\tp(G;xy))$.

The second claim follows from the part $(i)$, since $\tp(\tp(G;z,A_z,B_z);x_zy_z)\cong G$, where $\{x_z,y_z\}$ is the $\tp$-pair of the $\tp$-expansion.
\end{proof}

\begin{remark}
We note that a graph $G$ contains no $\gp$-pair if and only if every edge of $G$ is contained in an induced $2K_2$ of $G$, that is, its complement $\overline{G}$ is a $4$-cycled graph (see~\cite{BC1} for details). Furthermore, a $\gp$-pair in a connected claw-free graph is always a $\tp$-pair. On the other hand, any graph $G$ containing a non-isolated vertex admits a $\tp$-expansion (possibly with respect to a half empty $\tp$-pairing).
\end{remark}

\begin{definition}
A graph $H$ is called a \emph{mate} of $G$ provided that there exists  a sequence of graphs $G=G_0, G_1,\ldots, G_k=H$ 
such that $G_{i+1}$ is obtained from $G_i$ by either a $\tp$-expansion or a $\tp$-contraction for each $0\leq i<k$. 
\end{definition} 
We remark that being a mate for two graphs defines an equivalence relation on the set of graphs, and we denote by $\ME_G$, 
the equivalence class of a given graph $G$. The following is the consequence of Theorem~\ref{thm:reg-contract} 
and Corollary~\ref{cor:reg-expand}.
\begin{corollary}\label{cor:reg-mate}
If $H\in \ME_G$, then $\reg(G)=\reg(H)$.
\end{corollary}

\begin{example}
We know that $\reg(C_8)=3$ and $\im(C_8)=2$. The graph $C_8$ contains no $\tp$-pair. However, if we first expand it truely and apply successive $\tp$-contractions, we reach $3K_2$ as depicted in Figure~\ref{C8expand-contract}. Therefore, we have $C_8\in \ME_{3K_2}$.
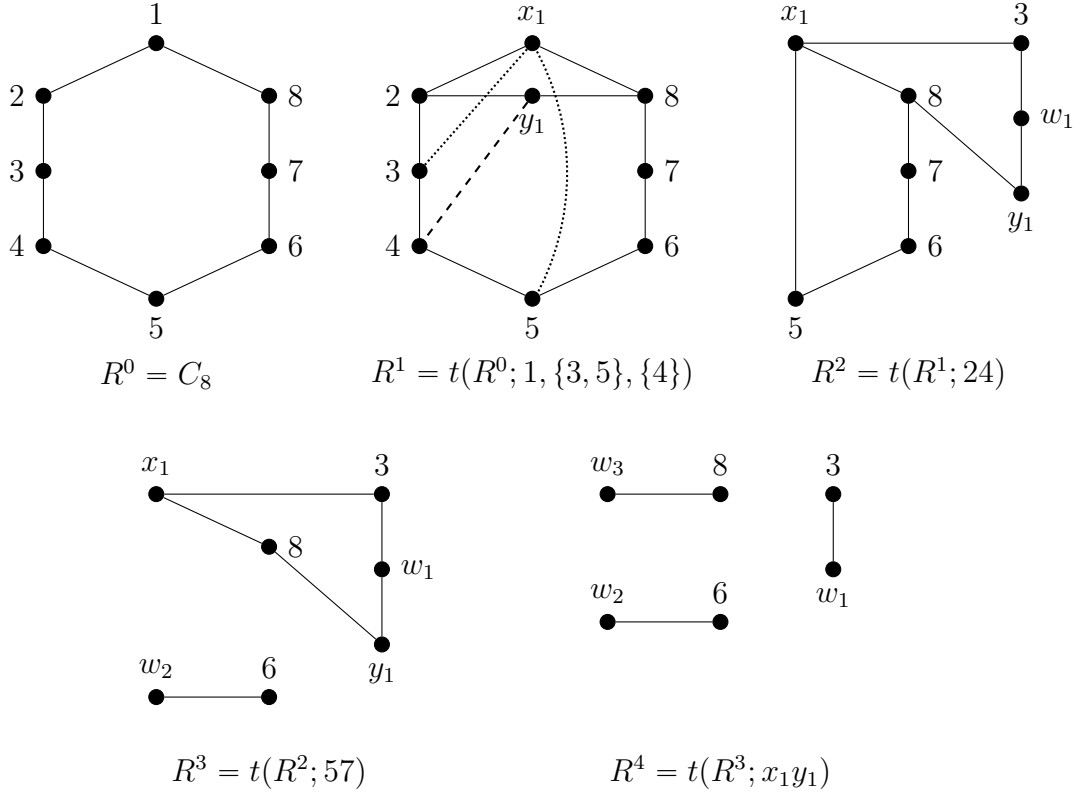
\begin{figure}[ht]
\begin{center}
\begin{tikzpicture}[scale=1]

\node [noddee] at (0,-0.2) (v1)[label=below:$5$]{};
\node [noddee] at (1.5,0.5) (v2)[label=right:$6$]{}
	edge [] (v1);
\node [noddee] at (1.5,1.5) (v3)[label=right:$7$]{}
	edge [] (v2);
\node [noddee] at (1.5,2.5) (v4)[label=right:$8$]{}
	edge [] (v3);
\node [noddee] at (0,3.2) (v5)[label=above:$1$]{}
	edge [] (v4);
\node [noddee] at (-1.5,2.5) (v6)[label=left:$2$]{}
    edge [] (v5);
\node [noddee] at (-1.5,1.5) (v7)[label=left:$3$]{}
     edge [] (v6);
\node [noddee] at (-1.5,0.5) (v8)[label=left:$4$]{}	
	 edge [] (v1)
	 edge [] (v7);
\node  at (0,-1.2) (v9)  {$R^0=C_8$};

\node [noddee] at (5,-0.2) (t1)[label=below:$5$]{};
\node [noddee] at (6.5,0.5) (t2)[label=right:$6$]{}
	edge [] (t1);
\node [noddee] at (6.5,1.5) (t3)[label=right:$7$]{}
	edge [] (t2);
\node [noddee] at (6.5,2.5) (t4)[label=right:$8$]{}
	edge [] (t3);
\node [noddee] at (5,3.2) (t5)[label=above:$x_1$]{}
	edge [] (t4);
\node [noddee] at (3.5,2.5) (t6)[label=left:$2$]{}
    edge [] (t5);
\node [noddee] at (3.5,1.5) (t7)[label=left:$3$]{}
     edge [] (t6)
     edge [thick, densely dotted] (t5);
\node [noddee] at (3.5,0.5) (t8)[label=left:$4$]{}	
	 edge [] (t1)
	 edge [] (t7);
\node [noddee] at (5,2.5) (t9)[label=below:$y_1$]{}
     edge [] (t4)
     edge [] (t6)
     edge [thick, dashed] (t8);	 
     	 
\draw[thick, densely dotted] (5,3.2) arc  (30:-30:3.45);
\node  at (5,-1.2) (t10)  {$R^1=t(R^0;1,\{3,5\},\{4\})$};

\node [noddee] at (8.5,-0.2) (a1)[label=below:$5$]{};
\node [noddee] at (10,0.5) (a2)[label=right:$6$]{}
	edge [] (a1);
\node [noddee] at (10,1.5) (a3)[label=right:$7$]{}
	edge [] (a2);
\node [noddee] at (10,2.5) (a4)[label=right:$8$]{}
	edge [] (a3);
\node [noddee] at (8.5,3.2) (a5)[label=above:$x_1$]{}
	edge [] (a1)
	edge [] (a4);
\node [noddee] at (11.5,3.2) (a6)[label=above:$3$]{}
	edge [] (a5);	 
\node [noddee] at (11.5,2.2) (a7)[label=right:$w_1$]{}
	edge [] (a6);
\node [noddee] at (11.5,1.2) (a8)[label=below:$y_1$]{}
	edge [] (a4)
	edge [] (a7);     	 
\node  at (10,-1.2) (a10)  {$R^2=t(R^1;24)$};

\node [noddee] at (1.5,-5.5) (b2)[label=above:$6$]{};
\node [noddee] at (0,-5.5) (b3)[label=above:$w_2$]{}
	edge [] (b2);
	
\node [noddee] at (1.5,-3.5) (b4)[label=right:$8$]{};
\node [noddee] at (0,-2.8) (b5)[label=above:$x_1$]{}
	edge [] (b4);
\node [noddee] at (3,-2.8) (b6)[label=above:$3$]{}
	edge [] (b5);	 
\node [noddee] at (3,-3.8) (b7)[label=right:$w_1$]{}
	edge [] (b6);
\node [noddee] at (3,-4.8) (b8)[label=below:$y_1$]{}
	edge [] (b4)
	edge [] (b7);     	 
\node  at (1.5,-6.5) (b10)  {$R^3=t(R^2;57)$};

\node [noddee] at (7.5,-4.5) (c2)[label=above:$6$]{};
\node [noddee] at (6,-4.5) (c3)[label=above:$w_2$]{}
	edge [] (c2);
	
\node [noddee] at (7.5,-2.8) (c4)[label=above:$8$]{};
\node [noddee] at (6,-2.8) (c5)[label=above:$w_3$]{}
	edge [] (c4);
\node [noddee] at (9,-2.8) (c6)[label=above:$3$]{};	 
\node [noddee] at (9,-3.8) (c7)[label=below:$w_1$]{}
	edge [] (c6);
     	 
\node  at (7.5,-6.5) (c10)  {$R^4=t(R^3;x_1y_1)$};

\end{tikzpicture}

\end{center}
\caption{A $\tp$-expansion and successive $\tp$-contractions of $C_8$.}
\label{C8expand-contract}
\end{figure}
\end{example}

\begin{definition}
The \emph{virtual induced matching number} $\vim(G)$ of a graph $G=(V,E)$ is defined by
\begin{equation*}
\vim(G):=\max \{\im(H)\colon H \;\textrm{is\;a\;mate\;of}\;G\}.
\end{equation*}
\end{definition} 

\begin{theorem}\label{thm:im-vim-reg}
$\im(G)\leq \vim(G)\leq \reg(G)$ for any graph $G$.
\end{theorem}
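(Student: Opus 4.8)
The plan is to deduce both inequalities almost immediately from material already in place, namely Katzman's lower bound $\im(H)\leq\reg(H)$ (recalled in the introduction, \cite{MK}) and Corollary~\ref{cor:reg-mate}, which asserts that regularity is constant on each equivalence class $\ME_G$.

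First I would observe that the lower bound $\im(G)\leq\vim(G)$ is essentially definitional: the graph $G$ is a mate of itself, witnessed by the empty sequence $G=G_0$ (or, if one insists $k\geq 1$ in the definition of ``mate'', by a single $\tp$-expansion of $G$ with respect to some non-isolated vertex and a half-empty $\tp$-pairing, which is available whenever $G$ has an edge; the edgeless case being trivial). Hence $\im(G)$ is one of the quantities over which the maximum defining $\vim(G)$ is taken, so $\im(G)\leq\vim(G)$.

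Next, for the upper bound $\vim(G)\leq\reg(G)$, let $H$ be any mate of $G$, i.e.\ $H\in\ME_G$. By Corollary~\ref{cor:reg-mate} we have $\reg(H)=\reg(G)$, and by Katzman's theorem applied to $H$ we have $\im(H)\leq\reg(H)$. Combining these,
\begin{equation*}
\im(H)\leq\reg(H)=\reg(G).
\end{equation*}
Since this holds for every $H\in\ME_G$, taking the maximum over all mates $H$ of $G$ yields $\vim(G)=\max\{\im(H)\colon H\in\ME_G\}\leq\reg(G)$, as desired.

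There is no real obstacle here: the theorem is a formal consequence of the two cited facts, and the genuinely substantive work was already carried out in proving Theorem~\ref{thm:reg-contract} (that a $\tp$-contraction preserves regularity) and its expansion counterpart Corollary~\ref{cor:reg-expand}, which together give Corollary~\ref{cor:reg-mate}. If anything needs care, it is only the bookkeeping convention in the definition of a mate (whether $k=0$ is allowed) so that $G\in\ME_G$; I would handle that as indicated above.
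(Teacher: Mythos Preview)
Your proposal is correct and follows essentially the same argument as the paper: the paper's proof is a two-line appeal to Corollary~\ref{cor:reg-mate} and Katzman's bound $\im(H)\leq\reg(H)$, maximizing over mates $H$, exactly as you do. The paper does not even spell out the first inequality, treating $G\in\ME_G$ as evident; your careful handling of that bookkeeping point is fine but unnecessary.
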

\begin{proof}
If $H$ is a mate of $G$, we know that $\reg(G)=\reg(H)$ by Corollary~\ref{cor:reg-mate}. However,
$\reg(H)\geq \im(H)$ by a result of Katzman~\cite{MK}, so maximizing over $H$ gives the desired result.
\end{proof}

\begin{remark}
Obviously, a graph $G$ may have infinitely many (non-isomorphic) mates, while
the set $\{\im(H)\colon H\;\textrm{is a mate of}\;G\}$ is finite, since $\reg(G)=\reg(H)\geq \im(H)$ for any mate $H$ of $G$. Moreover,
the equality $\vim(G)=\vim(H)$ holds whenever $H\in \ME_G$.
\end{remark}

We note that for the Morey-Villarreal graph (see Figure~\ref{fig:MV}), we have $\vim(G_{MV})=2<3=\reg_{\Z_2}(G_{MV})$, where the equality $\vim(G_{MV})=2$ is due to the fact that $\im(G_{MV})=2=\reg_{\Z_3}(G_{MV})$.  Furthermore, the graph $R_n$ depicted in Figure~\ref{fig-reg-im-1} shows that
the gap between the virtual induced matching and the induced matching numbers could be arbitrarily large, since $\vim(R_n)=\im(R_n)+n$ for any $n\geq 1$.

We do not know whether the virtual induced matching number is additive in general, however, we can state the following easy fact:

\begin{corollary}\label{cor:vim-add}
If $G$ and $H$ are two connected graphs on disjoint set of vertices, then
$\vim(G\cup H)\geq \vim(G)+\vim(H)$.
\end{corollary}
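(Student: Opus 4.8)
The plan is to pick a mate $G'$ of $G$ realizing $\vim(G) = \im(G')$ and a mate $H'$ of $H$ realizing $\vim(H) = \im(H')$, and to exhibit a mate of $G \cup H$ whose induced matching number is at least $\vim(G) + \vim(H)$. The natural candidate is $G' \cup H'$, so the first step is to argue that $G' \cup H'$ is indeed a mate of $G \cup H$. This should follow by running the sequence of $\tp$-expansions and $\tp$-contractions that transforms $G$ into $G'$ entirely within the $G$-part of the disjoint union $G \cup H$, and then the sequence transforming $H$ into $H'$ within the $H$-part. The only thing to check here is that a $\tp$-expansion or $\tp$-contraction performed on the $G$-component of $G \cup H$ is still a legal $\tp$-operation in the larger graph: this is immediate because all the relevant neighbourhoods ($N(x)$, $N[x]$, the complete-pairing sets $A_z, B_z$, the $\tp$-neighbour $u$ or $v$) live entirely inside the component, and adding a disjoint component $H$ changes none of them, nor does it create any new $2K_2$ that would spoil the $\gp$-pair condition beyond those already internal to $G$ (and the pairs we use are valid in $G$ by hypothesis). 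Hence each step remains a valid $\tp$-operation, and composing the two sequences shows $G' \cup H' \in \ME_{G \cup H}$, so in particular $G' \cup H'$ is a mate of $G \cup H$.

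The second step is the combinatorial inequality $\im(G' \cup H') \geq \im(G') + \im(H')$. This is the standard fact that the induced matching number of a disjoint union is at least (in fact equal to) the sum of the induced matching numbers of the pieces: given a maximum induced matching $M_{G'}$ of $G'$ and $M_{H'}$ of $H'$, the set $M_{G'} \cup M_{H'}$ is a matching of $G' \cup H'$, and since there are no edges between $V(G')$ and $V(H')$, the induced subgraph on $V(M_{G'} \cup M_{H'})$ is exactly the disjoint union of the induced subgraphs on $V(M_{G'})$ and $V(M_{H'})$, hence has edge set precisely $M_{G'} \cup M_{H'}$. Thus $M_{G'} \cup M_{H'}$ is an induced matching of $G' \cup H'$ of size $\im(G') + \im(H')$.

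Combining the two steps: $G' \cup H'$ is a mate of $G \cup H$, so by the definition of $\vim$ we get
\begin{equation*}
\vim(G \cup H) \geq \im(G' \cup H') \geq \im(G') + \im(H') = \vim(G) + \vim(H),
\end{equation*}
which is the claim. I do not expect any serious obstacle here; the one point requiring a line of care is the verification that the $\tp$-operations used to build $G'$ and $H'$ remain $\tp$-operations in the disjoint union, i.e.\ that neither the genuine-pair condition nor the existence of a $\tp$-neighbour is disturbed by the presence of the other component — but as noted this is clear since disjoint components do not interfere with local neighbourhood conditions. (One could alternatively phrase the whole argument via Corollary~\ref{cor:reg-mate} to note $\reg(G' \cup H') = \reg(G \cup H)$, but that is not needed for the stated inequality.)
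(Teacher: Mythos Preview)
Your proof is correct and is precisely the natural argument the paper has in mind; in fact the paper omits the proof entirely, calling the statement an ``easy fact,'' and your two-step outline (lift the $\tp$-sequences to the disjoint union, then use additivity of $\im$ over components) is exactly how one fills it in. The only substantive check is the one you flagged---that a $\tp$-contraction or $\tp$-expansion inside one component remains valid in the disjoint union---and your justification is correct: neighbourhoods of the involved vertices are unchanged by adjoining a disjoint component, so both the $\gp$-pair condition (no induced $2K_2$ can arise using vertices of the other component, since the cross edges are absent) and the $\tp$-neighbour condition persist.
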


\begin{proposition}\label{prop:reg-cycles}
For any $n\geq 3$, we have $\reg(C_n)=\vim(C_n)=\lfloor \frac{n+1}{3}\rfloor$.
\end{proposition}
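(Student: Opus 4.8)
The plan is to prove the two equalities in turn, reducing each of them to facts already established in the excerpt. Recall that the classical value of the regularity of a cycle is $\reg(C_n)=\lfloor\frac{n+1}{3}\rfloor$; one may take this as known, or re-derive it quickly from Corollary~\ref{cor:induction-sc} together with the standard recursion $\Ind(C_n)\simeq \Ind(P_{n-3})$ coming from removing the closed neighbourhood of a vertex and the fact that $\reg(P_m)=\lfloor\frac{m+2}{3}\rfloor$. Since $\im(G)\leq\vim(G)\leq\reg(G)$ always holds by Theorem~\ref{thm:im-vim-reg}, and $\im(C_n)=\lfloor\frac{n}{3}\rfloor$, the only content is to show $\vim(C_n)\geq\lfloor\frac{n+1}{3}\rfloor$ in the cases $n\equiv 1\pmod 3$ (where $\im(C_n)<\reg(C_n)$); for $n\not\equiv 1\pmod 3$ we already have $\im(C_n)=\reg(C_n)$ and the sandwich closes immediately.

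So the heart of the argument is: for $n=3k+1$, exhibit a mate $H$ of $C_n$ with $\im(H)\geq k+1$. The natural candidate, guided by the worked Example after Figure~\ref{C8expand-contract} (which handles $n=8=3\cdot 2+2$, actually showing $C_8\in\ME_{3K_2}$), is to show that $C_{3k+1}$ is a mate of $(k+1)K_2$, or at least of some graph containing an induced matching of size $k+1$. I would do this by first performing one $\tp$-expansion on $C_{3k+1}$ — choosing a vertex $z$ and a suitable (possibly half-empty) $\tp$-pairing $[A_z,B_z]$ in $C_{3k+1}-N[z]$, which is a path $P_{3k-2}$ — so as to create the room needed, and then applying a sequence of $\tp$-contractions that successively collapse the long path-parts, exactly as in the $C_8$ figure. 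Each $\tp$-contraction preserves regularity by Theorem~\ref{thm:reg-contract} and cannot decrease the induced matching number is not needed; what I need is Proposition~\ref{prop:im-contract-expand}, which says $\im$ is monotone in the right direction along the chain, so that the terminal graph $H$ has $\im(H)\leq\im(C_{3k+1})$ is the wrong inequality — rather I want a mate with \emph{large} $\im$, and the point of the expansion-then-contraction dance is that the intermediate expanded graph already has the larger matching number, which is then carried along (or one simply reads $\vim$ off the terminal graph directly). Concretely, I expect to show $C_{3k+1}\in\ME_{H}$ where $H$ is a disjoint union of edges and isolated vertices with $k+1$ edges, giving $\vim(C_{3k+1})\geq\im(H)=k+1$.

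The key steps in order: (1) record $\reg(C_n)=\lfloor\frac{n+1}{3}\rfloor$ and $\im(C_n)=\lfloor\frac{n}{3}\rfloor$; (2) invoke Theorem~\ref{thm:im-vim-reg} to get $\vim(C_n)\leq\lfloor\frac{n+1}{3}\rfloor$ and, when $3\nmid n-1$, to conclude equality at once; (3) for $n=3k+1$, describe explicitly the $\tp$-expansion at a chosen vertex of $C_{3k+1}$, verifying that the chosen pairing is genuinely a $\tp$-pairing (i.e.\ some neighbour $v$ of $z$ satisfies $N[v]\subseteq A_z\cup B_z\cup N[z]$); (4) describe the sequence of $\tp$-contractions and verify at each stage that the contracted pair is a $\tp$-pair (exhibit the $\tp$-neighbour), so that Theorem~\ref{thm:reg-contract}/Corollary~\ref{cor:reg-expand} apply throughout; (5) identify the terminal graph $H$, count its induced matching number as $k+1$, and conclude $\vim(C_{3k+1})\geq k+1$, hence equality.

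The main obstacle I anticipate is step (4): making the chain of $\tp$-contractions work uniformly in $k$ and checking the $\tp$-pair condition at every step — the $C_8$ example is small enough to draw, but for general $3k+1$ one must give a clean inductive description (e.g.\ "after the expansion the graph is $C_8$-like glued to a path $P_{3(k-2)+?}$, and one contraction reduces $k$ to $k-1$"), and be careful that the $\tp$-neighbour witnessing each contraction genuinely exists. A secondary subtlety is bookkeeping the parity/residue cases ($n\equiv 0,2\pmod 3$ are free; only $n\equiv 1$ needs the construction, but one should double-check small cases like $n=4$, where $C_4$ itself has $\im=1<2=\reg$, and indeed $C_4=\gp$-expansion territory since $\{$two opposite vertices$\}$ already form a $\gp$-pair). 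I expect the write-up to lean heavily on a single well-chosen figure generalizing Figure~\ref{C8expand-contract}, with the verification of the $\tp$ conditions done once in the text.
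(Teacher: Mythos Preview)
Your overall strategy (use the sandwich $\im\leq\vim\leq\reg$, then manufacture a mate with large induced matching via one $\tp$-expansion followed by $\tp$-contractions) is the right idea and is essentially what the paper does. But there are two concrete slips in the bookkeeping that would derail your write-up as it stands.

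First, the residue class is wrong. With $\im(C_n)=\lfloor n/3\rfloor$ and $\reg(C_n)=\lfloor (n+1)/3\rfloor$, the gap $\im(C_n)<\reg(C_n)$ occurs exactly when $n\equiv 2\pmod 3$, not $n\equiv 1\pmod 3$. (Indeed your own guiding example $C_8$ has $8\equiv 2\pmod 3$.) Relatedly, your parenthetical check of $C_4$ is off: $\reg(C_4)=\lfloor 5/3\rfloor=1$, not $2$, so $C_4$ is one of the ``free'' cases. For $n=3k+1$ one already has $\im(C_n)=k=\reg(C_n)$ and there is nothing to do; the work is for $n=3k+2$, where you need $\vim(C_n)\geq k+1$.

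Second, and more substantively, the paper avoids your anticipated obstacle (step~(4), verifying a long chain of $\tp$-contractions uniformly in $k$) by \emph{not} trying to reach $(k+1)K_2$ directly. Instead it proves, uniformly for all $n\geq 6$ regardless of residue, that $C_n$ is a mate of $K_2\cup C_{n-3}$: one $\tp$-expansion at vertex $1$ with pairing $[\{3,5\},\{4\}]$ (the $\tp$-neighbour being $2$), then $\tp$-contract the pair $\{2,4\}$ (with $\tp$-neighbour $3$), then $\tp$-contract the pair $\{3,y_1\}$ (with $\tp$-neighbour $w_1$). The result is $K_2\cup C_{n-3}$, and now a single inductive step (using Corollary~\ref{cor:vim-add} for the lower bound on $\vim$ of a disjoint union, and additivity of $\reg$) gives $\reg(C_n)=1+\reg(C_{n-3})$ and $\vim(C_n)\geq 1+\vim(C_{n-3})$ simultaneously. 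This is both shorter and cleaner than a case split by residue followed by a direct reduction to $(k+1)K_2$.
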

\begin{proof}
We write $V(C_n)=\{1,2,\ldots,n\}$, and assume that the edges of $C_n$ are in the cyclic fashion. Since the cases $3\leq n \leq 5$ are trivial, we let $n\geq 6$ and apply induction. The vertex $1$ admits a $\tp$-pairing in $L^0=C_n$, namely
$[\{3,5\}, \{4\}]$, and we define $L^1:=\tp(L^0;1,\{3,5\},\{4\})$ in which $\{2,4\}$ is a $\tp$-pair with the vertex $3$ 
is the $\tp$-neighbour. If we set $L^2:=\tp(L^1;24)$, where we denote by $w_1$ the vertex for which the pair $\{2,4\}$ is contracted, the pair $\{3,y_1\}$ becomes a $\tp$-pair in $L^2$ with $w_1$ is a $\tp$-neighbour. Now, the graph $L^3:=\tp(L^2;3y_1)$ is clearly isomorphic to $K_2\cup C_{n-3}$, where $V(K_2)=\{w_1,w_2\}$ and $w_2$ is the vertex for which $\{3,y_1\}$ is contracted. Therefore, 
\begin{equation*}
\reg(C_n)=1+\reg(C_{n-3})=1+\vim(C_{n-3})=\vim(C_n)=\lfloor \frac{n+1}{3}\rfloor,
\end{equation*}
where the first equality is due to Corollary~\ref{cor:reg-mate}, the second and last equalities follow from the induction, and the third equality is
the consequence of Theorem~\ref{thm:im-vim-reg} and Corollary~\ref{cor:vim-add}.
\end{proof}


\begin{proposition}\label{prop:deg2mate}
Let $\{a,b,c,d\}$ be a set of vertices of a $4$-path (not necessarily induced) in $G$ with edges
$ab,bc,cd$ such that $d_G(b)=d_G(c)=2$. If $ad\in E(G)$, then $G-\{a,b,c,d\}\cup K_2$ is a mate of $G$. Furthermore,
if $ad\notin E(G)$, then the graph $\fp(G-\{b,c\};da)\cup K_2$ is a mate of $G$. 
\end{proposition}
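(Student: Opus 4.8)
The plan is to realize the passage from $G$ to the target graph by at most one $\tp$-expansion followed by two $\tp$-contractions, in the spirit of the proof of Proposition~\ref{prop:reg-cycles}. I would treat the two cases separately, starting with the easier case $ad\in E(G)$, and throughout I use that the degree hypothesis forces $N_G(b)=\{a,c\}$ and $N_G(c)=\{b,d\}$.

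First, when $ad\in E(G)$ the set $\{a,b,c,d\}$ spans a $4$-cycle. I would show $\{a,c\}$ is a $\tp$-pair of $G$: it is non-adjacent (an edge $ac$ would force $\deg_G(c)\ge 3$); it is a $\gp$-pair because an induced $2K_2$ on $\{a,c,u,v\}$ would have to be $\{a,u\},\{c,v\}$ with $v\in N_G(c)=\{b,d\}$, and $v=b$, $v=d$ are excluded by the edges $ab$, $ad$; and $b\in N_G(a)\cap N_G(c)$ is a true-neighbour since $N_G[b]=\{a,b,c\}\subseteq N_G[a]\cup N_G[c]$. Passing to $G_1:=\tp(G;ac)$, the image $w_1$ has $N_{G_1}(w_1)=N_G(a)\cap N_G(c)=\{b,d\}$, the vertex $b$ becomes a pendant of $w_1$, and every vertex outside $\{a,b,c,d\}$ retains exactly its neighbours in $V(G)\setminus\{a,b,c,d\}$ together with a possible edge to $d$ (the only surviving edges at $w_1$ are $w_1b$ and $w_1d$). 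Then $\{b,d\}$ is a $\tp$-pair of $G_1$ with true-neighbour $w_1$: it is a $\gp$-pair because $b$ has the single neighbour $w_1$ and $w_1d\in E(G_1)$, and $N_{G_1}[w_1]=\{b,d,w_1\}\subseteq N_{G_1}[b]\cup N_{G_1}[d]$. In $G_2:=\tp(G_1;bd)$ the image $w_2$ satisfies $N_{G_2}(w_2)=\{w_1\}$ and $N_{G_2}(w_1)=\{w_2\}$, so $\{w_1,w_2\}$ is an isolated $K_2$, while every other vertex loses only its possible edge to $d$; hence $G_2\cong (G-\{a,b,c,d\})\cup K_2$, as required.

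When $ad\notin E(G)$, $\{a,c\}$ need no longer be a $\gp$-pair: the obstruction is an induced $2K_2$ of the form $\{av,cd\}$ with $v\in N_G(a)\setminus(\{b\}\cup N_G(d))$, and this is precisely what the preliminary expansion in Proposition~\ref{prop:reg-cycles} defeats. If $e$ is such an obstructing neighbour of $a$ with $N_G[a]\subseteq\{b,c,d\}\cup N_G[e]$, then $[\{b,d\},\{c\}]$ is a $\tp$-pairing of $e$ (complete since $cb,cd\in E(G-N_G[e])$, with true-neighbour $a$), and after $\tp$-expanding $e$ the new vertex $x_e$ becomes adjacent to $b$ and $d$ while both $x_e,y_e$ become adjacent to $a$; in the resulting graph $\{a,c\}$ is again a $\tp$-pair with true-neighbour $b$. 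Contracting $\{a,c\}$ and then $\{b,y_e\}$ — each a $\tp$-pair by the same pendant-vertex analysis as in the first case — splits off an isolated $K_2$ and amalgamates the surviving neighbourhoods of $a$ and $d$ onto one vertex, which one checks to be exactly $\fp(G-\{b,c\};da)$.

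The main obstacle is the general situation in the second case, where $a$ and $d$ have large, overlapping neighbourhoods: a single preliminary expansion disposes of only one obstructing vertex $e$, and only when $N_G[a]$ is concentrated near $e$. I would handle this by induction on $|V(G)|$ (equivalently on $\deg_G(a)+\deg_G(d)$): a $\tp$-expansion/contraction pair peels off one vertex of $N_G(a)\setminus(\{b\}\cup N_G(d))$ at a time, reducing to a strictly smaller instance of the statement, with base case $N_G(a)\setminus\{b\}\subseteq N_G(d)$ (and symmetrically on the $d$-side), where the three-move argument above applies verbatim. The fully degenerate cases $\deg_G(a)=1$ or $\deg_G(d)=1$, where $\{a,b,c,d\}$ is essentially a pendant path, would be dispatched separately using the ambient hypotheses of the section; the only delicate point throughout is bookkeeping which edges survive each $\tp$-contraction.
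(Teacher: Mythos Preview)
Your treatment of the case $ad\in E(G)$ is correct and in fact cleaner than the paper's: you observe that $\{a,c\}$ is already a $\tp$-pair of $G$ (the only potential $2K_2$ would be $\{au,cd\}$, killed by $ad\in E$), so two $\tp$-contractions suffice, whereas the paper first performs a $\tp$-expansion at $d$ with pairing $[\{b\},\emptyset]$ and then needs three contractions.

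The case $ad\notin E(G)$, however, has a genuine gap. Your proposed expansion at an obstructing vertex $e\in N_G(a)$ with pairing $[\{b,d\},\{c\}]$ does \emph{not} make $\{a,c\}$ a $\gp$-pair in general: after the expansion one computes $N_{G'}(c)\setminus N_{G'}(a)=\{d\}$ and $N_{G'}(a)\setminus N_{G'}(c)=(N_G(a)\setminus\{b,e\})\cup\{x_e\}$, so the complete-pairing condition requires every $u\in N_G(a)\setminus\{b,e\}$ to be adjacent to $d$ --- i.e.\ $e$ must be the \emph{only} obstruction, which is strictly stronger than your hypothesis $N_G[a]\subseteq\{b,c,d\}\cup N_G[e]$. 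More seriously, the inductive ``peeling'' scheme does not close: even if you could pass to a mate $G''$ of $G$ with the same $4$-path and one fewer obstruction, the induction would yield that $\fp(G''-\{b,c\};da)\cup K_2$ is a mate of $G$, not that $\fp(G-\{b,c\};da)\cup K_2$ is; the target graph is fixed by the original $G$, so shrinking the instance changes the goal.

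The paper's move is to expand at $d$ (not at any $e$) with the asymmetric pairing $[N_G(a)\setminus N_G(d),\,\{a\}]$, for which $c$ is a $\tp$-neighbour since $b\in N_G(a)\setminus N_G(d)$. The point is that $x_d$ then becomes adjacent to all of $N_G(a)\cup N_G(d)$ in one stroke, so in $G':=\tp(G;d)$ one has $N_{G'}(c)\setminus N_{G'}(a)=\{x_d\}$ and $x_d$ is adjacent to everything in $N_{G'}(a)\setminus N_{G'}(c)$; hence $\{a,c\}$ is a $\tp$-pair with true-neighbour $b$ without any further hypothesis. Two subsequent $\tp$-contractions ($\{a,c\}$ then $\{b,y_d\}$) give exactly $\fp(G-\{b,c\};da)\cup K_2$. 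The missing idea is that the expansion should absorb \emph{all} of $N_G(a)\setminus N_G(d)$ at once into one half of the pairing, rather than attack obstructions one vertex at a time.
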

\begin{proof}
Suppose first that $ad\in E(G)$. We then apply to a $\tp$-expansion on the vertex $d$ with respect to the $\tp$-pairing $[\{b\},\emptyset]$. Observe that in the resulting graph $\tp(G;d)$, the pair $\{a,c\}$ is a $\tp$-pair with $b$ as a $\tp$-neighbour. The $\tp$-contraction of $\{a,c\}$ provides the graph $\tp(\tp(G;d);ac)$ in which $\{b,y_d\}$ is a $\tp$-pair. When we $\tp$-contract $\{b,y_d\}$, the set $\{x_d,w_{ac}, w_{by_d}\}$ in the graph we obtained induces a $3$-path such that the vertex $w_{by_d}$ is of degree one, that is, $\{x_d,w_{by_d}\}$ is a $\tp$-pair. Therefore the $\tp$-contraction of $\{x_d,w_{by_d}\}$ in $\tp(\tp(\tp(G;d);ac);by_d)$ results a graph isomorphic to $G-\{a,b,c,d\}\cup K_2$ (compare to Figure~\ref{fig:4-sq-path}).

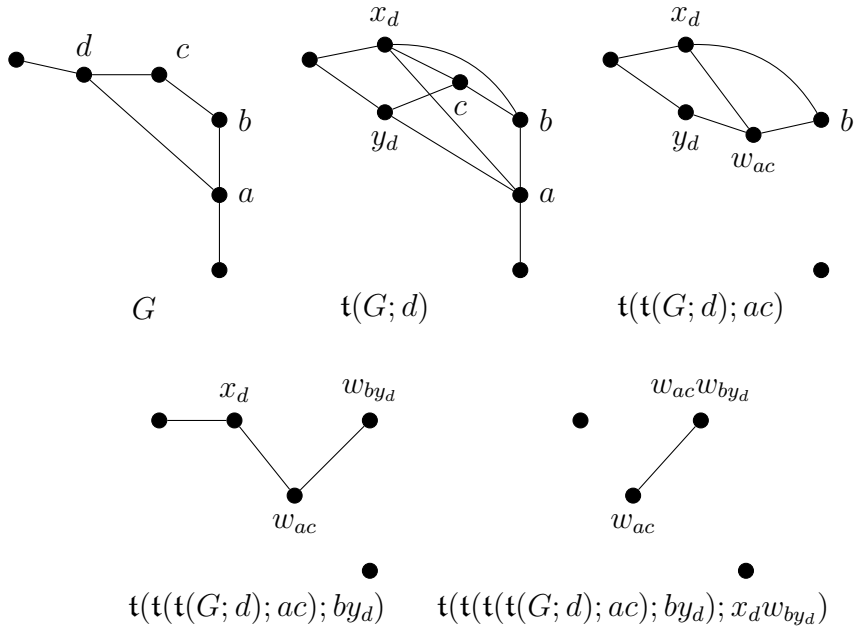
\begin{figure}[ht]
\begin{center}
\begin{tikzpicture}[scale=1]

\node [noddee] at (1,0) (v1)  {};
\node [noddee] at (1,1) (v2) [label=right:$a$] {}
	edge [] (v1);
\node [noddee] at (1,2) (v3) [label=right:$b$] {}
	edge [] (v2);
\node [noddee] at (0.2,2.6) (v4)[label=above right:$c$] {}
	edge [] (v3);
\node [noddee] at (-0.8,2.6) (v5) [label=above:$d$] {}
	edge [] (v2)
	edge [] (v4);
\node [noddee] at (-1.7,2.8) (v6)  {}
	edge [] (v5);
\node  at (0,-0.5) (v6)  {$G$};

\node [noddee] at (5,0) (t1)  {};
\node [noddee] at (5,1) (t2) [label=right:$a$] {}
	edge [] (t1);
\node [noddee] at (5,2) (t3) [label=right:$b$] {}
	edge [] (t2);
\node [noddee] at (4.2,2.5) (t4)[label=below:$c$] {}
	edge [] (t3);
\node [noddee] at (3.2,3) (t5) [label=above:$x_d$] {}
	edge [] (t2)
	edge [] (t4)
	edge [bend left]  (t3);
\node [noddee] at (2.2,2.8) (t6)  {}
	edge [] (t5);
\node [noddee] at (3.2,2.1) (t7)[label=below:$y_d$] {}
	edge [] (t2)
	edge [] (t4)	
	edge [] (t6);
\node  at (3.2,-0.5) (t8)  {$\tp(G;d)$};

\node [noddee] at (9,0) (r1)  {};
\node [noddee] at (9,2) (r3) [label=right:$b$] {};
\node [noddee] at (7.2,3) (r5) [label=above:$x_d$] {}
	edge [bend left]  (r3);
\node [noddee] at (6.2,2.8) (r6)  {}
	edge [] (r5);
\node [noddee] at (7.2,2.1) (r7) [label=below:$y_d$] {}	
	edge [] (r6);
\node [noddee] at (8.1,1.8) (r8) [label=below:$w_{ac}$] {}	
	edge [] (r3)
	edge [] (r5)
	edge [] (r7);
\node  at (7.4,-0.5) (r9)  {$\tp(\tp(G;d);ac)$};

\node [noddee] at (3,-4) (b1)  {};
\node [noddee] at (1.2,-2) (b5) [label=above:$x_d$] {};
\node [noddee] at (0.2,-2) (b6)  {}
	edge [] (b5);
\node [noddee] at (2,-3) (b8) [label=below:$w_{ac}$] {}	
	edge [] (b5);
\node [noddee] at (3,-2) (b3) [label=above:$w_{by_d}$] {}
	edge []  (b8);	
		
\node  at (1.5,-4.5) (b9)  {$\tp(\tp(\tp(G;d);ac);by_d)$};

\node [noddee] at (8,-4) (k1)  {};
\node [noddee] at (7.4,-2) (k5) [label=above:$w_{ac}w_{by_d}$] {};
\node [noddee] at (5.8,-2) (k6)  {};
\node [noddee] at (6.5,-3) (k8) [label=below:$w_{ac}$] {}	
	edge [] (k5);

\node  at (6.5,-4.5) (k9)  {$\tp(\tp(\tp(\tp(G;d);ac);by_d);x_dw_{by_d})$};		
	
\end{tikzpicture}

\end{center}
\caption{The first phase of expansions and contractions in Proposition~\ref{prop:deg2mate}.}
\label{fig:4-sq-path}
\end{figure}

Assume next that $ad\notin E(G)$. We apply to a $\tp$-expansion on the vertex $d$ with respect to the $\tp$-pairing
$[N_G(a)\setminus N_G(d), \{a\}]$ having the vertex $c$ as a $\tp$-neighbour. If we denote by $\{x_d,y_d\}$,
the resulting $\tp$-pair in $\tp(G;d)$, then $\{a,c\}$ is a $\tp$-pair in $\tp(G;d)$ with $\tp$-neighbour $b$. 
Once we contract this $\tp$-pair in $\tp(G;d)$ and denote the newly created vertex by $w_{ac}$, then $\{b, y_d\}$ becomes a $\tp$-pair in $\tp(\tp(G;d);ac)$ with a $\tp$-neighbour the vertex $w_{ac}$. Finally, the $\tp$-contraction of $\{b,y_d\}$ in $\tp(\tp(G;d);ac)$ yields a graph isomorphic to $\fp((G-\{b,c\});da)\cup K_2$, where the isolated edge is induced by $w_{ac}$ and $w_{by_d}$ (see to Figure~\ref{fig:4-path}).
\end{proof}

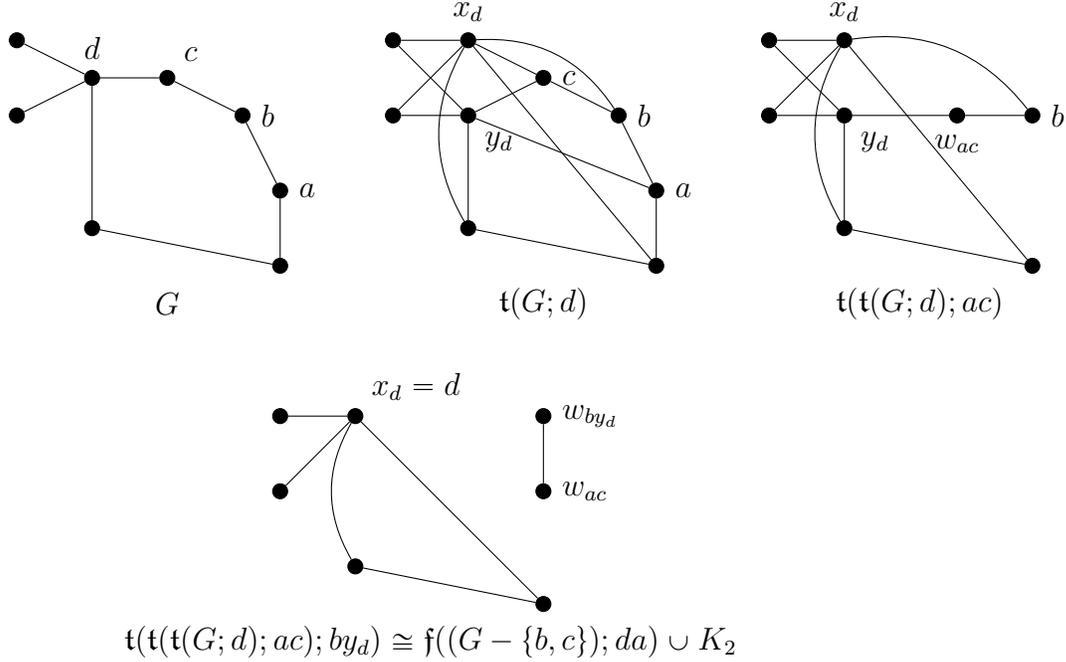
\begin{figure}[ht]
\begin{center}
\begin{tikzpicture}[scale=1]

\node [noddee] at (0,0) (v1)  {};
\node [noddee] at (0,1) (v2) [label=right:$a$] {}
	edge [] (v1);
\node [noddee] at (-0.5,2) (v3) [label=right:$b$] {}
	edge [] (v2);
\node [noddee] at (-1.5,2.5) (v4)[label=above right:$c$] {}
	edge [] (v3);
\node [noddee] at (-2.5,2.5) (v5) [label=above:$d$] {}
	edge [] (v4);
\node [noddee] at (-3.5,3) (v6)  {}
	edge [] (v5);
\node [noddee] at (-3.5,2) (v7)  {}
	edge [] (v5);
\node [noddee] at (-2.5,0.5) (v8)  {}
	edge [] (v1)
	edge [] (v5);
\node  at (-1.5,-0.5) (v9)  {$G$};

\node [noddee] at (5,0) (t1)  {};
\node [noddee] at (5,1) (t2) [label=right:$a$] {}
	edge [] (t1);
\node [noddee] at (4.5,2) (t3) [label=right:$b$] {}
	edge [] (t2);
\node [noddee] at (3.5,2.5) (t4)[label=right:$c$] {}
	edge [] (t3);
\node [noddee] at (2.5,3) (t5) [label=above:$x_d$] {}
	edge [] (t1)
	edge [bend left] (t3)
	edge [] (t4);
\node [noddee] at (2.5,2) (t6) [label=below right:$y_d$] {}
	edge [] (t2)
	edge [] (t4);		
\node [noddee] at (1.5,3) (t7)  {}
	edge [] (t5)
	edge [] (t6);
\node [noddee] at (1.5,2) (t8)  {}
	edge [] (t5)
	edge [] (t6);
\node [noddee] at (2.5,0.5) (t9)  {}
	edge [] (t1)
	edge [bend left] (t5)
	edge [] (t6);

\node  at (3.5,-0.5) (t10)  {$\tp(G;d)$};

\node [noddee] at (10,0) (r1)  {};
\node [noddee] at (10,2) (r3) [label=right:$b$] {};
\node [noddee] at (7.5,3) (r5) [label=above:$x_d$] {}
	edge [] (r1)
	edge [bend left] (r3);
\node [noddee] at (7.5,2) (r6) [label=below right:$y_d$] {};		
\node [noddee] at (6.5,3) (r7)  {}
	edge [] (r5)
	edge [] (r6);
\node [noddee] at (6.5,2) (r8)  {}
	edge [] (r5)
	edge [] (r6);
\node [noddee] at (7.5,0.5) (r9)  {}
	edge [] (r1)
	edge [bend left] (r5)
	edge [] (r6);
	
\node [noddee] at (9,2) (r4)[label=below:$w_{ac}$] {}
	edge [] (r3)
	edge [] (r6);

\node  at (8.5,-0.5) (r10)  {$\tp(\tp(G;d);ac)$};

\node [noddee] at (3.5,-4.5) (k1)  {};

\node [noddee] at (1,-2) (k5) [label=above right:$x_d\equal d$] {}	
	edge [] (k1);		
\node [noddee] at (0,-2) (k7)  {}
	edge [] (k5);
\node [noddee] at (0,-3) (k8)  {}
	edge [] (k5);
\node [noddee] at (1,-4) (k9)  {}
	edge [] (k1)
	edge [bend left] (k5);

\node [noddee] at (3.5,-2) (k3) [label=right:$w_{by_d}$] {};	
\node [noddee] at (3.5,-3) (k4)[label=right:$w_{ac}$] {}
	edge [] (k3);

\node  at (2,-5) (k10)  {$\tp(\tp(\tp(G;d);ac);by_d)\cong \mathfrak{f}((G-\{b,c\});da)\cup K_2$};

			
\end{tikzpicture}
\end{center}
\caption{The second phase of expansions and contractions in Proposition~\ref{prop:deg2mate}.}
\label{fig:4-path}
\end{figure}

Our next aim is to prove the equality $\reg(G)=\vim(G)$ when $G$ is a well-covered block-cactus graph~\cite{RLV} that in turn includes any Cohen-Macaulay graph with girth at least five (see also \cite{BC,HMT}). We recall that a graph $G$ is called a \emph{block-cactus graph}, if each of its blocks is a clique or a cycle. For that purpose, we first introduce a graph class containing all such graphs. 

\begin{definition}
Let $G$ be a graph, and let $C$ be an induced cycle of length $4\leq n\leq 7$ in $G$. Then $C$ is called a \emph{basic cycle} of $G$ if one of the followings holds:
\begin{itemize}
\item[$(i)$] $n=4$ and contains two adjacent vertices of degree two in $G$,
\item[$(ii)$] $n=5$ and contains no two adjacent vertices of degree three or more in $G$,
\item[$(iii)$] $n=6$ or $7$ and if $x,y\in V(C)$ are two vertices such that
$\deg_G(x), \deg_G(y)\geq 3$, then $d_G(x,y)\geq 3$.
\end{itemize} 
We say that $G$ is in the class $\BW$, if its vertex set can be partitioned $V(G)=B\cup W$ such that $B$ consists of vertices of basic cycles of $G$ and basic cycles form a partition of $B$, and $W$ induces a weakly chordal graph in $G$.    
\end{definition}

\begin{theorem}\label{thm:reg=vim}
If $G\in \BW$, then $\reg(G)=\vim(G)$.
\end{theorem}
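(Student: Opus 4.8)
The plan is to prove $\reg(G)=\vim(G)$ for $G\in\BW$ by induction on the number of basic cycles in a fixed partition $V(G)=B\cup W$. First I would dispose of the base case: if there are no basic cycles then $B=\emptyset$ and $G=G[W]$ is weakly chordal, so $\reg(G)=\im(G)$ by the result of Woodroofe cited in the excerpt, and since $\im(G)\leq\vim(G)\leq\reg(G)$ always (Theorem~\ref{thm:im-vim-reg}), equality propagates. So it suffices to find, for each basic cycle $C$, a way to ``remove'' $C$ while reducing the regularity by a controlled amount and simultaneously exhibiting a mate whose induced matching number drops by the same amount; then the induction closes because $\reg=\vim$ on the smaller graph forces $\reg=\vim$ on $G$.

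The heart of the argument is a local analysis of the three types of basic cycles, using the machinery developed in Section~\ref{sect:contr-expan}. For a basic $4$-cycle $C$ with two adjacent degree-two vertices, I would invoke Proposition~\ref{prop:deg2mate} (the $ad\in E(G)$ branch, since a $4$-cycle gives a path $a,b,c,d$ with $d_G(b)=d_G(c)=2$ and $ad\in E$): this produces a mate of $G$ isomorphic to $G'\cup K_2$ where $G'=G-\{a,b,c,d\}$. One checks $G'\in\BW$ (the remaining basic cycles are untouched, and the vertices possibly freed into $W$ still induce a weakly chordal graph — this needs the structural hypothesis on the basic cycle), and by Corollary~\ref{cor:reg-mate} together with additivity of regularity over components, $\reg(G)=\reg(G')+1$; by induction $\reg(G')=\vim(G')$, and $\vim(G\cup K_2)\geq\vim(G)+\vim(K_2)$ (Corollary~\ref{cor:vim-add}) pins down $\vim(G)\geq\vim(G')+1=\reg(G)$, hence equality. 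For a basic $5$-cycle, the model computation is exactly Proposition~\ref{prop:reg-cycles}: a sequence of $\tp$-expansion and two $\tp$-contractions on the cycle vertices (using that the $5$-cycle has no two adjacent vertices of degree $\geq 3$, so the expansions and contractions stay ``true'') converts $C$ into a $K_2$ plus the rest of the graph; I would redo that local calculation but now keeping the rest of $G$ attached, verifying that each $\tp$-pair used really is a $\tp$-pair in the larger graph (this is where the degree constraints in the definition of basic cycle are used — they guarantee the required dominated-neighbour vertex $u$ with $N_G[u]\subseteq N_G[x]\cup N_G[y]$). For basic $6$- and $7$-cycles, the distance condition $d_G(x,y)\geq 3$ for high-degree vertices again guarantees enough local degree-two structure; I expect to reduce a $6$-cycle or $7$-cycle to a shorter cycle or a path plus a $K_2$ factor, analogously to the reduction $C_n\rightsquigarrow C_{n-3}\cup K_2$ in Proposition~\ref{prop:reg-cycles}, peeling off one matching edge.

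The main obstacle I anticipate is the bookkeeping to confirm that after each reduction the resulting graph is still a member of $\BW$ with strictly fewer basic cycles: when a basic cycle is removed, some of its neighbours in $W$ (or in adjacent blocks) lose edges, and one must argue the modified $W$-part remains weakly chordal and that no basic cycle is inadvertently destroyed or created. Equivalently, the delicate point is checking at each step that the particular pair being contracted/expanded is genuinely a $\tp$-pair (resp.\ the pairing is a $\tp$-pairing) \emph{in the ambient graph $G$}, not merely in the cycle $C$ in isolation — the three clauses in the definition of basic cycle are tailored precisely so that the needed witness vertex $u$ (with its closed neighbourhood contained in $N_G[x]\cup N_G[y]$) exists, so the verification amounts to a short case check per cycle length. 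A secondary, lighter obstacle is handling the interaction between $B$ and $W$ at the ``attachment'' vertices of a basic cycle, i.e.\ the vertices of $C$ of degree $\geq 3$; because the basic-cycle conditions force these to be few and spread out, each basic cycle contributes a clean $+1$ to both $\reg$ and $\vim$, and the induction goes through. Once all basic cycles are peeled off, we land in the weakly chordal base case and conclude $\reg(G)=\vim(G)$.
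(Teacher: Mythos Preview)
Your approach is essentially the paper's, with one organizational simplification you are missing: rather than devising a separate sequence of $\tp$-moves for each cycle length, the paper invokes \emph{both} branches of Proposition~\ref{prop:deg2mate} uniformly. In any basic cycle $L$ one selects a $4$-path $a,b,c,d\subseteq L$ with $\deg_G(b)=\deg_G(c)=2$ (and, if $L$ has a vertex of degree $\geq 3$, arranges $d$ to be such a vertex). The $ad\in E$ branch handles $n=4$; the $ad\notin E$ branch---which is precisely the ``expand once, contract twice'' routine you sketch for $C_5$---handles $n=5,6,7$ in one stroke, yielding the mate $\fp((G-\{b,c\});da)\cup K_2$. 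For $n=5,6$ the surviving vertices of $L$ (an edge, respectively a triangle, hung on the merged vertex) are absorbed into $W'$; for $n=7$ the four survivors form a basic $4$-cycle that stays in $B'$. So the case analysis you plan is unnecessary once you recognise that Proposition~\ref{prop:deg2mate} already packages the needed $\tp$-sequence.

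One genuine correction: your induction variable is wrong. In the $n=7$ case the number of basic cycles does not drop, since the $7$-cycle becomes a basic $4$-cycle. Induct on $|V(G)|$ (as the paper does) or on $|B|$ instead; either strictly decreases at every step.
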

\begin{proof}
We proceed by an induction on the order of $G$. We first note that if $B=\emptyset$, then $\reg(G)=\vim(G)=\im(G)$ so that we may assume $B\neq \emptyset$. Let $L$ be the set of vertices of a basic cycle, say of length $n\geq 4$.
If every vertex in $L$ has degree two in $G$, we consider any $4$-path (not necessarily induced) on $\{a,b,c,d\}\subseteq L$.
Otherwise, $L$ contains at least one vertex, say $d$ of degree at least three. In such case, we consider a $4$-path
on $\{a,b,c,d\}$ in $L$ such that the adjacent vertices $b$ and $c$ are of degree two in $G$.  If $ad\in E(G)$,
that is, $n=4$, then $G-\{a,b,c,d\}\cup K_2$ is a mate of $G$ by Proposition~\ref{prop:deg2mate}.
Therefore, if we define $B':=B\setminus L$ and $W':=W\cup (L-\{a,b,c,d\})\cup V(K_2)$, then the
graph $G-\{a,b,c,d\}\cup K_2$ belongs to the class $\BW$ with the partition $V(G-\{a,b,c,d\}\cup K_2)=B'\cup W'$
so that $\reg(G)=\reg(G-\{a,b,c,d\})+1=\vim(G-\{a,b,c,d\})+1=\vim(G)$.

We may therefore assume that $ad\notin E(G)$, that is, $n\geq 5$. Note that the graph $\fp((G-\{b,c\});da)\cup K_2$ is a mate of $G$. We examine two cases separately:

{\it Case 1.} $n=5$ or $6$. If we define $B':=B\setminus L$ and $W':=W\cup (L-\{a,b,c\})\cup V(K_2)$,
then the decomposition $V(\fp((G-\{b,c\});da)\cup K_2)=B'\cup W'$ satisfies the required conditions for which
$\fp((G-\{b,c\});da)\cup K_2\in \BW$.

{\it Case 2.} $n=7$. In this case, we note that $L$ can contain at most one other vertex of degree three or more in $G$.
If there exists such a vertex, we may assume without loss of generality that this vertex is $a$.
Therefore, if we set $B':=B-\{a,b,c\}$ and $W':=W\cup V(K_2)$, the resulting decomposition $V(\fp((G-\{b,c\});da)\cup K_2)=B'\cup W'$ satisfies the required conditions for which $\fp((G-\{b,c\});da)\cup K_2\in \BW$.

In any case we conclude that $\reg(G)=\reg(\fp((G-\{b,c\});da)\cup K_2)$ so that the claim follows from the induction.
\end{proof}

The followings are direct consequence of Theorem~\ref{thm:reg=vim}.

\begin{corollary}
If $G$ is a well-covered block-cactus graph, then $\reg(G)=\vim(G)$.
\end{corollary}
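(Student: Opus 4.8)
The plan is to derive this corollary directly from Theorem~\ref{thm:reg=vim} by showing that every well-covered block-cactus graph lies in the class $\BW$. So first I would recall that in a block-cactus graph, each block is either a clique or a cycle, and that the well-coveredness hypothesis severely restricts which cycles can appear and how they attach to the rest of the graph. The key structural input is a classification of well-covered block-cactus graphs (due to the literature cited as~\cite{RLV}): in such a graph every cycle-block has length in $\{3,4,5,7\}$, and moreover the cut-vertices lying on a cycle-block must be spaced out in the precise way demanded by the definition of a basic cycle.

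The main step is to partition $V(G)=B\cup W$ as required. I would take $B$ to be the union of the vertex sets of all cycle-blocks of length $4$, $5$ or $7$, and $W$ to be everything else. Then I must check two things. First, that $W$ induces a weakly chordal graph: the blocks of $G[W]$ are cliques and triangles (triangles being $C_3$, which is a clique anyway), hence $G[W]$ is chordal, hence weakly chordal. (One should be a little careful that deleting the long cycle-blocks does not create induced long cycles or their complements in $G[W]$, but since $G[W]$ is again a block graph whose blocks are all cliques, it is chordal.) Second, that each chosen cycle-block $C$ is a \emph{basic cycle} in the sense of the definition: here is where well-coveredness does the work. For $n=4$: a well-covered graph cannot have a $4$-cycle with three or more vertices of degree $\geq 3$ essentially because one could build a maximal independent set missing the required size — more concretely, the classification forces at least two adjacent degree-two vertices on any $C_4$-block. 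For $n=5$: a $C_5$-block in a well-covered graph cannot have two adjacent vertices that are both cut-vertices (again by the classification/well-coveredness), which is exactly condition $(ii)$. For $n=7$: the cut-vertices on a $C_7$-block must be at pairwise distance $\geq 3$, which is condition $(iii)$; note $C_6$-blocks simply do not occur in well-covered block-cactus graphs, so that part of $(iii)$ is vacuous here.

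Having placed $G$ in $\BW$, the corollary is immediate: $\reg(G)=\vim(G)$ by Theorem~\ref{thm:reg=vim}. I would write this up as:

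\begin{proof}
Let $G$ be a well-covered block-cactus graph. By the structure theory of well-covered block-cactus graphs~\cite{RLV}, every cycle-block of $G$ has length in $\{3,4,5,7\}$; moreover, any $C_4$-block contains two adjacent vertices of degree two in $G$, any $C_5$-block contains no two adjacent vertices of degree three or more, and on any $C_7$-block the vertices of degree at least three are at pairwise distance at least $3$. Let $B$ be the union of the vertex sets of all cycle-blocks of length $4$, $5$ or $7$, and let $W:=V(G)\setminus B$. Then the cycle-blocks forming $B$ are pairwise vertex-disjoint and each is a basic cycle of $G$ by the conditions just listed (the case $n=6$ in the definition being vacuous). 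On the other hand, $G[W]$ is again a block graph all of whose blocks are cliques, hence $G[W]$ is chordal and in particular weakly chordal. Therefore $G\in \BW$ with respect to the partition $V(G)=B\cup W$, and Theorem~\ref{thm:reg=vim} gives $\reg(G)=\vim(G)$.
\end{proof}

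The one genuine obstacle is pinning down the precise statement of the well-covered block-cactus classification and making sure the three basic-cycle conditions really follow from it; if the cited reference states it in a slightly different normal form, a short lemma translating that normal form into conditions $(i)$–$(iii)$ would be needed, but this is routine given that well-coveredness of $G$ restricts each block and the attachment pattern of cut-vertices.
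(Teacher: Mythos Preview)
Your approach is correct and matches the paper's: the corollary is listed there as a direct consequence of Theorem~\ref{thm:reg=vim} with no further argument, so the content is exactly to place well-covered block-cactus graphs in the class $\BW$, which you do via the natural partition into long cycle-blocks and the rest. One detail you assert but do not justify is that the cycle-blocks of length $\geq 4$ are pairwise \emph{vertex-disjoint} (needed so that they partition $B$); this does follow from well-coveredness---two such blocks sharing a cut-vertex force maximal independent sets of different sizes---but it is not among the three degree conditions you extract from~\cite{RLV}, so it deserves a line of its own.
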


\begin{corollary}
If $G$ is a Cohen-Macaulay graph of girth at least five, then $\reg(G)=\vim(G)$.
\end{corollary}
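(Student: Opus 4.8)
The plan is to deduce this from the immediately preceding corollary, which already gives $\reg(G)=\vim(G)$ for every well-covered block-cactus graph; so it suffices to show that a Cohen-Macaulay graph $G$ of girth at least five is of that form. First I would recall the standard fact that a Cohen-Macaulay graph is unmixed: every minimal vertex cover, equivalently every maximal independent set, has the same cardinality, so $G$ is well-covered.

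Next I would invoke the structural description of Cohen-Macaulay graphs of girth at least five found in~\cite{BC,HMT}. Since $G$ has girth at least five it contains no triangle and no induced $C_4$, so every $2$-connected block of $G$ is an induced cycle of length at least five; the Cohen-Macaulay hypothesis then forces $G$ to be a block-cactus graph, i.e.\ each block of $G$ is a clique (necessarily $K_1$ or $K_2$ here) or a cycle. Combining the two steps, $G$ is a well-covered block-cactus graph, and the assertion follows directly from the preceding corollary, which in turn rests on Theorem~\ref{thm:reg=vim}.

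The only substantive ingredient is the classification identifying Cohen-Macaulay graphs of girth at least five with (well-covered) block-cactus graphs; the rest is bookkeeping. If one prefers to avoid that citation, the alternative is to verify directly that such a $G$ lies in the class $\BW$: its blocks are $K_2$'s, which can be absorbed into the weakly chordal part $W$, together with induced cycles $C_n$ with $n\ge 5$, and one would check that each such cycle is a basic cycle in the sense of the definition preceding Theorem~\ref{thm:reg=vim} (in particular that no induced cycle of length $\ge 8$ can occur, since such a cycle is neither basic nor weakly chordal). The delicate point along that route is the degree bookkeeping for an induced $C_7$, where being a basic cycle requires any two vertices of degree $\ge 3$ to be at distance at least three in $G$; appealing to the known classification makes this unnecessary.
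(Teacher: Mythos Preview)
Your approach matches the paper's: the paper states this corollary (along with the preceding one) as a direct consequence of Theorem~\ref{thm:reg=vim}, and in the text before the definition of $\BW$ it explicitly notes that the class of well-covered block-cactus graphs ``includes any Cohen-Macaulay graph with girth at least five,'' citing \cite{BC,HMT}. So your reduction via the preceding corollary is exactly the intended route.

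One sentence in your write-up is misleading, though. You write that since $G$ has girth at least five, ``every $2$-connected block of $G$ is an induced cycle of length at least five.'' Girth at least five by itself does \emph{not} force every $2$-connected block to be a cycle; the Petersen graph is a $3$-connected graph of girth five that is certainly not a cycle. The block-cactus structure is a consequence of the Cohen-Macaulay classification in \cite{BC,HMT}, not of the girth condition alone. You do invoke that classification in the next clause, so the argument is salvageable, but the sentence as written attributes the conclusion to the wrong hypothesis. Rephrase so that the structural result from \cite{BC,HMT} is what delivers the block-cactus conclusion; the girth hypothesis only tells you which blocks are \emph{possible} (edges or cycles of length $\ge 5$), not that the blocks actually have that form.
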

\section{Edge subdivisions and contractions on graphs}\label{sect:edge-subdiv}
A special case of Lozin transformation corresponds to an edge subdivision on a graph with respect to a chosen edge. We next provide a detail analyses on the effect of edge subdivisions of particular length to the regularity.
Moreover, we prove that the regularity of the graph obtained by a double edge subdivision equals to one more than the regularity of the graph resulting from the contraction of that edge.

Let $G=(V,E)$ be a graph, and let $e=xy\in E$ be an edge. An $n$-\emph{edge subdivision} on the edge $e$ for some $n\geq 1$ corresponds to replacing the edge $e$ by a path of length $n+1$, that is, we transform the graph $G$ to the graph $(G;e)_n$,
where $V((G;e)_n):=V\cup \{u_0,u_1,\ldots,u_{n-1}\}$ and $E((G;e)_n)=(E\setminus \{xy\})\cup \{xu_0,u_0u_1,\ldots, u_{n-1}y\}$. We only consider the cases where $n=2,3$. When $n=2,3$, we call the corresponding operations as the \emph{double} and \emph{triple} edge subdivisions, and denote the resulting graphs by $\DE(G;e)$ and $\LE(G;e)$ respectively. In particular, we denote by $\DE(G)$, the graph obtained from $G$ by applying double edge-subdivision to every edge of $G$. 

We remark that any triple edge subdivision on an edge $e=xy$ can be considered as a Lozin transformation by taking one of the sets in the partition of $N_G(x)$ as a singleton. In other words, we may consider the operation $\LE_x(G;\{y\},N_G(x)\backslash \{y\})$ for some $y\in N_G(x)$. Observe that the resulting graph can be obtained from $G$ by replacing the edge $xy$ in $G$ by a path $x-x_0-x_1-x_2-y$. Following Theorem~\ref{thm:lozin+reg}, we may readily describe the effect of a triple subdivision on regularity.

\begin{corollary}\label{cor:lozin+edgesubdiv}
Let $G$ be a graph and let $e=xy$ be an edge. Then $\reg(\LE(G;e))=\reg(G)+1$. 
\end{corollary}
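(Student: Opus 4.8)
The plan is to recognize a triple edge subdivision on $e=xy$ as a Lozin transformation and invoke Theorem~\ref{thm:lozin+reg}. The key observation, already spelled out in the discussion preceding the statement, is that $\LE(G;e)$ is obtained from $G$ by replacing the edge $xy$ with the path $x-x_0-x_1-x_2-y$; equivalently, if we partition $N_G(x)=\{y\}\cup (N_G(x)\setminus\{y\})$ and form the Lozin transform $\LE_x(G;\{y\},N_G(x)\setminus\{y\})$, the resulting graph is isomorphic to $\LE(G;e)$. Indeed, in $\LE_x(G;\{y\},N_G(x)\setminus\{y\})$ the vertex $x$ is deleted, a path $(\{p,a,b,q\},\{pa,ab,bq\})$ is attached, the new vertex $p$ is joined to $y$ (the single member of the first block), and $q$ is joined to every vertex of $N_G(x)\setminus\{y\}$. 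Relabeling $q$ as $x$, $b$ as $x_0$, $a$ as $x_1$, and $p$ as $x_2$, we obtain precisely the path $x-x_0-x_1-x_2-y$ spliced in place of the edge $xy$, so $\LE(G;e)\cong \LE_x(G;\{y\},N_G(x)\setminus\{y\})$.

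First I would make this isomorphism explicit as above, being careful to check that no edges are lost or gained: the only edge of $G$ incident to $x$ that disappears is $xy$ (all other edges $xv$ with $v\in N_G(x)\setminus\{y\}$ are reincarnated as $qv = xv$ after relabeling), and the new edges are exactly those of the subdividing path. Then I would apply Theorem~\ref{thm:lozin+reg} directly to the vertex $x$ and the chosen partition of $N_G(x)$, which gives $\reg(\LE_x(G))=\reg(G)+1$; combined with the isomorphism and the fact that regularity is an isomorphism invariant, this yields $\reg(\LE(G;e))=\reg(G)+1$.

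One small point worth mentioning is the degenerate situation $N_G(x)=\{y\}$, i.e. $x$ has degree one in $G$. Then the partition places everything in the first block and the second block is empty, so this is a \emph{trivial} Lozin operation in the terminology of Section~\ref{sect:lozin}. Theorem~\ref{thm:lozin+reg} is stated for arbitrary $x\in V$ with no restriction on the partition, so it still applies; alternatively one checks this case by hand, since subdividing a pendant edge three times simply replaces a $K_2$-component-of-the-pendant structure by a $P_5$-tail, and $\reg$ increases by exactly one. Either way the statement holds with no extra hypotheses.

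\textbf{Main obstacle.} There is essentially no obstacle here: the entire content has been front-loaded into Theorem~\ref{thm:lozin+reg}, and the corollary is a bookkeeping exercise in matching up the combinatorial description of a triple subdivision with the definition of $\LE_x(G;Y,Z)$. The only thing requiring (mild) care is writing down the vertex relabeling correctly and confirming that the two graphs are genuinely isomorphic rather than merely ``the same up to obvious identifications,'' so that one is entitled to transport the regularity computation across. This is the step I would present in full detail, brief as it is, with the appeal to Theorem~\ref{thm:lozin+reg} being a one-line finish.
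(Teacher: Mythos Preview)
Your proposal is correct and matches the paper's approach exactly: the paper treats this corollary as an immediate consequence of Theorem~\ref{thm:lozin+reg}, having already noted in the preceding paragraph that a triple subdivision on $e=xy$ is precisely the Lozin transform $\LE_x(G;\{y\},N_G(x)\setminus\{y\})$. Your relabeling argument making the isomorphism explicit, and your handling of the degenerate degree-one case, are more detailed than what the paper spells out but entirely in the same spirit.
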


As opposed to a triple edge subdivision, a double edge subdivision on a graph may not need to increase the regularity. For instance, if we consider $G=C_5$, then $\DE(C_5;e)\cong C_7$ so that $\reg(C_5)=\reg(\DE(C_5;e))=2$ for any edge $e\in E(C_5)$ (see also Remark~\ref{rem:double-reg}). On the other hand, it is not difficult to prove in general that an $n$-edge subdivision is a monotone increasing operation on the regularity of graphs for any $n\geq 2$. Similarly, such an operation is also monotone increasing with respect to the induced matching and cochordal cover numbers. It is less obvious that a triple edge subdivision is strictly increasing on the cochordal cover number.

\begin{lemma}\label{lem:triple-cd}
Let $G$ be a graph and let $e=xy$ be an edge. Then $\cd(G)+2\geq \cd(\LE(G;e))>\cd(G)$. 
\end{lemma}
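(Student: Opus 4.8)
The inequality $\cd(\LE(G;e)) > \cd(G)$ follows from monotonicity: since $\LE(G;e)$ contains $G$ as an induced minor in a suitable sense, and more precisely the triple subdivision strictly increases the cochordal cover number — this direction I would prove by exhibiting, from a cochordal cover of $\LE(G;e)$ of size $k$, a cochordal cover of $G$ of size $k-1$, using the fact that the three new vertices $x_0,x_1,x_2$ on the subdivided edge must involve at least one extra cochordal subgraph. So the genuine content is the upper bound $\cd(\LE(G;e)) \le \cd(G)+2$, and this is what I would concentrate on.

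\textbf{Constructing the cover.} Suppose $\cd(G) = k$, so $E(G)$ can be partitioned (or covered) as $E(G) = E(H_1) \cup \cdots \cup E(H_k)$ where each $H_i$ is a cochordal subgraph of $G$ on $V(G)$. The edge $e = xy$ lies in some $E(H_j)$. In $\LE(G;e)$ the edge $e$ is replaced by the path $x - x_0 - x_1 - x_2 - y$. The plan is: keep $H_i$ for $i \ne j$ unchanged (viewing each as a subgraph of $\LE(G;e)$ on the enlarged vertex set, with the new vertices isolated), modify $H_j$ by deleting the edge $xy$ and adding the edges $xx_0$ and $x_2y$, and then introduce one or two new cochordal subgraphs to carry the remaining path-edge(s) $x_0x_1$ and $x_1x_2$. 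Concretely, let $H_j'$ be $H_j$ with $xy$ removed and $\{xx_0, x_2y\}$ added; since $x_0$ has degree one and $x_2$ has degree one in $H_j'$ (they attach only to $x$ and $y$ respectively), adding a pendant vertex to a cochordal graph keeps it cochordal — a pendant vertex in $G$ corresponds to a dominated vertex, and in the complement it becomes a vertex adjacent to all but one, which does not create an induced cycle of length $\ge 4$. Hence $H_j'$ is cochordal. Then let $H_{k+1}$ be the single edge $x_0x_1$ and $H_{k+2}$ the single edge $x_1x_2$ (each trivially cochordal), giving a cover of size $k+2$. This establishes $\cd(\LE(G;e)) \le \cd(G)+2$.

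\textbf{The main obstacle and a sharper attempt.} The easy construction above already gives the $+2$ bound, so the real subtlety — if the authors want the cleanest argument — is verifying the strict inequality $\cd(\LE(G;e)) > \cd(G)$ rigorously. The difficulty is that "induced subgraph/minor monotonicity" of $\cd$ is not completely formal: one must argue that no cochordal cover of $\LE(G;e)$ of size exactly $\cd(G)$ can exist. The cleanest route is to combine the already-proved inequalities $\im \le \reg \le \cd$ with Corollary~\ref{cor:lozin+edgesubdiv}, which gives $\reg(\LE(G;e)) = \reg(G)+1 \le \cd(\LE(G;e))$ and $\cd(G) \ge \reg(G) = \reg(\LE(G;e)) - 1$; but this only yields $\cd(\LE(G;e)) \ge \reg(G)+1$, not strict separation from $\cd(G)$ itself. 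Instead I would argue directly: given a cochordal cover $\{F_1,\dots,F_m\}$ of $\LE(G;e)$, the path-edges $x_0x_1$ and $x_1x_2$ lie in members of the cover; consider the member $F_\ell$ containing $x_0x_1$ — in $F_\ell$, restricting back to $G$ and "healing" the path (contracting $x_0,x_1,x_2$ back into the edge $xy$, or reassigning) shows that at least one member of the cover becomes redundant on $G$, so $\cd(G) \le m-1$. Carrying out this reassignment carefully — ensuring cochordality is preserved under the contraction of the subdivision path, which is the potentially fiddly step — is where the actual work lies; I expect it to reduce to checking that contracting a pendant-attached $P_3$ inside a cochordal graph yields a cochordal graph, which is a short case analysis on induced cycles in the complement.
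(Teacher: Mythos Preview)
Your construction for the upper bound contains a genuine error. The graph $H_j'$---obtained from $H_j$ by deleting $xy$ and attaching the pendants $x_0$ (to $x$) and $x_2$ (to $y$)---is \emph{not} cochordal: the four vertices $\{x,x_0,y,x_2\}$ carry exactly the two disjoint edges $xx_0$ and $x_2y$, so they induce a $2K_2=\overline{C_4}$. Your pendant argument only addresses \emph{adding} a leaf to a cochordal graph; it says nothing about the removal of the edge $xy$, and it is precisely that removal that creates the induced $2K_2$. A correct repair is easy once you see this: replace $H_j$ by its induced subgraph $H_j-y$ (still cochordal), and take as the two extra pieces the star at $y$ in $\LE(G;e)$ (which covers $x_2y$ together with all remaining $G$-edges at $y$) and the path $P_4$ on $\{x,x_0,x_1,x_2\}$ (cochordal, covering the other three new edges). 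The paper in fact declares this direction obvious and devotes its proof entirely to the strict inequality---so your assessment of where the content lies is inverted.

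For $\cd(\LE(G;e))>\cd(G)$ your plan---start from a cochordal cover $\{H_1,\dots,H_k\}$ of $\LE(G;e)$ and extract one of $G$ of size $k-1$---is exactly the paper's, but you stop short of the argument. The paper's key first step is the very $2K_2$ observation that sank your upper bound: since a cochordal graph contains no induced $2K_2$, the edges $xx_0$ and $x_2y$ must lie in \emph{different} members $H_i$ and $H_j$ of the cover. From there the paper runs a short three-case analysis on which members contain the middle edges $x_0x_1$ and $x_1x_2$; in each case one modifies one or two of the $H_r$'s by swapping path edges for the single edge $xy$ and discards a now-redundant member, yielding a cochordal cover of $G$ of size $k-1$. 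Your ``contracting a pendant-attached $P_3$ inside a cochordal graph'' heuristic gestures at this but is not the mechanism; the argument is organised around which cover members carry which of the four path edges, not around a single contraction move.
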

\begin{proof}
Since the first inequality is obvious, we prove the second. Suppose that $\HE=\{H_1,\ldots,H_k\}$ is a cochordal cover of $\LE(G;e)$. Then there exist $i,j\in [k]$ with $i\neq j$ such that $xx_0\in E(H_i)$ and $x_2y\in E(H_j)$. 

{\it Case $1$.} There exists $t\in [k]\setminus \{i,j\}$ such that $x_0x_1\in E(H_t)$ (or similarly $x_1x_2\in E(H_t)$).
In such a case, we may assume that $x_1x_2\in E(H_t)$. If we define $E(H'_i):=(E(H_i)\setminus \{xx_0\})\cup \{xy\}$ and
$E(H'_j):=E(H_j)\setminus \{x_2y\}$, then $\{H'_i,H'_j\}\cup \{H_r\colon r\in [k]\setminus \{i,j,t\}\}$ is a cochordal cover of $G$.

{\it Case $2$.} $x_0x_1, x_1x_2\in E(H_i)$. If we define $E(H'_j):=(E(H_j)\setminus \{x_2y\})\cup \{xy\}$, then
$\{H'_j,H_r\colon r\in [k]\setminus \{i,j\}\}$ is a cochordal cover of $G$.

{\it Case $3$.} $x_0x_1\in E(H_i)$. By Cases $1$ and $2$, this forces $x_1x_2\in E(H_j)$. Now, if we define
$E(H_0):=(E(H_i)\setminus \{xx_0, x_0x_1\}\cup E(H_j)\setminus \{x_1x_2,x_2y\})\cup \{xy\}$, then
$\{H_0,H_r\colon r\in [k]\setminus \{i,j\}\}$ is a cochordal cover of $G$.
\end{proof} 

We note that the first inequality of Lemma~\ref{lem:triple-cd} could be tight. As an example, we state that
$\cd(\LE(C_4;e))=3>1=\cd(C_4)$ for any edge $e\in E(C_4)$.

\begin{proposition}\label{prop:double-all}
If $G=(V,E)$ is a graph with $E\neq \emptyset$, then $\im(\DE(G))=\reg(\DE(G))=\cd(\DE(G))=|E|$.
\end{proposition}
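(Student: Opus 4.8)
The plan is to establish the chain of equalities by exploiting the general inequalities $\im(H)\le\reg(H)\le\cd(H)$ (valid for any graph $H$, as recalled in the introduction) together with a direct combinatorial construction on $\DE(G)$. It therefore suffices to exhibit an induced matching in $\DE(G)$ of size $|E|$ (giving $\im(\DE(G))\ge|E|$) and a cochordal cover of $\DE(G)$ of size $|E|$ (giving $\cd(\DE(G))\le|E|$); squeezing these bounds forces all four quantities to equal $|E|$.

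First I would fix notation: for each edge $e=xy\in E$ the double subdivision replaces $e$ by a path $x-u_0^e-u_1^e-y$, so $\DE(G)$ has vertex set $V\cup\{u_0^e,u_1^e\colon e\in E\}$. The natural candidate induced matching is $M:=\{u_0^e u_1^e\colon e\in E\}$. I would check that $M$ is an induced matching: the middle vertices $u_0^e,u_1^e$ have degree exactly two in $\DE(G)$, their only neighbours outside the edge $u_0^eu_1^e$ are the original endpoints $x,y\in V$, and no two such middle edges are joined (the only vertices of $\DE(G)$ adjacent to a $u_i^e$ are either in $V$ or the partner $u_{1-i}^e$). Since $|M|=|E|$, this yields $\im(\DE(G))\ge|E|$.

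Next I would construct a cochordal cover of size $|E|$. For each edge $e=xy$ define the subgraph $H_e$ of $\DE(G)$ on the four vertices $\{x,u_0^e,u_1^e,y\}$ consisting of the path $x-u_0^e-u_1^e-y$ together with enough extra edges to make $\overline{H_e}$ chordal — in fact $P_4$ itself is already cochordal (its complement is again a $P_4$, which is chordal), so I can simply take $H_e$ to be that induced $P_4$, or if a spanning-on-$V(\DE(G))$ formulation is needed, add all vertices of $\DE(G)$ as isolated vertices. Then $\{H_e\colon e\in E\}$ covers every edge of $\DE(G)$: each original edge $xy$ has been destroyed and the three new edges $xu_0^e$, $u_0^eu_1^e$, $u_1^ey$ all lie in $H_e$. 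Hence $\cd(\DE(G))\le|E|$.

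Combining, $|E|\le\im(\DE(G))\le\reg(\DE(G))\le\cd(\DE(G))\le|E|$, so equality holds throughout. The only subtlety — and the step I would be most careful about — is the bookkeeping in the cochordal cover: I must confirm that $E\neq\emptyset$ is genuinely needed (if $E=\emptyset$ then $\DE(G)=G$ is edgeless and all three invariants are $0$, not $|E|$ — although here $|E|=0$ too, so the statement is vacuously fine; the hypothesis $E\ne\emptyset$ is really there to make the cover nonempty and to match conventions) and that no edge of $\DE(G)$ is left uncovered, i.e. that the decomposition $E(\DE(G))=\bigsqcup_{e\in E}E(H_e)$ is exact. This is immediate from the description of $\DE(G)$, so no real obstacle arises; the argument is essentially a sandwich estimate once the two constructions are in place.
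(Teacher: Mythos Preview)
Your proof is correct and follows essentially the same approach as the paper: both take the set of middle edges $\{u_0^eu_1^e : e\in E\}$ as the induced matching of size $|E|$, and both use the induced $P_4$'s on $\{x,u_0^e,u_1^e,y\}$ as the cochordal cover (the paper phrases each $H_f$ as ``the edge $f$ together with all edges incident to $f$'', but in $\DE(G)$ this is exactly the $P_4$). The sandwich $|E|\le\im\le\reg\le\cd\le|E|$ then finishes in both cases.
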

\begin{proof}
If we denote by $M$, the set of middle edges of each $4$-path corresponding to the double edge subdivision of each edge,
then $M$ is clearly an induced matching of cardinality $|E|$. On the other hand, for each such edge $f$, if we denote the subgraph consisting of the edge $f$ and any edge incident to $f$ by $H_f$, then the collection $\{H_f\colon f\in M\}$
is a cochordal covering of $\DE(G)$ from which the claim follows.  
\end{proof}

Our next result interrelates the effect of a double edge subdivision on the regularity to that of an edge contraction in a graph.

\begin{lemma}\label{lem:double-contract}
If $e=uv$ is an edge of a graph $G$, then $\reg(D(G;e))=\reg(G/e)+1$.
\end{lemma}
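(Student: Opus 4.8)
The statement relates a double edge subdivision of $e=uv$ to the contraction $G/e$, so the natural strategy is to build a bridge graph that both $\DE(G;e)$ and $G/e$ relate to through the homotopy/regularity-preserving operations developed in the preceding sections. Concretely, $\DE(G;e)$ replaces the edge $uv$ by a path $u-p-q-v$, while $G/e$ fuses $u$ and $v$ to a single vertex $w$ whose neighbourhood is $N_G(u)\cup N_G(v)$. The plan is to identify the two internal vertices $p,q$ of the subdivided path as a $\gp$-pair (indeed a $\tp$-pair) in $\DE(G;e)$: since $N(p)=\{u,q\}$ and $N(q)=\{p,v\}$ and $uv\notin E(\DE(G;e))$, there is no induced $2K_2$ on $\{p,q\}$ together with two other vertices, so $\{p,q\}$ is a $\gp$-pair, and $q$ (or $p$) serves as a $\tp$-neighbour because its closed neighbourhood is contained in $N[p]\cup N[q]$. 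Contracting this $\tp$-pair, Theorem~\ref{thm:reg-contract} keeps the regularity unchanged, and the resulting graph is exactly $G$ with $e$ replaced by a path $u-r-v$ of length two (a single edge subdivision), where $r$ is the contracted vertex with $N(r)=\{u,v\}$.

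So after one $\tp$-contraction I am reduced to comparing $\reg\bigl((G;e)_1\bigr)$ with $\reg(G/e)+1$, where $(G;e)_1$ is the single subdivision $u-r-v$. Here $r$ is a degree-two vertex whose two neighbours $u,v$ are non-adjacent, so $\{u,v\}$ is a $\gp$-pair in $(G;e)_1$ (the path $u-r-v$ guarantees no induced $2K_2$ using $u$ and $v$, since every vertex adjacent to one of them sees $r$-side constraints; more carefully, any two vertices $a\in N(u)\setminus N(v)$, $b\in N(v)\setminus N(u)$ with $ab\notin E$ would give $2K_2$, so I must check this is impossible — which it is precisely because in $(G;e)_1$ we still could have such $a,b$). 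This is the delicate point: $\{u,v\}$ need not be a $\gp$-pair of $(G;e)_1$ in general. The cleaner route is to instead apply the $\gp$-expansion/contraction machinery in reverse: I want to show $(G;e)_1$ is a mate of $G/e \cup K_2$. Observe that in $G/e\cup K_2$, writing the $K_2$ as an edge $\alpha\beta$, one can perform a $\tp$-expansion at $w$ (the contracted vertex) with pairing $[N_G(u)\setminus N_G(v),\,N_G(v)\setminus N_G(u)]$ — this pairing is complete in $(G/e)-N[w]$ iff $\{u,v\}$ was a $\gp$-pair in $G$, which we do not have. So instead I use the $K_2$: perform a $\tp$-expansion of $w$ using $\alpha$ as part of the pairing. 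The correct bookkeeping — mirroring Proposition~\ref{prop:deg2mate} — shows $G/e\cup K_2$ and $(G;e)_1\cup K_2$ (equivalently $\DE(G;e)$ after the first $\tp$-contraction, with the leftover $K_2$ absorbed) are mates, whence $\reg$ agree by Corollary~\ref{cor:reg-mate}.

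Putting the pieces together: $\reg(\DE(G;e)) = \reg\bigl((G;e)_1\bigr)$ by the $\tp$-contraction of $\{p,q\}$ (Theorem~\ref{thm:reg-contract}), and $(G;e)_1$ is shown to be a mate of $G/e\cup K_2$ by an explicit sequence of $\tp$-expansions and $\tp$-contractions in the spirit of Proposition~\ref{prop:deg2mate}; then Corollary~\ref{cor:reg-mate} gives $\reg\bigl((G;e)_1\bigr)=\reg(G/e\cup K_2)=\reg(G/e)+1$, the last equality because regularity is additive over connected components and $\reg(K_2)=1$. The main obstacle I anticipate is the middle step: verifying that the chosen $\tp$-expansion at $w$ in $G/e\cup K_2$ is genuinely a $\tp$-expansion (i.e.\ that the pairing is complete in the relevant deleted-neighbourhood graph and that the designated vertex is a true neighbour) and that after the ensuing $\tp$-contractions one lands precisely on $(G;e)_1$ up to isomorphism — this requires careful tracking of which vertices of $G$ become adjacent to $x_w$ versus $y_w$, exactly as in the two phases of Proposition~\ref{prop:deg2mate}. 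Once that combinatorial identification is pinned down, the regularity statement is immediate from the preserved-regularity corollaries.
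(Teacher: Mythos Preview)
Your first step contains a fundamental confusion between two different notions of ``contraction'' in the paper. The vertices $p$ and $q$ are \emph{adjacent} in $\DE(G;e)$ --- they form the middle edge of the subdivided path $u\!-\!p\!-\!q\!-\!v$ --- so by definition they cannot be a $\gp$-pair (let alone a $\tp$-pair): a $\gp$-pair is required to consist of \emph{non-adjacent} vertices, because it corresponds to an edge of the independence complex $\Ind(G)$, not an edge of $G$. Theorem~\ref{thm:reg-contract} is therefore simply inapplicable to $\{p,q\}$. What you have in mind when you say ``contract $\{p,q\}$ to get $(G;e)_1$'' is the ordinary graph-theoretic edge contraction $G/e$, and the whole point of this lemma is to understand how \emph{that} operation interacts with regularity; you cannot assume it is regularity-preserving. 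Note further that even if $p,q$ were non-adjacent, the $\gp$-contraction creates a vertex adjacent to $N(p)\cap N(q)$, not $N(p)\cup N(q)$; with $N(p)=\{u,q\}$ and $N(q)=\{p,v\}$ one has $N(p)\cap N(q)=\emptyset$, so the contracted vertex would be isolated, not the degree-two vertex $r$ you want.

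The paper's route avoids this trap by going in the opposite direction first: it performs a $\tp$-\emph{expansion} at $u$ in $D:=\DE(G;e)$ with respect to the pairing $[\{v\},\,N_D(v)\setminus N_G(u)]$ (with the subdivision vertex $a$ as $\tp$-neighbour), then carries out two successive $\tp$-contractions (of $\{v,a\}$ and then $\{b,y_u\}$), arriving directly at $(G/e)\cup K_2$. No single-subdivision intermediate $(G;e)_1$ is needed, and no step requires $\{u,v\}$ to be a $\gp$-pair in any graph. Your middle paragraph already senses that difficulty --- you correctly worry that $\{u,v\}$ need not be a $\gp$-pair --- but the fix is not to patch the second half; it is that the first half must be replaced by an expansion-then-contract sequence along the lines of Proposition~\ref{prop:deg2mate}, applied directly to $\DE(G;e)$ rather than to $(G;e)_1$.
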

\begin{proof}
We begin by applying a $\tp$-expansion in the graph $D:=D(G;e)$ to the vertex $u$ with respect to the $\tp$-pairing 
$[\{v\},N_D(v)\setminus N_G(u)]$ having the $\tp$-neighbour the vertex $a$ (see Figure~\ref{cc}).
Observe that $\{v,a\}$ is a $\tp$-pair in the graph $\tp(D;u)$ with a $\tp$-neighbour $b$. Once we contract this
$\tp$-pair, in the resulting graph $\tp(\tp(D;u);va)$, we have that $\{c,y_u\}$ is a $\tp$-pair having the vertex $w_{va}$
as a $\tp$-neighbour, where $w_{va}$ is the vertex for which the pair $\{v,a\}$ is contracted. Finally,
the $\tp$-contraction of $\{b,y_u\}$ yields a graph isomorphic to $(G/e)\cup K_2$, where the isolated edge is induced by
the vertices $w_{va}$ and $w_{by_u}$. Therefore, the graph $D(G;e)$ is a mate of $(G/e)\cup K_2$, that is,
$\reg(D(G;e))=\reg(G/e)+1$.
\end{proof}

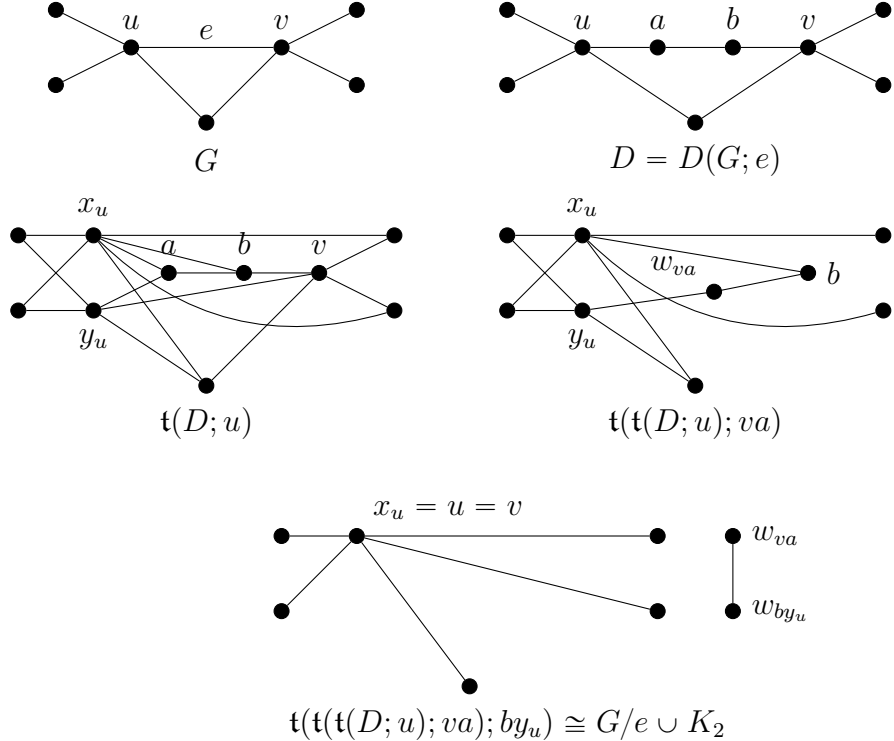
\begin{figure}[ht]
\begin{center}
\begin{tikzpicture}[scale=1]

\node [noddee] at (0,8) (v1)  {};
\node [noddee] at (-1,9) (v2) [label=above:$u$] {}
	edge [] (v1);
\node [noddee] at (1,9) (v3) [label=above:$v$] {}
	edge [] (v1)	
	edge [] (v2);
\node [noddee] at (-2,9.5) (v4) {}
	edge [] (v2);
\node [noddee] at (-2,8.5) (v5)  {}
	edge [] (v2);
\node [noddee] at (2,9.5) (v6)  {}
	edge [] (v3);
\node [noddee] at (2,8.5) (v7)  {}
	edge [] (v3);

\node  at (0,9.2) (v8)  {$e$};
\node  at (0,7.5) (v9)  {$G$};

\node [noddee] at (6.5,8) (t1)  {};
\node [noddee] at (5,9) (t2) [label=above:$u$] {}
	edge [] (t1);
\node [noddee] at (8,9) (t3) [label=above:$v$] {}
	edge [] (t1);
\node [noddee] at (4,9.5) (t4) {}
	edge [] (t2);
\node [noddee] at (4,8.5) (t5)  {}
	edge [] (t2);
\node [noddee] at (9,9.5) (t6)  {}
	edge [] (t3);
\node [noddee] at (9,8.5) (t7)  {}
	edge [] (t3);
\node [noddee] at (6,9) (t8) [label=above:$a$] {}
	edge [] (t2);
\node [noddee] at (7,9) (t9) [label=above:$b$] {}
	edge [] (t3)
	edge [] (t8);

\node  at (6.5,7.5) (t10)  {$D\equal D(G;e)$};

\node [noddee] at (0,4.5) (r1)  {};
\node [noddee] at (1.5,6) (r3) [label=above:$v$] {}
	edge [] (r1);
\node [noddee] at (-2.5,6.5) (r4) {};
\node [noddee] at (-2.5,5.5) (r5)  {};
\node [noddee] at (2.5,6.5) (r6)  {}
	edge [] (r3);
\node [noddee] at (2.5,5.5) (r7)  {}
	edge [] (r3);

\node [noddee] at (-0.5,6) (r8) [label=above:$a$] {};
\node [noddee] at (0.5,6) (r9) [label=above:$b$] {}
	edge [] (r3)
	edge [] (r8);

\node [noddee] at (-1.5,6.5) (r2) [label=above:$x_u$] {}
	edge [] (r1)
	edge [] (r4)
	edge [] (r5)
	edge [] (r6)
	edge [bend right] (r7)
	edge [] (r8)
	edge [] (r9);

\node [noddee] at (-1.5,5.5) (r2) [label=below:$y_u$] {}
	edge [] (r1)
	edge [] (r3)
	edge [] (r4)
	edge [] (r5)
	edge [] (r8);
\node  at (0,4) (r10)  {$\tp(D;u)$};

\node [noddee] at (6.5,4.5) (k1)  {};
\node [noddee] at (4,6.5) (k4) {};
\node [noddee] at (4,5.5) (k5)  {};
\node [noddee] at (9,6.5) (k6)  {};
\node [noddee] at (9,5.5) (k7)  {};
\node [noddee] at (8,6) (k9) [label=right:$b$] {};
\node [noddee] at (5,6.5) (k2) [label=above:$x_u$] {}
	edge [] (k1)
	edge [] (k4)
	edge [] (k5)
	edge [] (k6)
	edge [bend right] (k7)
	edge [] (k9);
\node [noddee] at (5,5.5) (k2) [label=below:$y_u$] {}
	edge [] (k1)
	edge [] (k4)
	edge [] (k5);
	
\node [noddee] at (6.75,5.75) (k3) [label=above left:$w_{va}$] {}
	edge [] (k2)
	edge [] (k9);
\node  at (6.5,4) (k10)  {$\tp(\tp(D;u);va)$};

\node [noddee] at (3.5,0.5) (p1)  {};
\node [noddee] at (1,2.5) (p4) {};
\node [noddee] at (1,1.5) (p5)  {};
\node [noddee] at (6,2.5) (p6)  {};
\node [noddee] at (6,1.5) (p7)  {};

\node [noddee] at (2,2.5) (p2) [label=above right:$x_u \equal u \equal v$] {}
	edge [] (p1)
	edge [] (p4)
	edge [] (p5)
	edge [] (p6)
	edge [] (p7);

\node [noddee] at (7,2.5) (p3) [label=right:$w_{va}$] {};

\node [noddee] at (7,1.5) (p2) [label=right:$w_{by_u}$] {}
	edge [] (p3);

\node  at (4,0) (p10)  {$\tp(\tp(\tp(D;u);va);by_u)\cong G \slash e \cup K_2$};

\end{tikzpicture}
\end{center}
\caption{The $\tp$-expansion and the following two $\tp$-contractions.}
\label{cc}
\end{figure}

\begin{lemma}\label{lem:double-reg}
If $e=uv$ is an edge of a graph $G$, then we have $\reg(G)\leq \reg(D(G;e))$.
\end{lemma}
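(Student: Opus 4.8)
The plan is to reduce the inequality to Lemma~\ref{lem:double-contract}, which identifies $\reg(D(G;e))$ with $\reg(G/e)+1$, by feeding it into the Dao--Huneke--Schweig dichotomy~\cite{DHS} (recalled just after Proposition~\ref{prop:induction-sc}) applied to an endpoint of $e$. The first thing I would record are two purely combinatorial facts about how vertex subsets sit inside the two modified graphs. Write $u,v$ for the endpoints of $e$ and $u-a-b-v$ for the inserted path, so $V(D(G;e))=V(G)\cup\{a,b\}$. Then $G-u$ is exactly the induced subgraph $D(G;e)[V(G)\setminus\{u\}]$: the only edge of $G$ destroyed in forming $D(G;e)$ is $e$ itself, which meets $u$ and is therefore absent anyway, while each of the new edges $ua,ab,bv$ meets $\{a,b\}$. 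Likewise $G-N_G[u]$ is exactly the induced subgraph $(G/e)[V(G)\setminus N_G[u]]$: since $v\in N_G(u)$, the set $V(G)\setminus N_G[u]$ contains neither $u$ nor $v$, hence makes sense in $G/e$; no vertex of it is adjacent to $u$ or $v$ in $G$; and the contraction only creates new edges at the merged vertex. By monotonicity of regularity under induced subgraphs (immediate from the Hochster-type formula recalled in the introduction) these give
\[
\reg(G-u)\le\reg(D(G;e))\qquad\text{and}\qquad\reg(G-N_G[u])\le\reg(G/e).
\]

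Next I would invoke the Dao--Huneke--Schweig theorem for the independence complex $\Ind(G)$ at the vertex $u$. Since $\del_{\Ind(G)}(u)=\Ind(G-u)$ and $\lk_{\Ind(G)}(u)=\Ind(G-N_G[u])$, that theorem says $\reg(G)$ equals $\reg(G-u)$ or $\reg(G-N_G[u])+1$. In the first case the left-hand displayed inequality gives $\reg(G)=\reg(G-u)\le\reg(D(G;e))$. In the second case the right-hand displayed inequality together with Lemma~\ref{lem:double-contract} gives $\reg(G)=\reg(G-N_G[u])+1\le\reg(G/e)+1=\reg(D(G;e))$. Either way the claim follows, with no induction needed.

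The argument is short, and the only steps demanding genuine care are the two ``induced subgraph'' identifications in the first paragraph, where one must keep exact track of which edges the double subdivision removes and adds and which edges the contraction adds; these are routine but easy to get wrong. If one wished to avoid Lemma~\ref{lem:double-contract} entirely, there is a more laborious self-contained route: reduce, via a minimal vertex set witnessing $\reg(G)$, to the case where $G$ is a prime graph, and then chain the Mayer--Vietoris sequences of $\Ind(D(G;e))$ at $a$, of $\Ind\!\big(G-(N_G[v]\setminus\{u\})\big)$ at $u$, and of $\Ind(G)$ at $v$, using primeness and Lemma~\ref{lem:edge-remove-2} to kill the connecting maps; there the genuinely delicate point is the degenerate subcase in which $e$ becomes an isolated $K_2$ of the chosen induced subgraph (so $\delta\ge 2$ fails), which has to be treated separately. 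The route through Lemma~\ref{lem:double-contract} is much cleaner, so that is the one I would take.
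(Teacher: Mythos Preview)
Your argument is correct, but it proceeds by a genuinely different route from the paper. The paper gives a self-contained homological proof that does \emph{not} invoke Lemma~\ref{lem:double-contract}: it picks a set $R\subseteq V(G)$ witnessing $\reg(G)=n$, disposes of the easy case $\{u,v\}\not\subseteq R$ via $D[R]\cong G[R]$, and in the remaining case runs the Mayer--Vietoris sequence of $(G[R],u)$ to find nonvanishing homology either in $G[R]-u\cong D[R]-u$ or in $G[R]-N_{G[R]}[u]$; in the latter case it adjoins the two new vertices $a,b$ to obtain a disjoint $K_2$ and hence a suspension, producing a witness for $\reg(D)\ge n$ directly inside $D$.

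Your approach trades that explicit construction for two short induced-subgraph observations ($G-u\hookrightarrow D(G;e)$ and $G-N_G[u]\hookrightarrow G/e$) plus the Dao--Huneke--Schweig dichotomy and the already-established Lemma~\ref{lem:double-contract}. This is slicker and avoids chasing homology by hand. The cost is a dependency on Lemma~\ref{lem:double-contract}: in the paper's ordering, Lemmas~\ref{lem:double-contract} and~\ref{lem:double-reg} are logically independent inputs that are then combined in Theorem~\ref{thm:contract-reg}, whereas in your version Lemma~\ref{lem:double-reg} becomes a corollary of Lemma~\ref{lem:double-contract}. Since there is no circularity (Lemma~\ref{lem:double-contract} is proved first and does not use Lemma~\ref{lem:double-reg}), this is a perfectly legitimate and arguably more economical organisation.
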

\begin{proof}
We write $D:=D(G;e)$. Suppose first that $\reg(G)=n$ and let $R\subseteq V(G)$ be a subset such that 
$\widetilde{H}_{n-1}(G[R])\neq 0$. If $x\notin R$ (or $y\notin R$), then $D[R]\cong G[R]$ so that
$\reg(D)\geq n$. So, we may assume that $x,y\in R$. We now consider the Mayer-Vietoris exact sequence of the pair
$(G[R],x)$:
\begin{equation*}
\cdots \to \widetilde{H}_{n-1}(G[R]-x)\to \widetilde{H}_{n-1}(G[R])\to \widetilde{H}_{n-2}(G[R]-N_{G[R]}[x])\to \cdots. 
\end{equation*}
If $\widetilde{H}_{n-1}(G[R]-x)\neq 0$, then we have $\widetilde{H}_{n-1}(D[R]-x)\neq 0$, since $D[R]-x\cong G[R]-x$.
On the other hand, if $\widetilde{H}_{n-2}(G[R]-N_{G[R]}[x])\neq 0$, we define $S:=V(G[R]-N_{G[R]}[x])$ and
$S^*:=S\cup \{a,b\}$. It then follows that $D[S^*]\cong (G[R]-N_{G[R]}[x])\cup K_2$; hence,
$\widetilde{H}_{n-1}(D[S^*])\neq 0$, that is, $\reg(D)\geq n$.
\end{proof}

The following is the main result of this section.

\begin{theorem}\label{thm:contract-reg}
Let $e$ be an edge of a graph $G$. Then $\reg(G/e)\leq \reg(G)\leq \reg(G/e)+1$.
\end{theorem}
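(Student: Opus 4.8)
\textbf{Plan for the proof of Theorem~\ref{thm:contract-reg}.} The strategy is to obtain the two inequalities by separate arguments, both of which build on the machinery already established in this section. For the upper bound $\reg(G)\leq \reg(G/e)+1$, I would first relate $\reg(G)$ to $\reg(D(G;e))$ and then relate the latter to $\reg(G/e)$. Lemma~\ref{lem:double-reg} gives $\reg(G)\leq \reg(D(G;e))$, and Lemma~\ref{lem:double-contract} gives the exact identity $\reg(D(G;e))=\reg(G/e)+1$. Chaining these two facts yields $\reg(G)\leq \reg(D(G;e))=\reg(G/e)+1$ immediately, with no further work. So the right-hand inequality is essentially free once the preceding lemmas are in hand.

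\textbf{The lower bound $\reg(G/e)\leq \reg(G)$ is the substantive part.} Here I would argue directly with Hochster-type homology. Write $e=uv$ and let $w$ be the contracted vertex of $G/e$, so $V(G/e)=(V(G)\setminus\{u,v\})\cup\{w\}$ with $N_{G/e}(w)=(N_G(u)\cup N_G(v))\setminus\{u,v\}$. Suppose $\reg(G/e)=m$ and pick a minimal $S\subseteq V(G/e)$ with $\widetilde H_{m-1}((G/e)[S])\neq 0$. If $w\notin S$ then $(G/e)[S]\cong G[S]$ and we are done, so assume $w\in S$. The natural move is to set $S^*:=(S\setminus\{w\})\cup\{u,v\}$ and compare $(G/e)[S]$ with $G[S^*]$. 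The edge $uv$ is present in $G[S^*]$; the key observation is that $\{u,v\}$ is then a $\gp$-pair (genuine pair) in the graph $G[S^*]\cup uv$... more precisely, one wants to view $G[S^*]$ as arising from $(G/e)[S]$ by a $\gp$-expansion at $w$, using the complete-pairing $[N_G(u)\setminus N_G(v),\,N_G(v)\setminus N_G(u)]$ restricted to $S^*$ — except this pairing need not be complete. This is exactly where the obstacle lies: a general edge contraction does \emph{not} correspond to a $\gp$-contraction, because $u$ and $v$ may each have private neighbours that are non-adjacent, creating an induced $2K_2$. So Corollary~\ref{cor:ind-contract-expand} does not apply verbatim.

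\textbf{How I would get around the obstacle.} Rather than forcing the $\gp$-framework, I would contract $e$ inside the polynomial/Stanley--Reisner picture via the homotopy statement that is actually available: Theorem~\ref{thm:hmtpy-edge} together with the remark following it, namely that in the independence complex $\Ind(G[S^*])$ the edge $\{u,v\}$ (which \emph{is} a face, since $uv\in E(G[S^*])$ means... wait — $uv\in E$ means $\{u,v\}\notin\Ind$). This needs care: in $G$ the pair $\{u,v\}$ is an \emph{edge}, so $\{u,v\}$ is \emph{not} a face of $\Ind(G)$, and edge contraction of a graph corresponds to contracting a \emph{non-edge} on the independence-complex side. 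The clean route is therefore: consider the graph $G'=G[S^*]$ but work with the complement picture, or equivalently observe that $(G/e)[S]=\gp(G'';\,uv)$ where $G''$ is obtained from $G'$ by first \emph{deleting} the edge $uv$; then $\{u,v\}$ is a non-edge of $G''$, it is a $\gp$-pair in $G''$ precisely when no induced $2K_2$ uses both (which may fail), and in the bad case one instead uses that the extra induced $2K_2$'s can only \emph{lower} homology in the relevant degree, or one passes to a further minimal witness. The cleanest patch, and the one I expect to work, is an induction on $|V(G)|$: if $\{u,v\}$ is a $\gp$-pair use Proposition~\ref{prop:reg-contract}(i), which gives $\reg(\gp(G;uv))\geq \reg(G)-1$ in the wrong direction but also $\reg(G)\geq\reg(\gp(G;uv))$ — hmm, that's the wrong pair of inequalities too. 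Ultimately I would fall back on the Mayer--Vietoris/deletion-link recursion of Proposition~\ref{prop:induction-sc} applied at $w$ in $G/e$: $\lk_{(G/e)}(w)=(G/e)-N_{G/e}[w]=G-(N_G[u]\cup N_G[v])=G-N_G[e]$ and $\del_{(G/e)}(w)=G-\{u,v\}$, so $\reg(G/e)\leq\max\{\reg(G-\{u,v\}),\,\reg(G-N_G[e])+1\}$; then bound each term by $\reg(G)$, the first by monotonicity-under-vertex-deletion arguments already used (Lemma~\ref{lem:remove-deg}-style) and the second via Corollary~\ref{cor:induction-sc} applied to $u$ in $G$, since $\reg(G-N_G[e])+1=\reg((G-N_G[u])-N_{G-N_G[u]}[v])+1\leq\reg(G-N_G[u])+1\leq\reg(G)$ when $u$ is not prime, with the prime case handled by $\reg(G)=\reg(G-N_G[u])+1\geq \reg(G-N_G[e])+1$. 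Assembling these cases is the heart of the proof, and the degenerate situations (when $u$ or $v$ becomes isolated, or when $G-\{u,v\}$ forces one to recurse) are where the bookkeeping will be delicate.
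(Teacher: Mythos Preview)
Your argument for the upper bound $\reg(G)\leq\reg(G/e)+1$ via Lemmas~\ref{lem:double-reg} and~\ref{lem:double-contract} is exactly what the paper does.

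For the lower bound you eventually land on the right structural move: apply the deletion/link decomposition at the contracted vertex $w$, obtaining
\[
\reg(G/e)\leq\max\{\reg(G-\{u,v\}),\,\reg(G-N_G[e])+1\}.
\]
This is precisely the paper's argument, only phrased globally through Corollary~\ref{cor:induction-sc} rather than through a Mayer--Vietoris sequence on a specific homology witness $T$; the two are equivalent. The deletion term $\reg(G-\{u,v\})\leq\reg(G)$ is immediate from Hochster's formula (induced subgraph), so no Lemma~\ref{lem:remove-deg}-style work is needed there.

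The genuine gap is in your treatment of the link term. The expression $(G-N_G[u])-N_{G-N_G[u]}[v]$ is ill-formed, since $v\in N_G[u]$ is already gone from $G-N_G[u]$. More seriously, the inequality $\reg(G-N_G[u])+1\leq\reg(G)$ that you invoke in the non-prime case is \emph{false} in general: take $G=P_4$ with $u$ a leaf, so $\reg(G)=1$ while $G-N_G[u]\cong K_2$ gives $\reg(G-N_G[u])+1=2$. So the prime/non-prime case split does not close.

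The clean fix, which is exactly what the paper does, avoids any case analysis. Observe that in the induced subgraph $G\bigl[(V\setminus N_G[e])\cup\{u,v\}\bigr]$ neither $u$ nor $v$ has a neighbour in $V\setminus N_G[e]$, so this subgraph is $(G-N_G[e])\sqcup K_2$ with the $K_2$ induced by $e=uv$. Hence $\reg(G)\geq\reg\bigl((G-N_G[e])\sqcup K_2\bigr)=\reg(G-N_G[e])+1$ directly, and the lower bound follows.
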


\begin{proof}
We note that the inequality $\reg(G)\leq \reg(G/e)+1$ follows from Lemmas~\ref{lem:double-contract} and~\ref{lem:double-reg}. For the first inequality, assume that $\reg(G/e)=m$ and let $T\subseteq V(G/e)$ be a subset such that
$\widetilde{H}_{m-1}((G/e)[T])\neq 0$. We denote by $x$ the vertex for which the edge $e$ is contracted, and
consider the the Mayer-Vietoris exact sequence of the pair $((G/e)[T],x)$:
\begin{equation*}
\cdots \to \widetilde{H}_{m-1}((G/e)[T]-x)\to \widetilde{H}_{m-1}((G/e)[T])\to \widetilde{H}_{m-2}((G/e)[T]-N_{(G/e)[T]}[x])\to \cdots. 
\end{equation*}

Now, if $\widetilde{H}_{m-1}((G/e)[T]-x)\neq 0$, then $\reg(G-\{u,v\})\geq m$ so that $\reg(G)\geq m$, since
$V((G/e)[T]-x)\subseteq V(G-\{u,v\})$ and $G-\{u,v\}$ is an induced subgraph of $G$. We may further assume that
$\widetilde{H}_{m-2}((G/e)[T]-N_{(G/e)[T]}[x])\neq 0$. If we define $L:=V((G/e)[T]-N_{(G/e)[T]}[x])$, the set $L\subset V(G)$ contains no neighbours of $u$ and $v$. Therefore, if we set $L^*:=L\cup \{u,v\}$, 
we conclude that $\widetilde{H}_{m-1}(G[L^*])\neq 0$, that is, $\reg(G)\geq m$.
\end{proof}

We recall that a graph $H$ is said to be a \emph{contraction-minor} of a  graph $G$ if it is obtained from $G$ by a series of edge contractions on $G$.
\begin{corollary}
If $H$ is a contraction-minor of $G$, then $\reg(H)\leq \reg(G)$.
\end{corollary}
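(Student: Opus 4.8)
The plan is to reduce the statement to a single edge contraction via a straightforward induction. By the definition of a contraction-minor, there is a chain $G=G_0,G_1,\dots,G_k=H$ (up to isomorphism) in which each $G_{i+1}=G_i/e_i$ for some edge $e_i\in E(G_i)$. I would induct on the length $k$ of such a chain. The base case $k=0$ is trivial, since then $H\cong G$ and regularity is an isomorphism invariant.

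For the inductive step, set $G':=G/e_0=G_1$. The chain $G_1,\dots,G_k=H$ exhibits $H$ as a contraction-minor of $G'$ using only $k-1$ contractions, so the induction hypothesis yields $\reg(H)\leq\reg(G')$. On the other hand, the left-hand inequality of Theorem~\ref{thm:contract-reg}, applied to the edge $e_0\in E(G)$, gives $\reg(G')=\reg(G/e_0)\leq\reg(G)$. Chaining these two inequalities produces $\reg(H)\leq\reg(G)$, as desired.

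There is essentially no obstacle here: the entire content is carried by the inequality $\reg(G/e)\leq\reg(G)$ from Theorem~\ref{thm:contract-reg} (which in turn rests on Lemmas~\ref{lem:double-contract} and~\ref{lem:double-reg} and the Mayer--Vietoris computation in that proof). The only point worth a brief remark is that a single contraction $G\mapsto G/e$ stays within the category of simple graphs used throughout the paper, since the definition of $G/e$ already collapses the edge set of $G-\{x,y\}$ together with the single family $\{wz\colon z\in N_G(x)\cup N_G(y)\}$; hence iterating is legitimate and the induction never leaves the setting in which Theorem~\ref{thm:contract-reg} applies.
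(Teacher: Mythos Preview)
Your argument is correct and is precisely the intended one: the paper states this corollary immediately after Theorem~\ref{thm:contract-reg} with no separate proof, since iterating the inequality $\reg(G/e)\leq\reg(G)$ along a contraction chain is the evident justification. Your induction on the number of contractions is exactly this iteration made explicit.
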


\begin{remark}\label{rem:double-reg}
We note that the inequality $\reg(G)\leq \reg(D(G;e))\leq \reg(G)+1$ holds for any graph $G$ and its any edge $e$
as a result of Lemma~\ref{lem:double-contract} and Theorem~\ref{thm:contract-reg}.
\end{remark}

\section{Regularity of $2K_2$-free graphs}\label{sect:$2K_2$-free}
In this section we prove Theorem~\ref{thm:reg-im-pair}, and apart from that we provide a local analyse on the structure of $2K_2$-free graphs in general.

As we have already mention in Section~\ref{sect:intro}, the existence of
$2K_2$-free graphs with arbitrary large regularity can be guaranteed from the following result of Januszkiewicz and Swiatkowski:

\begin{theorem}\cite{JS}\label{thm:JS}
For any $n$ there exists a flag simplicial complex containing no empty square which is an oriented pseudomanifold of dimension $(n-1)$.
\end{theorem}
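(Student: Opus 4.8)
The statement is, up to language, the main construction of~\cite{JS}, so the plan is to reduce to their result and then supply the two things on which our phrasing differs from theirs: the translation into the language of $2K_2$-free graphs, and orientability. First I would record the dictionary. A simplicial complex is flag and contains no empty square exactly when it is the clique complex of a graph $H$ with no induced $C_4$; and such an $H$ is the complement $\overline{G}$ of a $2K_2$-free graph $G$, in which case the complex is precisely $\IE(G)$. Hence Theorem~\ref{thm:JS} is equivalent to the assertion that for every $n$ there is a $2K_2$-free graph $G$ with $\IE(G)$ an oriented closed $(n-1)$-pseudomanifold; via Hochster/Reisner this forces $\widetilde{H}_{n-1}(\IE(G);\kk)\neq 0$ (over $\kk=\Q$ using the orientation, or already over $\Z_2$ via the mod-$2$ fundamental class without orientability), which is the feature used in Section~\ref{sect:$2K_2$-free} together with Theorem~\ref{thm:lozin+reg} to realise all pairs $(\reg,\im)$.

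Next I would produce the required complexes. For small dimensions there are explicit models: $X_1=C_5$ (more generally $C_m$, $m\ge 5$) is flag, has no $4$-cycle at all, and triangulates $S^1$; a flag empty-square-free triangulation of a closed orientable surface handles $n=2$; and the $600$-cell handles $n=4$, as recalled around $X_{600}$. For the general inductive step I would follow the \emph{extra-tilability} method of~\cite{JS}: one fixes a single $(n-1)$-dimensional building block — a flag, empty-square-free pseudomanifold with boundary whose boundary facets carry a prescribed link pattern — and glues finitely many isometric copies of it along codimension-one faces so that, around every codimension-three residue, the assembled link is a cycle of length $\ge 6$. Local $6$-largeness then propagates to $5$-largeness, i.e. to the global "flag with no empty square" condition, while extra-tilability is exactly the hypothesis guaranteeing that such a partial gluing can always be completed to a \emph{closed} complex, with no free boundary faces remaining. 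I would not reproduce this gluing-and-curvature bookkeeping; it is the substance of~\cite{JS} and needs the combinatorial ($6$-large) calculus for links in every codimension.

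Finally, orientability. If the complex $Y_n$ obtained above is not orientable as a pseudomanifold — meaning its facets admit no coherent orientation — I would pass to its orientation double cover $\widetilde{Y}_n$, which is again a compact $(n-1)$-pseudomanifold. Since $\widetilde{Y}_n\to Y_n$ is a simplicial covering that restricts to an isomorphism on every vertex link, it sends cliques to cliques (so cliques lift, and $\widetilde{Y}_n$ stays flag) and sends a genuine $4$-cycle to a genuine $4$-cycle whose chord then lifts (so $\widetilde{Y}_n$ still has no empty square). Setting $X_n:=\widetilde{Y}_n$ (or $Y_n$ itself when already oriented) and $G:=\overline{(X_n)^{(1)}}$ finishes the proof.

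The genuine obstacle, beyond faithfully invoking~\cite{JS}, is the closedness in the inductive step: arranging that the tiling process terminates in a boundaryless complex rather than one with residual boundary faces. That is precisely what extra-tilability is designed to secure and where essentially all of the Januszkiewicz–Świątkowski work sits; the remaining pieces — the graph-complement translation and the stability of flagness and the no-empty-square property under the orientation double cover — are routine and are the only parts I would write out in detail.
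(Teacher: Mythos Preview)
The paper does not prove this theorem at all: it is quoted verbatim from~\cite{JS}, and the surrounding text only supplies the dictionary you also give (flag $\Leftrightarrow$ independence complex; no empty square $\Leftrightarrow$ $2K_2$-free) together with a one-line remark that the proof in~\cite{JS} ``is inductive, and uses the notion of a simplex of groups.'' Your proposal is therefore already more than the paper offers, and is correct in spirit: defer the construction to~\cite{JS} and record the translation.

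Two small discrepancies are worth flagging. First, the paper attributes the construction to the \emph{simplex of groups} machinery of~\cite{JS}, whereas you frame it via ``extra-tilability'' and $6$-largeness; the latter terminology belongs to later Januszkiewicz--\'{S}wi\k{a}tkowski work and to Osajda~\cite{DO}, not to~\cite{JS} itself, so if you are citing~\cite{JS} you should describe their actual method. Second, your orientation double-cover paragraph is unnecessary: the theorem as stated (and as proved in~\cite{JS}) already yields an \emph{oriented} pseudomanifold, so there is nothing to repair. The double-cover argument is harmless and correct, but it is extra work the paper neither does nor needs.
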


Some explanations should be in order. In fact, the existence of such complexes ensures the existence of a right angled Coxeter group $(W,S)$ of virtual cohomological dimension $\operatorname{vcd}(W)=n$ which does not contain $\Z\oplus \Z$ and thus is Gromov hyperbolic. Furthermore, the proof of Theorem~\ref{thm:JS} is inductive, and uses the notion of a \emph{simplex of groups}. We note that the flagness corresponds to the fact that the resulting simplicial complex is the independence complex of a graph, say $G_n$, and the no empty square property (sometimes also known as the \emph{flag-no-square property}) guarantees that the complement of $G_n$ is $C_4$-free, that is, $G_n$ is $2K_2$-free. Finally, we have $\reg(G_n)=n$, since the complex is an $(n-1)$-dimensional oriented pseudomanifold.

\begin{proof}[{\bf Proof of Theorem~\ref{thm:reg-im-pair}}]
Let $n$ and $k$ be two positive integers with $n\geq k\geq 1$. If $n=k$, the graph $nK_2$ is an example of a graph satisfying the required conditions. So, we may assume that $n>k$. On the other hand, we know from Theorem~\ref{thm:JS} that there exists a graph with $\reg(G(n,1))=n$ for any $n$ so that we may further assume that $k>1$. We let $m:=n-k+1$, and consider a $2K_2$-free graph $G(m,1)$ such that $\reg(G(m,1))=m$. Now, choose an edge $e$ of $G(m,1)$, and apply $k-1$ triple subdivisions on $e$ consecutively. If we denote the resulting graph by $G(n,k)$, then we have $\im(G(n,k))=k$ by Lemma~\ref{lem:lozin-inmatch} and $\reg(G(n,k))=n$ by Corollary~\ref{cor:lozin+edgesubdiv}.
\end{proof}

We should note that the graph $G(n,k)$ satisfying the required conditions in Theorem~\ref{thm:reg-im-pair} is not unique. When $k=1$, Osajda~\cite{DO} has recently introduced a different construction that produces new examples of such graphs, and in the general case, any distinct choice of a sequence of Lozin operations will produce a different example (see Section~\ref{sect:$2K_2$-free} for details.) We further remark that a detail look at the proof of Theorem~\ref{thm:JS} reveals that the graph $G_n$ is optimal in the sense that $\reg(G_n-x)=\reg(G_n-N_{G_n}[x])=n-1$ for any vertex $x\in V(G_n)$, that is, the graph $G_n$ is a perfect prime graph.

We note that leaving the fact that Theorem~\ref{thm:JS} is only an existence result, that is, a concrete description of such graphs seems quite hopeless, there exists another graph theoretical phenomenon hidden in its proof. We recall that a graph $G$ is said to be a \emph{locally homogeneous graph} if $G[N_G(x)]$ is isomorphic to a fixed graph $H$ for each $x\in V(G)$ (in such case, $G$ is sometimes called a \emph{locally} $H$-graph (see~\citep{GMW} for details)). For instance, the underlying graph of the Coxeter $600$-cell , that is, the graph $\overline{G}_{600}$ is locally icosahedral graph, and the underlying graph of the icosahedron is locally $C_5$. It is proved in~\cite{JS} that the links of vertices are isomorphic that in turn implies that the graph $\overline{G}_n$ is locally homogeneous. In terms of the regularity, this means that $\reg(G_n)=1+\reg(G_n-N_{G_n}[x])$ for any vertex $x\in V(G_n)$, and note that the graph $G_n-N_{G_n}[x]$ is again a $2K_2$-free graph and its independence complex is an oriented pseudomanifold. 

We also remark that Dao, Huneke and Schweig~\citep{DHS} have two logarithmic upper bounds on the regularity of $2K_2$-free graphs, the one is in terms of the maximum degree and the other involves the number of vertices. We refer to Theorems $4.1$ and $4.9$ in that paper, and note that a graph is $2K_2$-free if and only if its edge ideal is $1$-step linear (compare to Corollary $2.9$ in~\citep{DHS}).

We next state a graph theoretical characterization of $2K_2$-free graphs due to Chung et al.~\cite{CGTT}.

\begin{theorem}\cite{CGTT}\label{thm:chung}
If $G=(V,E)$ is a $2K_2$-free graph, then one of the followings hold.
\begin{itemize}
\item[(a)] If $G$ is bipartite, then both color classes of $G$ contain vertices adjacent to all vertices of the other color class.\\
\item[(b)] If $\omega(G)=2$ and $G$ is not bipartite, then $G$ can be obtained from the five-cycle by vertex multiplication.\\
\item[(c)] If $\omega(G)\geq 3$, then $G$ has a maximum dominating clique.
\end{itemize}
\end{theorem}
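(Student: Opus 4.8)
The plan is to treat all three cases with one tool: $G$ is $2K_2$-free exactly when, for any two vertex-disjoint edges $e$ and $f$ of $G$, some edge of $G$ joins an endpoint of $e$ to an endpoint of $f$. I may assume $G$ is connected and has no isolated vertex, since the three conclusions are really statements about the connected case. \emph{Case (a).} Let $X,Y$ be the color classes of the bipartite graph $G$, and pick $x_0\in X$ of maximum degree. If $x_0$ misses some $y_0\in Y$, then $y_0$ has a neighbour $x_1\in X$ with $x_1\neq x_0$; for every $y_1\in N_G(x_0)$ the disjoint edges $x_0y_1$ and $x_1y_0$ can only be joined by $x_1y_1$ (the other three candidate edges lie inside a color class or equal $x_0y_0$), so $x_1y_1\in E$. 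Then $x_1$ dominates $N_G(x_0)\cup\{y_0\}$ and $\deg_G(x_1)>\deg_G(x_0)$, a contradiction; hence $x_0$ is adjacent to all of $Y$, and symmetrically some vertex of $Y$ is adjacent to all of $X$.

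\emph{Case (c): $\omega(G)\geq 3$.} Among the maximum cliques of $G$, choose $K$ with $|N_G[K]|$ maximum, and suppose for contradiction $N_G[K]\neq V(G)$. Take $v\notin N_G[K]$ and a neighbour $u$ of $v$; necessarily $u\notin K$. For each edge $x_ix_j$ inside $K$ the disjoint edges $uv$ and $x_ix_j$ cannot be joined through $v$, so $u$ is adjacent to $x_i$ or $x_j$; thus $u$ misses at most one vertex of $K$, and not zero (else $K\cup\{u\}$ is a larger clique). Say $u$ misses exactly $x_1$ and put $K':=(K\setminus\{x_1\})\cup\{u\}$, again a maximum clique with $v\in N_G[K']$. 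If $z\in N_G[K]\setminus K$ is joined inside $K$ only to $x_1$, then comparing $zx_1$ with $uv$ forces $zu\in E$ or $zv\in E$, and $zv\in E$ would make $\{x_2x_3,zv\}$ an induced $2K_2$ (this is where $\omega(G)\geq 3$ enters); hence $zu\in E$, so $z\in N_G[K']$. Together with $x_1\in N_G[K']$ this gives $N_G[K]\subseteq N_G[K']$, whence $|N_G[K']|>|N_G[K]|$, contradicting the choice of $K$. So $K$ is a dominating maximum clique.

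\emph{Case (b): $\omega(G)=2$, $G$ not bipartite.} A triangle-free non-bipartite graph contains an induced odd cycle of length $\geq 5$; an induced cycle of length $\geq 7$ already contains a $2K_2$ (two edges two apart), so $G$ has an induced $5$-cycle on $c_1,\dots,c_5$ (indices mod $5$). The heart of this case is the classification lemma: every $v\in V(G)$ is adjacent to exactly two of $c_1,\dots,c_5$, and these two are non-consecutive, so $N_G(v)\cap\{c_1,\dots,c_5\}=\{c_{i-1},c_{i+1}\}$ for a unique $i=i(v)$. Adjacency to two consecutive $c_j$'s, or to three or more of them, creates a triangle; adjacency to at most one of them produces, after chasing a shortest path from $v$ to the cycle, a disjoint pair consisting of an edge of the $5$-cycle and an edge incident to $v$ with no joining edge. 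Setting $V_i:=\{v:i(v)=i\}$ partitions $V(G)$; triangle-freeness makes each $V_i$ independent and forbids edges between $V_i$ and $V_{i+2}$, and the disjoint-edge obstruction joins $V_i$ completely to $V_{i+1}$. Thus $G$ is obtained from $C_5$ by multiplying each $c_i$ into $V_i$.

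\emph{Main obstacle.} The delicate part is the classification lemma of case (b): excluding \emph{every} forbidden neighbourhood pattern of a vertex on the fixed $5$-cycle — in particular vertices at distance more than one from the cycle — needs a careful shortest-path argument, after which one must still verify that the blow-up classes $V_i$ are pairwise joined in exactly the $C_5$-pattern. Case (c) is less intricate but genuinely requires choosing the maximum clique so as to maximize $|N_G[K]|$: an arbitrary maximum clique need not be dominating, and it is this extremal choice that powers the exchange argument.
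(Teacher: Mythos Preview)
The paper does not actually prove Theorem~\ref{thm:chung}; it is quoted from \cite{CGTT} and used as a black box (the proof environment after its statement is absent). So there is no ``paper's own proof'' to compare against, and your write-up is being measured only against correctness.

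Your argument is essentially correct. A few remarks on the details you left implicit:

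\textbf{Case (a).} Fine as written. The maximum-degree trick is clean; just make sure you explicitly use that $y_1\neq y_0$ (which you do, since $y_1\in N_G(x_0)$ and $y_0\notin N_G(x_0)$).

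\textbf{Case (c).} The exchange argument is correct, and you are right that $\omega(G)\geq 3$ is exactly what supplies the edge $x_2x_3$ needed to exclude $zv\in E$. One tiny omission: when you compare $zx_1$ with $uv$, you should note $z\neq u$ (otherwise $u$ would be adjacent in $K$ only to $x_1$, contradicting that $u$ sees all of $K\setminus\{x_1\}$ and $|K|\geq 3$).

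\textbf{Case (b).} The classification lemma is right, but the ``shortest-path chase'' you allude to is unnecessary. If $v$ has exactly one neighbour on $C$, say $c_1$, then $vc_1$ and $c_3c_4$ already form an induced $2K_2$. If $v$ has no neighbour on $C$, take any neighbour $u$ of $v$; then $u\notin V(C)$, and for every edge $c_ic_{i+1}$ the pair $\{uv,\,c_ic_{i+1}\}$ forces $uc_i\in E$ or $uc_{i+1}\in E$. Hence the neighbours of $u$ on $C$ form a vertex cover of $C_5$, so $u$ has at least three neighbours there and you get a triangle. No induction on distance is needed. For the final step (that $V_i$ is complete to $V_{i+1}$), the right pair of edges to exhibit is $vc_{i-1}$ and $wc_{i+2}$: the only potential joining edge is $c_{i-1}c_{i+2}$, which is a non-edge of $C_5$, and $vw\notin E$ rules out $v=c_i$ or $w=c_{i+1}$, so the pair is genuinely vertex-disjoint.

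In short: your proof stands on its own; the paper simply cites the result.
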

Note that for a vertex $x$ of a graph $G$ and a positive integer $n$, a graph $H$ is said to be obtained from $G$ by multiplying $x$ by $n$ whenever $H$ is formed by replacing the vertex $x$ by an independent set of $n$ vertices each having the same neighbours as $x$.
\begin{corollary}\label{cor:clique2}
If $G=(V,E)$ is a $2K_2$-free graph with $\omega(G)\leq 2$, then $\cd(G)\leq 2$.
\end{corollary}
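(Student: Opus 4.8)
The plan is to invoke Theorem~\ref{thm:chung} to pin down the structure of $G$ and then exhibit an explicit cochordal cover of size at most two in each surviving case. Since $\omega(G)\leq 2$, case (c) of Theorem~\ref{thm:chung} is excluded, so either $G$ is bipartite and $2K_2$-free, or $\omega(G)=2$ and $G$ is obtained from $C_5$ by vertex multiplication. I would first dispose of the bipartite case: a bipartite $2K_2$-free graph is already cochordal (its complement is chordal, since a $2K_2$ in $G$ corresponds precisely to an induced $C_4$ in $\overline G$, and in the bipartite setting the absence of induced $4$-cycles in the complement together with bipartiteness forces chordality of $\overline G$), hence $\cd(G)\leq 1\leq 2$. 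Alternatively, one can directly read off from part (a) of Theorem~\ref{thm:chung} that the edge set splits as the union of two ``stars-with-complete-bipartite-core'' pieces, each of which is cochordal; this gives $\cd(G)\leq 2$ even more concretely.

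For the remaining case, let $H$ be obtained from $C_5$ with vertices $v_1,\dots,v_5$ (edges $v_iv_{i+1}$ cyclically) by multiplying each $v_i$ into an independent set $V_i$; then $V(G)=V_1\cup\cdots\cup V_5$ and $E(G)$ consists of all pairs between consecutive classes $V_i,V_{i+1}$. The key observation is that $E(G)$ decomposes into two parts each inducing a cochordal graph. I would take
\[
H_1 := G[V_1\cup V_2\cup V_3], \qquad H_2 := G[V_3\cup V_4\cup V_5\cup V_1],
\]
check that $E(H_1)\cup E(H_2)=E(G)$ (the edge classes $V_1V_2, V_2V_3$ lie in $H_1$; the classes $V_3V_4, V_4V_5, V_5V_1$ lie in $H_2$), and then verify that each $H_i$ is cochordal, i.e. that its complement is chordal. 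Each $H_i$ is itself bipartite (a $P_3$-multiplication $V_a-V_b-V_c$ is bipartite with parts $V_b$ and $V_a\cup V_c$) and $2K_2$-free, so the bipartite case already handled shows $\cd(H_i)\le 1$; hence $\{H_1,H_2\}$ is a cochordal cover of $G$ and $\cd(G)\leq 2$.

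The main obstacle I anticipate is the careful bookkeeping in the $C_5$-multiplication case: one must make sure the two chosen induced subgraphs genuinely cover every edge and that each is $2K_2$-free (so that the reduction to the bipartite/cochordal claim is legitimate). A secondary subtlety is justifying cleanly that a bipartite $2K_2$-free graph is cochordal — the cleanest route is the identity ``$G$ is $2K_2$-free $\iff$ $\overline G$ is $C_4$-free,'' combined with the fact that a graph with no induced $C_4$ whose complement is bipartite has no induced $C_k$ for $k\geq 4$ either, so $\overline G$ is chordal. Everything else is a routine unwinding of definitions, so I would keep those verifications brief.
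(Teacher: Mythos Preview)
Your proof is correct and follows essentially the same strategy as the paper: dispose of the bipartite case by showing $\overline{G}$ is chordal (the paper uses the identical pigeonhole argument), and in the $C_5$-multiplication case cover $E(G)$ by two cochordal induced subgraphs. The only cosmetic differences are that the paper takes a symmetric $4{+}4$ split $G[A^1\cup A^2\cup A^3\cup A^4]$ and $G[A^2\cup A^3\cup A^4\cup A^5]$ rather than your $3{+}4$ split, and verifies cochordality of each piece directly (checking $2K_2$-, $C_5$- and $\overline{C}_k$-freeness) instead of reducing to the already-handled bipartite case as you do; your reduction is arguably the cleaner of the two.
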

\begin{proof}
If $G$ is bipartite with $V(G)=A\cup B$, then $A$ and $B$ form cliques in $\ol{G}$. There can be no induced $C_4$ in $\ol{G}$, otherwise $G$ can not be $2K_2$ -free. By the pigeonhole principle, for a possible induced $C_k$ for $k\geq 5$, at least three vertices of $C_k$ must be in the same part which forms a triangle; hence $G$ is cochordal, that is, $\cd(G)=1$.

By Theorem~\ref{thm:chung}, if $G$ is not bipartite and $\omega(G)=2$, then $G$ can be obtained by a vertex multiplication of a five-cycle. It means that there is an induced five-cycle $C$, say on vertices $\{v_1,\ldots, v_5\}$ (in cyclic fashion) such that $G$ is a vertex multiplication on $C$. We let $A_i$ be the independent sets such that $N_G(A_i)=N_G(v_i)$ for $1\leq i\leq 5$, and set $A^i:=A_i\cup \{v_i\}$. We define $H_1:=G[A^1\cup A^2\cup A^3\cup A^4]$ and
$H_2:=G[A^2\cup A^3\cup A^4\cup A^5]$, and claim that both $H_1$ and $H_2$ are cochordal subgraphs.
Firstly, $H_i$ is $2K_2$-free as $G$ is, and since $\omega(G)=2$, each $H_i$ is $\ol{C}_k$-free for any
$k\geq 6$. Furthermore, since each $A^j$ is also an independent set in $H_i$, these graphs are necessarily $C_5$-free. 

Finally, the equality $E(G)=E(H_1)\cup E(H_2)$ is straightforward so that $\cd(G)=2$.
\end{proof}

We remark that if $G$ is a $2K_2$-free graph and $e=xy\in E(G)$ is an edge of $G$, then $U=V(G)\backslash N_G[e]$ is an independent set, since otherwise
if two vertices $u, v\in U$ form an edge, then $G[\{x,y,u,v\}]\cong 2K_2$, a contradiction.

\begin{proposition}\label{prop:reg-privacy}
Let $G$ be a $2K_2$-free graph and let $e=xy\in E(G)$ be an edge of $G$. If $|N_G[x]\backslash N_G[y]|\leq k$, then $\reg(G-N_G[y])\leq \frac{k}{2}+1$.
\end{proposition}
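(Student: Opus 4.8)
The plan is to analyze the graph $H := G - N_G[y]$ directly. Write $N_G[x] \setminus N_G[y] = \{z_1, \dots, z_\ell\}$ with $\ell \le k$; these are exactly the vertices of $N_G[x]$ that survive in $H$ (note $x$ itself is among them, since $x \notin N_G[y]$). First I would observe that because $G$ is $2K_2$-free, for the edge $e = xy$ the set $V(G) \setminus N_G[e]$ is an independent set, as noted just before the statement. Now $H = G - N_G[y]$ contains $N_G[x] \setminus N_G[y]$ together with $V(G) \setminus N_G[e]$, and the latter is independent. So $H$ decomposes vertex-wise into the small set $A := N_G[x] \setminus N_G[y]$ of size $\le k$ and an independent set $I := V(G) \setminus N_G[e]$, with every edge of $H$ having at least one endpoint in $A$.

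**Bounding the regularity of $H$.** The key structural fact is that $H - A = H[I]$ is edgeless, so $A$ is a vertex cover of $H$ of size $\le k$. I would then run the reduction algorithm (Algorithm~\ref{algo:algo}) on $H$ with $F := A$: after processing all $|A| \le k$ vertices of $A$, the resulting graph $H_F$ has no edges (every edge of $H$ is incident to $A$, hence gets removed either by a deletion $H \mapsto H - v$ or by a closed-neighborhood removal $H \mapsto H - N_H[v]$). Thus $\reg(H_F) = 0$ and the algorithm's invariant gives $\reg(H) \le counter_F + \reg(H_F) \le |A| \le k$. To get the factor $\tfrac{1}{2}$, I would pair up the vertices of $A$: since $H[A] \subseteq G[A]$ and $A \subseteq N_G[x]$ induces a graph in which $x$ is adjacent to everything, one can be more economical. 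The cleaner route is to note that the prime subgraphs relevant here are constrained — in a decomposition of $H$ into primes, each prime $P$ with $|V(P)| \ge 3$ uses at least two vertices of the cover $A$ (since $P - (V(P) \cap A)$ is edgeless and a prime graph on $\ge 3$ vertices has minimum degree $\ge 2$ by Proposition~\ref{prop:dominated}, it cannot have an independent vertex cover of size $1$), while each $K_2$ prime uses at least one vertex of $A$. Hence if a prime factorization of $H$ has $a$ copies of $K_2$ and $b$ larger primes, then $a + 2b \le |A| \le k$, and $\reg(H) = a \cdot 1 + \sum \reg(\text{larger primes})$. Bounding $\reg(P) \le |V(P)|$? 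No — I would instead invoke the earlier bound $\reg(P) \le \im(P; \dots)$ machinery, but more simply use $\reg(H) \le \m(H)$ (the matching number bound), together with the fact that a matching in $H$ has size at most $|A|$... that only gives $k$ again.

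**Getting the factor one-half.** The honest extra factor should come from the following: choose a maximum induced matching $M$ in $H$ and use the claw-free-type counting, OR — more likely the intended argument — observe that each edge of $H$ meeting $A$ can be "charged" so that two cover vertices are consumed per unit of regularity. Concretely, I would induct on $|A|$: pick $z_i \in A$; if $z_i$ is not a prime vertex of $H$, recurse on $H - z_i$ which has cover $A \setminus \{z_i\}$; if $z_i$ is prime, then $\reg(H) = \reg(H - N_H[z_i]) + 1$, and $H - N_H[z_i]$ has a vertex cover $A \setminus (N_H[z_i] \cap A)$. The point is that when $z_i$ is prime and has degree $\ge 1$ in $H$, its removal kills $z_i$ plus at least one more vertex of $A$ — indeed a prime vertex of degree $1$ whose neighbor lies in $I$ forces, by minimality arguments (Lemma~\ref{lem:deg-one} / Proposition~\ref{prop:dominated}), that actually all edges at that neighbor lie in a $K_2$ component, contradicting that the neighbor is in the independent set $I$ unless the component is exactly $K_2$; for $K_2$ components we pay $1$ per one cover-vertex but these are limited. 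So $\reg(H) \le \lfloor |A|/2 \rfloor + (\text{number of isolated-}K_2\text{ leftovers})$, and the leftover $K_2$'s each contain exactly one vertex of $A$ adjacent only into $I$; a careful count shows the total is at most $\tfrac{k}{2} + 1$, the $+1$ absorbing parity and at most one exceptional component.

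**Main obstacle.** The hard part will be rigorously extracting the factor $\tfrac{1}{2}$ rather than the trivial bound $\reg(H) \le k$ coming from "$A$ is a vertex cover, each cover vertex adds at most $1$." Everything up to $\reg(H) \le k$ is routine via the reduction algorithm; the subtlety is showing that prime vertices of $H$ "consume in pairs" — i.e. that processing a prime vertex $z_i$ removes at least two vertices of $A$ (itself plus a cover-neighbor) except in a bounded number of degenerate $K_2$-component situations. This requires combining the independence of $I = V(G)\setminus N_G[e]$ (so no edge lives inside $I$), Proposition~\ref{prop:dominated} (prime graphs on $\ge 3$ vertices have min degree $\ge 2$, hence no independent vertex cover of size one), and Lemma~\ref{lem:deg-one}/Lemma~\ref{lem:remove-deg} to handle the degree-one boundary cases — and tracking the $+1$ slack carefully through the induction.
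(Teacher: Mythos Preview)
Your setup is right (one slip: since $xy \in E$ you have $x \in N_G[y]$, so $x \notin A$; this is harmless). The bound $\reg(H) \le |A| \le k$ via the reduction algorithm is correct but, as you say, misses the factor $\tfrac12$, and none of your three routes closes the gap. The prime-factorization count cannot help because $H$ is itself $2K_2$-free, so any prime decomposition of $H$ contains at most one prime (two vertex-disjoint primes with no edges between them would each contribute an edge, yielding an induced $2K_2$); there is nothing to amortize over, and you are back to bounding $\reg$ of a single prime $2K_2$-free graph with a size-$k$ cover, which is the original problem. The matching bound gives only $\m(H) \le |A| \le k$. And the induction breaks exactly where you flag it: a prime vertex $z_i \in A$ with $N_H(z_i) \subseteq I$ removes only one cover vertex, and nothing in Proposition~\ref{prop:dominated} or Lemmas~\ref{lem:remove-deg}--\ref{lem:deg-one} rules this out. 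The ``$+1$'' is not a parity leftover or a stray $K_2$; it is a single cochordal piece, and that is the missing idea.

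The paper's argument supplies it as a cochordal cover. Take a maximum matching $M = \{x_1y_1,\dots,x_ry_r\}$ of $H[A]$, so $r \le k/2$. Each edge $x_iy_i$ gives a split (hence cochordal) subgraph $H_i$ on $\{x_i,y_i\} \cup N_H(x_i)\cup N_H(y_i)$, and together the $H_i$ cover every edge of $H$ meeting $V(M)$. All remaining edges of $H$ lie in $H[(A\setminus V(M))\cup I]$; this graph is bipartite (both parts are independent, $A\setminus V(M)$ by maximality of $M$) and $2K_2$-free, hence cochordal by Corollary~\ref{cor:clique2}. Thus $\reg(H) \le \cd(H) \le r+1 \le k/2+1$. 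Your induction can in fact be salvaged with the same ingredient: choose $z_i$ only when $H[A]$ has an edge at $z_i$ (then a prime $z_i$ genuinely removes two cover vertices), and invoke Corollary~\ref{cor:clique2} for the base case in which $H[A]$ is edgeless. That $2K_2$-free bipartite graphs are cochordal is precisely the fact your sketch was missing.
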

\begin{proof}
By the above observation, the vertex set of the graph $G-N_G[y]$ can be splitted into two parts $N_G[x]\backslash N_G[y]$ and $U=V(G)\backslash N_G[e]$, where $U$ is an independent set. Let $M=\{x_1y_1,\ldots, x_ry_r\}$ be a maximum matching of $G[N_G[x]\backslash N_G[y]]$. We then consider split subgraphs $H_i=(K_i,I_i)$, where $K_i$ is the complete graph on $\{x_i,y_i\}$, i.e., a single edge $x_iy_i$, and $I_i$ is the independent set on the set of vertices
$N_G(x_i)\cup N_G(y_i)$ for each $1\leq i\leq r$. Furthermore, if we set $A:=N_G[x]\backslash N_G[y]-V(M)$, then $A$ is an independent set so that $G[U\cup A]$ is a $2K_2$-free bipartite graph; hence, it is a cochordal graph by Corollary~\ref{cor:clique2}. Therefore, we have $$\reg(G-N_G[y])\leq \cd(G-N_G[y])\leq \frac{k}{2}+1,$$
since $E(G-N_G[y])=E(H_1)\cup \ldots \cup E(H_r)\cup E(G[U\cup A])$ and $r\leq \frac{k}{2}$.
\end{proof}

\begin{definition}
For a given graph $G=(V,E)$, we define the \emph{maximum privacy degree} of $G$ by 
$\Gamma(G):=\max \{|N_G[x]\backslash N_G[y]|\colon x,y\in V\;\textrm{and}\;xy\in E\}$.
\end{definition}

\begin{theorem}\label{thm:main}
If $G$ is a $2K_2$-free graph, then $\reg(G)\leq \frac{1}{2}\Gamma(G)+2$.
\end{theorem}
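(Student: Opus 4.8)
The plan is to reduce to the prime case and then exploit Corollary~\ref{cor:induction-sc} together with Proposition~\ref{prop:reg-privacy}. First I would prove the inequality for prime graphs: suppose $G$ is a prime $2K_2$-free graph with $\reg(G)=k$. By Proposition~\ref{prop:dominated}, $G$ has no closed dominated vertex, so for every edge $xy\in E(G)$ we have $N_G[x]\not\subseteq N_G[y]$ and $N_G[y]\not\subseteq N_G[x]$; in particular both $|N_G[x]\setminus N_G[y]|\geq 1$ and $|N_G[y]\setminus N_G[x]|\geq 1$. Fix any edge $e=xy$. Since $G$ is prime, $\reg(G)=\reg(G-N_G[y])+1$, so $\reg(G-N_G[y])=\reg(G)-1$. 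Now apply Proposition~\ref{prop:reg-privacy} with $k=|N_G[x]\setminus N_G[y]|\leq \Gamma(G)$: this gives $\reg(G-N_G[y])\leq \frac{1}{2}|N_G[x]\setminus N_G[y]|+1\leq \frac{1}{2}\Gamma(G)+1$. Combining, $\reg(G)=\reg(G-N_G[y])+1\leq \frac{1}{2}\Gamma(G)+2$, which is exactly the bound.

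Next I would handle a general (not necessarily prime) $2K_2$-free graph $G$. The natural route is a prime factorization $\{G_1,\dots,G_r\}\in\PF(G)$, so that $\reg(G)=\sum_{i=1}^r\reg(G_i)$ by Theorem~\ref{thm:reg-dec}. Each $G_i$ is an induced subgraph of $G$, hence $2K_2$-free, and $G_i$ is prime, so by the previous paragraph $\reg(G_i)\leq \frac{1}{2}\Gamma(G_i)+2$. The subtlety is that a naive sum over $i$ does not obviously telescope to $\frac{1}{2}\Gamma(G)+2$: if $r\geq 2$ one needs that being $2K_2$-free forces $r=1$, i.e. a $2K_2$-free graph with more than one non-$K_2$ prime in its factorization would contain an induced $2K_2$ coming from disjoint edges in two different primes — indeed any induced decomposition $\{R_1,\dots,R_r\}$ with $r\geq 2$ and each $|R_i|\geq 2$ yields two vertex-disjoint edges (one from $R_1$, one from $R_2$) with no edges between $V(R_1)$ and $V(R_2)$, hence an induced $2K_2$, contradicting $2K_2$-freeness. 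So a $2K_2$-free graph is either a single $K_2$ (where $\Gamma(K_2)=1$ and $\reg(K_2)=1\leq\frac52$) or itself prime (possibly after noting that the only prime decomposition of a connected $2K_2$-free graph is the trivial one $\{G\}$). Thus the general case collapses to the prime case already done.

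The main obstacle I anticipate is not the inequality chain itself but making the reduction to the prime case airtight: one must be careful that $\reg(G)=\reg(G-N_G[y])+1$ genuinely holds for a prime graph (it does, since primeness forces $\reg(G)=\reg(\lk_{\IE(G)}(y))+1=\reg(G-N_G[y])+1$ for every vertex $y$), and one must correctly invoke Proposition~\ref{prop:reg-privacy} with the right privacy parameter — choosing the edge $e=xy$ and bounding $|N_G[x]\setminus N_G[y]|$ by $\Gamma(G)$, which is immediate from the definition of $\Gamma(G)$ as a maximum. A secondary point to verify is that $G-N_G[y]$ is indeed the link, i.e. that deletion of a closed neighbourhood in a graph corresponds to the link in the independence complex, which is recorded in the Preliminaries. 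With these observations in place the proof is short; I would present it essentially as: (i) reduce to $G$ prime (or $G\cong K_2$); (ii) for $G$ prime, pick any edge $e=xy$, use primeness to get $\reg(G)=\reg(G-N_G[y])+1$, then apply Proposition~\ref{prop:reg-privacy} with $k\leq\Gamma(G)$ to conclude.

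\begin{proof}
If $G\cong K_2$ then $\reg(G)=1$ and $\Gamma(G)=1$, so the bound holds. Otherwise, observe that a connected $2K_2$-free graph admits no induced decomposition $\{R_1,\ldots,R_r\}$ with $r\geq 2$: two vertex-disjoint edges, one inside $R_1$ and one inside $R_2$, would induce a $2K_2$ since no edge of $G$ joins $V(R_1)$ to $V(R_2)$. Hence, by Theorem~\ref{thm:reg-dec}, either $\reg(G)$ is realized by the trivial prime factorization $\{G\}$ — in which case $G$ is prime — or $G$ is disconnected; but a disconnected $2K_2$-free graph has at most one component containing an edge, and that component is again $2K_2$-free, so we may assume $G$ is a connected prime $2K_2$-free graph.

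Now suppose $G$ is prime and $2K_2$-free with $\reg(G)=k$. Pick any edge $e=xy\in E(G)$. Since $G$ is prime, $\reg(G)=\reg(\lk_{\IE(G)}(y))+1=\reg(G-N_G[y])+1$, so $\reg(G-N_G[y])=k-1$. By definition of the maximum privacy degree, $|N_G[x]\setminus N_G[y]|\leq \Gamma(G)$, so Proposition~\ref{prop:reg-privacy} gives
\begin{equation*}
\reg(G-N_G[y])\leq \tfrac{1}{2}|N_G[x]\setminus N_G[y]|+1\leq \tfrac{1}{2}\Gamma(G)+1.
\end{equation*}
Therefore $\reg(G)=\reg(G-N_G[y])+1\leq \tfrac{1}{2}\Gamma(G)+2$, as claimed.
\end{proof}
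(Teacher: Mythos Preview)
Your overall strategy---reduce to the prime case and then use primeness to get $\reg(G)=\reg(G-N_G[y])+1$ directly, so that only Proposition~\ref{prop:reg-privacy} is needed---is sound and genuinely different from the paper's proof. The paper instead runs a direct induction on $|V(G)|$: it picks an edge $xy$ realizing $\Gamma(G)$, bounds $\reg(G-y)$ by the inductive hypothesis (using the easy observation $\Gamma(G-y)\le\Gamma(G)$), bounds $\reg(G-N_G[y])$ by Proposition~\ref{prop:reg-privacy}, and combines via Corollary~\ref{cor:induction-sc}. Your prime-case paragraph is correct as written.

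However, your reduction step contains an actual error. From the (correct) observation that any induced decomposition of a connected $2K_2$-free graph has $r=1$, you conclude that such a graph is itself prime. That does not follow: a prime factorization $\{H\}$ with $r=1$ only says that some prime \emph{induced subgraph} $H$ satisfies $\reg(H)=\reg(G)$, not that $H=G$. Concretely, $P_3$ and $C_4$ are connected and $2K_2$-free but not prime (each has a closed-dominated vertex; cf.\ Proposition~\ref{prop:dominated}). The fix is short: take such an $H$; it is induced in $G$, hence $2K_2$-free, and for any edge $xy$ of $H$ one has $N_H[x]\setminus N_H[y]=(N_G[x]\setminus N_G[y])\cap V(H)$, so $\Gamma(H)\le\Gamma(G)$. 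Applying your prime-case argument to $H$ gives $\reg(G)=\reg(H)\le\tfrac12\Gamma(H)+2\le\tfrac12\Gamma(G)+2$. With this patch your argument is complete and arguably sleeker than the paper's induction, since it avoids tracking $\reg(G-y)$ altogether.
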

\begin{proof}
We proceed by an induction on the order of $G$. Since the base case is trivial, we assume that the claim holds for any graph whose order is less than $|G|$. Suppose that $\Gamma(G)=|N_G[x]\backslash N_G[y]|$ for some edge $e=xy$ in $G$. We first note that $\Gamma(G-y)\leq \Gamma(G)$, since $|N_{G-y}[u]\backslash N_{G-y}[v]|\leq |N_G[u]\backslash N_G[v]|$ for any $uv\in E(G-y)$. Therefore, it follows from the induction that $\reg(G-y)\leq \frac{1}{2}\Gamma(G-y)+2\leq \frac{1}{2}\Gamma(G)+2$. On the other hand, we have $\reg(G-N_G[y])\leq \frac{1}{2}\Gamma(G)+1$ by Proposition~\ref{prop:reg-privacy}; hence,
$$\reg(G)\leq \max \{\reg(G-y),\reg(G-N_G[y])+1\}\leq \frac{1}{2}\Gamma(G)+2$$
by Corollary~\ref{cor:induction-sc}.
\end{proof}

\begin{proposition}\label{prop:mindeg}
Let $G$ be a $2K_2$-free graph. If $x$ is a vertex such that $d_G(x)<2\reg(G)-3$, then $\reg(G)=\reg(G-N_G[x])$.
\end{proposition}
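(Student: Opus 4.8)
The plan is to argue by induction on $|V(G)|$. If $x$ is isolated, the claim is immediate: $G-N_G[x]=G-x$, and an isolated vertex is a cone point of the independence complex, so $\reg(G)=\reg(G-x)$. Hence from now on I assume $d_G(x)\ge 1$.

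The heart of the argument will be the claim that, under the hypothesis $d_G(x)<2\reg(G)-3$, \emph{no neighbour of $x$ is a prime vertex of $G$}. To prove it, suppose $y\in N_G(x)$ satisfies $\reg(G-y)<\reg(G)$. Applying Corollary~\ref{cor:induction-sc} to the vertex $y$ and using $\reg(G-y)<\reg(G)$ forces $\reg(G-N_G[y])\ge \reg(G)-1$. I will contradict this by bounding $\reg(G-N_G[y])$ from above via $2K_2$-freeness. Since $xy\in E(G)$ and $G$ is $2K_2$-free, the set $U:=V(G)\setminus N_G[\{x,y\}]$ is independent, and $V(G)\setminus N_G[y]=A\sqcup U$, where $A:=N_G(x)\setminus N_G[y]$ satisfies $|A|\le d_G(x)-1$ (because $y\in N_G(x)\cap N_G[y]$). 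Fix a maximum matching $M$ of $G[A]$; for each $e'\in M$ the subgraph of $G-N_G[y]$ spanned by all edges meeting an endpoint of $e'$ is a split graph, hence cochordal, and the edges of $G-N_G[y]$ not incident to $V(M)$ then lie inside $G[(A\setminus V(M))\cup U]$, which is bipartite (both $A\setminus V(M)$ and $U$ are independent, the former by maximality of $M$) and $2K_2$-free, hence cochordal by Corollary~\ref{cor:clique2}. Therefore
\begin{equation*}
\reg(G-N_G[y])\le \cd(G-N_G[y])\le |M|+1\le \frac{d_G(x)-1}{2}+1,
\end{equation*}
and combining this with $\reg(G-N_G[y])\ge \reg(G)-1$ gives $d_G(x)\ge 2\reg(G)-3$, contradicting the hypothesis. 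This proves the claim.

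Granting the claim, pick any $y\in N_G(x)$; it is not a prime vertex, so $\reg(G-y)=\reg(G)$. The graph $G-y$ is again $2K_2$-free, has one fewer vertex, and $d_{G-y}(x)=d_G(x)-1<2\reg(G)-3=2\reg(G-y)-3$, so the induction hypothesis applies to $(G-y,x)$ and yields $\reg(G-y)=\reg\big((G-y)-N_{G-y}[x]\big)$. Since $y\in N_G(x)$ we have $(G-y)-N_{G-y}[x]=G-N_G[x]$, whence $\reg(G-N_G[x])=\reg(G-y)=\reg(G)$, closing the induction.

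I expect the only delicate point to be the cochordal-cover estimate inside the claim: one must check carefully that the edges of $G-N_G[y]$ meeting a fixed matching edge really span a split subgraph, and that every edge left uncovered is confined to the bipartite $2K_2$-free graph $G[(A\setminus V(M))\cup U]$. Both facts rest on the maximality of $M$ together with the independence of $U$, and the independence of $U$ is exactly where $2K_2$-freeness is used. Everything else is routine manipulation of Corollary~\ref{cor:induction-sc}, the inequality $\reg(H)\le\cd(H)$, and the monotonicity of $\reg$ under passing to induced subgraphs.
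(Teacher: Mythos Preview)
Your proof is correct and follows essentially the same approach as the paper's: both show that for every neighbour $y$ of $x$ the bound $\reg(G-N_G[y])\le \frac{d_G(x)+1}{2}<\reg(G)-1$ (obtained from the split/cochordal cover of $G-N_G[y]$) forces $\reg(G-y)=\reg(G)$, and then strip off the neighbours of $x$ one by one. The only cosmetic differences are that the paper packages the cochordal-cover estimate as a separate Proposition~\ref{prop:reg-privacy} and phrases the removal of neighbours iteratively rather than as a formal induction on $|V(G)|$.
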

\begin{proof}
We let $d_G(x)=d$. If $y\in N_G(x)$, then $|N_G[x]\backslash N_G[y]|\leq d-1$. Thus, the inequality $\reg(G-N_G[y])\leq\frac{d+1}{2}<\reg(G)-1$ holds by Proposition~\ref{prop:reg-privacy}. However, this implies that $\reg(G)=\reg(G-y)$ by Corollary~\ref{cor:induction-sc}. We may therefore remove any neighbourhood of $x$ without altering the regularity in which case $x$ becomes an isolated vertex, that is, $\reg(G)=\reg(G-N_G[x])$ as claimed.
\end{proof}

In fact Proposition~\ref{prop:reg-privacy} provides a  strict (local) restriction on the minimum degree of $2K_2$-free graphs with a given regularity.
\begin{corollary}\label{cor:mindeg}
If $G$ is a $2K_2$-free graph such that $\reg(G)=k$, then there exists an induced subgraph $H$ of $G$ such that $\reg(G)=\reg(H)$ and $\delta(H)\geq 2k-3$.
\end{corollary}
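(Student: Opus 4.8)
The plan is to iterate Proposition~\ref{prop:mindeg}: repeatedly delete the closed neighbourhood of any vertex whose degree is too small, which by that proposition does not change the regularity, and stop when no such vertex remains. More precisely, set $H_0 := G$ and, as long as $H_i$ contains a vertex $x$ with $d_{H_i}(x) < 2k-3$ (note $\reg(H_i)=k$ throughout), put $H_{i+1} := H_i - N_{H_i}[x]$. Here one must check that each $H_i$ is still $2K_2$-free — this is immediate, since being $2K_2$-free is a hereditary property and $H_{i+1}$ is an induced subgraph of $H_i$ — and that $\reg(H_{i+1}) = \reg(H_i) = k$, which is exactly the content of Proposition~\ref{prop:mindeg} applied to $H_i$ and the vertex $x$ (its hypothesis $d_{H_i}(x) < 2\reg(H_i)-3 = 2k-3$ holds by the choice of $x$). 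Since $|V(H_{i+1})| < |V(H_i)|$, the process terminates after finitely many steps with a graph $H := H_m$.

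First I would verify the termination and the invariants above; this is the routine part. The remaining point is to identify what it means for the process to stop: it stops precisely when $H$ has \emph{no} vertex $x$ with $d_H(x) < 2\reg(H)-3 = 2k-3$, i.e.\ when $\delta(H) \geq 2k-3$ — unless $H$ is empty. So the only genuine issue is to rule out $H = \{\emptyset\}$ (or more generally that $H$ becomes edgeless before the degree condition is met). But $\reg(H) = k \geq 1$, and an edgeless graph has regularity $0$; more to the point, if $H$ had an isolated vertex the process would already have been applicable to it only if $0 < 2k-3$, i.e.\ $k \geq 2$. For $k = 1$ the statement $\delta(H) \geq 2k-3 = -1$ is vacuous, so $H = G$ works and there is nothing to prove. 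For $k \geq 2$ one argues that since $\reg(H) = k \geq 1$, $H$ has at least one edge, hence at least two vertices of positive degree; if some vertex of $H$ still had degree $< 2k-3$ the process would not have stopped, so $\delta(H) \geq 2k-3$ as required, and $H$ is a legitimate (non-empty) induced subgraph of $G$ with $\reg(H) = \reg(G)$.

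The main obstacle, such as it is, is the bookkeeping around small values of $k$: one has to be careful that the inequality $d_G(x) < 2\reg(G)-3$ in Proposition~\ref{prop:mindeg} can be vacuous (for $k \leq 1$) or can only be triggered by genuinely low-degree vertices (for $k = 2$, only degree-$0$ vertices qualify, since $2k-3 = 1$). In all these cases the claimed bound $\delta(H) \geq 2k-3$ is either vacuous or automatically satisfied once all isolated vertices have been stripped away, and one should simply note that the iteration handles this uniformly: it runs exactly until the degree condition fails for every vertex, which is the definition of $\delta(H) \geq 2k-3$. I expect no further difficulty; the corollary is essentially a compactness-style repackaging of Proposition~\ref{prop:mindeg}.
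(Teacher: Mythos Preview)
Your proposal is correct and follows essentially the same iterative approach as the paper: repeatedly apply Proposition~\ref{prop:mindeg} to strip off the closed neighbourhood of a low-degree vertex until none remains, using finiteness to terminate. The paper's proof is terser and does not spell out the hereditary property of $2K_2$-freeness or the small-$k$ edge cases, but the argument is the same.
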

\begin{proof}
Suppose that $\delta(G)<2k-3$, and let $x$ be a vertex of minimum degree in $G$. We have $\reg(G)=\reg(G-N_G[x])$ by Proposition~\ref{prop:mindeg}, since $d_G(x)<2k-3$. If $\delta(G-N_G[x])\geq 2k-3$, we are done. Otherwise, we continue in a similar way, and since $G$ is finite, this process will eventually terminate.
\end{proof}
In terms of prime graphs, we may restate Corollary~\ref{cor:mindeg} as follows.
\begin{corollary}\label{cor:prime-2k2}
If $G$ is a prime $2K_2$-free graph, then $\reg(G)\leq \frac{\delta(G)+3}{2}$.
\end{corollary}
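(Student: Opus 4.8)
The plan is to derive Corollary~\ref{cor:prime-2k2} directly from Corollary~\ref{cor:mindeg} together with the definition of a prime graph. First I would recall that if $G$ is prime with $\reg(G)=k$, then by definition $\reg(G-x)<\reg(G)$ for every vertex $x\in V(G)$, so $G$ cannot be replaced by any proper induced subgraph without dropping the regularity. Next I would invoke Corollary~\ref{cor:mindeg}: since $G$ is $2K_2$-free with $\reg(G)=k$, there exists an induced subgraph $H\subseteq G$ with $\reg(H)=\reg(G)=k$ and $\delta(H)\geq 2k-3$.

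The key step is then to observe that primeness forces $H=G$. Indeed, if $H$ were a proper induced subgraph, we could obtain it from $G$ by successively deleting vertices, and at the first deletion the regularity would already strictly decrease (as $G$ is prime), so $\reg(H)<\reg(G)$, contradicting $\reg(H)=\reg(G)$. More carefully, one should note that Corollary~\ref{cor:mindeg} is proved by repeatedly removing closed neighbourhoods $N_G[x]$ of minimum-degree vertices; each such step of the form $G\mapsto G-N_G[x]$ preserves the regularity only when $x$ satisfies $d_G(x)<2k-3$, but if $G$ is prime then $\reg(G)=\reg(G-N_G[x])$ can only hold if additionally no vertex gets deleted, i.e. $N_G[x]=\{x\}$ and $x$ is isolated — impossible in a graph with $\delta(G)\geq 1$ unless the process has already terminated. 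Hence the process in the proof of Corollary~\ref{cor:mindeg} terminates immediately, meaning $\delta(G)\geq 2k-3$ already.

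Therefore $\delta(G)\geq 2k-3 = 2\reg(G)-3$, which rearranges to $\reg(G)\leq \frac{\delta(G)+3}{2}$, as claimed. I expect the main (and only real) obstacle to be making precise why primeness makes the reduction in Corollary~\ref{cor:mindeg} trivial: one must argue that a prime graph admits no regularity-preserving closed-neighbourhood deletion, which follows because such a deletion $G\mapsto G-N_G[x]$ with $|N_G[x]|\geq 2$ would in particular give $\reg(G-x)\geq \reg(G-N_G[x])=\reg(G)$ (since $G-N_G[x]$ is an induced subgraph of $G-x$ and regularity is monotone under induced subgraphs), contradicting primeness. Everything else is a one-line rearrangement of the inequality.
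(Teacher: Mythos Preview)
Your proposal is correct and follows the same route as the paper, which presents the corollary simply as a restatement of Corollary~\ref{cor:mindeg} for prime graphs. Your first argument in the third paragraph is already complete: if $H\subsetneq G$ is a proper induced subgraph with $\reg(H)=\reg(G)$, pick any $x\in V(G)\setminus V(H)$; then $\reg(G)=\reg(H)\leq\reg(G-x)<\reg(G)$ by monotonicity of regularity under induced subgraphs and primeness, a contradiction. The subsequent ``More carefully'' paragraph is unnecessary and slightly muddled (the clause about $N_G[x]=\{x\}$ is confused), so you should simply drop it.
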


\begin{definition}
Let $G$ be a graph. A degree-one vertex of $G$ is called a \emph{pendant vertex}, and if $x$ is a pendant vertex and $N_G(x)=\{y\}$, then the vertex $y$ is said to be the \emph{support vertex} of $x$ in $G$. We denote by $P_G$ and $SP_G$, the set of pendant and their support vertices in $G$ respectively.
\end{definition}

\begin{proposition}\label{prop:remove}
Let $G$ be a $2K_2$-free graph such that $G$ is not a cochordal graph. Then, $\reg(G)=\reg(G-(P_G\cup SP_G))$.
\end{proposition}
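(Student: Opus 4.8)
The plan is to reduce the claim to the following local statement: if $x$ is a pendant vertex of a $2K_2$-free graph $G$ with support vertex $y=N_G(x)$, then removing $y$ (and hence $x$, which becomes isolated) does not change the regularity, i.e.\ $\reg(G)=\reg(G-\{x,y\})$, \emph{provided} $G$ is not cochordal. Granting this, I would iterate: the graph $G-\{x,y\}$ is again $2K_2$-free, and as long as it is not cochordal one repeats the reduction; when it becomes cochordal one still has $\reg=1$ and the process terminates, so after finitely many steps one has removed exactly $P_G\cup SP_G$. (One must check that removing one pendant/support pair does not destroy the pendant status of the others, or rather that the total set removed is exactly $P_G\cup SP_G$; in a $2K_2$-free graph a support vertex can have only one pendant neighbour unless the whole graph is tiny, so the reductions are essentially independent, and any newly created pendant vertices in $G-\{x,y\}$ would already have been pendant or support in $G$ — this bookkeeping needs a short lemma.)

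For the local statement itself, I would use Lemma~\ref{lem:deg-one}: since $N_G(x)=\{y\}$, either $\reg(G)=\reg(G-x)$ or $\reg(G)=\reg(G-N_G[y])+1$. In the first case, $G-x$ is $2K_2$-free, has $y$ as an isolated vertex (wait — $y$ need not be isolated in $G-x$), so more carefully: if $\reg(G)=\reg(G-x)$ then we continue the argument on $G-x$; the point is that $x$ contributes nothing, and then $y$ is handled as an ordinary vertex. The substantive case is $\reg(G)=\reg(G-N_G[y])+1$. Here I want to derive a contradiction with $G$ not being cochordal, or rather show this case forces $\reg$ to already be computed on the smaller graph. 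The key observation is that $e=xy$ is an edge of a $2K_2$-free graph, so $U:=V(G)\setminus N_G[e]$ is an independent set; since $x$ is pendant, $N_G[e]=N_G[y]$, so $G-N_G[y]=G[U]$ is edgeless, whence $\reg(G-N_G[y])=0$ and $\reg(G)=1$, i.e.\ $G$ is cochordal — contradicting the hypothesis. Therefore only the first alternative of Lemma~\ref{lem:deg-one} can occur when $G$ is not cochordal, giving $\reg(G)=\reg(G-x)$.

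Having pinned down $\reg(G)=\reg(G-x)$, I still need to remove $y$. In $G-x$ the vertex $y$ may now have smaller degree; if $G-x$ is still not cochordal I would like to argue $y$ can be deleted too. One clean route: after deleting all pendant vertices at once, every former support vertex $y$ either still has an edge or becomes isolated; apply Corollary~\ref{cor:induction-sc} at $y$, noting $\reg((G-P_G)-N[y])$ is controlled — here Proposition~\ref{prop:reg-privacy} is the natural tool, bounding $\reg(G-N_G[y])$ in terms of the privacy degree at a well-chosen edge incident to $y$, which is small because the pendant neighbour contributed almost nothing. I expect the main obstacle to be exactly this second half: cleanly showing that after stripping the pendants, the support vertices themselves may be deleted without lowering the regularity, and organizing the simultaneous removal so that the inductive/iterative step stays inside the class of $2K_2$-free non-cochordal graphs until the regularity drops to $1$. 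The pendant-removal half is straightforward from Lemma~\ref{lem:deg-one} plus the edgeless-complement observation; the support-removal half is where the real work (and the use of Proposition~\ref{prop:reg-privacy} and Corollary~\ref{cor:induction-sc}) lies.
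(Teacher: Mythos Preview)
You have the decisive observation in hand — that for a pendant $x$ with support $y$, the edge $e=xy$ satisfies $N_G[e]=N_G[y]$, and $2K_2$-freeness then forces $G-N_G[y]$ to be edgeless, i.e.\ $\reg(G-N_G[y])=0$ — but you deploy it in the wrong direction. By feeding it into Lemma~\ref{lem:deg-one} you only conclude $\reg(G)=\reg(G-x)$, which strands you at the support vertex $y$; your sketch for removing $y$ (via Proposition~\ref{prop:reg-privacy}, tracking privacy degrees, worrying about new pendants) is neither completed nor necessary.

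The paper's move is simply to attack $y$ first. Apply Corollary~\ref{cor:induction-sc} at the \emph{support} vertex:
\[
\reg(G)\le \max\{\reg(G-y),\ \reg(G-N_G[y])+1\}=\max\{\reg(G-y),\ 1\}.
\]
Since $G$ is not cochordal, $\reg(G)\ge 2$, so the maximum must be realised by the first term and $\reg(G)=\reg(G-y)$. Every pendant attached to $y$ is now isolated in $G-y$ and drops out for free; one then inducts on the number of support vertices. There is no ``second half'': removing the support is the easy step, and removing the pendants is trivial afterwards, not the other way around. (Incidentally, your side remark that a support vertex in a $2K_2$-free graph can have only one pendant is false — nothing forbids several degree-one vertices sharing the same neighbour — and the paper's argument handles all of $y$'s pendants simultaneously.)
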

\begin{proof}
Since $G$ is not a cochordal graph, we have $\reg(G)\geq 2$. Assume that $x\in P_G$ and $N_G(x)=\{y\}$. Since $2K_2$-free, the graph $G-N_G[y]$ is an edgeless graph so that $\reg(G-N_G[y])=0$. However, this implies that $\reg(G)=\reg(G-y)$ 
by Corollary~\ref{cor:induction-sc}. On the other hand, any pendant vertex whose support is $y$ in $G$ is an isolated vertex of $G-y$, i.e., $\reg(G-y)=\reg((G-y)\cup SP_G(y))=\reg(G-(\{y\}\cup SP_G(y)))$, where $SP_G(y)=\{x\in P_G\colon N_G(x)=\{y\}\}$. We then apply to an induction on $|SP_G|$ to conclude the claim.  
\end{proof}

\begin{corollary}\label{cor:pendant-support}
If $G$ is a $2K_2$-free graph such that $G$ is not cochordal, then $\reg(G)\leq \frac{1}{2}\Gamma(G-(P_G\cup SP_G))+2$.
\end{corollary}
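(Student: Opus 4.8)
The plan is to chain together the two preceding results in the obvious way. By Proposition~\ref{prop:remove}, since $G$ is a $2K_2$-free graph that is not cochordal, we have the exact identity $\reg(G)=\reg(G-(P_G\cup SP_G))$. So it suffices to bound $\reg(G-(P_G\cup SP_G))$ from above by $\frac{1}{2}\Gamma(G-(P_G\cup SP_G))+2$.

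For this, I would like to apply Theorem~\ref{thm:main} to the graph $H:=G-(P_G\cup SP_G)$, which would immediately give $\reg(H)\leq \frac{1}{2}\Gamma(H)+2$ and hence the claim. The only thing to check is that $H$ is still $2K_2$-free. But $H$ is an induced subgraph of $G$, and being $2K_2$-free is a hereditary property (an induced subgraph of a $2K_2$-free graph contains no induced $2K_2$ either, since any such copy would already be induced in $G$). So Theorem~\ref{thm:main} applies to $H$ with no further work.

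Putting the pieces together:
\begin{equation*}
\reg(G)=\reg(G-(P_G\cup SP_G))\leq \tfrac{1}{2}\Gamma(G-(P_G\cup SP_G))+2,
\end{equation*}
where the equality is Proposition~\ref{prop:remove} (using that $G$ is $2K_2$-free and not cochordal) and the inequality is Theorem~\ref{thm:main} applied to the $2K_2$-free induced subgraph $G-(P_G\cup SP_G)$.

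There is essentially no obstacle here; this is a bookkeeping corollary. The only mild point worth a sentence in the write-up is noting explicitly that $2K_2$-freeness is hereditary so that Theorem~\ref{thm:main} is legitimately invoked on $G-(P_G\cup SP_G)$ rather than on $G$ itself — the benefit of the statement being precisely that $\Gamma$ is measured on the stripped-down graph, which can be strictly smaller than $\Gamma(G)$. One might also remark that if $G-(P_G\cup SP_G)$ happens to be cochordal then its regularity is at most $1\leq \frac{1}{2}\Gamma(\cdot)+2$ trivially (or one handles it directly), so no case is lost; but since Proposition~\ref{prop:remove} already yields the equality unconditionally under the stated hypothesis on $G$, this is not even needed.
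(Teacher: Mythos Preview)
Your proof is correct and is exactly the intended argument: the paper states this corollary without proof because it follows immediately by combining Proposition~\ref{prop:remove} with Theorem~\ref{thm:main} applied to the $2K_2$-free induced subgraph $G-(P_G\cup SP_G)$. Your observation that $2K_2$-freeness is hereditary is the only point that needs noting, and you have it.
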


\section{Conclusion}\label{sect:concl}
In this section, we offer a short discussion on the results of this paper and possible new directions to lead from here.

Regarding the results of Section~\ref{sect:primes}, the most prominent question would be the characterization of connected bipartite (perfect) prime graphs of maximum degree at most three. 
The investigation of the structural properties that they need to carry would be valuable. Furthermore, it would be interesting to determine whether there exists a small constant $c>0$ such that $\reg(B)\leq c\im(B)$ holds for any such graph, while our primarily search (compare to Theorem~\ref{thm:sharp-bip}) hints that the right value of such a constant $c$ would be  $\frac{3}{2}$.
On the other hand, in the view of Theorem~\ref{thm:complex-reg-dec} (or Corollary~\ref{cor:reg-gim}), a possible characterization of graphs whose prime subgraphs can be completely determined would be highly interesting.
\begin{problem}
Is it possible to characterize graphs $G$ for which only induced prime subgraph that G can contain is isomorphic to either $K_2$ or a cycle $C_{3k+2}$ for some $k\geq 1$?
\end{problem}
Observe that the regularity of any such graph must satisfy $\reg(G)\leq 2\im(G)$. In particular, we do not know whether claw-free graphs can contain primes other than a $K_2$, a cycle or the complement of a cycle.

\begin{problem}
Is it true that $\reg(G)=\im(G;\{K_2,C_5\};{\bf(1,2)})$ if $G$ is a $(C_3,P_6)$-free graph?
\end{problem}

For a complete description of the role of the virtual induced matching number on the computation of graph's regularity, a further study is definitely needed. 

\begin{problem}
Does there exist a graph $G$ whose regularity is independent of the characteristic of the coefficient field such that $\vim(G)<\reg(G)$?
\end{problem}

There could be two directions to lead on the study of virtual induced matching number. In one way, there is the characterization of graphs (or graph classes) for which the equality $\reg(G)=\vim(G)$ holds.

\begin{problem}
Is it true that $\reg(G)=\vim(G)$ for any vertex-decomposable graph $G$?
\end{problem}

We remark that if $G$ is a graph for which it has a mate $H$ satisfying $\reg(H)=\im(H)$, then the equality $\reg(G)=\vim(G)$ must hold for $G$. For instance, the regularity of any graph having a chordal graph as a mate equals to its virtual induced matching number.

\begin{problem}
Characterize graphs that have the graph $nK_2$ as a mate for some $n\geq 1$? 
\end{problem}

On the other hand, since a $\tp$-contraction or $\tp$-expansion does not need to preserve the bipartiteness of a graph, it would be valuable to find such closed operations on the class of bipartite graphs under which the regularity is stable.

\section*{Acknowledgments}
We would like to thank Michal Adamaszek for bringing the work of Przytycki and Swiatkowski~\cite{PS} to our attention, Damian Osajda for explaining to us the details of his Basic Construction~\cite{DO}, and Zakir Deniz for his help with the Sage codes.


\end{document}